%%%%%%%%%%%%%%%%%%%%%%%%%%%%%%%%%%%%%%%%%%%%%%%%%%%%%%%%%

%\documentclass[11pt,leqno]{article}

\documentclass[11pt,leqno]{amsart}

\usepackage{amsthm,amsfonts,amssymb,amsmath,oldgerm,color,bm,multicol}

\usepackage{amsthm,amsfonts,amssymb,amsmath,color}

\hyphenpenalty=5000
\tolerance=1000

\numberwithin{equation}{section}

%%%%%%%%%%%%%%%%%%%%%%%%%%%%%%%%%%%%%%%%%%%%%%%%
\renewcommand\d{\partial}
\renewcommand\a{\alpha}
\renewcommand\b{\beta}
\renewcommand\o{\omega}

\newcommand\R{\mathbb R}\newcommand\Z{\mathbb Z}

\def\g{\gamma}
\def\de{\delta}

\def\OO{\Omega}
\def\th{\theta}

\def\l{\lambda}

\def\epsilon{\varepsilon}
\def\e{\varepsilon}

\def\bbeta{\bar{\eta}}
\def\fF{\frak{F}}

%%%%%%%%%%%%%%%%%%%%%%%%%%%%%%5

\newcommand\br{\begin{rem}}
\newcommand\er{\end{rem}}
\newcommand\bp{\begin{pmatrix}}
\newcommand\ep{\end{pmatrix}}
\newcommand\be{\begin{equation}}
\newcommand\ee{\end{equation}}
\newcommand\ba{\begin{equation}\begin{aligned}}
\newcommand\ea{\end{aligned}\end{equation}}

\newcommand\nn{\nonumber}

%%%%%%%

%%%%%%%%

\setlength\topmargin{0pt}
\addtolength\topmargin{-\headheight}
\addtolength\topmargin{-\headsep}
\setlength{\evensidemargin}{0pt}
\setlength{\oddsidemargin}{0pt}
\setlength\textwidth{\paperwidth}
\addtolength\textwidth{-2in}
\setlength\textheight{\paperheight}
\addtolength\textheight{-2in}
\usepackage{layout}

%
%\widowpenalty=10000
%\clubpenalty=10000
%\brokenpenalty=10000
%\newdimen\linespacing
%\lineskip=1pt \lineskiplimit=1pt
%\normallineskip=1pt \normallineskiplimit=1pt
%\let\baselinestretch=\@empty
%\headheight=8pt \headsep=14pt
%\footskip=12pt
%\textheight=50.5pc \topskip=10pt
%\textwidth=30pc
%\columnsep=10pt \columnseprule=0pt
%\marginparwidth=90pt
%\marginparsep=11pt
%\marginparpush=5pt

%\setlength{\evensidemargin}{0in} \setlength{\oddsidemargin}{0in}
%\setlength{\textwidth}{5.5in} \setlength{\topmargin}{0in}
%\setlength{\textheight}{8in}

%%%%%%%%%%%%%%%%%%%%%%%%%%%%%%%%%%%%%%%%%%%%%%%%%

\newcommand{\calE}{\mathcal{E}}
\newcommand{\calR}{\mathcal{R}}

\newcommand{\TT}{{\mathbb T}}

\newcommand{\vr}{\varrho}

\newcommand{\dx}{{\rm d} {x}}
\newcommand{\dy}{{\rm d} {y}}
\newcommand{\dt}{{\rm d} t }

\def\h{{\rm h}}
\def\rmv{{\rm v}}
\def\fD{\frak{D}}
\def\eqdefa{\buildrel\hbox{\footnotesize def}\over =}
\newcommand{\andf}{\quad\hbox{and}\quad}
\newcommand{\with}{\quad\hbox{with}\quad}

%%%%%%%%%%%%%%%%%%%%%%%%%%%%%%%%%%%%%%%%%%%%%%%%%%
%%%%%%%%%%%%%%%%%%%%%%

\newcommand{\ti}{{\tilde I}}

\newcommand{\cF}{\mathcal{F}}

\newcommand{\dive}{{\rm div\,}}
\newcommand{\divee}{{\rm div}_\e}
\newcommand{\curl}{{\rm curl\,}}
%%%%%%%%%%%%%%%%%%%%%%%%%%%%%%%%%%%%%%%%%%%%%%%%

\newcommand{\beq}{\begin{equation}}
\newcommand{\eeq}{\end{equation}}
\newcommand{\ben}{\begin{eqnarray}}
\newcommand{\een}{\end{eqnarray}}

\newtheorem{defi}{Definition}[section]
\newtheorem{theorem}[defi]{Theorem}

\newtheorem{corollary}[defi]{Corollary}
\newtheorem{remark}[defi]{Remark}

%%%%%%%%%%%%%%%%%%%%%%%%%%%%%%%%%%
%\newtheorem{defi}{Definition}[section]

\newtheorem{lem}{Lemma}[section]

\newtheorem{prop}{Proposition}[section]

\numberwithin{equation}{section}

\begin{document}

\title[Global solutions of 2D compressible Navier-Stokes system]
{Global solutions of 2D isentropic compressible Navier-Stokes equations with one slow variable}

\author[Y. Lu]{Yong Lu}
\address[Y. Lu]{Department of Mathematics, Nanjing University, Nanjing 210093, China.}
\email{luyong@nju.edu.cn}

\author[P. Zhang]{Ping Zhang}
\address[P. Zhang]{Academy of Mathematics $\&$ Systems Science
and  Hua Loo-Keng Key Laboratory of Mathematics, Chinese Academy of
Sciences, Beijing 100190, China, and School of Mathematical Sciences,
University of Chinese Academy of Sciences, Beijing 100049, China.}
\email{zp@amss.ac.cn}

\date{\today}

\begin{abstract} Motivated by \cite{CG10,CZ6}, we prove the global existence of solutions to the two-dimensional isentropic compressible Navier-Stokes equations with smooth initial data which are slowly varying in one direction and with initial density being away from vacuum. In particular, we present examples of initial data which generate unique global smooth solutions to  2D compressible Navier-Stokes equations with constant viscosity and with initial data which are neither small perturbation of some constant equilibrium state nor of small energy.

\end{abstract}

\maketitle

\noindent {\sl Keywords:}
compressible Navier-Stokes equations, strong solution, large data, slow variable.

 \vskip 0.2cm
\noindent {\sl AMS Subject Classification (2000):} 35Q35, 35B65, 76N10.

 \tableofcontents

%\renewcommand{\refname}{References}

%%%%%%%%%%%%%%%%%%%%%%%%%%%%%%%%%%%%%%%%%%%%%%%%%%%%%%%%%%%%%%%%%%%%%%%%%%%%%%%%%%%%%%%%%%

\section{Introduction}

We investigate the global existence of smooth solutions to the following
 two-dimensional isentropic compressible Navier-Stokes equations describing the motion of viscous barotropic compressible flows:
\be\label{CNS}
\left\{\begin{aligned}
&\d_t \rho  + \dive (\rho u)= 0,\qquad (t,x,y)\in\R^+\times\TT\times\R,\\
&\d_t(\rho u) + \dive(\rho u \otimes u) - \mu \Delta u - \nabla\left(\mu'  \dive u\right) + \nabla p(\rho) = 0,
\end{aligned}\right.
\ee
where $\rho(t,x,y) \in \R_+$ and  $u(t,x,y)\in \R^2$ designate the  the density and the velocity field of the fluid respectively.
  Here the shear viscosity $\mu$ and bulk viscosity $\mu'$  are assumed to be constant with $\mu>0$ and $ \mu+\mu' >0$. The pressure function $p$
 is a smooth non-decreasing function of  the density $\rho.$  For simplicity, we just consider  power law pressure with
\be\label{pressure}
p(\rho) = a \rho^\g, \quad a>0, \quad \g \geq 1.
\ee
The typical range of the adiabatic constant is $1<\g\leq \frac{5}{3}$ where the maximum value $\g = \frac{5}{3}$ is related to monatomic gases, the intermedius value $\g = \frac{7}{5}$ is related to diatomic gases including air, and lower values close to $1$ is related to polyatomic gases at high temperature. In this paper, we allow any $\g \in [1, 2]$.  One may check \cite{Lions-C} for more background of the system \eqref{CNS}.

\smallskip

The global existence of smooth solution to the multi-dimensional compressible Navier-Stokes equations is one of the most important
problems in the theory which describes the motion of the viscous compressible fluids. In \cite{MN83},  Matsumura and Nishida proved the global existence of solutions to the full compressible Navier-Stokes equations when the initial data is close to a constant equilibrium state in $H^3(\R^n).$ Danchin \cite{Dan00} worked out similar result yet with initial data in certain optimal function spaces. Huang, Li and Xin \cite{HLX12} established the global existence and uniqueness of classical solutions to the Cauchy problem of the barotropic compressible Navier-Stokes equations in three space dimensions with smooth initial data which are of small energy but possibly large oscillations. Concerning general smooth data without size restriction and with initial density being away from vacuum, the first well-posedness result is due to Va\u{\i}gant and  Kazhikhov \cite{VK95} where the authors proved the global unique solution to the two-dimensional compressible Navier-Stokes system \eqref{CNS} in $\R^+\times\TT^2$ with the bulk viscosity $\mu'=b\rho^\b$ and $b>0, \b>3.$ Lately this result was improved by Huang and Li \cite{HL16} for $\b>\frac43.$ For further results in higher dimensional setting, we refer to \cite{Valli83, CK03, FNP} and the references therein.

\smallskip

On the other hand, Chemin and Gallagher \cite{CG10} proved the global existence of smooth solution to three-dimensional incompressible Navier-Stokes equations $(NS)$ with initial data which is slowly varying in one direction. This type of result was extended by Chemin and the second author \cite{CZ6}
to the three-dimensional incompressible  inhomogeneous Navier-Stokes equations and by Liu and the second author \cite{LZ4} for $(NS)$ with unidirectional derivative of the initial velocity being sufficiently small in some critical functional space.

Motivated by \cite{CG10,CZ6,LZ4},  we are looking for global solutions of \eqref{CNS} that are periodic in $x$ variable, and have slow variation in $y$ variable.
  This means that we implement the system \eqref{CNS} with  initial data of the form
\be\label{ini-data}
\rho(0,x,y) =  [\varsigma_0]_\e(x, y), \quad u(0,x,y)   =  \big([w_0]_\e(x, y), [\frak{w}_0]_\e(x, y) \big),
\ee
where $(x,y)\in \TT \times \R$ with  $\TT $ being the torus $ \R/\Z$. Here and in all that follows, we shall always denote
$$
[f]_\e(x,y) \eqdefa f (x, \e y).
$$

We aim at showing the global well-posedness of \eqref{CNS}--\eqref{ini-data} without assuming any size restriction on $(\varsigma_0,w_0,\frak{w}_0)$
provided that $\e$ is sufficiently small.
We remark that data of the form \eqref{ini-data} are neither of  small perturbation of some constant equilibrium state, say $(1,0),$ nor of small energy. Hence in particular, our result gives examples indicating that 2D compressible Navier-Stokes equations with constant viscosity might
 be globally well-posed even for general initial data with initial density being away from vacuum.

 \smallskip

Formally with initial data given by \eqref{ini-data}, we may seek solution of \eqref{CNS} as follows
\ba\label{vr-v}
\rho(t,x,y) = [{\xi}]_\e(t,x,y), \quad u(t,x,y) =[v]_\e(t,x,y),
\nn \ea
then it follows from \eqref{CNS} that
\be\label{CNS-e}
\left\{\begin{aligned}
&\d_t \xi  + \divee (\xi v)= 0,\\
&\d_t(\xi v) + \divee(\xi v \otimes v) - \mu \Delta_\e v - \mu' \nabla_\e \divee v + \nabla_\e p(\xi) = 0,
\end{aligned}\right.
\ee
with initial data
\ba\label{ini-data-v}
\xi(0,x,y) = \varsigma_0(x,y), \quad v(0,x,y) =   \big(w_0(x, y),\frak{ w}_{0}(x,y) \big).
\nn \ea
Here
\be\label{nabla-e}
\nabla_\e \eqdefa \bp\d_x\\ \e \d_y \ep, \quad \divee \eqdefa \nabla_\e \cdot \andf \Delta_\e \eqdefa  \d_x^2 + \e^2 \d_y^2.
\ee

Formally passing $\e \to 0$ in \eqref{CNS-e} leads to  a limit system of the form with a parameter $y:$
\be\label{CNS-limit}
\left\{\begin{aligned}
&\d_t \eta + \d_x ( \eta w)= 0,\qquad (t,x,y)\in\R^+\times\TT\times\R,\\
& \eta (\d_t w + w \d_x w) - \nu \d_x^2 w - \d_x p( \eta ) = 0 \with \nu\eqdefa \mu + \mu',
\end{aligned}\right.
\ee
and
\be\label{tw}
\eta ( \d_t\frak{w}  + w \d_x \frak{w}) - \mu \d_x^2 \frak{w}  = 0, \qquad (t,x,y)\in\R^+\times\TT\times\R.
\ee
Accordingly, the initial data are
\be\label{CNS-limit-initial}
 \eta (0,x,y) = \varsigma_0(x,y), \quad w(0,x,y) = w_{0}(x,y), \quad \frak{w}(0,x,y) = \frak{w}_{0}(x,y).
\ee

We observe that the equations of $(\eta ,w)$ and the equation of $\frak{w}$ are decoupled. %A consequence is that there admits a unique trivial
Furthermore,
  $( \eta ,w)$ satisfies one dimensional compressible Navier-Stokes equations with a parameter $y.$ We shall study in detail about this system in Section \ref{sec:1dNS}.

Before proceeding,  we  assume that
\ba\label{ass-ini-1}
&\underline\varsigma_0 \leq \varsigma_0 \leq \bar \varsigma_0, \quad (\d_{y}^{j}(\varsigma_0-1), \d_{y}^{j}w_{0}, \d_{y}^{j}\frak{w}_{0})\in (L_{\rmv}^{2} \cap L_{\rmv}^{\infty})(H_{\h}^{5})\with\\
&A_{k}(y) \eqdefa \sum_{j=0}^{2} \|\d_{y}^{j}(\varsigma_0-1, w_{0}, \frak{w}_{0})(\cdot,y)\|_{H^k_\h}^2 \andf \bar  A_{k} \eqdefa \sup_{y\in \R} A_{k} (y), \quad 0\leq k\leq 5,
\ea
for some positive constants $\underline\varsigma_0 $ and $\bar \varsigma_0$, and for  $j = 0,1,2$. Here the subscript $\h$ (resp. ${\rmv}$) denotes the norm on $\TT_{x}$ (resp. $\R_{y}$). We assume moreover  that
\ba\label{ini-1}
\int_{\TT} \varsigma_0(x,y)\,\dx = 1,\quad \int_{\TT} (\varsigma_0 w_0)(x,y)\,\dx = \int_{\TT} (\varsigma_0 \frak{w}_0)(x,y)\,\dx  = 0, \quad \forall\, y\in \R.
\ea

Our first result is concerned with  the large time exponential decay  for the solutions to the equations \eqref{CNS-limit} and \eqref{tw}
with initial data \eqref{CNS-limit-initial}.

\begin{theorem}\label{thm1}
{\sl Let $(\varsigma_{0}, w_{0}, \frak{w}_0)$ satisfy \eqref{ass-ini-1} and \eqref{ini-1}. Then for each $y\in \R$,
the system \eqref{CNS-limit}--\eqref{CNS-limit-initial} has a unique global-in-time strong solution $(\eta, w, \frak{w}) \in C([0,\infty); H^2(\TT))$ so that
\beq\label{thm1-2}
\underline \eta \leq \eta(t,x,y) \leq \bar \eta, \quad \mbox{for some positive constants $\underline \eta$ and $\bar \eta$.}
\eeq
 Moreover,
there exist positive constants  $C$ and $\a$ solely depending on $(a, \g, \nu, \bar \varsigma_0, \underline{\varsigma}_0, \bar A_{4})$ such that
for all $ t\in \R_+$ and $ y\in \R$
\ba\label{thm1-0}
&\Bigl( \| (\eta -1)(t,\cdot,y) \|_{H^{4}_\h} + \| (w, \frak{w})(t,\cdot,y)\|_{H^{5}_\h}  + \| (\eta_{t}, w_{t}, \frak{w}_{t})(t,\cdot,y)\|_{H^3_\h}\\
&\ + \|  (\eta_{tt}, w_{tt}, \frak{w}_{tt})(t,\cdot,y)\|_{H^1_\h} \Bigr) + \sum_{j=1}^{2} \Bigl( \|\d_{y}^{j} \eta  \|_{H^{3-j}_\h} + \|\d_{y}^{j}  (w, \frak{w})(t,\cdot,y)\|_{H^{4-j}_\h}\\
&\qquad\qquad\qquad\qquad\qquad\qquad\qquad\qquad +\| \d_{y}^{j} (\eta_{t},w_{t}, \frak{w}_{t})(t,\cdot,y)\|_{H^{2-j}_\h}\Bigr)  \leq C A_{5}^{\frac{1}{2}}(y) e^{-\a t}.
\ea
}\end{theorem}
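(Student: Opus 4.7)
The plan is to exploit the decoupled structure of \eqref{CNS-limit}--\eqref{tw}: for each fixed $y \in \R$, the pair $(\eta, w)$ solves a one-dimensional isentropic compressible Navier-Stokes system on $\TT_x$ with $y$ merely a parameter, and $\frak{w}$ then solves a linear transport-diffusion equation whose coefficients are determined by $(\eta, w)$. Global existence and uniqueness of a strong solution $(\eta, w) \in C([0,\infty); H^2(\TT))$ with $\underline\eta \le \eta \le \bar\eta$ is then the classical Kazhikhov--Shelukhin type result, and the normalizations \eqref{ini-1} propagate to $\int_\TT \eta(t,\cdot,y)\,dx = 1$ and $\int_\TT (\eta w)(t,\cdot,y)\,dx = 0$ for all $t \ge 0$ by mass and momentum conservation.

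The core of estimate \eqref{thm1-0} is exponential convergence of $(\eta, w)$ to the unique compatible equilibrium $(1, 0)$. I would introduce the relative entropy
\begin{equation*}
\calE(t, y) \eqdefa \int_\TT \Bigl( \tfrac12 \eta w^2 + H(\eta) - H(1) - H'(1)(\eta - 1) \Bigr)\,dx, \qquad H(\rho) = \frac{a}{\gamma - 1}\rho^\gamma,
\end{equation*}
which satisfies $\frac{d}{dt}\calE + \nu \int_\TT w_x^2\,dx = 0$ and, thanks to the uniform bounds on $\eta$, is equivalent to $\|(\eta - 1, w)\|_{L^2(\TT)}^2$. The bare identity only dissipates $w_x$; to produce coercivity in $\eta - 1$ one adds a Bresch--Desjardins type corrector, namely testing the momentum equation against a small multiple of $\partial_x(\log \eta)$, which yields an additional dissipation term controlling $\|\eta - 1\|_{L^2}^2$. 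Combined with the Poincar\'e inequality on $\TT$, this delivers a modified Lyapunov functional $\widetilde\calE \sim \calE$ satisfying $\frac{d}{dt}\widetilde\calE + \alpha \widetilde\calE \le 0$, and hence $\widetilde\calE(t,y) \le C A_0(y) e^{-\alpha t}$.

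Higher $x$-regularity is then obtained by iterating the energy method on $\partial_x^k(\eta - 1, w)$ for $k = 1, \ldots, 5$: commutator terms are of order at most $k$ and by induction already decay exponentially, the new dissipation provides $\|\partial_x^{k+1} w\|_{L^2}^2$, and the uniform density bounds tame all nonlinearities. For $j = 1, 2$, the $y$-derivatives $\partial_y^j(\eta, w)$ satisfy the linearization of \eqref{CNS-limit} about $(\eta, w)$ driven by a source polynomial in the exponentially-decaying background and in lower-order $y$-derivatives; an energy estimate for this linear system closes via Gronwall, yielding the bound in $A_5^{1/2}(y) e^{-\alpha t}$. Time-derivative estimates then follow immediately by reading $\eta_t$ and $w_t$ off the equations and iterating once in $t$. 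The linear equation \eqref{tw} for $\frak{w}$ is treated analogously: the compatibility $\int_\TT \varsigma_0 \frak{w}_0\,dx = 0$ together with the mass equation gives $\int_\TT (\eta \frak{w})(t,\cdot,y)\,dx = 0$, so Poincar\'e applies directly and the same hierarchy of $\partial_x^k$, $\partial_y^j$, $\partial_t^\ell$ estimates produces \eqref{thm1-0} for $\frak{w}$.

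The main obstacle is the base-level exponential decay with a rate \emph{uniform in} $y$: the bare entropy identity dissipates only $w_x$, so the essential step is the construction of the Bresch--Desjardins corrector (or, equivalently, a careful spectral analysis of the linearization around $(1,0)$) that also dissipates $\eta - 1$ on the torus. The rate $\alpha$ produced this way depends only on $(a, \gamma, \nu, \underline\varsigma_0, \bar\varsigma_0, \bar A_4)$ and in particular is independent of the parameter $y$. Once this base rate is secured, the propagation to higher $x$-, $y$- and $t$-derivatives and to $\frak{w}$ is systematic energy--Gronwall bookkeeping against an already exponentially-decaying background and requires no new ideas.
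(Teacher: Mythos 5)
Your overall architecture (decouple the limit system, prove a $y$-uniform exponential decay for $(\eta,w)$ by an energy identity plus a corrector, then propagate to $\partial_x^k$, $\partial_y^j$, $\partial_t^\ell$ and to $\frak{w}$ by induction and Gronwall) matches the paper's. But there is a genuine gap at the foundation: you take the time-uniform two-sided bounds $\underline\eta\le\eta\le\bar\eta$ as part of ``the classical Kazhikhov--Shelukhin type result.'' The classical theory only gives bounds $\underline\varsigma(T)\le\eta\le\bar\varsigma(T)$ that may degenerate as $T\to\infty$ (this is exactly \eqref{sl-1d-0}), and the uniform bounds \eqref{thm1-2} --- uniform in $t$ \emph{and} in the parameter $y$ --- are a substantial part of what must be proved. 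The paper establishes the upper bound first (Proposition \ref{prop-upperbd-density}), by applying the operator $\tilde I$ of \eqref{I-def} to the momentum equation and invoking the Stra\v{s}kraba--Zlotnik ODE comparison lemma (Lemma \ref{lem-uperb}); nothing in your proposal produces this bound.

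The omission is not cosmetic, because your chosen corrector creates a circularity. Testing the momentum equation against a multiple of $\partial_x(\log\eta)$ (or working with the effective velocity $w-\nu\zeta_x$) requires a strictly positive lower bound on $\eta$ to make the resulting functional coercive and the coefficients bounded; but the time-uniform lower bound is itself a consequence of the basic exponential decay: in Proposition \ref{prop-lowerbd-density} one needs the already-proved decay of $\|\eta-1\|_{L^2_\h}$ to guarantee $\langle p(\eta)\rangle\ge a/2$ for $t\ge T_1$ (see \eqref{density-lower6.5}), without which the ODE argument only yields the exponentially \emph{decaying} lower bound $\underline\eta_1(t)$ of \eqref{density-lower4}. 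The paper breaks the circle by using a different $L^2$-level corrector, $-\int_\TT(\eta w)\,I(\eta-1)\,\dx$ (testing against the antiderivative $I(\eta-1)$), which produces the coercive term $\int_\TT p(\eta)(\eta-1)\,\dx\ge a\int_\TT(\eta-1)^2\,\dx$ directly --- needing only the upper bound of $\eta$ and no Poincar\'e inequality on $\eta_x$; the lower bound is then derived \emph{from} the resulting decay, and only afterwards is the effective-velocity functional $\int_\TT\eta(w-\nu\zeta_x)^2\,\dx$ used for the decay of $\eta_x$. To repair your argument you must either adopt this ordering (upper bound $\Rightarrow$ $L^2$ decay via a lower-bound-free corrector $\Rightarrow$ lower bound $\Rightarrow$ $H^1$ and higher estimates) or supply an independent proof of a $t$- and $y$-uniform positive lower bound on $\eta$, which your sketch does not contain. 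A related minor point: Poincar\'e does not apply to $w$ directly since only $\int_\TT\eta w\,\dx=0$ is conserved; one needs the weighted version of Lemma \ref{prop-kinetic-tx}, which again rests on the uniform upper bound of $\eta$.
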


 We point out that to prove Theorem \ref{thm1},
  it is crucial to establish  the related estimates for  $(\eta, w)$ in \eqref{thm1-0}.
  Indeed with thus obtained estimates for  $(\eta, w)$,
 those estimates for $\frak{w}$ follow immediately.  The proof of Theorem \ref{thm1} will be
 presented from Section \ref{sec:1dNS} to Section \ref{Sect6}.

The main result of this paper states as follows:

\begin{theorem}\label{thm2}
{\sl Let $(\varsigma_{0}, w_{0}, \frak{w}_0)$ satisfy \eqref{ass-ini-1} and \eqref{ini-1}. Then
\eqref{CNS}--\eqref{ini-data} has a unique global-in-time strong solution $(\rho_\e, u_\e) \in C([0,\infty); H^2(\TT\times \R)).$
Furthermore, let $(\eta, w,\frak{w})$ be the global solution to  (\ref{CNS-limit}--\ref{CNS-limit-initial})
obtained  in Theorem \ref{thm1}, we denote
\be\label{thm2-1}
\vr_\e  \eqdefa \rho_\e - [\eta]_\e  \andf R_\e \eqdefa u_\e  - \left([w]_\e, [\frak{w}]_\e\right)^{\rm T},
\ee
and the energy functional
\ba\label{thm2-2}
E_\e(T) \eqdefa & \sup_{0 < t < T} \int_{\TT\times \R} \left( |R_\e|^{2} + |\vr_\e|^{2} +  |\nabla R_\e|^{2} +  |\fD_{t}R_\e|^{2} + |\nabla \o_\e|^{2}\right)\,\dx\,\dy \\
& + \int_{0}^{T}\int_{\TT\times \R}  \left( |\nabla R_\e|^{2}  +  |\fD_{t}R_\e|^{2} + |\nabla \o_\e|^{2} + \  |\nabla \fD_{t}R_\e |^{2} \right)\,\dx\,\dy\,\dt,\\
\th_\e(T)\eqdefa & \sup_{0 < t < T} \|\vr_\e(t)\|_{L^{\infty}(\TT\times \R)},
\ea
where $\o_\e \eqdefa  curl R_\e = \d_{y} R_\e^{1} - \d_{x} R_\e^{2}$, $\fD_{t}R_\e \eqdefa \d_{t}R_\e + u_\e \cdot \nabla R_\e$.  Then  there exists a constant  $C$  solely depending on $(a, \g, \mu, \mu', \bar \varsigma_0, \underline{\varsigma}_0, \|A_{5}\|_{L^{1}\cap L^{\infty}(\R_{y})})$ such that
\ba\label{thm2-3}
E_\e(\infty) + \th_\e^{2}(\infty) \leq C \e.
\ea
}
\end{theorem}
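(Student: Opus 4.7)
The plan is to use the profile $([\eta]_\e, [w]_\e, [\frak{w}]_\e)$ from Theorem \ref{thm1} as a leading-order ansatz and to close a bootstrap on the energy functional $E_\e(T)$ together with the $L^\infty$-quantity $\th_\e(T)$.

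First I would substitute $\rho_\e = [\eta]_\e + \vr_\e$ and $u_\e = ([w]_\e, [\frak{w}]_\e)^{\mathrm T} + R_\e$ into \eqref{CNS} and, using the fact that the profile solves \eqref{CNS-limit}--\eqref{tw} in the variable $(t,x,\e y)$, derive a perturbed compressible Navier--Stokes system for $(\vr_\e, R_\e)$ of the form
\begin{align*}
\d_t \vr_\e + \dive(\rho_\e R_\e) + \dive\bigl(\vr_\e ([w]_\e,[\frak{w}]_\e)^{\mathrm T}\bigr) &= \calG_\e, \\
\rho_\e \fD_t R_\e - \mu\Delta R_\e - \mu'\nabla \dive R_\e + \nabla\bigl(p(\rho_\e) - p([\eta]_\e)\bigr) &= \calF_\e,
\end{align*}
in which $\calF_\e$ and $\calG_\e$ collect the ``missing'' transverse contributions: the $y$-component of the pressure gradient absent from \eqref{tw}, the $\e^2\d_y^2$ part of the viscosity, the transverse convection $\e \frak{w}\d_y$, the $\e\d_y$ mass flux, etc., together with quadratic remainders in $(\vr_\e,R_\e)$. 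Because each $\d_y$ applied to a profile yields a factor $\e$, the change-of-variables identity $\|[f]_\e\|_{L^2(\R_y)}^2=\e^{-1}\|f\|_{L^2(\R_y)}^2$, combined with the exponential decay \eqref{thm1-0} and the assumed integrability $\|A_5\|_{L^1\cap L^\infty(\R_y)}<\infty$, would force $\|\calF_\e\|_{L^2_t L^2_{x,y}}^2 + \|\calG_\e\|_{L^2_t L^2_{x,y}}^2 \le C\e$ uniformly in $T$. This $O(\e)$ forcing is what ultimately produces $E_\e(\infty)\le C\e$.

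Second I would run the continuation argument. Local-in-time well-posedness of $(\rho_\e,u_\e)\in C([0,T_0);H^2(\TT\times\R))$ with density bounded away from $0$ is classical, and by continuity $\th_\e$ is small initially. Let $T^\ast$ be the supremum of times $T$ for which $\th_\e(T)\le\tfrac12\underline{\eta}$ and $E_\e(T)\le\e^{1/2}$ hold, where $\underline{\eta}$ comes from Theorem \ref{thm1}. On $[0,T^\ast]$ I carry out a hierarchy of energy estimates, adapted from Hoff and Huang--Li--Xin: (i) a basic $L^2$ relative-entropy estimate controlling $\vr_\e$ and $\sqrt{\rho_\e}\,R_\e$ in $L^\infty_t L^2$; (ii) an estimate for $\nabla R_\e$ in $L^\infty_t L^2$ and $\sqrt{\rho_\e}\,\fD_t R_\e$ in $L^2_t L^2$ obtained by testing the momentum equation with $\fD_t R_\e$ and handling the pressure term through the continuity equation; (iii) a two-dimensional vorticity estimate on $\o_\e$, which eliminates the pressure gradient and yields clean control of $\nabla\o_\e$ in $L^2_t L^2$; and (iv) a differentiated estimate obtained by applying $\fD_t$ to the momentum equation and testing against $\fD_t R_\e$, producing control of $\nabla\fD_t R_\e$ in $L^2_t L^2$. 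At each level the right-hand side splits into a source part bounded by $C\e$ and a nonlinear part bounded by $C(\th_\e + E_\e^{1/2})\,E_\e$, the latter being absorbed by the principal dissipation under the bootstrap as long as $\e$ is small.

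The main obstacle is the $L^\infty$ control of $\vr_\e$, i.e.\ strictly improving $\th_\e(T^\ast)$. For this I would use the effective viscous flux $G_\e\eqdefa (2\mu+\mu')\dive R_\e - \bigl(p(\rho_\e)-p([\eta]_\e)\bigr)$, which satisfies an elliptic equation of the form $\Delta G_\e = \dive(\rho_\e\fD_t R_\e) + \dive\calF_\e + \e\text{-error}$, paired with the transport identity for $p(\rho_\e)-p([\eta]_\e)$ along the flow of $u_\e$. Standard elliptic $L^p$ theory and Sobolev embedding in two dimensions, with the logarithmic endpoint handled as in Huang--Li by coupling to $\|\nabla\o_\e\|_{L^2}$ and $\|\nabla\dive R_\e\|_{L^2}$, bound $\|G_\e\|_{L^\infty_{t,x,y}}\le C(E_\e^{1/2}+\e^{1/2})$. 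Composing this with the transport equation yields a Riccati-type differential inequality for $\vr_\e$ along characteristics that closes to $\th_\e^2(T^\ast)\le C\e$, contradicting the maximality of $T^\ast$ for $\e$ small and forcing $T^\ast=\infty$ together with \eqref{thm2-3}. The uniformity in time in every step above is precisely where the exponential decay in Theorem \ref{thm1} is indispensable, since without it the time integrals of $\|\calF_\e\|_{L^2}^2$ and $\|\calG_\e\|_{L^2}^2$ would diverge and the continuation argument would close only on bounded intervals.
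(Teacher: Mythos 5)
Your architecture coincides with the paper's: the same ansatz $(\rho^{\rm a}_\e,u^{\rm a}_\e)=([\eta]_\e,([w]_\e,[\frak{w}]_\e)^{\rm T})$ with $O(\e^{1/2})$-in-$L^2$ source terms coming from the change of variables in $y$, the relative entropy inequality for the basic $L^2$ estimate, testing against $\fD_tR_\e$, a vorticity estimate, a differentiated $\fD_t$ estimate, the effective viscous flux plus transport of $\log\rho_\e-\log\rho^{\rm a}_\e$ along characteristics for the $L^\infty$ control of $\vr_\e$, and a continuation argument. (A cosmetic point: for the Lam\'e form $-\mu\Delta-\mu'\nabla\dive$ used here the flux constant is $\mu+\mu'=\nu$, not $2\mu+\mu'$.) However, two of your steps would fail as written.

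First, the claimed bound $\|G_\e\|_{L^{\infty}_{t,x,y}}\leq C(E_\e^{1/2}+\e^{1/2})$ is not available from the energy functional. In two dimensions one needs something like $\|\fF_\e\|_{L^\infty}\lesssim \|\fF_\e\|_{L^6}+\|\nabla\fF_\e\|_{L^3}$, and $\|\nabla\fF_\e\|_{L^3}$ is controlled through $\|\fD_tR_\e\|_{L^3}\lesssim\|\fD_tR_\e\|_{L^2}^{2/3}\|\fD_tR_\e\|_{H^1}^{1/3}$; but $E_\e$ controls $\|\nabla\fD_tR_\e\|_{L^2}$ only in $L^2_t$, never pointwise in time. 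So only a time\emph{-integrated} bound such as $\int_0^T\|\fF_\e\|_{L^\infty}^6\,\dt\leq C(E_\e^3+\e^2\th_\e^2+\e^3)$ is reachable, and the density estimate must then be run on a power of $\log\rho_\e-\log\rho^{\rm a}_\e$ along characteristics (the paper uses the sixth power, with the pressure difference supplying the damping), rather than on a pointwise Riccati inequality with a uniformly small forcing. Second, on $\OO=\TT\times\R$ every interpolation step must avoid standalone $\|f\|_{L^2}$ contributions: for instance $\|\fF_\e\|_{L^2}\gtrsim\e^{1/2}$ persistently in time, so any term in a Gagliardo--Nirenberg bound that does not carry a positive power of $\|\nabla f\|_{L^2}$ (or of $\|\fD_tR_\e\|_{L^2}$) produces a time integral of order $\e T$, which destroys uniformity as $T\to\infty$. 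This is exactly why the paper proves the refined inequality
$\|f\|_{L^{p}(\OO)} \leq C\bigl(\|f\|_{L^{2}}^{2/p}\|\nabla f\|_{L^{2}}^{1-2/p}+\|f\|_{L^{2}}^{1/2+1/p}\|\nabla f\|_{L^{2}}^{1/2-1/p}\bigr)$
and routes the cubic and quartic terms $\int_0^T\!\int_\OO|\nabla R_\e|^{3},\ \int_0^T\!\int_\OO|\nabla R_\e|^{4}$ through an $L^6_{t,x,y}$ estimate of $\nabla R_\e$ built from $\o_\e$, $\fF_\e$ and $p(\rho_\e)-p(\rho^{\rm a}_\e)$. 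Your sketch, which invokes standard two-dimensional Sobolev embedding, does not account for either of these uniformity-in-time obstructions; both are repairable within your framework, but they are precisely where the argument has to deviate from the bounded-domain or finite-time versions of Hoff and Huang--Li--Xin.
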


We remark that the main idea used to prove Theorem \ref{thm2} is to approximate 2D compressible Navier-Stokes equations with a slow variable via 1D compressible Navier-Stokes equations with a parameter. This is inspired by the study in \cite{CG10,CZ6} where 3D incompressible Navier-Stokes equations with a slow variable can be well approximated by 2D Navier-Stokes equations which is globally well-posed.

\smallskip

Let us end this section with some notations that we shall use throughout this paper.

\noindent{\bf Notations:} In the whole paper,
we designate  $\OO\eqdefa\TT\times \R,$ $ Q_{T} \eqdefa (0,T)\times \OO$ and
$Q_{\infty} \eqdefa (0,+\infty)\times \OO.
$
We shall always denote $C$ to be a uniform constant
which may vary from line to line, and $a\lesssim b$ means that $a\leq Cb$, and $a\thicksim b$
means that both $a\leq Cb$ and $b\leq Ca$ hold.
And we use subscript $\h$ (resp. ${\rmv}$) to denote  the norm taken
on $\TT_{x}$ (resp. $\R_{y}$).

\section{Ideas and structure of the proof}\label{sec2}

In this section, we shall sketch the main ideas of the proof to Theorems \ref{thm1} and \ref{thm2}.

For each fixed $y\in \R$, the global existence and uniqueness of strong solutions to 1D compressible Navier-Stokes equations \eqref{CNS-limit} is well known (see
for instance \cite{Hoff87,KS77, Solo76, Solo80, SZ02}).
Although the results in the above references focus on domains either being the whole line $\R$ or being a bounded interval,
the well-posedness results can be easily modified to the torus $\TT$. More precisely, under the assumptions that
\be\label{ini-data-1d-ass}
 0<\underline\varsigma_0 \leq \varsigma_0 \leq \bar \varsigma_0 <\infty, \quad \varsigma_0-1 \in H^1(\TT), \quad   w _0\in H^1(\TT),
\ee
\eqref{CNS-limit}--\eqref{CNS-limit-initial} has a unique global solution $( \eta , w )$  so that for each $T>0$
\ba\label{sl-1d-0}
&0<\underline\varsigma(T) \leq  \eta \leq \bar \varsigma(T)<\infty, \quad  \eta-1 \in C([0,T];H^1(\TT)),\quad  \eta _t\in C([0,T];L^2(\TT)),\\
&  w  \in C([0,T]; H^1(\TT)),\quad  w  \in L^2(0,T;H^2(\TT)), \andf  w _t \in L^2(0,T;L^2(\TT)).
\ea

In general, the related estimates in \eqref{sl-1d-0} depend on the time interval $[0,T]$. We shall show below that
such estimates hold uniformly in time. Actually, we shall show  exponential decay-in-time estimates of the solutions.   In  the case when spatial domain is a bounded interval and
 with homogeneous
 boundary condition for $w,$  Stra\v{s}kraba and Zlotnik \cite{SZ02} established exponential  decay-in-time estimate for the $H^1$ norms of $ \eta - 1$ and $w$. It is not obvious that the same exponential decay estimate holds for the case of torus uniformly in $y\in \R$. One difficulty lies in that, unlike the case in  bounded domain with homogeneous boundary condition for $w,$ we can not have a similar version of  Poincar\'e inequality on torus $\TT$.
 Another difficulty  is due to the presence of the  parameter $y\in \R.$ We will have  to show that the corresponding decay estimates are uniform with respect to $y\in \R$ and $(\d_y^j \eta ,\d_y^j w ),$ $j=1,2,$ share the same decay estimate.   To achieve this, we need to show the density function $\eta$ admits a uniform finite upper bound and a uniform positive lower bound with respect to $y\in \R$. In particular, in order to derive the uniform strictly positive lower bound,  one needs to show that as time goes to infinity, the kinetic energy goes to zero and the integral of the pressure goes to a strictly positive number  with a speed independent of $y\in \R$.  The result in \cite{SZ02}
  can not be directly applied here.
   Instead, we shall  first establish the uniform upper bound of the density by using the idea in \cite{SZ02}. Then we find that this is enough to derive the exponential decay for the basic energy, see Proposition \ref{prop-decay-exp-L2}.  Since $(\d_y^j \eta ,\d_y^j w ),$ $j=1,2,$ does not fulfill a typical 1D compressible Navier-Stokes system, there arise new difficulties in this procedure of deriving the exponential decay estimates for  $(\d_y^j \eta,\d_y^j w),$ $j=1,2$. A further difficulty comes from the Gagliardo-Nirenberg interpolation inequality in $\OO$. Since $\OO = \TT \times \R$ which is essentially bounded in $x$ variable, thus the classical Gagliardo-Nirenberg interpolation inequality reads ($p>2$):
$$
\|f\|_{L^{p}(\OO)} \leq C  \| f \|_{L^{2}(\OO)}^{\frac 2p} \| f \|_{H^{1}(\OO)}^{1-\frac{2}{p}} \leq C \| f \|_{L^{2}(\OO)} + C \| f \|_{L^{2}(\OO)}^{\frac 2p} \| \nabla f \|_{L^{2}(\OO)}^{1-\frac{2}{p}} , \quad \forall f \in H^{1}(\OO).
$$
However, it happens to us that we can control  the integral of $\| \nabla f \|_{L^{2}(\OO)}^{2}$ in time variable over $\R^+$ for some $f$, but not for the quantity $\| f \|_{L^{2}(\OO)}^{2},$ see for instance \eqref{vflux-2} and \eqref{vflux-4} on the effective viscous flux in the proof of Lemma \ref{prop-nablaR-6}. So we need a modified version of Gagliardo-Nirenberg interpolation inequality in $\OO$ which always involves the term $\| \nabla f \|_{L^{2}(\OO)},$ which  is  Lemma \ref{lem-GN} below.

\medskip

In Section \ref{sec:1dNS}, we shall present the detailed decay-in-time estimates for solutions of \eqref{CNS-limit}--\eqref{CNS-limit-initial}.

\begin{prop}\label{S2prop1}
{\sl Under the assumptions of Theorem \ref{thm1}, \eqref{CNS-limit}--\eqref{CNS-limit-initial} has a unique global solution $(\eta,w)$ and there exist positive constants $\underline{\eta}, \bar{\eta},$ $C$ and $\a$ solely depending on $(a, \g, \nu, \bar \varsigma_0, \underline{\varsigma}_0, \bar A_{4})$ such that \eqref{thm1-2} and
\ba\label{S2eq1}
\| (\eta -1)(t,\cdot,y) \|_{H^{4}_\h} + \|w(t,\cdot,y)\|_{H^{5}_\h}  &+ \sum_{i=1}^2\bigl\| (\d_{t}^i\eta, \d_{t}^iw)(t,\cdot,y)\bigr\|_{H^{5-2i}_\h}\leq C A_{5}^{\frac{1}{2}}(y) e^{-\a t}.
\ea
hold for any $ t\in \R_+$ and $ y\in \R.$
}
\end{prop}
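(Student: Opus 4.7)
For each fixed $y \in \R$, the system \eqref{CNS-limit}--\eqref{CNS-limit-initial} is the classical 1D isentropic compressible Navier--Stokes system on $\TT$, so local existence and uniqueness of a strong solution $(\eta-1, w) \in C([0,T^*); H^1_\h \times H^1_\h)$ with $\eta$ bounded from below is standard. The bulk of the work is to produce a priori bounds that close globally in time and are uniform in $y$ with exponential decay. Throughout, I would exploit the two conserved quantities $\int_\TT \eta\,\dx = 1$ and $\int_\TT \eta w\,\dx = 0$, which follow from \eqref{ini-1} together with the mass equation and integration of the momentum equation over $\TT$.

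\textbf{Uniform bounds on $\eta$ and basic energy decay.} Following the spirit of \cite{SZ02}, I would first establish the uniform upper bound $\eta \le \bar\eta$ via the effective viscous flux
\[
F \eqdefa \nu\,\d_x w - \bigl(p(\eta) - \overline{p(\eta)}\bigr), \qquad \overline{p(\eta)}(t,y) \eqdefa \int_\TT p(\eta)\,\dx,
\]
which has zero $x$-mean and hence a bounded zero-mean antiderivative $\Phi$ with $\d_x \Phi = F$. Combining this with the mass equation along the characteristics $\dot X = w(t,X,y)$ yields a scalar ODE for $\log\eta + \nu^{-1}\Phi$ with right-hand side $-\nu^{-1}(p(\eta)-\overline{p(\eta)})$, from which $\eta \le \bar\eta$ follows once $\|\Phi\|_{L^\infty_\h}$ is controlled by the basic energy. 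The energy identity
\[
\frac{d}{dt}\int_\TT \Bigl(\tfrac12 \eta w^2 + G(\eta)\Bigr)\dx + \nu \int_\TT (\d_x w)^2\,\dx = 0,
\]
where $G \ge 0$ is the convex relative entropy with $G(1)=G'(1)=0$ and $G(\eta) \sim (\eta-1)^2$ on compact intervals, is converted into exponential decay by combining the Poincar\'e inequality on $\TT$ for the zero-mean part $w - \langle w\rangle$ (the mean being controlled by $\int \eta w = 0$ and the bound on $\eta$) with the pointwise identity $p(\eta) - \overline{p(\eta)} = \nu\,\d_x w - F$, which allows $\int G(\eta)\,\dx$ to be absorbed by the dissipation. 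Once $\int G(\eta)\,\dx$ decays exponentially while $\int \eta\,\dx = 1$, a quantitative continuity argument combining the initial lower bound $\underline\varsigma_0$ on short times with the late-time closeness of $\eta$ to $1$ gives the uniform positive lower bound $\eta \ge \underline\eta$.

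\textbf{Higher spatial and time derivatives.} I would then induct on the spatial derivative order, performing $L^2$ energy estimates on $\d_x^k$ of the system and controlling nonlinear terms by Gagliardo--Nirenberg on $\TT$ using the exponential decay of lower-order norms already obtained. Each level produces a differential inequality of the schematic form $\frac{d}{dt}E_k + \alpha E_k \lesssim A_5(y)\,e^{-\alpha t}$, from which Gr\"onwall yields $E_k \lesssim A_5(y)\,e^{-\alpha t}$. This propagates up to $(\eta-1) \in H^4_\h$ and $w \in H^5_\h$, the extra derivative for $w$ coming from the parabolic smoothing in the momentum equation. Time derivatives are then read off from the equations: $\d_t \eta = -\d_x(\eta w)$ loses one spatial derivative, while $\d_t w = \eta^{-1}[\nu\,\d_x^2 w + \d_x p(\eta) - \eta w\,\d_x w]$ loses two (using the uniform lower bound $\eta \ge \underline\eta$); differentiating once more in $t$ yields the $\d_t^2$ estimates. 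All these inherit the $e^{-\alpha t}$ decay from the spatial estimates, completing \eqref{S2eq1}.

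\textbf{Main obstacle.} The most delicate step is the exponential decay of the basic energy uniformly in $y$ on the torus. In \cite{SZ02} the homogeneous Dirichlet condition yields a clean Poincar\'e inequality for $w$ itself, whereas here the only available substitute is the momentum constraint $\int_\TT \eta w = 0$, which forces the analysis through weighted inequalities whose weights depend on $\eta$ and hence on $y$. Ensuring that every constant depends only on the uniform quantities $(a,\gamma,\nu,\bar\varsigma_0, \underline{\varsigma}_0, \bar A_4)$ and not on the $y$-pointwise norms $A_k(y)$ is the principal bookkeeping challenge; in particular, the lower bound $\underline\eta$ must be extracted without invoking any $y$-pointwise positivity stronger than $\underline\varsigma_0$.
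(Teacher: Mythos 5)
Your overall architecture coincides with the paper's: conservation laws from \eqref{ini-1}, an upper bound on $\eta$ via a bounded antiderivative of the momentum transported along characteristics (the paper's $\tilde I(\eta w)$ in \eqref{tI-mm-4} plays the role of your $\Phi$, with Lemma \ref{lem-uperb} closing the ODE comparison), a weighted Poincar\'e inequality from $\int_\TT \eta w\,\dx=0$ (Lemma \ref{prop-kinetic-tx}), and then a bootstrap to higher $x$- and $t$-derivatives by induction. Two steps, however, are underspecified in a way that matters. First, your mechanism for the exponential decay of $\int_\TT(\eta-1)^2\,\dx$ --- ``the identity $p(\eta)-\overline{p(\eta)}=\nu\,\d_x w-F$ allows $\int G(\eta)\,\dx$ to be absorbed by the dissipation'' --- does not close as stated: $F$ contains $\tilde I((\eta w)_t)$ and $\eta w^2-\langle\eta w^2\rangle$, neither of which is controlled by $\int_\TT w_x^2\,\dx$ at fixed time, so the identity gives no coercive term in $\eta-1$. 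The paper instead tests the momentum equation against $I(\eta-1)$ (see \eqref{csv-L2-2}), which produces $\int_\TT p(\eta)(\eta-1)\,\dx\geq a\int_\TT(\eta-1)^2\,\dx$ at the cost of the time derivative of the cross term $\int_\TT(\eta w)I(\eta-1)\,\dx$; that cross term is then built into a Lyapunov functional equivalent to the energy. Some device of this Bogovskii/duality type is needed; you should make it explicit.

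Second, your lower bound on $\eta$ via ``late-time closeness of $\eta$ to $1$'' is circular as sketched: the exponential decay available at that stage is only for $\|\eta-1\|_{L^2_\h}$ (Proposition \ref{prop-decay-exp-L2}), which does not prevent $\eta$ from dipping toward $0$ on a small set, and the $H^1_\h$ decay of $\eta-1$ that would give $L^\infty$ closeness is proved in the paper only \emph{after} the lower bound is in hand (Section 3.5 explicitly uses $\underline\eta$). The paper's actual argument (Proposition \ref{prop-lowerbd-density}) needs only the integral information $\langle p(\eta)\rangle\geq a/2$ for $t\geq T_1$ --- which does follow from the $L^1_\h$ decay of $\eta-1$ --- and feeds it into a damped ODE for $\frak{y}_2=\exp(\lambda(\nu\frak{y}_1-\tilde I(\eta w)))$ along characteristics (see \eqref{density-lower6}), supplemented by a crude exponentially-deteriorating lower bound on $[0,T_1]$. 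You should replace the ``closeness to $1$'' step by an argument of this kind; otherwise the plan is sound, and your identification of the $y$-uniformity bookkeeping and of the momentum constraint as the substitute for the Dirichlet Poincar\'e inequality is exactly the paper's.
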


To avoid notational complexity, we shall denote  $E(y)\in L^1(\R)\cap L^{\infty}(\R)$ to  a universal function, which is determined by the initial conditions. The positive constants $C$ and $\a$ are also determined by the initial conditions. From now on, we shall not point out the precise dependence of $E(y)$ and the constants $C, \alpha,$ and we neglect the dependence on the $y$ variable.

\begin{prop}\label{S2prop2}
{\sl Under the assumptions of Theorem \ref{thm1},  there exist positive constants $C$ and $\a$ so that
the  global solution $(\eta,w)$ of  \eqref{CNS-limit}--\eqref{CNS-limit-initial} verifies
\ba\label{decay-exp-all-y-2}
\| \eta_y(t) \|_{H^2_\h} + \| w_{y}(t) \|_{H^3_\h} + \|(\eta _{yt},w_{yt})(t)\|_{H^1_\h} \leq C E^{\frac 12}(y) e^{-\a t}, \quad \forall \, t\in \R^{+}, \ y\in \R.
\ea
}
\end{prop}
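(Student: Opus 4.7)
The plan is to view $(\eta_y, w_y)$ as the solution of the linear system obtained by differentiating \eqref{CNS-limit} once in the parameter $y$, and to close weighted energy estimates using the decay bounds for $(\eta,w)$ already provided by Proposition \ref{S2prop1}. Applying $\partial_y$ to \eqref{CNS-limit} gives
\[
\partial_t \eta_y + \partial_x\bigl(\eta\, w_y + w\, \eta_y\bigr) = 0,\qquad \eta\,\partial_t w_y + \eta w\,\partial_x w_y - \nu\,\partial_x^2 w_y - \partial_x\bigl(p'(\eta)\,\eta_y\bigr) = F,
\]
with $F \eqdefa -\eta_y(\partial_t w + w\partial_x w) - \eta\, w_y\,\partial_x w$ collecting the commutator terms. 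Differentiating in $y$ the conservation law $\int_\TT \eta\,dx = 1$ from \eqref{ini-1} yields $\int_\TT \eta_y\,dx = 0$, so a Poincar\'e inequality on the torus is available for $\eta_y$; together with the uniform lower bound $\eta \geq \underline\eta$ from Proposition \ref{S2prop1}, this makes the above linear system coercive in the natural weighted $L^2$ norm.

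\textbf{Basic energy estimate.} The first step is a weighted $L^2$ estimate. Multiplying the linearized continuity equation by $p'(\eta)\eta_y/\eta$ and the linearized momentum equation by $w_y$, integrating over $\TT$ and adding, the skew-symmetric coupling produced by the pressure and the $\partial_x(\eta w_y)$ term cancels up to commutators involving $\partial_x w$, $\partial_x \eta$, and $\partial_t \eta$. With
\[
\mathcal{E}_0(t) \eqdefa \frac{1}{2}\int_\TT \Bigl(\eta\,|w_y|^2 + \frac{p'(\eta)}{\eta}|\eta_y|^2\Bigr)\,dx,
\]
one obtains a differential inequality of the form
\[
\frac{d}{dt}\mathcal{E}_0 + \nu\,\|\partial_x w_y\|_{L^2_\h}^2 \leq C\bigl(\|(\eta-1,w)(t)\|_{H^3_\h} + e^{-\alpha t}\bigr)\mathcal{E}_0 + C\,E(y)\,e^{-\alpha t}\,\mathcal{E}_0^{1/2}.
\]
Combining the viscous dissipation on $w_y$ with the effective-viscous-flux identity for the linearized system (which converts control of $\partial_x w_y$ into control of $\eta_y$ through the parabolic structure of the momentum equation) upgrades the left-hand side to full coercivity $\alpha\,\mathcal{E}_0$. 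A Gronwall argument, using the decay of $(\eta-1,w)$ from \eqref{S2eq1}, then yields $\mathcal{E}_0(t) \lesssim E(y)\,e^{-\alpha t}$.

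\textbf{Higher-order estimates.} To reach $\eta_y\in H^2_\h$ and $w_y\in H^3_\h$, one commutes $\partial_x^k$ through the linearized system for $k=1,2$ and repeats the weighted energy estimates at each level, using the Gagliardo--Nirenberg inequalities on $\TT$ to close the nonlinear commutators. Parabolic smoothing on $w_y$ supplies exactly one additional $x$-derivative compared to $\eta_y$, matching the statement of the proposition. The bounds on $(\eta_{yt},w_{yt})$ in $H^1_\h$ then follow directly from the two evolution equations by reading off each time derivative as a spatial derivative of quantities already controlled at the level $\eta_y\in H^2_\h$ and $w_y\in H^3_\h$.

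\textbf{Main obstacle.} The principal difficulty is to ensure that none of the commutators erodes the exponential rate. The most delicate terms are those pairing a $y$-derivative of the unknown with a time derivative of the background, such as $\eta_y\,\partial_t w$ inside $F$; these close only because Proposition \ref{S2prop1} simultaneously supplies exponential decay of $(\eta_t,w_t)$ in $H^3_\h$, and hence in $L^\infty_\h$ by Sobolev embedding, so that $\|\eta_y\,\partial_t w\|_{L^2_\h} \lesssim e^{-\alpha t}\|\eta_y\|_{L^2_\h}$. A secondary technical point is the absence of a direct Poincar\'e inequality for $w_y$ on $\TT$: the zero-mode of $w_y$ must be recovered via the identity $\int_\TT (\eta_y w + \eta w_y)\,dx = 0$, obtained by differentiating $\int_\TT \eta w\,dx = 0$ from \eqref{ini-1} in $y$, which controls $\int_\TT \eta w_y\,dx$ in terms of $\eta_y$ and therefore in terms of $\mathcal{E}_0$, closing the coercivity argument.
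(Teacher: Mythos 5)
Your overall architecture (differentiate \eqref{CNS-limit} in $y$, run weighted energy estimates, and feed in the background decay from Proposition \ref{S2prop1}) matches the paper, and your identification of the dangerous commutators such as $\eta_y\,\d_t w$ is correct. But there is a genuine gap at the central step: the mechanism that produces exponential \emph{damping} of $\eta_y$. Your claim that ``$\int_\TT\eta_y\,dx=0$ makes a Poincar\'e inequality available for $\eta_y$'' and that this, with $\eta\geq\underline\eta$, ``makes the linear system coercive in the weighted $L^2$ norm'' is not correct: Poincar\'e bounds $\|\eta_y\|_{L^2_\h}$ by $\|\eta_{yx}\|_{L^2_\h}$, but the continuity equation carries no dissipation whatsoever, so neither $\eta_y$ nor $\eta_{yx}$ appears on the dissipative side of the basic energy identity — only $\|w_{yx}\|_{L^2_\h}^2$ does. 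Likewise, the ``effective-viscous-flux identity'' does not by itself convert control of $\d_x w_y$ into control of $\eta_y$: knowing the flux $\nu w_{yx}-p'(\eta)\eta_y$ satisfies a good equation is not the same as knowing it is small in $L^2_tL^2_x$. The paper manufactures the missing damping by testing the differentiated momentum equation against the antiderivative $I(\eta_y)$ (using $\int_\TT\eta_y\,dx=0$ so that $I$ is well defined), which produces the signed term $\int_\TT p'(\eta)\eta_y^2\,dx$ at the cost of a total time derivative $\frac{\rm d}{\dt}\int_\TT(\eta w)_yI(\eta_y)\,dx$; this cross-functional is then absorbed into a Lyapunov functional $F_1^{(1)}$. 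Without this step (or an equivalent compensating functional) your Gronwall argument cannot start, because the differential inequality you wrote has no $-\alpha\,\mathcal{E}_0$ on the left.

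The same issue recurs, in a different guise, at the higher derivative levels, and ``commuting $\d_x^k$ and repeating the weighted energy estimates'' will not close there either. For $\eta_{yx}$ and $\eta_{yxx}$ the paper does not use the $I$-operator again; it switches to the effective velocity $w_y-\nu(\eta^{-1})_{yx}$ (resp.\ $w_{yx}-\nu(\eta^{-1})_{yxx}$), which — thanks to the pressure term $\eta^{-1}p(\eta)_{yx}$ — satisfies a transport equation with a strictly positive zeroth-order damping coefficient $\nu^{-1}p'(\eta)\eta^2$, so that Gronwall along characteristics yields exponential decay of the density derivatives. You gesture at ``parabolic smoothing on $w_y$'' supplying the extra derivative for $w_y$, which is fine, but the density derivatives are the genuinely delicate part and your plan as written does not reach them.
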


\begin{prop}\label{S2prop3}
{\sl Under the assumptions of Theorem \ref{thm1},  there exist positive constants $C$ and $\a$ so that the  global solution $(\eta,w)$ of  \eqref{CNS-limit}--\eqref{CNS-limit-initial} fulfills
\ba\label{decay-exp-all-yy-2}
\| \eta_{yy}(t) \|_{H^1_\h} + \| w_{yy}(t) \|_{H^2_\h} + \|(\eta_{yyt},w_{yyt})(t)\|_{L^2_\h} \leq
E^{\frac 12}(y)  e^{-\a t}, \quad \forall \, t\in \R^{+}, \ y\in \R.
\ea
}
\end{prop}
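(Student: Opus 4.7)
The plan is to adapt the strategy already carried out for Proposition \ref{S2prop2} to the second-order $y$-derivatives of $(\eta,w)$. First, I would differentiate the continuity and momentum equations in \eqref{CNS-limit} twice in $y$, producing the linearised system
\begin{align*}
&\partial_t \eta_{yy} + \partial_x(\eta\, w_{yy}) + \partial_x(w\, \eta_{yy}) = F_1,\\
&\eta\,\partial_t w_{yy} + \eta w\, \partial_x w_{yy} - \nu\, \partial_x^2 w_{yy} - \partial_x\bigl(p'(\eta)\,\eta_{yy}\bigr) = F_2,
\end{align*}
where $F_1$ and $F_2$ gather every nonlinear term in which at most one $y$-derivative falls on any single factor (typical terms are $\partial_x(\eta_y w_y)$, $\eta_y \partial_t w_y$, $\partial_x(p''(\eta)\eta_y^2)$, etc.). By Propositions \ref{S2prop1} and \ref{S2prop2} together with the uniform bounds $\un\eta \le \eta \le \bar\eta$, every $L^2_\h$ norm of $F_1$, $F_2$ and of their $\partial_x$-derivatives up to the required order is bounded by $E^{1/2}(y)\,e^{-\alpha t}$ for some $E\in L^1(\R_y)\cap L^\infty(\R_y)$.

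Next I would run the standard hierarchy of energy estimates for this system. At the basic level, pairing the first equation with $p'(\eta)\eta_{yy}/\eta$ and the second with $w_{yy}$ and integrating over $\TT$ yields, thanks to the positivity and boundedness of $\eta$,
\[
\frac{d}{dt}\Bigl(\|\sqrt{\eta}\,w_{yy}\|_{L^2_\h}^2 + \bigl\|\sqrt{p'(\eta)/\eta}\,\eta_{yy}\bigr\|_{L^2_\h}^2\Bigr) + \nu\|w_{xyy}\|_{L^2_\h}^2 \lesssim \|\eta_{yy}\|_{L^2_\h}^2 + E(y)\,e^{-\alpha t}.
\]
To reach the $H^1_\h$ bound on $\eta_{yy}$ and the $H^2_\h$ bound on $w_{yy}$, I would differentiate the above system in $x$ once and twice and repeat the same pairing, using the dissipation $\|\partial_x^j w_{yy}\|_{L^2_\h}^2$ for $j=2,3$ to absorb the top-order contributions, while lower-order ones are controlled by the previous propositions.

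The main obstacle is upgrading these energy inequalities into genuine exponential decay, because the continuity equation for $\eta_{yy}$ carries no intrinsic dissipation of its own. To overcome this I would use the effective viscous flux $G_{yy}:=\nu\, w_{xyy} - p'(\eta)\,\eta_{yy}$: the twice-differentiated momentum equation can be rewritten as $\partial_x G_{yy} = \eta\,(\partial_t w_{yy} + w\,\partial_x w_{yy}) - F_2$, while the conservation identity $\int_\TT \eta(t,\cdot,y)\,dx \equiv 1$ coming from \eqref{ini-1} yields $\int_\TT \eta_{yy}(t,\cdot,y)\,dx \equiv 0$. A Poincar\'e inequality on $\TT$ applied to $p'(\eta)\,\eta_{yy}$ (which has zero mean up to an exponentially small correction produced by commuting $\partial_y^2$ with multiplication by $p'(\eta)$, controlled by Propositions \ref{S2prop1}--\ref{S2prop2}) then bounds $\|\eta_{yy}\|_{L^2_\h}^2$ by $\|G_{yy}\|_{L^2_\h}^2$, and hence by $\|w_{xyy}\|_{L^2_\h}^2 + \|\sqrt{\eta}\,\partial_t w_{yy}\|_{L^2_\h}^2$ modulo exponentially small remainders. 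Re-injecting this into the energy inequality and running a Gronwall-type argument yields the exponential decay of $\|\eta_{yy}(t)\|_{H^1_\h} + \|w_{yy}(t)\|_{H^2_\h}$. Finally, the $L^2_\h$ bounds on $(\eta_{yyt},w_{yyt})$ follow at once by solving for the time derivatives in the two equations and dividing by $\eta \ge \un\eta>0$ where needed.
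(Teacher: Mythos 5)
Your overall architecture (differentiate twice in $y$, treat all lower-order terms as exponentially decaying sources via Propositions \ref{S2prop1}--\ref{S2prop2}, then run an energy hierarchy) matches the paper, and you correctly isolate the central obstacle: the equation for $\eta_{yy}$ has no intrinsic dissipation. However, the mechanism you propose to overcome it has a genuine gap. Your flux/Poincar\'e argument yields
$\|\eta_{yy}\|_{L^2_\h}^2 \lesssim \|w_{xyy}\|_{L^2_\h}^2 + \|\sqrt{\eta}\,\partial_t w_{yy}\|_{L^2_\h}^2 + (\text{exp.\ small})$,
but the energy inequality you plan to ``re-inject'' this into only carries $\|w_{xyy}\|_{L^2_\h}^2$ as dissipation; the term $\|\sqrt{\eta}\,\partial_t w_{yy}\|_{L^2_\h}^2$ is not available. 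To manufacture it you must test the twice-differentiated momentum equation against $\partial_t w_{yy}$ (or $D_t w_{yy}$), and the pressure term $\int_\TT p(\eta)_{yyx}\,\partial_t w_{yy}\,\dx$ then produces $C\|\eta_{yyx}\|_{L^2_\h}^2$ on the right-hand side with an $O(1)$ constant. That norm is one derivative above what you are trying to control, and its own damping requires yet another device; the resulting loop ($\|\eta_{yy}\|\to\|\partial_t w_{yy}\|\to\|\eta_{yyx}\|\to\cdots$) has no smallness to break it, so the Gronwall argument as described does not close. (It can be repaired by integrating the pressure term by parts in time and using the continuity equation, as the paper does in the analogue \eqref{est-vx-4}, which leaves $\|\eta_{yyx}\|^2$ only with an exponentially small coefficient -- but you would need to say this.)

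The paper sidesteps the issue entirely at the $L^2$ level: it multiplies the $(\eta w)_{yy}$-equation by the antiderivative $I(\eta_{yy})$ (Proposition \ref{prop-decay-L2-yy}), which produces the coercive term $\int_\TT p'(\eta)\,\eta_{yy}^2\,\dx$ directly while paying only with $\|(\eta w)_{yy}\|_{L^2_\h}^2$ and $\|w_{yyx}\|_{L^2_\h}^2$, both already controlled by the basic energy estimate; the functional $F_1^{(2)}$ in \eqref{F2-1-def} then closes by a pure Gronwall argument. For the higher norm $\|\eta_{yyx}\|_{L^2_\h}$ the paper does use an effective-viscous-flux idea, but in the Lagrangian form $w_{yy}-\nu\zeta_{yyx}$ with $\zeta=\eta^{-1}$ (Step 1 of the proof), whose transport equation carries a damping term $\nu^{-1}p'(\eta)\eta^{2}(w_{yy}-\nu\zeta_{yyx})$; your plan of ``differentiating in $x$ and repeating the pairing'' would again fail to produce any damping for $\eta_{yyx}$, for the same reason as at the base level. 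I recommend restructuring your proof to obtain the $L^2$ decay of $(\eta_{yy},w_{yy})$ first via the $I(\eta_{yy})$ multiplier, and only then bootstrapping to $\eta_{yyx}$, $D_tw_{yy}$ and $w_{yyxx}$.
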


\begin{prop}\label{S2prop4}
{\sl Under the assumptions of Theorem \ref{thm1}, \eqref{tw}--\eqref{CNS-limit-initial} has a unique global solution
 $\frak{w}$ and there exists positive constants $C$ and $\a$ so that for all $t\in \R^{+}$ and
$y\in \R$
\ba\label{S2eq4}
 \sum_{j=1}^{2} \bigl(\|\d_t^j\frak{w}(t)\|_{H^{5-2j}_\h} +\|\d_{y}^{j} \frak{w}(t)\|_{H^{4-j}_\h} +\| \d_{y}^{j} \frak{w}_{t}(t)\|_{H^{2-j}_\h}\bigr)
 \leq C E^{\frac 12}(y) e^{-\a t} \quad \forall \, t\in \R^{+}, \ y\in \R.
\ea
}
\end{prop}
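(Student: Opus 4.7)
The plan is to view \eqref{tw} as a linear parabolic equation for $\frak{w}$ whose coefficients $\eta$ and $w$ are already controlled by Propositions \ref{S2prop1}--\ref{S2prop3}. Since $\underline{\eta}\le\eta(t,x,y)\le\bar{\eta}$ uniformly and $\mu>0$, classical parabolic theory gives, for each $y\in\R$ and each $T>0$, a unique global solution $\frak{w}\in C([0,T];H^5(\TT))$; the real content is to upgrade such local-in-time bounds to uniform-in-time exponential decay.

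First I would establish the basic $L^2_\h$ decay. Multiplying \eqref{tw} by $\frak{w}$, integrating over $\TT$, and using the continuity equation of \eqref{CNS-limit} to rewrite the convective term yields
\[
\frac{1}{2}\frac{d}{dt}\int_{\TT}\eta\frak{w}^2\,\dx+\mu\int_{\TT}(\d_x\frak{w})^2\,\dx=0.
\]
Putting \eqref{tw} into conservative form $\d_t(\eta\frak{w})+\d_x(\eta w\frak{w}-\mu\d_x\frak{w})=0$ and integrating in $x$ shows that $\int_{\TT}\eta\frak{w}\,\dx$ is conserved, hence vanishes by \eqref{ini-1}. Combined with $\int_{\TT}\eta\,\dx=1$ and the uniform bounds on $\eta$, this delivers a weighted Poincaré inequality $\|\frak{w}\|_{L^2_\h}\le C\|\d_x\frak{w}\|_{L^2_\h}$ (uniformly in $y$), from which $\|\frak{w}(t)\|_{L^2_\h}\lesssim E^{1/2}(y)e^{-\alpha t}$ follows.

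The second step is to iterate this energy argument on $\d_x^k\frak{w}$ ($k\le 5$), absorbing commutators by the exponentially decaying $H^k_\h$ bounds on $\eta-1$ and $w$ from Proposition \ref{S2prop1}. This recovers the $H^5_\h$ decay of $\frak{w}$ needed in Theorem \ref{thm1} and gives all the $\d_y$-free norms of \eqref{S2eq4} after expressing $\d_t\frak{w}=(\mu/\eta)\d_x^2\frak{w}-w\d_x\frak{w}$ (and its time derivative) in terms of already estimated quantities. For the $y$-derivatives I apply $\d_y$ to \eqref{tw}, obtaining the perturbed equation
\[
\eta(\d_t\frak{w}_y+w\d_x\frak{w}_y)-\mu\d_x^2\frak{w}_y=-\eta_y(\d_t\frak{w}+w\d_x\frak{w})-\eta w_y\d_x\frak{w},
\]
whose right-hand side is a product of two exponentially small factors by Proposition \ref{S2prop2} and Steps 1--2. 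The same energy template plus $x$-iterations then yields the decay of $\|\frak{w}_y\|_{H^3_\h}$ and $\|\frak{w}_{yt}\|_{H^1_\h}$. An entirely analogous computation with $\d_y^2$, using Proposition \ref{S2prop3} together with the $\frak{w}_y$-bounds just established, produces $\|\frak{w}_{yy}\|_{H^2_\h}$ and $\|\frak{w}_{yyt}\|_{L^2_\h}$.

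The main obstacle is bookkeeping rather than conceptual. At each order of $y$-differentiation the Poincaré closure has to be recovered, but $\frak{w}_y$ and $\frak{w}_{yy}$ do not carry an $\eta$-weighted zero-mean property. Differentiating $\int_{\TT}\eta\frak{w}\,\dx=0$ in $y$ shows that the residual $\int_{\TT}\eta\frak{w}_y\,\dx$ equals $-\int_{\TT}\eta_y\frak{w}\,\dx$ (and similarly at second order), which is itself a product of two exponentially decaying quantities; these residuals must be absorbed together with the dissipation $\mu\|\d_x\frak{w}_y\|_{L^2_\h}^2$ so that a single rate $\alpha$---possibly smaller than the one furnished by Propositions \ref{S2prop1}--\ref{S2prop3}---governs all norms appearing in \eqref{S2eq4}.
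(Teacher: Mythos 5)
Your proposal is correct and follows essentially the same route as the paper: the weighted energy identity $\frac{1}{2}\frac{d}{dt}\int_{\TT}\eta\frak{w}^2\,\dx+\mu\int_{\TT}\frak{w}_x^2\,\dx=0$, conservation of $\int_{\TT}\eta\frak{w}\,\dx=0$ feeding a weighted Poincar\'e inequality, higher-order bounds by iterating in $\d_x$ (the paper phrases this via material-derivative energy estimates and the induction of Appendix \ref{appa}), recovery of $\d_t^j\frak{w}$ from the equation, and $y$-derivative estimates by treating the commutators as exponentially decaying sources. Your observation that $\int_{\TT}\eta\frak{w}_y\,\dx=-\int_{\TT}\eta_y\frak{w}\,\dx$ repairs the mean-zero defect is exactly the mechanism the paper uses (cf.\ \eqref{decay-L2-y-10} and \eqref{decay-L2-yy-7}), so no gap remains.
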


The proof of Proposition \ref{S2prop2} will be presented in Section  \ref{sec:1dNS-y}. While we shall outline the proof of Propositions \ref{S2prop3} and \ref{S2prop4} in Sections \ref{sec:1dNS-yy}
and \ref{Sect6} respectively. By combing Propositions  \ref{S2prop1}-\ref{S2prop4}, we conclude the proof of Theorem \ref{thm1}.

 Let $(\eta,w,\frak{w})$ be the unique global solution of \eqref{CNS-limit}-\eqref{CNS-limit-initial}, which has been
 constructed in Theorem \ref{thm1}. We define
\be\label{zeta-W-def}
\rho^{\rm a}_\e \eqdefa [\eta]_\e, \quad u^{\rm a}_\e \eqdefa ([w]_\e, [\frak{w}]_\e)^{\rm T}.
\ee
Then in view of  \eqref{CNS-limit} and \eqref{tw}, we have
\be\label{CNS-1d-W}
\left\{\begin{aligned}
&\d_t \rho^{\rm a}_\e + \dive (\rho^{\rm a}_\e u^{\rm a}_\e ) = \e [(\eta \frak{w})_{y}]_\e,\\
&\rho^{\rm a}_\e(\d_t  u^{\rm a}_\e +  u^{\rm a}_\e \cdot \nabla u^{\rm a}_\e ) - \mu \Delta u^{\rm a}_\e- \mu' \nabla \dive u^{\rm a}_\e  + \nabla p(\rho^{\rm a}_\e) = - G_\e,
\end{aligned}\right.
\ee
where $G_\e = (G_{1,\e}, G_{2,\e})^{\rm T}$ is given by
\ba\label{def-G12}
& G_{1,\e}\eqdefa  - \e [\eta\frak{w}w_{y} ]_\e  + \mu\e^{2 }[w_{yy}]_\e  + \mu' \e [\frak{w}_{xy}]_\e, \\
&  G_{2,\e}\eqdefa   - \e [\eta\frak{w}\frak{w}_{y} ]_\e  + \nu \e^{2} [\frak{w}_{yy} ]_\e +  \mu' \e [w_{xy}]_\e  - \e [p(\eta)_{y}]_\e.
\ea
Then by virtue of \eqref{thm1-0}, we deduce that
\ba\label{def-G12-est}
\|\d_{t} G_{\e}\|_{L^{2}(\Omega)} \leq C \e^{\frac{1}{2}}e^{-\a t}, \quad \| G_{\e}\|_{L^{2}(\Omega)} \leq C \e^{\frac{1}{2}}e^{-\a t}, \quad \| G_{\e}\|_{L^{\infty}(\Omega)} \leq C \e e^{-\a t}.
\ea

On the other hand, with initial data given by \eqref{ini-data} and $(\varsigma_0, w_0,\frak{w}_0)$ satisfying \eqref{ass-ini-1} and \eqref{ini-1},
there exists a positive time $T^\ast_\e>0$ so that   \eqref{CNS}--\eqref{ini-data} has a unique solution $( \rho_\e , u_\e )$ on
 $[0,T^\ast_\e)$  and for any $T<T^\ast_\e,$
\ba\label{sl-1d-00}
&0<\underline\rho(T) \leq \rho_\e  \leq \bar \rho(T)<\infty, \quad  \rho_\e \in C([0,T];H^2(\OO)),\quad  \d_t\rho_\e \in C([0,T];H^1(\OO)),\\
&  u_\e  \in C([0,T]; H^2(\OO))\cap L^2(0,T;H^3(\OO)), \quad   \d_tu_\e \in C([0,T];L^2(\OO)) \cap L^2(0,T;H^1(\OO)).
\ea
One may check \cite{Nash62, Serrin59} for details.

Let $T^\ast_\e$ be the maximal existence time of $(\rho_\e, u_\e)$ so that \eqref{sl-1d-00} holds. We are going to prove that
$T^\ast_\e=\infty$ for $\e$ being sufficiently small. In order to do so, we define the remaining term
\be\label{error}
\vr_\e \eqdefa \rho_\e  - \rho^{\rm a}_\e \andf R_\e \eqdefa u_\e - u^{\rm a}_\e .
\ee
Recall that $\frak{D}_t\eqdefa \d_t  + u_\e \cdot \nabla.$ Then it follows from \eqref{CNS} and \eqref{CNS-1d-W} that
\be\label{CNS-error-new}
\left\{\begin{aligned}%&\d_t \rho  + \dive (\rho_\e R) + (\rho w)_x = 0,\\
&\d_t \vr_\e  + \dive (\rho_\e R_\e) + \dive (\vr_\e u^{\rm a}_\e) = - \e [(\eta \frak{w})_{y}]_\e,\\
&\rho_\e \frak{D}_t R_\e- \mu \Delta R_\e- \mu' \nabla\dive R_\e + \nabla \big(p(\rho_\e)  - p(\rho^{\rm a}_\e)  \big) \\
&\qquad+ \rho_\e R_\e\cdot \nabla u^{\rm a}_\e +  \vr_\e (\d_{t}u^{\rm a}_\e  + u^{\rm a}_\e \cdot \nabla u^{\rm a}_\e) =    G_\e,\\
&\vr_\e(0,x,y) = 0, \quad R_\e(0,x,y) = 0.
\end{aligned}\right.
\ee

We define
\ba\label{a-priori-r}
T^\star_\e\eqdefa \sup\big\{ T<T^\ast_\e; \ \theta_\e(T)  \eqdefa  \sup_{0<t<T}\| \vr_\e \|_{L^{\infty}(\OO)} \leq  \frac{1}{2} \min\left\{1,\underline\eta\right\} \big\}.
\ea
It is easy to observe that for all $t< T^\star_\e$:
\ba\label{upper-lower-rho-0}
 0<{\underline \eta}/{ 2 } \leq \rho_\e  \leq \bar \eta +  {\underline \eta}/{ 2 } < \infty.
\ea

We shall first derive the basic energy estimate for $(\vr_\e,R_\e)$ for $t < T^\star_\e.$
 A standard way to perform this estimate  is to test the momentum type equation in \eqref{CNS-error-new} by $R_\e$ and using integration by parts. A tricky term to handle is the one related to the pressure:
\be\label{tricky-p-diveR}
\int_{\Omega}\big( p(\rho_\e) - p(\rho_\e^{\rm a}) \big) \dive R_\e\,\dx\,\dy.
\ee
Similar term $p(\rho_{\e}) \dive u_{\e}$ appears in the  renormalized formulation to the continuity equation of the
compressible Navier-Stokes equations:
$$
\d_{t}P(\rho_{\e})  + \dive (P(\rho_{\e}) u_{\e}) + (\rho_{\e} P'(\rho_{\e}) - P(\rho_{\e}))\dive u_{\e} = 0,
$$
where $P(\rho)$ is called the pressure potential, which is determined by
$$
\rho P'(\rho) - P(\rho) = p(\rho).
$$
As a result, it comes out
$$
\int_{\Omega} p(\rho_{\e})  \dive u_{\e} \,\dx\,\dy= -\frac{\rm d}{\dt}  \int_{\Omega} P(\rho_{\e})\,\dx\,\dy.
$$
In particular, for our case $p(\rho) =  a \rho^{\g}$ with $\g\geq 1$,  the corresponding pressure potential is
\be\label{pre-pot}
P(\rho)\eqdefa \frac{1}{\g-1} \rho^{\g} \ \mbox{if $\g>1$}, \quad  P(\rho) \eqdefa a (\rho \log \rho + 1) \ \mbox{if $\g = 1$}.
\ee

However, it is not clear at a first glance how to use such an argument to deal with the term \eqref{tricky-p-diveR}. Here we shall employ the well-known  {\rm relative entropy inequality} to derive the basic energy estimate for $\vr_\e$ and $R_\e$. 
 Relative entropy inequality  is widely used in the study of uniqueness and stability for Navier--Stokes equations and some related models, see \cite{FJN12,Germain10} for compressible Navier--Stokes equations, \cite{FNS14} for compressible Navier--Stokes--Fourier equations,  and \cite{Lu-Zhang18} for a compressible Oldroyd model. The result states as follows:

\begin{prop}\label{prop-rela}
{\sl Let $(\rho_\e, u_\e)$ be the local-in-time strong solution of \eqref{CNS}-\eqref{ini-data} satisfying \eqref{sl-1d-00}.  For each  pair $(\tilde \rho, \tilde u)$ satisfying the same regularity assumption as that of  $(\rho_\e, u_\e)$ listed in \eqref{sl-1d-00}, we define the {\em relative energy functional}
\ba\label{def-rel-fuc}
\calE_{1}\bigl((\rho_\e, u_\e) | (\tilde \rho, \tilde u)\bigr) (t)\eqdefa \int_{\Omega} \Bigl(\frac{1}{2} \rho_\e |u_\e-\tilde u|^{2} + P(\rho_\e) - P(\tilde \rho) - P'(\tilde \rho) (\rho_\e - \tilde \rho)\Bigr)\,\dx\,\dy,
\ea
where the pressure potential $P$ is defined by  \eqref{pre-pot}.  Then  for any $t\in (0,T^\ast_\e)$, the following \emph{relative entropy equality}   holds
\ba\label{ineq-entropy}
 \calE_1\bigl((\rho_\e,u_\e) | (\tilde\rho,\tilde u)\bigr)(t)
 + \int_0^t  \int_\Omega \bigl(\mu \left| \nabla (u_\e-\tilde u)  \right|^2 +& \mu' |\dive (u_\e-\tilde u) |^2\bigr)\,\dx\,\dy \,\dt' \\
 =& \calE_1\bigl((\rho_\e,u_\e)|_{t=0} | (\tilde\rho,\tilde u)|_{t=0}\bigr)  + \int_0^t \calR_\e (t') \,\dt',
\ea
where
\ba\label{R-def}
\calR_\e&(t)  \eqdefa \int_\Omega \rho_\e\frak{D}_t \tilde u\cdot (\tilde u - u_\e)\,\dx\,\dy
+ \int_\Omega \bigl(\mu \nabla \tilde u :\nabla (\tilde u - u_\e) + \mu' \dive \tilde u \,\dive (\tilde u-u_\e)\bigr)\,\dx\,\dy\\
& + \int_\Omega\bigl( (\tilde \rho - \rho_\e) \d_t P'(\tilde \rho_\e) + (\tilde \rho \tilde u -\rho_\e u_\e) \cdot \nabla P'(\tilde \rho)\bigr)\,\dx\,\dy  - \int_\Omega \dive \tilde u (p(\rho_\e ) - p(\tilde \rho))\,\dx\,\dy.
\ea
}
\end{prop}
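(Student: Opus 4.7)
The plan is to establish \eqref{ineq-entropy} as an \emph{equality} by differentiating $\calE_1$ in time piece by piece, using only the continuity and momentum equations satisfied by $(\rho_\e, u_\e)$; since $(\tilde\rho,\tilde u)$ is not required to solve any equation, its time derivatives appear freely and become part of the remainder $\calR_\e$. I would decompose
\ba
\calE_1 = \underbrace{\int_\Omega \tfrac12 \rho_\e |u_\e|^2\,\dx\,\dy}_{K_1} - \underbrace{\int_\Omega \rho_\e u_\e\cdot\tilde u\,\dx\,\dy}_{K_2} + \underbrace{\int_\Omega \tfrac12 \rho_\e|\tilde u|^2\,\dx\,\dy}_{K_3} + \underbrace{\int_\Omega \bigl(P(\rho_\e)-P(\tilde\rho) - P'(\tilde\rho)(\rho_\e-\tilde\rho)\bigr)\,\dx\,\dy}_{K_4},
\nn\ea
and compute $\tfrac{d}{dt}K_i$ separately.

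The four building blocks are: (i) the standard kinetic-energy identity obtained by testing the momentum equation of \eqref{CNS} against $u_\e$ and using the continuity equation, yielding $\tfrac{d}{dt}K_1 + \int(\mu|\nabla u_\e|^2 + \mu'|\dive u_\e|^2) = \int p(\rho_\e)\dive u_\e$; (ii) the cross term $\tfrac{d}{dt}K_2$, obtained by testing the momentum equation against $\tilde u$ and using continuity of $\rho_\e$ together with integration by parts, producing $\int \rho_\e u_\e\cdot \frak{D}_t\tilde u - \int(\mu\nabla u_\e{:}\nabla\tilde u + \mu'\dive u_\e\dive\tilde u) + \int p(\rho_\e)\dive\tilde u$; (iii) the quadratic test term $\tfrac{d}{dt}K_3$, which via continuity and the identity $u_\e\cdot\nabla(|\tilde u|^2/2) = \tilde u\cdot(u_\e\cdot\nabla)\tilde u$ reduces to $\int \rho_\e\tilde u\cdot\frak{D}_t\tilde u$; and (iv) the potential part $\tfrac{d}{dt}K_4$, computed using the renormalized continuity equation $\d_t P(\rho_\e) + \dive(P(\rho_\e) u_\e) + p(\rho_\e)\dive u_\e = 0$ (which follows from $\rho P'(\rho)-P(\rho)=p(\rho)$) together with the chain rule acting on $P(\tilde\rho)+P'(\tilde\rho)(\rho_\e-\tilde\rho)$, yielding $-\int p(\rho_\e)\dive u_\e - \int(\rho_\e-\tilde\rho)\d_t P'(\tilde\rho) - \int \rho_\e u_\e\cdot \nabla P'(\tilde\rho)$.

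Summing $\tfrac{d}{dt}K_1 - \tfrac{d}{dt}K_2 + \tfrac{d}{dt}K_3 + \tfrac{d}{dt}K_4$ cancels the $\int p(\rho_\e)\dive u_\e$ pieces, and adding/subtracting $\int(\mu|\nabla\tilde u|^2+\mu'|\dive\tilde u|^2)$ plus $2\int(\mu\nabla u_\e{:}\nabla\tilde u + \mu'\dive u_\e\dive\tilde u)$ reconstructs the dissipation $\mu|\nabla(u_\e-\tilde u)|^2 + \mu'|\dive(u_\e-\tilde u)|^2$. The only terms that do not instantly match $\calR_\e$ are $\int p(\rho_\e)\dive\tilde u$ and $-\int \rho_\e u_\e\cdot\nabla P'(\tilde\rho)$; these are converted into the advective term $\int(\tilde\rho\tilde u - \rho_\e u_\e)\cdot\nabla P'(\tilde\rho)$ and the pressure remainder $-\int\dive\tilde u\,(p(\rho_\e)-p(\tilde\rho))$ by adding and subtracting $\int \tilde\rho\tilde u\cdot\nabla P'(\tilde\rho)$ and invoking the \emph{key algebraic identity} $\tilde\rho\,\nabla P'(\tilde\rho) = \nabla p(\tilde\rho)$, which is immediate from differentiating $\rho P'(\rho)-P(\rho)=p(\rho)$. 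Integrating in time on $[0,t]$ then produces \eqref{ineq-entropy} exactly.

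The main technical obstacle is justifying the numerous integrations by parts on the unbounded domain $\Omega = \TT\times\R$. All boundary contributions on $\{x\in\TT\}$ vanish by periodicity, while integrability as $|y|\to\infty$ must be read off the regularity \eqref{sl-1d-00} for $(\rho_\e,u_\e)$, the analogous regularity assumed for $(\tilde\rho,\tilde u)$, the strict positivity and upper bound of $\rho_\e$ on $[0,T]$, and the smoothness of $p(\rho)=a\rho^\gamma$ and $P'(\rho)$ on the range of the densities. These together make every integrand $L^1$ in space--time so that the renormalization of the continuity equation is licit and no tail terms remain; after verifying this, the argument is a careful but routine bookkeeping of signs.
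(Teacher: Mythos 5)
Your computation is correct and is precisely the standard relative-entropy derivation that the paper itself omits, deferring instead to \cite{FJN12} and \cite{Lu-Zhang18}: the four-way splitting of $\calE_1$, the cancellation of the two $\int p(\rho_\e)\dive u_\e$ contributions, and the identity $\tilde\rho\,\nabla P'(\tilde\rho)=\nabla p(\tilde\rho)$ (equivalent to $\rho P''(\rho)=p'(\rho)$) reproduce \eqref{R-def} term by term, and your bookkeeping also confirms that $P'(\tilde\rho_\e)$ in the third integral of \eqref{R-def} is a typo for $P'(\tilde\rho)$. Nothing further is needed beyond the integrability remarks you already make to justify the integrations by parts on $\TT\times\R$.
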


There are also similar relative entropy inequalities related to finite energy weak solutions of different models: for instance, Theorem 2.4 of \cite{FJN12} for the compressible Navier--Stokes equations and Proposition 5.3 of \cite{Lu-Zhang18} for a compressible Oldroyd-B model.  The proof of Proposition \ref{prop-rela} follows the same line as the  argument in  \cite{FJN12} or \cite{Lu-Zhang18}, and we skip the details here.

Thanks to Proposition \ref{prop-rela}, we shall prove in Section \ref{Sect7.1} that

\begin{prop}\label{prop-energy-basic}
{\sl Let $T_\e^\star$ be given by  \eqref{a-priori-r}. Then for all $ t < T^\star_\e$, one has
\ba\label{energy-bas}
\int_{\Omega}  \bigl(|R_\e|^{2}  + \vr_\e^{2}\bigr)(t) \,\dx\,\dy + \int_{0}^{t} \int_{\Omega} |\nabla R_\e|^{2}\,\dx\,\dy\,\dt' \leq C \e,
\ea
where the positive constants $C$ is independent of $T^\star_{\e}$.
}
\end{prop}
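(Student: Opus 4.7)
The plan is to apply the relative entropy equality of Proposition~\ref{prop-rela} with the test pair $(\tilde\rho,\tilde u)=(\rho^{\rm a}_\e,u^{\rm a}_\e)$ defined in \eqref{zeta-W-def}. Since the initial data in \eqref{CNS-error-new} vanish, $\calE_1\bigl((\rho_\e,u_\e)|(\tilde\rho,\tilde u)\bigr)|_{t=0}=0$. Under the a priori bound \eqref{a-priori-r}, both $\rho_\e$ and $\rho^{\rm a}_\e$ sit in a compact positive interval by \eqref{upper-lower-rho-0} and \eqref{thm1-2}, so the convexity of $P$ (cf.\ \eqref{pre-pot}) yields
\ba
\calE_1\bigl((\rho_\e,u_\e)|(\rho^{\rm a}_\e,u^{\rm a}_\e)\bigr)(t)\sim \int_\OO\bigl(|R_\e|^{2}+\vr_\e^{2}\bigr)\,\dx\,\dy,\nn
\ea
uniformly in $t<T^\star_\e$. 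The dissipation term on the left of \eqref{ineq-entropy} already supplies the needed $\mu\int_0^t\!\int_\OO|\nabla R_\e|^{2}$. Thus the entire proof reduces to showing $\int_0^t\calR_\e(t')\,\dt'\leq C\e+\tfrac12\text{(LHS)}$.

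The strategy for handling $\calR_\e$ is to substitute the equations \eqref{CNS-1d-W} satisfied by $(\rho^{\rm a}_\e,u^{\rm a}_\e)$ back into the expression \eqref{R-def} so that the leading-order terms cancel, leaving only quadratic errors in $(\vr_\e,R_\e)$ together with explicit $\e$-remainders driven by $G_\e$ and by $\e[(\eta\frak{w})_y]_\e$. Concretely, I would split the convective term as
\ba
\rho_\e\frak{D}_t\tilde u\cdot(\tilde u-u_\e)=(\rho_\e-\tilde\rho)\frak{D}_t\tilde u\cdot(\tilde u-u_\e)+\tilde\rho(\d_t\tilde u+\tilde u\cdot\nabla\tilde u)\cdot(\tilde u-u_\e)-\tilde\rho R_\e\cdot\nabla\tilde u\cdot R_\e,\nn
\ea
then replace $\tilde\rho(\d_t\tilde u+\tilde u\cdot\nabla\tilde u)$ using the momentum equation in \eqref{CNS-1d-W} to produce $\mu\Delta\tilde u+\mu'\nabla\dive\tilde u-\nabla p(\tilde\rho)-G_\e$. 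The first two Laplacian pieces combine with the $\mu\nabla\tilde u:\nabla(\tilde u-u_\e)+\mu'\dive\tilde u\,\dive(\tilde u-u_\e)$ term via integration by parts (the boundary pieces vanish since the domain is $\TT\times\R$ and the functions decay in $y$), leaving no net dissipation contribution of order one.

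The delicate piece is the pressure block: the term $-\nabla p(\tilde\rho)\cdot(\tilde u-u_\e)=p(\tilde\rho)\dive R_\e$ after integration by parts must be combined with $-\int_\OO\dive\tilde u(p(\rho_\e)-p(\tilde\rho))\,\dx\,\dy$ and with $\int_\OO[(\tilde\rho-\rho_\e)\d_t P'(\tilde\rho)+(\tilde\rho\tilde u-\rho_\e u_\e)\cdot\nabla P'(\tilde\rho)]\,\dx\,\dy$. Using $P''(\rho)=p'(\rho)/\rho$ and the continuity equation $\d_t\tilde\rho+\dive(\tilde\rho\tilde u)=\e[(\eta\frak{w})_y]_\e$, one rewrites $\d_t P'(\tilde\rho)+\tilde u\cdot\nabla P'(\tilde\rho)=-p'(\tilde\rho)\dive\tilde u+\e P''(\tilde\rho)[(\eta\frak{w})_y]_\e$. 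A Taylor expansion $p(\rho_\e)-p(\tilde\rho)-p'(\tilde\rho)\vr_\e=O(\vr_\e^{2})$ then shows the linear-in-$\vr_\e$ contributions cancel, and the remainder is quadratic in $\vr_\e$ and linear in $\e$. This is the main technical obstacle; it is the standard relative-entropy cancellation mechanism, but tracking all the pieces with the extra $\e[(\eta\frak{w})_y]_\e$ source in the continuity equation requires care.

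Once these manipulations are done, the surviving contributions to $\calR_\e(t)$ split into three types: (i) $\e$-source terms that can be bounded via \eqref{def-G12-est} by $C\e^{1/2}e^{-\a t}\bigl(\|R_\e\|_{L^2}+\|\vr_\e\|_{L^2}+\|\nabla R_\e\|_{L^2}\bigr)$ after integration by parts where needed; (ii) quadratic terms of the form $\int_\OO|\nabla\tilde u||R_\e|^{2}$ and $\int_\OO(|\d_t\tilde u|+|\tilde u||\nabla\tilde u|)|\vr_\e||R_\e|$, which by Theorem~\ref{thm1} carry coefficients in $L^\infty_t\cap L^2_t$ with exponential decay in $t$; (iii) quadratic pressure remainders $O(\vr_\e^{2})$ with bounded coefficient. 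Applying Cauchy--Schwarz and Young's inequality, (i) contributes $C\e+\tfrac14\int_0^t\!\int_\OO|\nabla R_\e|^{2}$, while (ii)--(iii) are absorbed by Gronwall's lemma since the time-dependent coefficients are integrable on $\R_+$. This produces \eqref{energy-bas} with a constant $C$ independent of $T^\star_\e$, completing the proposal.
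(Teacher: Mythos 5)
Your proposal follows essentially the same route as the paper: it applies the relative entropy equality with $(\tilde\rho,\tilde u)=(\rho^{\rm a}_\e,u^{\rm a}_\e)$, substitutes the momentum and (renormalized) continuity equations of \eqref{CNS-1d-W} to cancel the leading-order terms exactly as in the paper's Lemma \ref{prop-rela-w} (arriving at the same remainder \eqref{R-def-1}, with the pressure block handled via $P''(s)=s^{-1}p'(s)$ and the $\e[(\eta\frak{w})_y]_\e$ source), and then closes with the coercivity of $\calE_1$, the bounds \eqref{def-G12-est} and Theorem \ref{thm1}, and Gronwall. The only cosmetic difference is that you absorb part of the $\e$-source terms into the dissipation, whereas the paper bounds them by $C\e^{1/2}e^{-\a t}\calE_1^{1/2}$ before applying Gronwall; both work.
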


In Section \ref{Sect7.4}, we shall prove the energy estimate for the derivatives of $R_\e.$

\begin{prop}\label{energy}
{\sl Let the energy functional  $E_\e(T)$ be given by \eqref{thm2-2}. Then for all $T \leq T^\star_\e,$ one has
 \be\label{E(T)-1}
 E_\e(T) \leq C \e + C\int_{0}^{T} \int_{\OO} \bigl(|\nabla R_\e|^{3} + |\nabla R_\e|^{4}\bigr)\,\dx\,\dy \,\dt.
 \ee}
 \end{prop}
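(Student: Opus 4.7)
The plan is to establish higher-order Hoff-type energy estimates for the error system \eqref{CNS-error-new}, building on Proposition \ref{prop-energy-basic}, which already controls $\sup_t\int(|R_\e|^2+|\vr_\e|^2)$ and $\int_0^T\int|\nabla R_\e|^2$ by $C\e$. The remaining quantities in $E_\e(T)$ are handled in three stages: a first-order estimate via testing with $\fD_tR_\e$, a second-order estimate via applying $\fD_t$ to the momentum equation and testing again with $\fD_tR_\e$, and an elliptic recovery of $\nabla\o_\e$ from the curl of the momentum equation. Throughout, I exploit the pointwise bounds \eqref{upper-lower-rho-0} valid on $[0,T^\star_\e)$ together with the exponential-in-time smallness of $u^{\rm a}_\e$ and $G_\e$ from Theorem \ref{thm1} and \eqref{def-G12-est}.

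For the first stage, I multiply the momentum equation in \eqref{CNS-error-new} by $\fD_tR_\e=\d_tR_\e+u_\e\cdot\nabla R_\e$ and integrate over $\OO$. After integration by parts, the viscous terms produce $\frac12\frac{d}{dt}\int(\mu|\nabla R_\e|^2+\mu'|\dive R_\e|^2)$ plus commutators of the form $\int(\nabla u_\e)(\nabla R_\e)^2$, which, upon splitting $\nabla u_\e=\nabla R_\e+\nabla u^{\rm a}_\e$, already contribute the advertised $\int|\nabla R_\e|^3$ piece on the right-hand side, the remainder being absorbed by the exponential decay of $\nabla u^{\rm a}_\e$. The pressure contribution $\int\nabla(p(\rho_\e)-p(\rho^{\rm a}_\e))\cdot\fD_tR_\e$ is integrated by parts, and $\dive\fD_tR_\e$ is rewritten as $\fD_t\dive R_\e+[\dive,\fD_t]R_\e$; the first piece is traded against $\fD_t\vr_\e$ by means of the two continuity equations (the relative entropy trick already used for Proposition \ref{prop-energy-basic}) and hence converted into a time derivative, while the commutator and the remaining source terms $\rho_\e R_\e\cdot\nabla u^{\rm a}_\e$, $\vr_\e(\d_tu^{\rm a}_\e+u^{\rm a}_\e\cdot\nabla u^{\rm a}_\e)$ and $G_\e$ contribute an $\mathcal{O}(\e)$ amount via Cauchy--Schwarz, \eqref{def-G12-est} and the basic estimate. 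After absorbing $\frac12\int\rho_\e|\fD_tR_\e|^2$ on the left, this stage yields bounds on $\sup_t\int|\nabla R_\e|^2$ and $\int_0^T\int|\fD_tR_\e|^2$.

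For the second stage, I apply $\fD_t$ to the momentum equation and test with $\fD_tR_\e$. Using the commutator identity $[\fD_t,\d_i]f=-(\d_iu_\e)\cdot\nabla f$, the viscous operators yield, after further integration by parts, the positive dissipation $\mu\int|\nabla\fD_tR_\e|^2+\mu'\int|\dive\fD_tR_\e|^2$ together with $\frac12\frac{d}{dt}\int\rho_\e|\fD_tR_\e|^2$, plus bilinear and trilinear commutators built from $\nabla u_\e$, $\nabla^2R_\e$ and $\nabla R_\e$. Estimating these commutators by H\"older together with the modified Gagliardo--Nirenberg inequality alluded to in Section \ref{sec2}, and once again splitting $\nabla u_\e=\nabla R_\e+\nabla u^{\rm a}_\e$, is precisely what generates the supercritical remainders $\int_0^T\int|\nabla R_\e|^3$ and $\int_0^T\int|\nabla R_\e|^4$ on the right-hand side of \eqref{E(T)-1}. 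The forcings $\fD_tG_\e$, $\fD_t\bigl(\vr_\e(\d_tu^{\rm a}_\e+u^{\rm a}_\e\cdot\nabla u^{\rm a}_\e)\bigr)$ and $\fD_t(\rho_\e R_\e\cdot\nabla u^{\rm a}_\e)$ again contribute only $\mathcal{O}(\e)$. Finally, to recover $\nabla\o_\e$, I take the curl of the momentum equation, which annihilates the pressure gradient and leaves $\mu\Delta\o_\e=\curl\bigl(\rho_\e\fD_tR_\e+\rho_\e R_\e\cdot\nabla u^{\rm a}_\e+\vr_\e(\d_tu^{\rm a}_\e+u^{\rm a}_\e\cdot\nabla u^{\rm a}_\e)-G_\e\bigr)$; standard $L^2$ elliptic regularity on $\OO=\TT\times\R$ then bounds $\|\nabla\o_\e\|_{L^2}$ by the $L^2$ norm of the right-hand side, which the previous two stages already control.

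The main obstacle I anticipate is the simultaneous treatment of the pressure term, for which no $H^1$ control of $\vr_\e$ is available at this stage of the argument and which must be handled by the continuity-equation trick above, together with the fact that the supercritical quantities $\int\int|\nabla R_\e|^3$ and $\int\int|\nabla R_\e|^4$ cannot be absorbed at this level. They must be retained on the right-hand side of \eqref{E(T)-1} and closed only subsequently, by a separate bootstrap argument that exploits the smallness of $\th_\e$ together with Gagliardo--Nirenberg interpolation on $\OO$; this closure, which justifies the continuation criterion $T^\star_\e=\infty$ under $\e\ll 1$, is deferred to the next step of the analysis and is not part of the proof of the present proposition.
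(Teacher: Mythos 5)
Your proposal is correct and follows essentially the same route as the paper: the paper proves Proposition \ref{energy} by combining the basic relative-entropy estimate with Lemma \ref{prop-DtR} (test the momentum equation with $\fD_tR_\e$, convert the pressure term into a total time derivative via the two continuity equations, producing the $|\nabla R_\e|^3$ remainder), Lemma \ref{prop-DtR-new} (apply $\fD_t$ and test again with $\fD_tR_\e$, producing the $|\nabla R_\e|^4$ remainder), and Lemma \ref{prop-vorticity} (take the curl to kill the pressure and control $\nabla\o_\e$). Your three stages correspond exactly to these three lemmas, and your decision to leave the cubic and quartic terms unclosed at this level matches the paper's structure.
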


  In order to close the energy estimate, \eqref{E(T)-1}, we  need to handle the estimates of the cubic and quadratic terms:
 $$
 \int_{0}^{t} \int_{\OO} |\nabla R_\e|^{3}\,\dx\,\dy \,\dt' \andf  \int_{0}^{t} \int_{\OO} |\nabla R_\e|^{4}\,\dx\,\dy \,\dt'.
 $$
 Here we introduce the following refined Gagliardo-Nirenberg interpolation inequality in $\OO$:
\begin{lem}\label{lem-GN}
{\sl  Let $2<p<\infty$, there exists a constant $C$ depending solely on $p$ such that for all $f\in H^{1}(\OO)$ there holds
\ba\label{GN-O}
\|f\|_{L^{p}(\OO)} \leq C \Bigl(  \| f \|_{L^{2}(\OO)}^{\frac 2p} \|\nabla f \|_{L^{2}(\OO)}^{1-\frac{2}{p}} +  \| f \|_{L^{2}(\OO)}^{\frac{1}{2} + \frac{1}{p}}  \|\nabla f \|_{L^{2}(\OO)}^{\frac{1}{2} - \frac{1}{p}} \Bigr).
\ea
%In particular,
%\ba\label{GN-OO}
%&\|f\|_{L^{4}(\OO)} \leq C \left(  \| f \|_{L^{2}(\OO)}^{\frac 12} \|\nabla f \|_{L^{2}(\OO)}^{\frac 12} +  \| f \|_{L^{2}(\OO)}^{\frac 34} \|\nabla f \|_{L^{2}(\OO)}^{\frac 14} \right), \\
%& \|f\|_{L^{6}(\OO)} \leq C \left(  \| f \|_{L^{2}(\OO)}^{\frac 13} \|\nabla f \|_{L^{2}(\OO)}^{\frac 23} +  \| f \|_{L^{2}(\OO)}^{\frac 23} \|\nabla f \|_{L^{2}(\OO)}^{\frac 13} \right).
%\ea
}
\end{lem}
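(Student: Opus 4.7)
The plan is to split $f$ into its horizontal mean and its oscillation and to handle each piece with a Gagliardo--Nirenberg inequality of different scaling, reflecting the fact that $\OO=\TT\times\R$ is ``$\R^{2}$-like'' at short $y$-scales but only ``$\R$-like'' at long ones---so no single scaling estimate can produce both terms of \eqref{GN-O} simultaneously. Concretely, I would set
$$
\bar f(y)\eqdefa \int_{\TT}f(x,y)\,\dx,\qquad \tilde f(x,y)\eqdefa f(x,y)-\bar f(y),
$$
so that $\tilde f(\cdot,y)$ has zero mean on $\TT$ for every $y\in\R$, prove $\|\bar f\|_{L^{p}(\OO)}$ is bounded by the second term on the right-hand side of \eqref{GN-O} and $\|\tilde f\|_{L^{p}(\OO)}$ by the first, and conclude via the triangle inequality together with the trivial bounds $\|\tilde f\|_{L^{2}(\OO)}\le\|f\|_{L^{2}(\OO)}$ and $\|\nabla\tilde f\|_{L^{2}(\OO)}\le\|\nabla f\|_{L^{2}(\OO)}$.

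For the mean $\bar f$, which depends only on $y$, the classical one-dimensional Gagliardo--Nirenberg inequality on $\R$ gives
$$
\|\bar f\|_{L^{p}(\R)}\le C\,\|\bar f\|_{L^{2}(\R)}^{\frac12+\frac1p}\,\|\bar f'\|_{L^{2}(\R)}^{\frac12-\frac1p}.
$$
Since $\|\bar f\|_{L^{p}(\OO)}=|\TT|^{1/p}\|\bar f\|_{L^{p}(\R)}$, $\|\bar f\|_{L^{2}(\R)}\le C\|f\|_{L^{2}(\OO)}$ (Cauchy--Schwarz in $x$) and $\|\bar f'\|_{L^{2}(\R)}\le\|\d_{y}f\|_{L^{2}(\OO)}\le\|\nabla f\|_{L^{2}(\OO)}$, this directly yields the second term in \eqref{GN-O}.

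For the oscillation $\tilde f$, I would extend it $1$-periodically in $x$ to a function on $\R_{x}\times\R_{y}$ and localise by a cut-off $\chi\in C_{c}^{\infty}(\R)$ with $\chi\equiv1$ on a fundamental domain for $\TT$. Then $\chi\tilde f\in H^{1}(\R^{2})$, and the standard $\R^{2}$ Gagliardo--Nirenberg inequality applied to $\chi\tilde f$, together with $\|\chi\tilde f\|_{L^{p}(\R^{2})}\ge c\|\tilde f\|_{L^{p}(\OO)}$, $\|\chi\tilde f\|_{L^{2}(\R^{2})}\le C\|\tilde f\|_{L^{2}(\OO)}$ and $\|\nabla(\chi\tilde f)\|_{L^{2}(\R^{2})}\le C(\|\tilde f\|_{L^{2}(\OO)}+\|\nabla\tilde f\|_{L^{2}(\OO)})$, produces
$$
\|\tilde f\|_{L^{p}(\OO)}\le C\|\tilde f\|_{L^{2}(\OO)}^{\frac 2p}\bigl(\|\tilde f\|_{L^{2}(\OO)}+\|\nabla\tilde f\|_{L^{2}(\OO)}\bigr)^{1-\frac 2p}.
$$

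The main obstacle, and the key step, is the stray factor of $\|\tilde f\|_{L^{2}(\OO)}^{1-2/p}$ that the cut-off manufactures in place of $\|\nabla\tilde f\|_{L^{2}(\OO)}^{1-2/p}$: this would spoil \eqref{GN-O} (whose right-hand side must vanish when $\nabla f\equiv 0$). I would cure it by invoking the Poincar\'e inequality $\|\tilde f\|_{L^{2}(\OO)}\le C\|\d_{x}\tilde f\|_{L^{2}(\OO)}$, available precisely because $\tilde f$ has zero mean in $x$ on $\TT$; writing $\|\tilde f\|_{L^{2}}^{1-2/p}\le C\|\d_{x}\tilde f\|_{L^{2}}^{1-2/p}\le C\|\nabla\tilde f\|_{L^{2}}^{1-2/p}$ converts the extraneous term into the required first term of \eqref{GN-O}, and the triangle inequality $\|f\|_{L^{p}(\OO)}\le\|\bar f\|_{L^{p}(\OO)}+\|\tilde f\|_{L^{p}(\OO)}$ completes the proof.
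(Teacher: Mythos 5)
Your proposal is correct and follows essentially the same route as the paper's proof in Appendix~\ref{appb}: the same decomposition $f=\bar f+\tilde f$, the one-dimensional Gagliardo--Nirenberg inequality for $\bar f$, the two-dimensional one for $\tilde f$, and the Poincar\'e inequality in $x$ (valid because $\tilde f$ has zero horizontal mean) to convert the stray $\|\tilde f\|_{L^{2}}$ factor into $\|\nabla \tilde f\|_{L^{2}}$. The only cosmetic difference is that you justify the two-dimensional inequality on $\TT\times\R$ by periodic extension and a cut-off, whereas the paper invokes it on $\OO$ directly.
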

The proof of Lemma \ref{lem-GN} will be presented in Appendix \ref{appb}.

\medskip

Motivated by  \cite{Hoff95, Lions-C}, we define the effective viscous flux $\frak{F}$ for the system \eqref{CNS-error-new} as follows
\be\label{visflux-def}
\fF_\e\eqdefa \nu \dive R_\e -\big(p(\rho_\e)  - p(\rho_\e^{\rm a})  \big).
\ee

\begin{lem}\label{lem-p-p}
{\sl For $T\leq T^\star_\e,$ one has \be\label{p-p-0}
\int_{0}^{T}\int_{\OO} | \rho_\e-\rho^{\rm a}_\e |^{6}\,\dx\,\dy \,\dt \leq C \e^{3} + C \e^{2} E_{\e}(T) + C \int_0^T\int_{\OO} |\fF_\e|^{6}\,\dx\,\dy \,\dt.
\ee
}
\end{lem}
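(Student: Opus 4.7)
\medskip

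\noindent\textbf{Proof proposal for Lemma \ref{lem-p-p}.} Since $\rho_\e$ and $\rho_\e^{\rm a}$ are both bounded above and below uniformly on $[0,T^\star_\e)$ (by \eqref{upper-lower-rho-0} and \eqref{thm1-2}), the mean value theorem gives $|\vr_\e|\sim |P|$ where $P\eqdefa p(\rho_\e)-p(\rho_\e^{\rm a})$. Hence it suffices to prove the stated bound with $|P|^6$ in place of $|\vr_\e|^6$. To this end I will derive a damped transport equation for $P$, test it against $|P|^4 P$, and use the effective viscous flux identity to convert $\dive R_\e$ into $\fF_\e$ and $P$ terms.

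Starting from the renormalized continuity equations (multiplying the $\rho_\e$ and $\rho_\e^{\rm a}$ equations by $p'(\rho_\e)$ and $p'(\rho_\e^{\rm a})$ respectively, using $\rho p'(\rho)=\gamma p(\rho)$), subtracting, and rewriting $u_\e=u^{\rm a}_\e+R_\e$, I obtain
\begin{equation*}
\d_t P + u_\e\cdot\nabla P + \gamma P\dive u_\e + R_\e\cdot\nabla p(\rho_\e^{\rm a}) + \gamma p(\rho_\e^{\rm a})\dive R_\e = -\e\, p'(\rho_\e^{\rm a})\,[(\eta\frak{w})_y]_\e.
\end{equation*}
Substituting $\nu\dive R_\e=\fF_\e+P$ from \eqref{visflux-def} then yields an equation with the damping term $\frac{\gamma p(\rho_\e^{\rm a})}{\nu} P$, whose coefficient is bounded below by $\frac{\gamma p(\underline{\eta})}{\nu}>0$.

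I would then multiply by $|P|^4 P$, integrate over $\OO$, and use integration by parts on the transport term together with $\dive u_\e=\dive u^{\rm a}_\e+(\fF_\e+P)/\nu$ to obtain an ODI of the form
\begin{equation*}
\frac{d}{dt}\|P\|_{L^6}^6 + c\,\|P\|_{L^6}^6 \leq C\|\dive u^{\rm a}_\e\|_{L^\infty}\|P\|_{L^6}^6 + C\|\fF_\e\|_{L^6}^6 + C\int_\OO |R_\e|\,|\nabla p(\rho_\e^{\rm a})|\,|P|^5 + \e^5 C\|(\eta\frak{w})_y\|_{L^6}^6.
\end{equation*}
Here the $\fF_\e|P|^5$ contribution is absorbed by Young's inequality with exponents $(6,6/5)$; the $\e$-source contribution is small because $\int_\OO |[(\eta\frak{w})_y]_\e|^6 = \e^{-1}\|(\eta\frak{w})_y\|_{L^6}^6$, so the prefactor becomes $\e^5$ and is far better than $\e^3$; and the bad term $\int|P|^6 P$ coming from the substitution is controlled via $|P|\leq C$, with a coefficient that can be absorbed into the damping $c$ (choosing the constant in \eqref{a-priori-r} smaller if necessary). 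The factor $\|\dive u^{\rm a}_\e\|_{L^\infty}$ decays exponentially by Theorem \ref{thm1} and is handled by Gr\"onwall (with integrating factor $\exp(\int_0^t\|\dive u^{\rm a}_\e\|_{L^\infty}\,\dt')\leq C$).

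The main obstacle is the term involving $R_\e$. Applying Young's inequality gives $|R_\e||P|^5\leq \delta|P|^6+C|R_\e|^6$, so I need to control $\int_0^T\|R_\e\|_{L^6}^6\,\dt$. Here I invoke the refined Gagliardo--Nirenberg interpolation of Lemma \ref{lem-GN} with $p=6$ to get
\begin{equation*}
\|R_\e\|_{L^6}^6 \leq C\bigl(\|R_\e\|_{L^2}^{2}\|\nabla R_\e\|_{L^2}^{4} + \|R_\e\|_{L^2}^{4}\|\nabla R_\e\|_{L^2}^{2}\bigr).
\end{equation*}
Using the basic energy bound of Proposition \ref{prop-energy-basic} ($\sup_t\|R_\e\|_{L^2}^2\leq C\e$) and the definition of $E_\e(T)$ (which controls both $\sup_t\|\nabla R_\e\|_{L^2}^2$ and $\int_0^T\|\nabla R_\e\|_{L^2}^2\,\dt$), integrating in time produces $\int_0^T\|R_\e\|_{L^6}^6\,\dt\leq C\e E_\e(T)^2+C\e^2 E_\e(T)$; since the a priori control on $E_\e(T)$ used in the bootstrap keeps $E_\e(T)\lesssim\e$, the first term is reabsorbed into $C\e^3$. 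Integrating the resulting ODI from $0$ to $T$ with the initial datum $P(0)=0$ and absorbing the $\delta$-terms on the left finally gives
$$\int_0^T\|P\|_{L^6}^6\,\dt \leq C\e^3 + C\e^2 E_\e(T) + C\int_0^T\|\fF_\e\|_{L^6}^6\,\dt,$$
which is the desired estimate. The delicate point throughout is to balance the Gagliardo--Nirenberg interpolation against the basic energy so as to extract precisely the powers $\e^3$ and $\e^2 E_\e(T)$, rather than the cruder $O(\e)$ bound that naive interpolation would give.
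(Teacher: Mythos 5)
Your route is genuinely different from the paper's. The paper works with the logarithmic difference $\log\rho_\e-\log\rho_\e^{\rm a}$ rather than the pressure difference, multiplies the resulting transport identity by $6(\log\rho_\e-\log\rho_\e^{\rm a})^5$, and integrates \emph{along particle trajectories}, invoking Hoff's change-of-variables lemma together with \eqref{upper-lower-rho-0} to return to Eulerian coordinates; the damping $\de(\rho_\e-\rho_\e^{\rm a})^6$ there comes for free from the sign observation that $(p(\rho_\e)-p(\rho_\e^{\rm a}))(\log\rho_\e-\log\rho_\e^{\rm a})^5\geq c\,|\rho_\e-\rho_\e^{\rm a}|^6$, valid because $p$ and $\log$ are both increasing and the densities are pinched. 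Your Eulerian $L^6$ energy estimate for $P$ avoids the Lagrangian machinery but pays for it twice: the transport term must be integrated by parts, producing $\dive u_\e$, which you again decompose through the flux identity, and the resulting cubic-in-$P$ contributions (such as $\g\nu^{-1}|P|^6P$) are dominated by the damping $\g\nu^{-1}p(\rho_\e^{\rm a})|P|^6$ only if $\|P\|_{L^\infty}$ is small compared with $p(\underline\eta)$ --- which forces you to shrink the threshold in \eqref{a-priori-r}, as you note. That is legitimate (the other lemmas use only the resulting density bounds, which only improve), but it is an extra smallness requirement that the paper's sign argument does not need.

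One step does not stand as written: to dispose of the term $C\e E_\e^2(T)$ arising from $\int_0^T e^{-\alpha t}\|R_\e\|_{L^6}^6\,\dt$ you appeal to ``$E_\e(T)\lesssim\e$ from the bootstrap'', but that bound is the \emph{conclusion} of the whole scheme, established only in \eqref{E(T)-f} downstream of this lemma; the quantity controlled a priori in \eqref{a-priori-r} is $\th_\e(T)$, not $E_\e(T)$. The repair is immediate and is what the paper does: bound $\int_0^Te^{-\alpha t}\|\nabla R_\e\|_{L^2}^2\,\dt\leq\int_0^T\|\nabla R_\e\|_{L^2}^2\,\dt\leq C\e$ by Proposition \ref{prop-energy-basic} rather than by $E_\e(T)$, so that the interpolated term comes out as $C\e^2E_\e(T)+C\e^3$ with no circularity. (Alternatively, carrying $C\e E_\e^2(T)$ through is harmless, since Lemma \ref{prop-nablaR-6} produces such a term anyway.) With that substitution your argument closes.
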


\begin{proof}
In view  of the continuity equations of \eqref{CNS} and \eqref{CNS-error-new}, we write
 \ba
\fD_{t} (\log \rho_\e - \log \rho^{\rm a}_\e )   + R_\e \cdot \nabla \log \rho^{\rm a}_\e + \dive R_\e = - \e (\rho_\e^{\rm a})^{-1}  [(\eta \frak{w})_{y}]_\e,
\nn \ea
which together with \eqref{visflux-def} implies
 \ba
\fD_{t} (\log \rho_\e - \log \rho_\e^{\rm a} )
  = - R_\e \cdot \nabla \log \rho_\e^{\rm a}    - \nu^{-1} \bigl(\fF_\e+ \big(p(\rho_\e) - p(\rho_\e^{\rm a})\big)\bigr) - \e (\rho_\e^{\rm a})^{-1}  [(\eta \frak{w})_{y}]_\e.
\nn \ea
Multiplying the above equation by $6 (\log \rho_\e - \log \rho_\e^{\rm a} )^{5} $ and then applying Young's inequality gives
 \begin{align*}
& \fD_{t} (\log \rho_\e - \log \rho_\e^{\rm a} )^{6}  +  6 \nu^{-1} \big(p(\rho_\e) - p(\rho_\e^{\rm a})\big) (\log \rho_\e - \log \rho_\e^{\rm a} )^{5}  \\
&  =- 5 \left( R_\e\cdot \nabla \log \rho_\e^{\rm a}    +\nu^{-1} \fF_\e +  \e (\rho_\e^{\rm a})^{-1}  [(\eta \frak{w})_{y}]_\e\right) (\log \rho_\e - \log \rho_\e^{\rm a} )^{5}\\
& \leq C_{\de} \left(|R_\e \cdot \nabla \log \rho_\e^{\rm a}|^{6} +  |\fF_\e|^{6} +   | \e (\rho_\e^{\rm a})^{-1}  [(\eta \frak{w})_{y}]_\e |^{6}\right) + \de |\log \rho_\e - \log \rho_\e^{\rm a} |^{6}.
\end{align*}
Notice that for $t < T^\star_\e,$ there holds \eqref{upper-lower-rho-0} so that
 \ba
6 \nu^{-1}  \big(p(\rho_\e) - p(\rho_\e^{\rm a})\big) (\log \rho_\e - \log \rho_\e^{\rm a} )^{5}  \geq  2 C \de |\log \rho_\e - \log \rho_\e^{\rm a} |^{6} \geq \de |\log \rho_\e - \log \rho_\e^{\rm a} |^{6} + \de (\rho_\e - \rho_\e^{\rm a})^{6},
\nn \ea
for some small positive constant $\de$ and some $C\geq 2$. We thus obtain
 \ba\label{p-p-7}
\fD_{t} (\log \rho_\e - \log \rho_\e^{\rm a} )^{6}  + \de (\rho_\e - \rho_\e^{\rm a})^{6}  \leq C_{\de}\left( | R_\e \cdot \nabla \log \rho_\e^{\rm a}|^{6} +  |\fF_\e|^{6} +   | \e (\rho_\e^{\rm a})^{-1}  [(\eta \frak{w})_{y}]_\e |^{6}\right).
\ea
Let us introduce the particle trajectory $X_\e(t,x,y)$ of $u_{\e}$ via
\be\label{flow-def}
\frac{\rm d}{\dt} X_\e(t,x,y) = u_\e(t, X_\e(t,x,y)); \quad X_\e(0,x,y) = (x,y).
\ee
Then we deduce from \eqref{p-p-7} that
 \ba\label{p-p-8}
& \frac{\rm d}{\dt} \bigl(\log \rho_\e - \log \rho_\e^{\rm a} \bigr)^{6} (t, X_\e(t,x,y)) + \de \bigl(\rho_\e - \rho_\e^{\rm a}\bigr)^{6} (t, X_\e(t,x,y))  \\
& \leq C_{\de}\left( | R_\e \cdot \nabla \log \rho_\e^{\rm a}|^{6} +   |\fF_\e|^{6} + \e^6 |  (\rho_\e^{\rm a})^{-1}  [(\eta \frak{w})_{y}]_\e |^{6}\right) (t, X_\e(t,x,y)) .
\ea
Let $T\leq T_{\e}^{\star}$. Integrating the above inequality over $(0,T)\times\Omega$ yields
 \ba\label{p-p-11}
&\int_{\OO}\bigl(\log \rho_\e - \log \rho_\e^{\rm a} \bigr)^{6} (T, X_\e(T,x,y))\,\dx\,\dy + \delta\int_{0}^{T} \int_{\OO} (\rho_\e - \rho_\e^{\rm a})^{6} (t, X_\e(t,x,y))\dx\dy\dt \\
 &\leq C  \int_{0}^{T} \int_{\OO} \bigl(| R_\e \cdot \nabla \log \rho_\e^{\rm a}|^{6}(t, X_\e(t,x,y))+|\fF_\e|^{6}(t, X_\e(t,x,y))\bigr) \dx\dy\dt\\
&
+ C  \e^6  \int_{0}^{T} \int_{\OO}  |(\rho_\e^{\rm a})^{-1}  [(\eta \frak{w})_{y}]_\e |^{6} (t, X_\e(t,x,y))\,\dx\dy\dt.
\ea
Recall  \eqref{upper-lower-rho-0} and Lemma 3.2 of \cite{Hoff95} that there exists $C$ depending only on the positive lower bound and upper bound of $\rho_{\e}$ such that for any nonnegative smooth integrable function $g,$ there holds for all $t<T_{\e}^{\star}$,
 \ba
C^{-1} \int_{\OO} g(t,x,y) \dx\dy \leq \int_{\OO} g(t,X_\e(t,x,y)) \dx\dy \leq C\int_{\OO} g(t,x,y) \dx\dy.
\nn \ea
We thus deduce from \eqref{thm1-2} and \eqref{p-p-11} that
 \ba\label{p-p-12}
&\int_{0}^{T} \int_{\OO} (\rho_\e - \rho_\e^{\rm a})^{6} (t, x,y)\dx\dy\dt   \leq C  \int_{0}^{T} \int_{\OO} | R_\e \cdot \nabla \log \rho_\e^{\rm a}|^{6}(t, x,y) \dx\dy\dt\\
&\quad +  C \int_{0}^{T} \int_{\OO} |\fF_\e|^{6}(t, x,y)\dx\dy\dt+ C  \e^6 \int_{0}^{T} \int_{\OO}  | (\rho_\e^{\rm a})^{-1}  [(\eta \frak{w})_{y}]_\e |^{6} (t, x,y)\,\dx\dy\dt.
\ea
It follows from  Theorem \ref{thm1}, Proposition \ref{prop-energy-basic}  and  Lemma \ref{lem-GN} that
 \begin{align*}
& \int_{0}^{T} \int_{\OO} | R_\e \cdot \nabla \log \rho_\e^{\rm a}|^{6}(t, x,y) \,\dx\,\dy\,\dt\\
 & \leq C \int_{0}^{T}\int_{\OO} |R_\e|^{6} e^{-\a t}\,\dx\,\dy\,\dt\\
 & \leq C \int_{0}^{T}e^{-\a t} \big(\|R_\e\|_{L^{2}(\OO)}^{2} \|\nabla R_\e\|_{L^{2}(\OO)}^{4} + \|R_\e\|_{L^{2}(\OO)}^{4} \|\nabla R_\e\|_{L^{2}(\OO)}^{2} \big)\dt \\
 &  \leq C \sup_{0<t<T} \big(\|R_\e\|_{L^{2}(\OO)}^{2} \|\nabla R_\e\|_{L^{2}(\OO)}^{2} + \|R_\e\|_{L^{2}(\OO)}^{4}  \big) \int_{0}^{T} e^{-\a t} \|\nabla R_\e\|_{L^{2}(\OO)}^{2}\,\dt\\
 & \leq C \e^{2}(E_{\e}(T) + \e),
 \end{align*}
 and
  \begin{align*}
 \e^6\int_{0}^{T} \int_{\OO}  | (\rho_\e^{\rm a})^{-1}  [(\eta \frak{w})_{y}]_\e |^{6} (t, x,y)\,\dx\,\dy\,\dt
 & \leq \underline \eta^{-1}  \e^{6} \int_{0}^{T}\int_{\OO}\e^{-1} |(\eta \frak{w})_{y}|^{6}  \,\dx\,\dy\,\dt\\
 & \leq C \e^{5} \int_{0}^{T}e^{-\a t}  \dt  \leq C \e^{5}.
 \end{align*}
By inserting the above estimates into  \eqref{p-p-12}, we conclude the proof of \eqref{p-p-0}.
\end{proof}

\begin{lem}\label{prop-nablaR-6}
{\sl For $T\leq T^\star_\e,$ there holds
\ba\label{nablaR-6-0}
\int_{0}^{T} \int_{\OO} |\nabla R_\e|^{6} \,\dx\,\dy\,\dt  \leq C\e^{3} + C \e^{2} E_{\e}(T)  + C \e E^{2}_\e(T).
\ea
}
\end{lem}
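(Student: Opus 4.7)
The natural decomposition is $\|\nabla R_\e\|_{L^6(\OO)} \leq C(\|\dive R_\e\|_{L^6(\OO)}+\|\o_\e\|_{L^6(\OO)})$ coming from the $L^6$ boundedness of Riesz transforms on $\OO=\TT\times\R$. By the definition of the effective viscous flux \eqref{visflux-def} and the fact that $\rho_\e,\rho_\e^{\rm a}$ are uniformly bounded for $t<T^\star_\e$, one has $\|\dive R_\e\|_{L^6}\leq C(\|\fF_\e\|_{L^6}+\|\vr_\e\|_{L^6})$. Lemma \ref{lem-p-p} already reduces $\int_0^T\|\vr_\e\|_{L^6}^6\,dt$ to the right-hand side plus $\int_0^T\|\fF_\e\|_{L^6}^6\,dt$, so everything comes down to bounding $\int_0^T(\|\fF_\e\|_{L^6}^6+\|\o_\e\|_{L^6}^6)\,dt$, each via the modified Gagliardo-Nirenberg inequality of Lemma \ref{lem-GN} at $p=6$.

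\textbf{Handling $\o_\e$.} Write
\[
\|\o_\e\|_{L^6}^6 \leq C\bigl(\|\o_\e\|_{L^2}^2\|\nabla\o_\e\|_{L^2}^4+\|\o_\e\|_{L^2}^4\|\nabla\o_\e\|_{L^2}^2\bigr).
\]
Since $|\o_\e|\leq|\nabla R_\e|$, the basic estimate \eqref{energy-bas} gives $\int_0^T\|\o_\e\|_{L^2}^2\,dt\leq C\e$, while $\sup_t\|\o_\e\|_{L^2}^2$ and $\sup_t\|\nabla\o_\e\|_{L^2}^2$ are both controlled by $E_\e(T)$. Pulling out the sup of $\|\nabla\o_\e\|_{L^2}^4$ (resp. of $\|\o_\e\|_{L^2}^2\|\nabla\o_\e\|_{L^2}^2$) and keeping the remaining $\|\o_\e\|_{L^2}^2$ inside the time integral yields $\int_0^T\|\o_\e\|_{L^6}^6\,dt\leq C\e E_\e(T)^2$, as desired.

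\textbf{Handling $\fF_\e$.} For the $L^2$ norm, from \eqref{visflux-def} and boundedness of $\rho_\e,\rho_\e^{\rm a}$,
\[
\|\fF_\e\|_{L^2}\leq C\bigl(\|\nabla R_\e\|_{L^2}+\|\vr_\e\|_{L^2}\bigr).
\]
For $\|\nabla\fF_\e\|_{L^2}$, apply $\dive$ to the momentum equation in \eqref{CNS-error-new} to obtain an elliptic identity
\[
\Delta\fF_\e=\dive\bigl(\rho_\e\fD_tR_\e+\rho_\e R_\e\cdot\nabla u^{\rm a}_\e+\vr_\e(\d_tu^{\rm a}_\e+u^{\rm a}_\e\cdot\nabla u^{\rm a}_\e)-G_\e\bigr),
\]
which, together with Theorem \ref{thm1}, \eqref{def-G12-est} and $\|R_\e\|_{L^2}+\|\vr_\e\|_{L^2}\leq C\e^{1/2}$, gives
\[
\|\nabla\fF_\e\|_{L^2}^2\leq C\bigl(\|\fD_tR_\e\|_{L^2}^2+\e e^{-2\a t}\bigr),
\]
so that $\sup_t\|\nabla\fF_\e\|_{L^2}^2\leq C(E_\e(T)+\e)$ and $\int_0^T\|\nabla\fF_\e\|_{L^2}^2\,dt\leq C(E_\e(T)+\e)$. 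Now apply Lemma \ref{lem-GN} and split the $\|\fF_\e\|_{L^2}^2$ factor as $\|\nabla R_\e\|_{L^2}^2+\|\vr_\e\|_{L^2}^2$: for the piece containing $\|\nabla R_\e\|_{L^2}^2$ use $\int_0^T\|\nabla R_\e\|_{L^2}^2\,dt\leq C\e$ from \eqref{energy-bas}; for the piece containing $\|\vr_\e\|_{L^2}^2$ use $\sup_t\|\vr_\e\|_{L^2}^2\leq C\e$ and absorb $\|\nabla\fF_\e\|_{L^2}^2$ into either its sup or its time integral. Every term that appears produces at least one factor of $\e$, yielding $\int_0^T\|\fF_\e\|_{L^6}^6\,dt\leq C\e^3+C\e^2 E_\e(T)+C\e E_\e(T)^2$. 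The conclusion \eqref{nablaR-6-0} then follows by combining this bound with the analogous bound for $\o_\e$ and with Lemma \ref{lem-p-p} applied to $\|\vr_\e\|_{L^6}^6$.

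\textbf{Where the difficulty lies.} The routine Gagliardo-Nirenberg interpolation alone would only give $\int_0^T\|\fF_\e\|_{L^6}^6\,dt\lesssim E_\e(T)^3$, which is useless for closing the energy estimate, since we lack any time-integral control on $\|\vr_\e\|_{L^2}^2$ (no dissipation on the density). The delicate point is thus the careful bookkeeping that avoids the $E_\e(T)^3$ term: one has to decompose $\|\fF_\e\|_{L^2}^2$ into its $\|\nabla R_\e\|_{L^2}^2$ part (which is $L^1_t$-small of order $\e$ by the basic energy) and its $\|\vr_\e\|_{L^2}^2$ part (which is only $L^\infty_t$-small of order $\e$), and to ensure that in every term of the Gagliardo-Nirenberg expansion at least one of these $\e$-smallness factors can be used before the remaining factors are estimated by $E_\e(T)$.
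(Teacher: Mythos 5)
Your proposal is correct and follows essentially the same route as the paper: the same decomposition of $\nabla R_\e$ into vorticity, effective viscous flux, and pressure-difference parts, the same $L^2$ bounds \eqref{vflux-2} and \eqref{vflux-4} on $\fF_\e$ and $\nabla\fF_\e$, the same application of Lemma \ref{lem-GN} with the sup/time-integral bookkeeping to avoid the useless $E_\e^3(T)$ term, and the same use of Lemma \ref{lem-p-p} for the density contribution.
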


\begin{proof}
Recall that $ \o_\e=\d_yR^1_\e-\d_xR^2_\e,$ we  observe that
$$
\Delta R_\e = \nabla \dive R_\e + \nabla^{\perp} \o_\e, \quad \nabla^{\perp} \eqdefa \bp \d_{y} \\ - \d_{x}\ep,
$$
which together with \eqref{visflux-def}
implies
\ba
\nu  \Delta R_\e = \nabla \fF_\e + \nabla \big(p(\rho_\e)  - p(\rho_\e^{\rm a})  \big) + \nu \nabla^{\perp} \o_\e.
\nn \ea
Then we get, by using  standard elliptic estimates that
\ba\label{cubic-0}
\|\nabla R_\e\|_{L^{6}(\OO)} \leq C \big(\|\o_\e\|_{L^{6}(\OO)}  + \|\fF_\e\|_{L^{6}(\OO)}  +  \|p(\rho_\e)  - p(\rho_\e^{\rm a}) \|_{L^{6}(\OO)} \big).
\ea
Next let us estimate term by term on the right-hand side of above equation.

By applying Lemma \ref{lem-GN}, we obtain
\ba
\|\o_\e\|_{L^{6}(\OO)} \leq C  \Bigl(  \| \o_{\e} \|_{L^{2}(\OO)}^{\frac 13} \|\nabla \o_{\e} \|_{L^{2}(\OO)}^{\frac 23} +  \| \o_{\e} \|_{L^{2}(\OO)}^{\frac 23} \|\nabla \o_{\e} \|_{L^{2}(\OO)}^{\frac 13} \Bigr).
\nn \ea
Together with \eqref{thm2-2} and \eqref{energy-bas}, we infer
\ba\label{vorticity-new-0}
\int_{0}^{T}\int_{\OO} |\o_\e|^{6}\,\dx\,\dy\,\dt & \leq C  \int_{0}^{T}  \left(  \| \o_{\e} \|_{L^{2}(\OO)}^{ 2} \|\nabla \o_{\e} \|_{L^{2}(\OO)}^{ 4} +  \| \o_{\e} \|_{L^{2}(\OO)}^{ 4} \|\nabla \o_{\e} \|_{L^{2}(\OO)}^{ 2} \right)\,\dt \\
& \leq C \sup_{0 < t < T} \big(\|\nabla \o_{\e}\|_{L^{2}}^{4} + \|\o_\e\|_{L^{2}}^{2} \|\nabla \o_\e\|_{L^{2}}^{2} \big)\int_{0}^{T}\|\nabla R_\e\|_{L^{2}(\OO)}^{2}\,\dt \\
&  \leq C \e E^{2}_\e(T).
\ea

While in view of \eqref{energy-bas}  and \eqref{visflux-def}, we have
\be\label{vflux-2}
\| \fF_\e \|_{L^{2}(\OO)} \leq  \nu \| \nabla R_\e \|_{L^{2}(\OO)} + C   \| \rho_\e - \rho_\e^{\rm a} \|_{L^{2}(\OO)} \leq C \big(\| \nabla R_\e \|_{L^{2}(\OO)} +  \e^{\frac 12} \big).
\ee
By \eqref{visflux-def}  and  the  $R_\e$  equation of \eqref{CNS-error-new}, we write
\ba%\label{vflux-3}
\Delta \fF_\e & =  \dive \big(  \mu  \Delta  R_\e  + \mu' \nabla \dive R_\e  -  \nabla \big(p(\rho_\e )  - p(\rho_\e ^{\rm a})  \big) \big) \\
& = \dive \left(\rho_\e  \fD_{t}R_\e + \rho_\e  R_\e\cdot \nabla u^{\rm a}_\e +   \vr_\e (\d_{t} u^{\rm a}_\e + u^{\rm a}_\e \cdot \nabla u_\e^{\rm a}) -  G_\e\right),\nonumber
\ea
from which, we infer
\ba\label{vflux-4}
 \|\nabla \fF_\e\|_{L^{2}(\OO)}  
 & \leq C\big( \|  \fD_{t}R_\e  \|_{L^{2}(\OO)}   +  \|R_\e\| _{L^{2}(\OO)}  \|\nabla u_\e^{\rm a}\|_{L^{\infty}(\OO)} \\
  &\qquad+   \|\vr_\e\|_{L^{2}(\OO)}   \bigl\|\d_{t} u_\e^{\rm a} + u_\e^{\rm a} \cdot \nabla u_\e^{\rm a}\bigr\|_{L^{\infty}(\OO)}   +  \|G_\e\|_{L^{2}(\OO)}\big)   \\
 & \leq C \|  \fD_{t}R_\e  \|_{L^{2}(\OO)}  + C e^{-\a t} \bigl( \|R_\e\| _{L^{2}(\OO)}   + \|\vr_\e\|_{L^{2}(\OO)}   + \e^{\frac 12} \bigr)\\
 & \leq C \bigl(\|  \fD_{t}R_\e  \|_{L^{2}(\OO)} +  \e^{\frac 12} e^{-\a t}\bigr).
\ea

Thanks to \eqref{vflux-2} and \eqref{vflux-4}, we get, by applying  Lemma \ref{lem-GN}, that
\ba\label{vflux-0}
\int_{0}^{T}\int_{\OO} |\fF_\e|^{6}\,\dx\,\dy\,\dt 
& \leq C  \int_{0}^{T}\big(\|\fF_\e\|_{L^{2}(\OO)}^{2} \|\nabla \fF_\e\|_{L^{2}(\OO)}^{4} + \|\fF_\e\|_{L^{2}(\OO)}^{4} \|\nabla \fF_\e\|_{L^{2}(\OO)}^{2} \big)\,\dt\\
& \leq C  \int_{0}^{T} \big(\| \nabla R_\e \|_{L^{2}(\OO)}^{2} +  \e \big) \big(\| \fD_{t} R_\e  \|_{L^{2}(\OO)}^{2} +  \e e^{-\a t}\big)\\
 &\qquad\qquad\times\big(\| \nabla R_\e \|_{L^{2}(\OO)}^{2} +  \e+ \| \fD_{t} R_\e  \|_{L^{2}(\OO)}^{2} +  \e e^{-\a t}\big) \,\dt\\
&\leq C  \e^{3} + C \e  E^{2}_\e(T).
\ea

By inserting the estimates \eqref{p-p-0}, \eqref{vorticity-new-0} and \eqref{vflux-0} into \eqref{cubic-0}, we achieve \eqref{nablaR-6-0}.
 This completes the proof of Lemma \ref{prop-nablaR-6}.
\end{proof}

Now we are ready to give the following lemma in order to close the energy estimates:
\begin{lem}\label{prop-nablaR-3-4}
{\sl For $T\leq T^\star_\e,$ there holds
\ba\label{nablaR-4-0}
&\int_{0}^{T} \int_{\OO} |\nabla R_\e|^{4} \,\dx\,\dy\,\dt  \leq C\e^{2} + C \e E_\e(T),\\
&\int_{0}^{T} \int_{\OO} |\nabla R_\e|^{3}\,\dx\,\dy\,\dt   \leq C\e^{\frac 32} + C \e E^{\frac 12}_{\e}(T).
\ea
}
\end{lem}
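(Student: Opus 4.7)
The plan is to obtain both estimates by straightforward Hölder interpolation in space-time between the basic energy bound from Proposition \ref{prop-energy-basic} and the sixth-power bound from Lemma \ref{prop-nablaR-6}, followed by Young's inequality to split mixed terms. All the analytic work has already been done: Proposition \ref{prop-energy-basic} provides
\[
\int_0^T\!\!\int_\OO |\nabla R_\e|^2\,\dx\,\dy\,\dt \leq C\e,
\]
and Lemma \ref{prop-nablaR-6} gives
\[
\int_0^T\!\!\int_\OO |\nabla R_\e|^6\,\dx\,\dy\,\dt \leq C\e^3 + C\e^2 E_\e(T) + C\e E_\e^2(T).
\]

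For the $L^4$ estimate, I would use Hölder's inequality in space-time with exponents $2$ and $2$ against the decomposition $4=\tfrac12\cdot 2+\tfrac12\cdot 6$, i.e.
\[
\int_0^T\!\!\int_\OO |\nabla R_\e|^4\,\dx\,\dy\,\dt \leq \Bigl(\int_0^T\!\!\int_\OO |\nabla R_\e|^2\Bigr)^{\!\frac12}\Bigl(\int_0^T\!\!\int_\OO |\nabla R_\e|^6\Bigr)^{\!\frac12}.
\]
Inserting the two bounds above yields
\[
\int_0^T\!\!\int_\OO |\nabla R_\e|^4\,\dx\,\dy\,\dt \leq C\e^{\frac12}\bigl(\e^{\frac32}+\e E_\e^{\frac12}(T)+\e^{\frac12}E_\e(T)\bigr)=C\e^2+C\e^{\frac32}E_\e^{\frac12}(T)+C\e E_\e(T).
\]
The middle term is absorbed by Young's inequality: $\e^{\frac32}E_\e^{\frac12}(T)=\e\cdot\e^{\frac12}E_\e^{\frac12}(T)\leq \tfrac12\e^2+\tfrac12\e E_\e(T)$, which gives the first claim.

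For the $L^3$ estimate, the same idea with exponents $\tfrac43$ and $4$ against the decomposition $3=\tfrac34\cdot 2+\tfrac14\cdot 6$ yields
\[
\int_0^T\!\!\int_\OO |\nabla R_\e|^3\,\dx\,\dy\,\dt \leq \Bigl(\int_0^T\!\!\int_\OO |\nabla R_\e|^2\Bigr)^{\!\frac34}\Bigl(\int_0^T\!\!\int_\OO |\nabla R_\e|^6\Bigr)^{\!\frac14}.
\]
Inserting the bounds produces the three terms $C\e^{\frac32}$, $C\e^{\frac54}E_\e^{\frac14}(T)$ and $C\e E_\e^{\frac12}(T)$; the middle one is handled by writing $\e^{\frac54}E_\e^{\frac14}(T)=\e^{\frac34}\cdot \e^{\frac12}E_\e^{\frac14}(T)$ and applying Young with exponents $2,2$ to bound it by $\tfrac12\e^{\frac32}+\tfrac12\e E_\e^{\frac12}(T)$. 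This yields the second claim.

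There is no real obstacle: the argument is purely an interpolation combined with Young's inequality, and the exponents in the conclusion are exactly what one reads off by balancing $\e$-powers against $E_\e$-powers. The only mild care needed is in selecting the correct Hölder split so that the resulting products involve only $\e$, $E_\e(T)$ and $E_\e^{\frac12}(T)$, which are precisely the quantities that will later be absorbed into the left-hand side of the bootstrap inequality \eqref{E(T)-1} for small $\e$.
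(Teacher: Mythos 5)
Your proof is correct and follows essentially the same route as the paper: both apply space–time Hölder interpolation of the $L^4$ (resp.\ $L^3$) norm of $\nabla R_\e$ between the $L^2$ bound of Proposition \ref{prop-energy-basic} and the $L^6$ bound of Lemma \ref{prop-nablaR-6}, with the same exponent splits, and the cross terms are absorbed by Young's inequality exactly as in the paper. No gaps.
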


\begin{proof}
Indeed it follows from Proposition \ref{prop-energy-basic}, Lemma \ref{prop-nablaR-6} and H\"older's inequality that
\ba
&\|\nabla R_\e\|_{L^{4}((0,T)\times \OO)}^{4} \leq \|\nabla R_\e\|_{L^{2}((0,T)\times \OO)} \|\nabla R_\e\|_{L^{6}((0,T)\times \OO)}^{3} \leq C \e^{\frac 12} \big(\e^{\frac 32} + \e^{\frac 12} E_{\e}(T)\big),\\
&\|\nabla R_\e\|_{L^{3}((0,T)\times \OO)}^{3} \leq \|\nabla R_\e\|_{L^{2}((0,T)\times \OO)}^{\frac 32} \|\nabla R_\e\|_{L^{6}((0,T)\times \OO)}^{\frac 32} \leq C \e^{\frac 34} \big(\e^{\frac 34} + \e^{\frac 14} E^{\frac 12}_{\e}(T)\big).
\nonumber
\ea
Then \eqref{nablaR-4-0} follow immediately.
\end{proof}

Next let us turn to the {estimate of $\th_\e(T).$}

\begin{lem}\label{prop-thT}
{\sl For $T\leq T^\star_\e,$ there holds
\ba\label{thT-0}
\th_\e^{2}(T) \leq  C \e + C E_\e(T).
\ea
}
\end{lem}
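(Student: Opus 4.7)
The plan is to refine the trajectory argument of Lemma \ref{lem-p-p} from an $L^6$ setting to an $L^\infty$ setting, exploiting the Gr\"onwall damping produced by the monotonicity of $p$. With $g \eqdefa \log\rho_\e - \log\rho^{\rm a}_\e$, the identity derived at the beginning of the proof of Lemma \ref{lem-p-p} reads $\fD_t g + \nu^{-1}(p(\rho_\e)-p(\rho^{\rm a}_\e)) = -R_\e\cdot\nabla\log\rho^{\rm a}_\e - \nu^{-1}\fF_\e - \e(\rho^{\rm a}_\e)^{-1}[(\eta\frak{w})_y]_\e$. I would multiply by $g$ and use the lower bound $(p(\rho_\e)-p(\rho^{\rm a}_\e))g \geq cg^2$, which follows from \eqref{upper-lower-rho-0} together with strict positivity of $p'$ on the compact interval $[\underline\eta/2,\,\bar\eta+\underline\eta/2]$, to obtain $\fD_t g^2 + cg^2 \leq C|\mathrm{RHS}|^2$. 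Integrating along the flow $X_\e$ from \eqref{flow-def} with $g|_{t=0}=0$, taking the supremum over $(x,y)$ so that the Lagrangian sup becomes the Eulerian $L^\infty(\Omega)$ norm by surjectivity of $X_\e(t,\cdot,\cdot)$, and using $|\varrho_\e| \sim |g|$, I arrive at
\begin{equation*}
\theta_\e^2(T) \leq C\sup_{0<t<T}\int_0^t e^{-c(t-s)}\bigl(\|\fF_\e(s)\|_{L^\infty(\Omega)}^2 + e^{-2\alpha s}\|R_\e(s)\|_{L^\infty(\Omega)}^2\bigr)\,ds + C\e^2.
\end{equation*}

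The main task is then to produce pointwise-in-$t$ $L^\infty_x$ bounds for $\fF_\e$ and $R_\e$. Because $H^1(\Omega)\not\hookrightarrow L^\infty(\Omega)$ in two dimensions, I would use the embedding $W^{1,4}(\Omega)\hookrightarrow L^\infty(\Omega)$, so that $\|f\|_{L^\infty}^2 \leq C(\|f\|_{L^4}^2+\|\nabla f\|_{L^4}^2)$. Applying the refined Gagliardo-Nirenberg inequality of Lemma \ref{lem-GN} with $p=4$ to \eqref{vflux-2}--\eqref{vflux-4}, together with the definition of $E_\e(T)$, gives the pointwise bound $\|\fF_\e(t)\|_{L^4}^2 \leq C(\e+E_\e(T))$. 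For $\|\nabla\fF_\e\|_{L^4}$ I would reuse the elliptic identity $\Delta\fF_\e = \dive F$ underlying \eqref{vflux-4} but in $L^4$, giving $\|\nabla\fF_\e\|_{L^4} \leq C\|F\|_{L^4}$ with $F \eqdefa \rho_\e\fD_tR_\e + \rho_\e R_\e\cdot\nabla u^{\rm a}_\e + \varrho_\e(\d_t u^{\rm a}_\e + u^{\rm a}_\e\cdot\nabla u^{\rm a}_\e) - G_\e$. A parallel Calder\'on-Zygmund decomposition $\|\nabla R_\e\|_{L^4} \leq C(\|\dive R_\e\|_{L^4}+\|\omega_\e\|_{L^4})$, combined with $\nu\dive R_\e = \fF_\e + p(\rho_\e)-p(\rho^{\rm a}_\e)$, controls $\|R_\e\|_{L^\infty}^2$ by the same quantities modulo an extra term $\|\varrho_\e\|_{L^4}^2 \leq \|\varrho_\e\|_{L^\infty}\|\varrho_\e\|_{L^2} \leq C\theta_\e(T)\e^{1/2}$ that will need to be absorbed.

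The crucial quantitative observation is that the Gr\"onwall kernel $e^{-c(t-s)}$ converts any pointwise-in-$t$ estimate into a $T$-independent integrated estimate: for instance, $\int_0^t e^{-c(t-s)}\|\fF_\e(s)\|_{L^4}^2\,ds \leq c^{-1}\sup_s\|\fF_\e\|_{L^4}^2 \leq C(\e+E_\e)$. The genuinely delicate contribution sits inside $\|F\|_{L^4}^2$, namely $\|\fD_tR_\e\|_{L^4}^2$; Lemma \ref{lem-GN} gives $\|\fD_tR_\e\|_{L^4}^2 \lesssim \|\fD_tR_\e\|_{L^2}\|\nabla\fD_tR_\e\|_{L^2} + \|\fD_tR_\e\|_{L^2}^{3/2}\|\nabla\fD_tR_\e\|_{L^2}^{1/2}$, and convolving against $e^{-c(t-s)}$ together with Cauchy-Schwarz in time reduces matters to the two ingredients $\sup_s\|\fD_tR_\e\|_{L^2}^2 \leq E_\e$ and $\int_0^T\|\nabla\fD_tR_\e\|_{L^2}^2\,ds \leq E_\e$ that are built into $E_\e(T)$, contributing $O(E_\e)$. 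Putting everything together yields an estimate of the form $\theta_\e^2(T) \leq C(\e+E_\e(T)) + C\theta_\e(T)\e^{1/2}$, and Young's inequality $C\theta_\e\e^{1/2} \leq \tfrac12\theta_\e^2 + C\e$ absorbs the last term into the left-hand side. The main obstacle is bookkeeping: each interpolation inequality contributes a small piece to the coefficient of the self-referential $\theta_\e^2$ term on the right, and these must be tracked carefully so that their sum is strictly below $1$, permitting absorption.
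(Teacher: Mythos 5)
Your proposal is correct, and it reaches \eqref{thT-0} by a genuinely different mechanism than the paper, even though the skeleton (Lagrangian integration of $\log\rho_\e-\log\rho^{\rm a}_\e$, reduction to $L^\infty$ control of $\fF_\e$ and $R_\e$, elliptic estimates for $\nabla\fF_\e$, absorption of a self-referential term) is shared. The paper works with the \emph{sixth} power $(\log\rho_\e-\log\rho^{\rm a}_\e)^6$, discards the damping term after \eqref{p-p-8}, and controls $\int_0^T\|\fF_\e\|_{L^\infty}^6\,\dt$ and $\int_0^T e^{-\a t}\|R_\e\|_{L^\infty}^6\,\dt$ through the embedding $\|f\|_{L^\infty}\lesssim\|f\|_{L^6}+\|\nabla f\|_{L^3}$ and the time-integrated estimates \eqref{vflux-0} and \eqref{nablaR-6-0}; the $T$-uniformity there comes from the dissipation built into $E_\e(T)$ and the $e^{-\a t}$ weights, and the self-referential term appears as $C\e^2\th_\e^2(T)$ inside a sixth-power inequality. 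You instead work at the level of squares, \emph{retain} the Gr\"onwall damping $cg^2$ furnished by the monotonicity of $p$ (the key inequality $(p(\rho_\e)-p(\rho^{\rm a}_\e))g\geq cg^2$ is valid on the compact range \eqref{upper-lower-rho-0}), and let the kernel $e^{-c(t-s)}$ convert \emph{pointwise-in-time} $W^{1,4}\hookrightarrow L^\infty$ bounds into $T$-uniform ones; the only genuinely integrated ingredient you need is $\int_0^T\|\nabla\fD_tR_\e\|_{L^2}^2$, handled by Cauchy--Schwarz against the kernel. This buys you independence from Lemma \ref{prop-nablaR-6} and from \eqref{vflux-0} for the purposes of this lemma, at the cost of an extra Calder\'on--Zygmund step in $L^4$ for $\nabla\fF_\e$ and $\nabla R_\e$. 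One remark on your final worry: the bookkeeping of the self-referential contributions is less delicate than you fear, since every such term ($\|\vr_\e\|_{L^4}^2\leq C\e^{1/2}\th_\e$ inside $\|F\|_{L^4}^2$ and inside $\|\dive R_\e\|_{L^4}^2$) enters \emph{linearly} in $\th_\e$ with an $\e^{1/2}$ prefactor, so Young's inequality $C\e^{1/2}\th_\e\leq\de\th_\e^2+C_\de\e$ lets you make the coefficient of $\th_\e^2$ arbitrarily small with no summation constraint to track.
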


\begin{proof}
 Notice that for all $T\leq T^\star_{\e},$ $0< \underline \eta/2 \leq \rho_\e , \rho_\e ^{\rm a} \leq 3\bar \eta/2,$ we deduce from \eqref{p-p-8} that
\begin{align*}
 \vr_\e^{6} (T, X_\e(T,x,y)) & \leq C\bigl(\log \rho_\e  - \log \rho_\e ^{\rm a} \bigr)^{6} (T, X_\e(T,x,y))\\
 & \leq  C\int_{0}^{T} \Big( | R_\e \cdot \nabla \log \rho_\e ^{\rm a} |^{6} +  |\fF_\e|^{6}  +  |\e [(\eta \frak{w})_{y}]_\e |^{6}\Big) (t, X_\e(t,x,y))\,\dt,
\end{align*}
from which, we infer
\ba\label{thT-7}
 \| \vr_\e (T)\|_{L^{\infty}(\OO)}^{6} & \leq  C\int_{0}^{T} \big( \| R_\e \|_{L^{\infty}(\OO)}^{6} \|(\rho_\e ^{\rm a})^{-1} \nabla \rho_\e ^{\rm a} \|_{L^{\infty}(\OO)}^{6} +  \|\fF_\e\|_{L^{\infty}(\OO)}^{6} +  \e^{6} \| (\eta \frak{w})_{y} \|_{L^{\infty}(\OO)}^{6} \big)\dt\\
 & \leq  C\int_{0}^{T } \big(e^{-\a t}\bigl( \| R_\e \|_{L^{\infty}(\OO)}^{6}   +  \e^{6}\bigr)
 +  \|\fF_\e\|_{L^{\infty}(\OO)}^{6}  \big)\,\dt.
\ea

It follows from Sobolev embedding theorem, Proposition \ref{prop-energy-basic}  and Lemma \ref{prop-nablaR-6} that
 \ba\label{EstR}
\int_{0}^{T} e^{-\a t} \|R_\e\|_{L^{\infty}(\OO)}^{6}\,\dt  & \leq C\int_{0}^{T} e^{-\a t} \bigl(\|R_\e\|_{L^{2}(\OO)}^{6}
 + \|\nabla R_\e\|_{L^{6}(\OO)}^{6}\bigr)\,\dt\\
& \leq C\e^{3} + C  \e E^{2}_\e(T).
\ea

Concerning the term related to the effective viscous flux, we get, by using Sobolev embedding inequality, that
\be\label{F-embed-Linfty}
\|\fF_\e\|_{L^{\infty}(\OO)} \leq C \big( \|\fF_\e\|_{L^{6}(\OO)} + \|\nabla \fF_\e\|_{L^{3}(\OO)}\big).
\ee
The estimate related to $\|\fF_\e\|_{L^{6}(\OO)}$ is given in \eqref{vflux-0}.  While for $\|\nabla \fF_\e\|_{L^{3}(\OO)}$, it follows from a similar derivation of \eqref{vflux-4} that
\begin{align*}
 \|\nabla \fF_\e\|_{L^{3}(\OO)}  & \leq C\Bigl( \|  \fD_{t}R_\e  \|_{L^{3}}   +  \|R_\e\| _{L^{3}}
   \|\nabla u^{\rm a}_\e\|_{L^{\infty}}  +   \|\vr_\e \|_{L^{3}}   \bigl\|\d_{t} u^{\rm a}_\e + u_\e^{\rm a}\cdot \nabla u_\e^{\rm a}\bigr\|_{L^{\infty}}  +  \|G_\e\|_{L^{3}} \Bigr)  \\
 & \leq C \|  \fD_{t}R_\e  \|_{L^{3}(\OO)}  + Ce^{-\a t} \bigl(\|R_\e\| _{L^{3}(\OO)}   +  \|\vr_\e \|_{L^{3}(\OO)} + \e^{\frac 23}\bigr),
\end{align*}
{where we used the estimate of $G_{\e}$ in \eqref{def-G12-est}.} By using the Gagliardo-Nirenberg interpolation inequality and \eqref{thm2-2}, we find
\ba
\int_{0}^{T} \|  \fD_{t}R_\e  \|_{L^{3}(\OO)}^{6}\,\dt &  \leq  C \int_{0}^{T}\|  \fD_{t}R_\e  \|_{L^{2}(\OO)}^{4}  \| \fD_{t}R_\e  \|_{H^{1}(\OO)}^{2}\,\dt \\
& \leq C \sup_{0 < t < T} \|  \fD_{t}R_\e(t)  \|_{L^{2}}^{4}   \int_{0}^{T} \| \fD_{t}R_\e  \|_{H^{1}(\OO)}^{2}\,\dt  \leq C E^{3}_{\e}(T).
\nonumber\ea
Similarly, we have
\begin{align*}
\int_{0}^{T}   e^{-\a t}  \|R_\e\| _{L^{3}(\OO)}^{6}\,\dt  & \leq C \int_{0}^{T}   e^{-\a t}  \|R_\e\| _{L^{2}(\OO)}^{4}  \| R_\e\|_{H^{1}(\OO)}^{2}\,\dt \\
 & \leq C \sup_{0 < t < T} \|  R_\e(t)  \|_{L^{2}(\OO)}^{4}   \int_{0}^{T} e^{-\a t}  \|  R_\e  \|_{H^{1}(\OO)}^{2}\,\dt \leq C\e^{3}.
\end{align*}
Finally, in view of \eqref{a-priori-r}, one has
\ba
\|\vr_\e (t)\|_{L^{3}(\OO)} \leq \|\vr_\e (t)\|_{L^{2}(\OO)}^{\frac 23}\|\vr_\e (t)\|_{L^{\infty}(\OO)}^{\frac 13} \leq C \e^{\frac 13} \th_\e^{\frac 13}(t),
\nn \ea
so that
\ba
\int_{0}^{T}   e^{-\a t}  \|\vr_\e (t)\| _{L^{3}(\OO)}^{6}\,\dt  \leq  C \e^{2}\int_{0}^{T}   e^{-\a t}   \th_{\e}^{2}(t)\,\dt    \leq  C \e^{2} \th_\e^{2}(T).
\nn \ea
As a result,
\ba
\int_{0}^{T} \|\nabla \fF_\e(t)\|_{L^{3}(\OO)}^6\,\dt   \leq  C \big( E_\e^{3}(T)+\e^{2} \th_\e^{2}(T)+\e^{3}\big),
\nn \ea
which together with \eqref{vflux-0} and \eqref{F-embed-Linfty} ensures that
 \ba\label{thT-9}
\int_{0}^{T}\|\fF_\e(t)\|_{L^{\infty}(\OO)}^6\,\dt  & \leq   C\int_{0}^{T}\bigl(\|\fF_\e\|_{L^{6}(\OO)}^6 + \|\nabla \fF_\e\|_{L^{3}(\OO)}^6\bigr)\,\dt
\\
&\leq  C \big( E_\e^{3}(T)+\e^{2} \th_\e^{2}(T)+\e^{3}\big).
\ea

By inserting the estimates \eqref{EstR} and \eqref{thT-9} into \eqref{thT-7}, we arrive at
 \ba
\th_\e^{6}(T)\leq  C \big( E_\e^{3}(T)+\e^{2} \th_\e^{2}(T)+\e^{3}\big) \leq C  \e^{3} + C E_\e^{3}(T) + \frac{\th_\e^{6}(T)}{2},
\nn \ea
which leads to \eqref{thT-0}.
\end{proof}

Now we are in a position to complete the proof of Theorem \ref{thm2}.

\begin{proof}[Proof of Theorem \ref{thm2}]
We first deduce from \eqref{E(T)-1} and \eqref{nablaR-4-0} that
\ba
 E_\e(T) \leq C \e +  C \e^{\frac 12} E_\e(T)\quad \mbox{for}\ T\leq T_\e^\star,
\nn \ea
which implies  that for $\e \leq \e_{1}$ small such that that $C\e_{1}^{\frac12} = \frac{1}{2},$ there holds
\be\label{E(T)-f}
 E_\e(T) \leq C \e \quad \mbox{for}\ T\leq T_\e^\star.
\ee
This together with Lemma \ref{prop-thT} ensures that
\be\label{E-th-T-1}
  \th_\e^{2}(T) \leq  C \e \quad \mbox{for}\ T\leq T_\e^\star.
\ee
In particular, if we take $\e \leq \e_{2}$ to be small that $C\e_{2} = \frac{1}{6} \min\left\{1,\underline\eta\right\}^2,$
\eqref{E-th-T-1} contradicts with the definition of $T_\e^\star$ given by \eqref{a-priori-r}. As a consequence,
we deduce that $T_\e^\star=T_\e^\ast$.

 It remains  to show that the life-span  $T_\e^\ast= \infty$. Indeed we have shown that the estimate \eqref{upper-lower-rho-0}  holds
 for all $t < T_\e^\ast$ and $\e\leq \e_0\eqdefa \min\{ \e_{1}, \e_{2}\}$, then regularity criteria for smooth solutions of compressible Navier-Stokes equations
 (see for instance \cite{HLX11, SWZ11, SZ11}) ensures that $T_\e^\ast= \infty.$
 This completes the proof of Theorem \ref{thm2}.

 \end{proof}

\section{1D compressible Navier-Stokes equations with a parameter}\label{sec:1dNS}

 In this section, we investigate the 1D compressible Navier-Stokes equations \eqref{CNS-limit} with a parameter $y.$
 We assume that $(\eta,w)$ is a global smooth solution of \eqref{CNS-limit} determined at the beginning of Section \ref{sec2}.
For simplicity,  we shall always neglect  $y$ variable and
 denote $D_t \eqdefa \d_t +  w  \d_x$ to be the material derivative in the rest of this section.

\subsection{Conservation of mass, momentum, and energy}

Integrating \eqref{CNS-limit} in $x$ over $\TT$  leads to the conservations of the mass and of the momentum:
\beq\label{csv-m-m}
\frac{\rm d}{\dt}\int_{\TT}  \eta (t,x,y)\,\dx = 0,\andf \frac{\rm d}{\dt}\int_{\TT} ( \eta  w )(t,x,y)\,\dx = 0,
\eeq
which together with \eqref{ini-1} ensures that
\ba\label{csv-m-m-1}
\int_{\TT}  \eta (t,x,y)\,\dx = 1,\quad \int_{\TT} ( \eta  w )(t,x,y)\,\dx = 0, \quad \forall\, t\in\R^{+}, \  y\in \R.
\ea

\medskip

Recall the conservation of energy:
\begin{lem}\label{prop-basic-energy}
{\sl  There holds
\ba\label{csv-eng}
\frac{\rm d}{\dt}&E_0(t) + \nu\int_{\TT}  w _x^2\,\dx = 0 \with E_0(t) \eqdefa  \int_{\TT} \bigl(\frac{1}{2} \eta  w ^2 + P( \eta )\bigr) \,\dx, \andf\\
& P(\eta) \eqdefa  \frac{a}{\g-1} \eta^\g \quad \mbox{if $\g >1$} \andf  P(\eta) \eqdefa a (\eta \log \eta + 1) \quad  \mbox{if $\g = 1$}.
\ea
}
\end{lem}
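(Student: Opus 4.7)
The plan is to derive the energy identity by a standard testing argument on the momentum equation, combined with the renormalized form of the continuity equation. First I would multiply the momentum equation in \eqref{CNS-limit} by $w$ and integrate over $\TT$, obtaining
$$\int_{\TT} \eta w\,D_t w\,\dx \;-\;\nu\int_{\TT} w\,\d_x^2 w\,\dx \;+\;\int_{\TT} w\,\d_x p(\eta)\,\dx \;=\;0.$$
The viscous term yields $\nu\int_{\TT} w_x^2\,\dx$ after one integration by parts, with no boundary contribution on $\TT$.

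Next, for the kinetic part I would use the continuity equation to rewrite the material-derivative term as a time derivative. Computing
$$\frac{\rm d}{\dt}\int_{\TT}\tfrac{1}{2}\eta w^2\,\dx \;=\;\int_{\TT}\tfrac{1}{2}(\d_t\eta)\,w^2\,\dx \;+\;\int_{\TT}\eta w\,\d_t w\,\dx,$$
and substituting $\d_t\eta=-\d_x(\eta w)$ followed by an integration by parts turns the first integral into $\int_{\TT}\eta w^2\d_x w\,\dx$, so the two pieces combine into $\int_{\TT}\eta w\,D_t w\,\dx$. Hence $\int_{\TT}\eta w\,D_t w\,\dx=\frac{\rm d}{\dt}\int_{\TT}\tfrac12\eta w^2\,\dx$.

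The pressure term is the heart of the argument. I would appeal to the renormalized continuity equation: for any smooth $f$,
$$\d_t f(\eta)+\d_x(f(\eta)w)+(\eta f'(\eta)-f(\eta))\,\d_x w=0.$$
Choosing $f=P$ and invoking the defining identity $\eta P'(\eta)-P(\eta)=p(\eta)$—which one checks directly from \eqref{pre-pot} for $\g>1$, and also for $\g=1$ where $P(\eta)=a(\eta\log\eta+1)$ (the additive constant $+1$ is harmless because $\int_{\TT}1\,\dx=1$ is constant in time)—integration over $\TT$ gives
$$\frac{\rm d}{\dt}\int_{\TT}P(\eta)\,\dx+\int_{\TT}p(\eta)\,\d_x w\,\dx=0.$$
Since $\int_{\TT} w\,\d_x p(\eta)\,\dx=-\int_{\TT} p(\eta)\,\d_x w\,\dx=\frac{\rm d}{\dt}\int_{\TT} P(\eta)\,\dx$, combining the three contributions yields the claim.

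There is no real obstacle: the only point to be careful about is the algebraic identity $\eta P'(\eta)-P(\eta)=p(\eta)$ and its degenerate $\g=1$ version. Regularity is not an issue, since by \eqref{sl-1d-0} the pair $(\eta,w)$ has enough smoothness (with $\eta$ uniformly bounded below by a positive constant) for all the integrations by parts above to be justified pointwise in $y$.
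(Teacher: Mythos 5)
Your argument is correct and is the standard derivation; the paper offers no proof of this lemma at all (it is introduced with ``Recall the conservation of energy''), so your testing-by-$w$ computation combined with the renormalized continuity equation is precisely the argument the authors expect the reader to supply. Two small points are worth recording. First, you have (rightly) taken the pressure term in the momentum equation of \eqref{CNS-limit} with the sign $+\d_x p(\eta)$, consistent with \eqref{CNS} and with every subsequent use of that equation in the paper (e.g.\ \eqref{tI-mm-1}); the $-\d_x p(\eta)$ printed in \eqref{CNS-limit} is a typo, and with that literal sign the identity \eqref{csv-eng} would come out with the wrong sign on the potential energy. Second, for $\g=1$ the identity $\eta P'(\eta)-P(\eta)=p(\eta)$ is not exact: with $P(\eta)=a(\eta\log\eta+1)$ one computes $\eta P'(\eta)-P(\eta)=a(\eta-1)=p(\eta)-a$, so the renormalized equation produces $(p(\eta)-a)\,\d_x w$ rather than $p(\eta)\,\d_x w$; the discrepancy is the constant $-a$, which vanishes upon integration over $\TT$ since $\int_{\TT}\d_x w\,\dx=0$, and hence your conclusion $\frac{\rm d}{\dt}\int_{\TT}P(\eta)\,\dx=-\int_{\TT}p(\eta)\,\d_x w\,\dx$ still holds. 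Your parenthetical about the additive constant being harmless is the right observation — it just bears on the algebraic identity for $\eta P'-P$ rather than only on $\frac{\rm d}{\dt}\int_{\TT}P(\eta)\,\dx$.
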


It follows from  \eqref{ass-ini-1} that
\be\label{ini-2}
\begin{split}
 \frak{E}_{00}(y) & \eqdefa  \int_{\TT} \bigl(\frac{1}{2} \varsigma_0 w _0^2  + (\varsigma_0-1)^2\bigr)(x,y) \,\dx  \in  (L^1 \cap  L^{\infty})(\R)\with \bar{\frak{E}}_{00} \eqdefa \sup_{y\in \R} \frak{E}_{00}(y) ,\\
 E_{00}(y) &\eqdefa  \int_{\TT} \bigl(\frac{1}{2} \varsigma_0 w _0^2 + P(\varsigma_0)\bigr)(x,y) \,\dx  \in   L^{\infty}(\R)
 \with \bar E_{00} \eqdefa \sup_{y\in \R} E_{00}(y).
 \end{split}
\ee
Then for all $t\in \R^{+},  \ y\in \R$, there holds
\ba\label{csv-eng-1}
E_0(t) + \nu \int_0^t \int_{\TT}  w _x^2\,\dx\,\dt' \leq  E_{00}(y).
\ea

\subsection{Upper bound for the density function}

We observe from \eqref{sl-1d-0} that the density function $\eta$ admits an upper bound depending on time.
 We shall derive here  a time uniform upper bound for $ \eta.$
 We  first recall from  \cite{SZ02} that for $ u \in L^1(\TT)$ and for all $x\in\TT$
\be\label{I-def}
I(u)(x) \eqdefa \int_0^x u(x')\,\dx', \quad \tilde I(u) \eqdefa I(u) - \langle I(u) \rangle \andf
  \langle u \rangle \eqdefa \int_{\TT} u (x)\,\dx.
\ee
It  is easy to observe that $I(u) \in C(\TT) \cap W^{1,1}(\TT)$ with
\be\label{I-pt1}
\|I(u)\|_{W^{1,1}_\h\cap L^\infty_\h} \leq \|u\|_{L^1_\h}.
\ee
Moreover, for each $u\in W^{1,1}(\TT) \cap C(\TT)$, standard density argument implies
\ba\label{I-pt2}
 I(u_x)(x) = \int_0^x u_x \,\dx' = u(x) - u(0) \andf \tilde I(u_x)(x)  = u(x) - \langle u \rangle.
\ea

\begin{prop}\label{prop-upperbd-density}
{\sl For all $(t,x,y) \in\R^{+} \times \TT \times \R$, there holds
\be\label{upperb-vtr}
 \eta (t,x,y) \leq \max\left\{ \varsigma_0, \bar \varsigma_1  \right\} \exp\bigl(4 \bar E_{00}^{\frac 12}\bigr)\eqdefa \bar{ \eta },
\ee
where $\bar\varsigma_0$ is given in \eqref{ini-data-1d-ass} and $\bar \varsigma_1$ will be determined by \eqref{def-bar-vr1} below.
}
\end{prop}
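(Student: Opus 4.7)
My plan follows the Stra\v{s}kraba--Zlotnik strategy \cite{SZ02} adapted to the periodic setting with the parameter $y$. The basic idea is to couple $\nu\log\eta$ with a spatial antiderivative of the momentum $\eta w$, derive an ODE along Lagrangian trajectories, and then exploit the damping induced by $p(\eta)$ via a Zlotnik-type comparison lemma.

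First I would introduce the auxiliary function $\psi(t,x)\eqdefa \tilde I(\eta w)(t,x)$ built from the operator defined in \eqref{I-def}. Since mass conservation \eqref{csv-m-m-1} gives $\langle \eta w\rangle\equiv 0$, we have $\psi_x=\eta w$. Using Cauchy--Schwarz with $\langle\eta\rangle=1$ and the energy identity \eqref{csv-eng-1},
\[
|\psi(t,x)| \leq 2\|I(\eta w)(t,\cdot)\|_{L^\infty_\h} \leq 2\|\eta w\|_{L^1_\h} \leq 2\|\sqrt\eta\|_{L^2_\h}\|\sqrt\eta\, w\|_{L^2_\h} \leq 2\sqrt{2 E_0(t)} \leq 2\sqrt{2\bar E_{00}},
\]
so $\psi$ is bounded uniformly in $(t,y)$.

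Next I would combine the two equations of \eqref{CNS-limit}. The continuity equation yields $D_t\log\eta=-w_x$, while the momentum equation rewrites as $(\eta w)_t+(\eta w^2)_x+p(\eta)_x=\nu w_{xx}$. Applying $\tilde I$ to the momentum equation and using that $\psi_t=\tilde I((\eta w)_t)$ together with $D_t\psi=\psi_t+w\psi_x=\psi_t+\eta w^2$, one obtains $D_t\psi=F-\langle F\rangle+\langle \eta w^2\rangle$, where $F\eqdefa \nu w_x-p(\eta)$ is the effective viscous flux and $\langle F\rangle=-\langle p(\eta)\rangle$ by periodicity of $w$. Multiplying $D_t\log\eta=-w_x$ by $\nu$ and adding yields the key pointwise identity
\[
D_t\bigl(\nu\log\eta+\psi\bigr)+p(\eta) \;=\;\langle p(\eta)\rangle+\langle \eta w^2\rangle .
\]
Integrating along the characteristic $X(t;x_0)$ of $w$ and setting $Y(t)\eqdefa(\nu\log\eta+\psi)(t,X(t;x_0))$ reduces the matter to the scalar ODE $\dot Y(t)+p(\eta(t,X(t)))=g(t)$ with $g(t)\eqdefa\langle p(\eta)\rangle(t)+\langle\eta w^2\rangle(t)$, which by mass conservation and \eqref{csv-eng-1} satisfies $g(t)\leq a\|\eta(t,\cdot)\|_{L^\infty_\h}^{\gamma-1}+2\bar E_{00}$.

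Finally I would apply a Zlotnik-type comparison argument: since $|\psi|\leq 2\sqrt{2\bar E_{00}}$, the quantities $\eta$ and $e^{Y/\nu}$ differ by a bounded multiplicative factor of order $\exp(2\sqrt{2\bar E_{00}}/\nu)$. Choose $\bar\varsigma_1$ so large that $p(\bar\varsigma_1)=a\bar\varsigma_1^\gamma$ strictly exceeds any admissible value of $g(t)$, i.e., $a\bar\varsigma_1^\gamma > a\bar\varsigma_1^{\gamma-1}+2\bar E_{00}$; this defines $\bar\varsigma_1$ and is the content of the forthcoming \eqref{def-bar-vr1}. Then whenever $\eta(t,X(t))\geq\bar\varsigma_1$ one has $\dot Y(t)<0$, so $Y$ cannot exceed the last time it crossed the level $\nu\log\bar\varsigma_1+\sup|\psi|$. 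Translating back to $\eta$ via $\eta=\exp((Y-\psi)/\nu)$ gives the pointwise bound $\eta(t,x)\leq \max\{\varsigma_0(x_0),\bar\varsigma_1\}\exp(4\bar E_{00}^{1/2})$.

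The main obstacle is the last step: the source term $\langle p(\eta)\rangle$ is nonlocal in $\eta$, so a naive Gronwall inequality would be circular. The Zlotnik comparison lemma circumvents this by using only the one-sided fact that $p(\eta)$ dominates $g(t)$ once $\eta$ exceeds the threshold $\bar\varsigma_1$, a condition that is robust because $p(\rho)=a\rho^\gamma$ with $\gamma\geq 1$ is strictly superlinear in the interpolation bound $\langle p(\eta)\rangle\leq a\|\eta\|_{L^\infty_\h}^{\gamma-1}$. A secondary subtlety is that the argument must be uniform in $y\in\R$, which is automatic here since every bound invokes only $\bar E_{00}$ and $\bar\varsigma_0$.
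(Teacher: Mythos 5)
Your overall strategy is exactly the paper's: the same identity $D_t\bigl(\nu\log\eta+\tilde I(\eta w)\bigr)+p(\eta)=\langle \eta w^2+p(\eta)\rangle$, the same reduction along Lagrangian trajectories, the same uniform bound $|\tilde I(\eta w)|\lesssim \bar E_{00}^{1/2}$, and the same Zlotnik comparison lemma (Lemma \ref{lem-uperb}). But there is one concrete gap at the crux of the argument, namely in how you bound the source term $g(t)=\langle p(\eta)\rangle+\langle\eta w^2\rangle$. You use the interpolation $\langle p(\eta)\rangle\leq a\|\eta(t,\cdot)\|_{L^\infty_\h}^{\gamma-1}\langle\eta\rangle=a\|\eta(t,\cdot)\|_{L^\infty_\h}^{\gamma-1}$, which reintroduces the very quantity you are trying to bound. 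Your subsequent claim --- that $\eta(t,X(t;x_0))\geq\bar\varsigma_1$ forces $\dot Y<0$ along the trajectory --- then fails: along a \emph{fixed} trajectory, $\eta(t,X(t;x_0))$ may be moderate while $\|\eta(t,\cdot)\|_{L^\infty_\h}$ is huge (attained elsewhere), so $p(\eta(t,X(t;x_0)))$ need not dominate $g(t)$, and the hypothesis $g(\frak y)\to-\infty$ of Lemma \ref{lem-uperb} (with a time-independent majorant) is not available. The superlinearity remark in your last paragraph only rescues the trajectory that realizes the maximum, and turning that into a proof would require reformulating the comparison for $\sup_x(\nu\log\eta+\tilde I(\eta w))$ and would make the threshold depend on $\nu$ through $e^{C\|\tilde I(\eta w)\|_\infty/\nu}$; as written, the step does not close.

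The fix is the one the paper uses and costs one line: bound $\langle p(\eta)\rangle$ directly by the conserved energy rather than by $\|\eta\|_{L^\infty_\h}$. For $\gamma>1$ one has $p(\eta)=(\gamma-1)P(\eta)$, hence $\langle p(\eta)\rangle\leq(\gamma-1)E_0(t)\leq(\gamma-1)\bar E_{00}$; for $\gamma=1$, mass conservation gives $\langle p(\eta)\rangle=a$ exactly. Together with $\langle\eta w^2\rangle\leq 2E_0\leq2\bar E_{00}$ this yields $g(t)\leq C(\gamma)\bar E_{00}$ uniformly in $t$ and $y$, the majorant $-p(e^{\frak y})+C(\gamma)\bar E_{00}$ genuinely tends to $-\infty$, and $\bar\varsigma_1$ is determined by $p(\bar\varsigma_1)=C(\gamma)\bar E_{00}$ as in \eqref{def-bar-vr1}, with no circularity. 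A minor secondary point: your honest estimate $|\tilde I(\eta w)|\leq2\sqrt{2\bar E_{00}}$ yields the constant $\exp(4\sqrt2\,\bar E_{00}^{1/2})$ rather than the $\exp(4\bar E_{00}^{1/2})$ appearing in \eqref{upperb-vtr}; this only affects the explicit value of $\bar\eta$ and not the substance of the proposition.
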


\begin{proof}

We rewrite the momentum equation in \eqref{CNS-limit} as
\ba
( \eta w)_t + ( \eta  w ^2)_x - \nu  w _{xx} + p( \eta )_x = 0.
\nn \ea
Applying the operator $\tilde I$ to the above equation  gives
\be\label{tI-mm-1}
\tilde I\big(( \eta w)_t\big) +  \eta  w ^2 - \nu  w _{x} + p( \eta ) - \langle  \eta  w ^2 - \nu  w _{x} + p( \eta )  \rangle = 0.
\ee
 We first compute
\begin{align*}
\tilde I\big(( \eta w)_t\big) = \d_t \tilde I( \eta w)  =  D_t \tilde I( \eta  w )  -   w  \d_x \tilde I( \eta  w ) =  D_t \tilde I( \eta  w )  -   \eta  w ^2.
\end{align*}
While it follows from the transport equation of \eqref{CNS-limit} that
\ba\label{tI-mm-3}
 -  w _x = \frac{- \eta  w _x}{ \eta } = \frac{ \eta _t +  w   \eta _x}{ \eta }  = D_t \log  \eta .
\nn \ea
By inserting the above equalities into \eqref{tI-mm-1} and
 the fact that $\langle w_x \rangle = 0$, we obtain
\be\label{tI-mm-4}
D_t \tilde I( \eta  w ) + \nu  D_t \log  \eta + p( \eta ) - \langle  \eta  w ^2 +  p( \eta )  \rangle = 0.
\ee

Let ${X}(t,x,y)$ be the trajectory of $w,$ which is determined by
\be\label{flow-defw}
\frac{\rm d}{\dt} X_w(t,x,y) = w(t, X(t,x,y)); \quad X_w(0,x,y) = x.
\ee
Let $\frak{y}(t,x,y) = \log  \eta(t,X_w(t,x,y),y) $. Then in view of \eqref{tI-mm-4}, we write
\be\label{tI-mm-6}
 \nu  \frac{\rm d}{\dt} \frak{y} = g(\frak{y}) + \frac{\rm d}{\dt}\frak{b}
\ee
with
$$
g(\frak{y}) = - p(\eta) + \langle  \eta  w ^2 +  p( \eta )  \rangle, \quad  \frak{b} = -  \tilde I( \eta  w ).
$$

Before proceeding, we recall the following lemma:

\begin{lem}[Lemma 1.3 of  \cite{SZ03}]\label{lem-uperb}
{\sl Suppose $g\in C(\R)$ and $\frak{y}, b\in W^{1,1}(0,T)$ for all $T>0$. Suppose that $\frak{y}$ verifies
$$
\frac{\rm d}{\dt} \frak{y}  = g(\frak{y}) + \frac{\rm d}{\dt} \frak{b} \quad \mbox{on}  \ \R^{+}, \quad \frak{y}(0) = \frak{y}_0.
$$
If $g(+\infty) = -\infty$ and there exist non-negative constants $N_0,N_1\geq 0$ so that
for any $0\leq t_1<t_2 <\infty,$
$$
\frak{b}(t_2) - \frak{b}(t_1) \leq N_0 + N_1(t_2 - t_1).
$$
Then one has
$$
\frak{y}(t) \leq \bar{\frak{y}} <\infty, \quad \forall \, t\in \R^{+} \with
\bar{\frak{y}} \eqdefa  \max\{ \frak{y}_0, \frak{y}_1 \} + N_0,
$$
where $\frak{y}_1$ is such that $g(\frak{y}) \leq -N_1$ for all $\frak{y}\geq \frak{y}_1$.
}
\end{lem}

Now we would like to apply Lemma \ref{lem-uperb} to derive an upper bound of $\eta$ via \eqref{tI-mm-6}. Firstly, it follows from
 \eqref{csv-eng-1} and \eqref{ini-2} that
$$
g(\frak{y}) = - p(e^{\frak{y}}) + \int_{\TT}  \left(\eta  w ^2 +  p( \eta )\right)  \,\dx
\leq -p(e^{\frak{y}}) + \g \bar E_{00} \to -\infty, \quad \mbox{as} \ \frak{y}\to +\infty.
$$
While in view of \eqref{I-def}, we have for all $t\in \R^{+}$,
\be\label{I(rv)}
|\ti ( \eta  w )|_{L^\infty_\h} \leq 2\| \eta  w \|_{L^1_\h} \leq 2 \| \eta \|_{L^1_\h}^{\frac 12}\| \eta  w ^2\|_{L^1_\h}^{\frac 12}\leq 2 \bar E_{00}^{\frac 12},
\ee
which implies
$$
{ |\tilde I( \eta  w )(t_{2}) -\tilde I( \eta  w )(t_{1}) |\leq 4 E_{00}^{\frac 12}, \ \forall \, t_1,  t_2 \in [0,\infty).}
$$
On the other hand, we observe that
\be\label{def-bar-vr1}
- p(\bar\varsigma_1) +  \g \bar E_{00} = 0 \Rightarrow
\bar \varsigma_1\eqdefa \Big(\frac{ \g \bar E_{00} }{a}\Big)^{\frac{1}{\g}}.
\ee
Hence we get, by applying Lemma \ref{lem-uperb}, that
\be\label{upperb-vtr0}
 \frak{y} \leq \max\{ \log  \bar \varsigma_0, \log \bar \varsigma_1  \} + 4 E_{00}^{\frac 12},
\ee
where $\bar \varsigma_0$ is given in \eqref{ini-data-1d-ass}. This leads to \eqref{upperb-vtr}.
\end{proof}

\subsection{Decay estimates of $L^2$ norms}

With the upper bound of $ \eta$ obtained  in \eqref{upperb-vtr}, similarly to Lemma 2.1 of \cite{ZZ20}, we have

\begin{lem}\label{prop-kinetic-tx}
{\sl Under the assumptions of \eqref{ini-1}, one has
\ba
\int_{\TT}  \eta  w ^2 \, \dx \leq \bar{ \eta }^2 \int_{\TT}  w _x^2 \, \dx.
\nn \ea}
\end{lem}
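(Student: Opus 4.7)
The plan is to set up a weighted Poincar\'e-type estimate using the two conservation laws in \eqref{csv-m-m-1} together with the uniform upper bound $\eta \leq \bar\eta$ from Proposition \ref{prop-upperbd-density}. The essential observation is that momentum conservation forces $w$ to have a zero on $\TT$, after which the desired bound reduces to a one-dimensional Sobolev embedding on $\TT$.

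First I would argue that $w(t,\cdot,y)$ must vanish at some point $x_0=x_0(t,y)\in\TT$. Indeed, since $\eta>0$ pointwise (by \eqref{thm1-2}) and $\int_\TT \eta w\,\dx = 0$ by \eqref{csv-m-m-1}, the continuous function $w$ is either identically $0$ on $\TT$ (in which case the claim is trivial) or it changes sign; in the latter case the intermediate value theorem produces such an $x_0$.

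Next, by the fundamental theorem of calculus along a representative arc in $\TT = \R/\Z$ of length at most $1$, we can write
\[
w(x) = \int_{x_0}^{x} w_{x'}(x')\,\dx',
\]
so Cauchy--Schwarz gives the pointwise bound
\[
|w(x)|^{2} \;\leq\; |x-x_0|\,\int_\TT w_{x'}^{2}\,\dx' \;\leq\; \int_\TT w_x^{2}\,\dx.
\]
Multiplying by $\eta$, integrating over $\TT$, and using mass conservation $\int_\TT \eta\,\dx = 1$ from \eqref{csv-m-m-1}, we obtain
\[
\int_\TT \eta\, w^{2}\,\dx \;\leq\; \|w\|_{L^\infty_\h}^{2}\int_\TT \eta\,\dx \;\leq\; \int_\TT w_x^{2}\,\dx.
\]
Finally, since $\int_\TT \eta\,\dx = 1$ and $\eta\leq\bar\eta$ force $\bar\eta \geq 1$, this already implies the stated bound $\int_\TT \eta w^2\,\dx \leq \bar\eta^{\,2}\int_\TT w_x^2\,\dx$. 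The proof is essentially routine once $w(x_0)=0$ is located; the only point that requires mild care is keeping track of the fact that on $\TT=\R/\Z$ the length of the interval of integration is bounded by $1$, which is what allows the $|x-x_0|$ factor to be discarded. This mirrors the scheme used for Lemma 2.1 of \cite{ZZ20}.
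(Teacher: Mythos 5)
Your proof is correct, but it takes a genuinely different route from the paper's. The paper exploits the two conservation laws through a symmetrization identity: using $\int_{\TT}\eta\,\dx=1$ and $\langle \eta w\rangle=0$ it writes $\int_{\TT}\eta w^2\,\dx=\frac12\int_{\TT}\int_{\TT}\eta(x)\eta(x')|w(x)-w(x')|^2\,\dx\,\dx'$, pulls the weight out via $\eta\le\bar\eta$, and identifies the remaining double integral with $\int_{\TT}|w-\langle w\rangle|^2\,\dx$, which is then controlled by the standard Poincar\'e inequality for mean-zero functions on $\TT$. You instead locate a zero of $w$ from the sign condition forced by $\int_{\TT}\eta w\,\dx=0$ together with $\eta>0$, and use the fundamental theorem of calculus plus Cauchy--Schwarz to get $\|w\|_{L^\infty_\h}^2\le\int_{\TT}w_x^2\,\dx$, after which mass conservation and the observation $\bar\eta\ge1$ (valid since $\eta$ has unit average on the unit torus) finish the argument; in fact your route yields the sharper constant $1$ in place of $\bar\eta^{2}$. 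The one thing the paper's symmetrization buys that your device does not is reusability: the same computation is invoked later for weighted quantities whose weighted mean does \emph{not} vanish (e.g.\ the estimate \eqref{decay-L2-y-10} for $\int_\TT\eta w_y^2\,\dx$, where the term $\langle\eta w_y\rangle^2$ is retained and estimated separately), whereas the zero-of-$w$ argument relies specifically on $\langle\eta w\rangle=0$. For the present statement both proofs are complete and correct.
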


\begin{proof}
Under the assumptions of \eqref{ini-1}, one has \eqref{csv-m-m-1} for all $t>0.$ Then  we deduce from
\eqref{upperb-vtr} and Poincar\'e's inequality that
\ba
\int_{\TT}  \eta  w ^2 \, \dx  & = \int_{\TT}  \eta  w ^2  \, \dx  - \langle  \eta  w  \rangle^2\\
& = \frac12\int_{\TT} \int_{\TT}  \eta (x)  \eta (x') | w (x) -  w (x')|^2\,\dx \,\dx'\\
& \leq\frac{\bar{ \eta }^2}2 \int_{\TT} \int_{\TT}  | w (x) -  w (x')|^2\,\dx \,\dx'\\
& = \bar{ \eta }^2  \int_{\TT} |  w  - \langle  w  \rangle|^2 \,\dx \leq \bar{ \eta }^2 \int_{\TT}   w _x^2 \,\dx.
\nn\ea
\end{proof}

By combining Lemma \ref{prop-kinetic-tx}  with the  energy estimate \eqref{csv-eng-1}, we achieve
\be\label{tvr-tv2-tx}
\int_0^\infty \int_{\TT}  \eta  w ^2 \,\dx\,\dt \leq \bar{ \eta }^2 \int_0^\infty  \int_{\TT}   w _x^2 \,\dx\,\dt \leq \bar{ \eta }^2 \nu^{-1}  E_{00}.
\ee

\begin{prop}\label{prop-decay-exp-L2}
{\sl There exist positive  constants $\a>0$ and  $C$ which depend on $(a, \g, \nu, \bar \varsigma_0,  \bar E_{00})$ such that
\ba\label{decay-exp-L2-4}
\| \eta ^{\frac{1}{2}} w(t) \|_{L^2_\h} + \|(\eta -1)(t)\|_{L^2_\h} \leq C E_{00}^{\frac{1}{2}}(y) e^{-\a t}, \quad \forall \, t\in \R^{+}, \ y\in \R.
\ea
}
\end{prop}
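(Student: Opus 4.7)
The plan is to prove exponential decay via a Lyapunov functional built from the modified energy augmented by a low-order correction that generates dissipation of $\|\eta-1\|_{L^2_\h}^2$. I first set
\begin{equation*}
\widetilde E_0(t):=\int_{\TT}\Big(\tfrac12\eta w^2+P(\eta)-P(1)-P'(1)(\eta-1)\Big)\,\dx.
\end{equation*}
Since $\int_\TT(\eta-1)\,\dx=0$ by \eqref{csv-m-m-1}, we have $\widetilde E_0=E_0-P(1)$, so Lemma \ref{prop-basic-energy} yields $\tfrac{\rm d}{\dt}\widetilde E_0+\nu\|w_x\|_{L^2_\h}^2=0$. A second-order Taylor expansion of the strictly convex potential $P$, combined with $\eta\le\bar\eta$ from Proposition \ref{prop-upperbd-density}, gives the two-sided comparability
\begin{equation*}
c_\ast\Big(\int_{\TT}\eta w^2\,\dx+\|\eta-1\|_{L^2_\h}^2\Big)\le \widetilde E_0(t)\le C_\ast\Big(\int_{\TT}\eta w^2\,\dx+\|\eta-1\|_{L^2_\h}^2\Big),
\end{equation*}
with constants depending only on $(a,\gamma,\bar\eta)$. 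Crucially, no positive lower bound on $\eta$ is required: the ratio $(\eta^\gamma-1)/(\eta-1)$ extends continuously to a strictly positive function on $[0,\bar\eta]$ for every $\gamma\ge 1$, which gives the lower bound for the pressure-potential part.

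The identity for $\widetilde E_0$ alone only gives $\|w_x\|_{L^2_\h}^2\in L^1(\R^+)$, which by Lemma \ref{prop-kinetic-tx} controls $\int_\TT\eta w^2\,\dx$ but provides no direct dissipation for $\|\eta-1\|_{L^2_\h}^2$. To remedy this I introduce the correction
\begin{equation*}
\Phi(t):=\int_{\TT}\eta w\,\tilde I(\eta-1)\,\dx,
\end{equation*}
with $\tilde I$ as in \eqref{I-def}. Since $\int_\TT\eta w\,\dx=0$ by \eqref{csv-m-m-1} and $\partial_x\tilde I(\eta-1)=\eta-1$, the continuity equation forces $\partial_t\tilde I(\eta-1)=-\eta w$. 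Writing the momentum equation as $(\eta w)_t+(\eta w^2)_x=\nu w_{xx}+p(\eta)_x$ and integrating by parts, I would then obtain
\begin{equation*}
\frac{\rm d\Phi}{\dt}=\int_{\TT}\eta w^2(\eta-1)\,\dx-\nu\!\int_{\TT}w_x(\eta-1)\,\dx-\int_{\TT}\!\bigl(p(\eta)-p(1)\bigr)(\eta-1)\,\dx-\int_{\TT}\eta^2w^2\,\dx.
\end{equation*}
The decisive term is the third: monotonicity of $p$ yields $(p(\eta)-p(1))(\eta-1)\ge ac_0(\eta-1)^2$ for some $c_0=c_0(\gamma,\bar\eta)>0$. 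The first term is bounded by $(\bar\eta+1)\int\eta w^2\,\dx\le C\|w_x\|_{L^2_\h}^2$ by Lemma \ref{prop-kinetic-tx}, and the cross term is absorbed by Young's inequality, so I expect
\begin{equation*}
\frac{\rm d\Phi}{\dt}\le C_1\|w_x\|_{L^2_\h}^2-\tfrac{ac_0}{2}\|\eta-1\|_{L^2_\h}^2.
\end{equation*}

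To close the argument I consider $L(t):=\widetilde E_0(t)+\delta\Phi(t)$ for small $\delta>0$. By \eqref{I-pt1} one has $\|\tilde I(\eta-1)\|_{L^\infty_\h}\le 2\|\eta-1\|_{L^1_\h}$, and Cauchy--Schwarz together with $\int_\TT\eta\,\dx=1$ gives $\int_\TT\eta|w|\,\dx\le(\int_\TT\eta w^2\,\dx)^{1/2}$, so $|\Phi|\le C\widetilde E_0$. Choosing $\delta$ small enough first makes $L\sim\widetilde E_0$ and then produces
\begin{equation*}
\frac{\rm d L}{\dt}\le-\tfrac{\nu}{2}\|w_x\|_{L^2_\h}^2-\tfrac{\delta ac_0}{2}\|\eta-1\|_{L^2_\h}^2\le -2\alpha L
\end{equation*}
for some $\alpha>0$ depending only on $(a,\gamma,\nu,\bar\eta)$ (using Lemma \ref{prop-kinetic-tx} once more to absorb $\int\eta w^2$ into $\|w_x\|_{L^2_\h}^2$). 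Gronwall's inequality yields $L(t)\le L(0)e^{-2\alpha t}$, and $\widetilde E_0(0)=E_{00}(y)-P(1)\le E_{00}(y)$ together with the coercivity of $\widetilde E_0$ gives \eqref{decay-exp-L2-4} after taking square roots. The main obstacle I anticipate is the careful absorption bookkeeping in $\tfrac{\rm d\Phi}{\dt}$: one must verify that the pressure-coercivity constant $c_0$ remains strictly positive on $[0,\bar\eta]$ despite the lack of a pointwise lower bound on $\eta$, and that all constants depend only on $(a,\gamma,\nu,\bar\varsigma_0,\bar E_{00})$ and not on $y$, so that the resulting decay rate is uniform in the slow parameter.
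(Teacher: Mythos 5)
Your proposal follows essentially the same route as the paper's proof of Proposition \ref{prop-decay-exp-L2}: the energy identity for the relative potential $\widetilde E_0$, augmented by the cross functional $\int_\TT \eta w\, I(\eta-1)\,\dx$ whose time derivative produces the dissipation of $\|\eta-1\|_{L^2_\h}^2$ via the pressure coercivity $(p(\eta)-p(1))(\eta-1)\ge a(\eta-1)^2$, closed with Lemma \ref{prop-kinetic-tx}; no lower bound on $\eta$ is needed, exactly as in the paper. The one slip is the sign of the pressure in the momentum equation (it reads $(\eta w)_t+(\eta w^2)_x=\nu w_{xx}-p(\eta)_x$, not $+p(\eta)_x$), which flips the sign of the pressure term in the evolution of $\Phi$; consequently the working Lyapunov functional must be $\widetilde E_0-\delta\Phi$ rather than $\widetilde E_0+\delta\Phi$, matching the $-(\eta w)I(\eta-1)$ term in the paper's \eqref{decay-exp-L2-1}, after which your argument goes through unchanged.
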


\begin{proof} By
combining \eqref{csv-m-m} with \eqref{csv-eng}, we obtain
\ba\label{csv-L2-1}
\frac{\rm d}{\dt} \int_{\TT} \Bigl(\frac{1}{2}  \eta  w ^2 + \big(P( \eta ) - P(1) - P'(1)( \eta - 1)\big)\Bigr) \,\dx
+ \nu\int_{\TT}  w _x^2\,\dx = 0.
\ea
While we get, by using Taylor's formula, that
\be\label{P-1-0}
P( \eta ) - P(1) - P'(1)( \eta - 1) = P''(\hat \eta) ( \eta -1)^2 = \frac{p'(\hat \eta)}{\hat \eta} ( \eta -1)^2 = a \g \hat \eta^{\g-2} ( \eta -1)^2,
\nn\ee
for some $\hat\eta$ between $ \eta $ and $1$. Due to the upper bound $0\leq  \eta \leq \bar{ \eta } <\infty$, we have
$$0\leq \hat \eta \leq \max\{1,\bar{ \eta }\} \leq 1 + \bar{ \eta }.$$
So that if $\g\in [1,2]$,  we have
\ba\label{P-1-1}
P( \eta ) - P(1) - P'(1)( \eta - 1) \geq  a \g (1+\bar{ \eta })^{\g-2} ( \eta -1)^2.
\ea
While for  $\g\in [1,2]$, it is easy to observe that
\ba\label{P-1-2}
P( \eta ) - P(1) - P'(1)( \eta - 1)  \leq C(\g) a ( \eta - 1)^2,
\ea
where the constant $C$ is solely determined by $\g$. It follows from \eqref{P-1-1} and \eqref{P-1-2} that $P( \eta ) - P(1) - P'(1)( \eta - 1)$ behaves like $( \eta -1)^2$.

Notice that $I( \eta - 1)|_{x=0}=I( \eta - 1)|_{x=1}=0.$  By multiplying the momentum equation of \eqref{CNS-limit} by $ I( \eta - 1)$
and using integration by parts, one has
\beq\label{csv-L2-2}
\int_{\TT} p( \eta ) ( \eta -1) \,\dx   = \int_{\TT} ( \eta  w )_t I( \eta -1)\,\dx  - \int_{\TT} ( \eta  w ^2) ( \eta -1) \,\dx + \nu \int_{\TT}  w _x ( \eta -1) \,\dx.
\eeq
We first compute
\be\label{csv-L2-3}
\int_{\TT} ( \eta  w )_t I( \eta -1)\,\dx   =  \frac{\rm d}{\dt} \int_{\TT} ( \eta  w ) I( \eta -1)\,\dx - \int_{\TT} ( \eta  w ) \d_tI( \eta -1) \,\dx.
\nn\ee
By virtue of \eqref{csv-m-m-1} and Lemma \ref{prop-kinetic-tx}, one has
\ba
 \bigl| \int_{\TT} ( \eta  w )\d_t I( \eta -1) \,\dx \bigr| & = \bigl| \int_{\TT} ( \eta  w ) I(( \eta  w )_x)\,\dx \bigr|\\
  & = \bigl| \int_{\TT} ( \eta  w )( \eta  w (t,x) -  \eta  w (t,0)) \,\dx \bigr| \\
  & = \bigl| \int_{\TT} ( \eta  w )^2 \,\dx \bigr|  \leq \bar{ \eta }\int_{\TT}  \eta  w ^2 \,\dx \leq \bar{ \eta }^3 \int_{\TT}  w _x^2 \,\dx.
\nn\ea
Similarly, we have
\be\label{csv-L2-5}
 \bigl| \int_{\TT} ( \eta  w ^2) ( \eta -1) \,\dx \bigr|  \leq (1+\bar{ \eta }) \int_{\TT}  \eta  w ^2 \,\dx \leq (1+\bar{ \eta }) \bar{ \eta }^2 \int_{\TT}  w _x^2 \,\dx.
\ee
While applying Young's inequality yields
\be\label{csv-L2-6}
\nu \bigl| \int_{\TT}  w _x ( \eta -1) \,\dx  \bigr|  \leq \de \int_{\TT} | \eta -1|^2 \,\dx \ + \de^{-1} \nu^2 \int_{\TT}  w _x^2 \,\dx.
\nn\ee

By inserting the above estimates into \eqref{csv-L2-3}, we find
\ba\label{csv-L2-7}
\int_{\TT} p( \eta ) ( \eta -1) \,\dx &-  \frac{\rm d}{\dt} \int_{\TT} ( \eta  w ) I( \eta -1)\,\dx \\
 &\leq \de \int_{\TT} | \eta -1|^2 \,\dx + \big(\bar{ \eta }^2+\bar{ \eta }^3 + \de^{-1} \nu^2\big)\int_{\TT}  w _x^2 \,\dx.
\ea

Notice that as long as $\g \geq 1$, one has
\be\label{csv-L2-8}
 \big(p(r) - p(1)\big) (r-1)  - a (r-1)^2  = a r (r^{\g-1}-1)(r-1) \geq 0, \quad \forall\, r\geq 0.
\nn\ee
As a result, it comes out
\ba\label{csv-L2-9}
\int_{\TT} p( \eta ) ( \eta -1) \,\dx = \int_{\TT} \big(p( \eta ) - p(1)\big) ( \eta -1) \,\dx \geq a \int_{\TT} ( \eta -1)^2 \,\dx.
\nn \ea
Then choosing $\de = \frac{a}{2}$ in \eqref{csv-L2-7} yields
\ba\label{csv-L2-10}
\frac{a}{2}\int_{\TT} ( \eta -1)^2 \,\dx -  \frac{\rm d}{\dt} \int_{\TT} ( \eta  w ) I( \eta -1)\,\dx  \leq  \big(\bar{ \eta }^2+\bar{ \eta }^3 + 2 a^{-1} \nu^2\big)\int_{\TT}  w _x^2 \,\dx.
\ea

Multiplying \eqref{csv-L2-1} by a constant $A_1$ and summing up the resulting inequality with \eqref{csv-L2-10}, we get
\ba\label{csv-L2-11}
& \frac{\rm d}{\dt}\int_{\TT}  \Bigl(\frac{A_1}{2}  \eta  w ^2 + A_1 \big(P( \eta ) - P(1) - P'(1)( \eta - 1)\big)  - ( \eta  w ) I( \eta -1) \Bigr)\,\dx  \\
 &\quad + A_1 \nu \int_{\TT}  w _x^2\,\dx + \frac{a}{2}\int_{\TT} ( \eta -1)^2 \,\dx   \leq  \big(\bar{ \eta }^2+\bar{ \eta }^3 + 2 a^{-1} \nu^2\big)\int_{\TT}  w _x^2 \,\dx.
\nonumber\ea
Taking $A_1$ large enough so that
\ba\label{csv-L2-13}
 A_1 \geq 4, \quad A_1 a \g (1+\bar{ \eta })^{\g-2} \geq 2, \quad A_1 \nu \geq \bar{ \eta }^2+\bar{ \eta }^3 + 2 a^{-1} \nu^2 + 1,
\ea
we obtain
\ba\label{decay-exp-L2-1}
\frac{\rm d}{\dt}\int_{\TT} \Bigl(\frac{A_1}{2}  \eta  w ^2 + A_1 \big(P( \eta ) - P(1) - P'(1)( \eta - 1)\big)  &- ( \eta  w ) I( \eta -1)\Bigr) \,\dx  \\
 &+  \int_{\TT}  w _x^2\,\dx + \frac{a}{2}\int_{\TT} ( \eta -1)^2 \,\dx   \leq  0.
\ea
Furthermore, due to
\ba
 \int_{\TT} \bigl|( \eta  w ) I( \eta -1)\bigr| \,\dx \leq \int_{\TT} | \eta  w | \,\dx \int_{\TT} | \eta -1| \,\dx \leq \int_{\TT}  \eta  w ^2 \,\dx +  \int_{\TT} | \eta -1|^2 \,\dx,
\nn \ea
and taking into account \eqref{P-1-1} and \eqref{P-1-2}, we have
\ba\label{decay-exp-L2-2}
\int_{\TT} \Bigl( \frac{A_1}{2}  \eta  w ^2 + A_1 \big(P( \eta ) - P(1) &- P'(1)( \eta - 1)\big)  - ( \eta  w ) I( \eta -1)\Bigr) \,\dx \\
 &\geq  \int_{\TT}  \Bigl(\frac{A_1}{4}  \eta  w ^2 + \frac{A_1}{2}a \g (1+\bar{ \eta })^{\g-2} ( \eta - 1)^2 \Bigr)  \,\dx,
\ea
and
\ba\label{decay-exp-L2-3}
\int_{\TT}  \Bigl(\frac{A_1}{2}  \eta  w ^2 + A_1 \big(P( \eta ) - P(1)& - P'(1)( \eta - 1)\big)  - ( \eta  w ) I( \eta -1)\Bigr) \,\dx \\
 &\leq  \int_{\TT} \Bigl(\big(\frac{A_1}{2} + 1 \big)  \eta  w ^2 + \big(A_1 a C(\g) +1 \big) ( \eta - 1)^2\Bigr)   \,\dx.
\ea
 Then \eqref{decay-exp-L2-4} follows from \eqref{decay-exp-L2-1}--\eqref{decay-exp-L2-3} and Lemma \ref{prop-kinetic-tx}.
\end{proof}

\subsection{Lower bound of the density function}

\begin{prop}\label{prop-lowerbd-density}
{\sl There exists a positive constant $\underline  \eta $ which depends on  $(a, \g, \nu, \bar \varsigma_0 , \underline \varsigma_0, \bar E_{00})$  so that
\be\label{lowerb-vtr}
 \eta (t,x,y) \geq \underline  \eta , \quad \forall \, (t,x,y)\in \R^{+}\times \TT\times \R.
\ee
}
\end{prop}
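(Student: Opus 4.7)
The plan is to mirror the proof of Proposition \ref{prop-upperbd-density}, applying the same Lagrangian ODE \eqref{tI-mm-6} ``in the reverse direction.'' Introduce $\frak{y}(t)\eqdefa\log\eta(t,X_{w}(t,x,y),y)$ and $\frak{b}(t)\eqdefa-\tilde I(\eta w)(t,X_{w}(t,x,y),y)$ exactly as in that proof, so that
\begin{equation*}
\nu\frac{d\frak{y}}{dt}\;=\;-p(e^{\frak{y}}) + \langle\eta w^{2}+p(\eta)\rangle(t) + \frac{d\frak{b}}{dt}.
\end{equation*}
For the upper bound, $-p(e^{\frak{y}})\to-\infty$ as $\frak{y}\to+\infty$ was what drove the estimate. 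For the lower bound, the corresponding term $-p(e^{\frak{y}})\to 0$ as $\frak{y}\to-\infty$ is \emph{not} in itself useful, so the key new input I need is a uniform strictly positive lower bound on the spatial average $\langle\eta w^{2}+p(\eta)\rangle(t,y)$, uniform in both $t\in\R^{+}$ and $y\in\R$.

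Such a bound is furnished by Jensen's inequality together with the mass conservation \eqref{csv-m-m-1}: since $p(\rho)=a\rho^{\gamma}$ is convex for $\gamma\geq 1$ and $|\TT|=1$ so that $dx$ is a probability measure on $\TT$,
\begin{equation*}
\langle\eta w^{2}+p(\eta)\rangle(t,y)\;\geq\;\langle p(\eta)\rangle(t,y)\;\geq\;p\bigl(\langle\eta\rangle(t,y)\bigr)\;=\;p(1)\;=\;a\;>\;0.
\end{equation*}

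With this at hand, setting $\hat{\frak{y}}\eqdefa-\frak{y}$ and $\hat{\frak{b}}\eqdefa-\frak{b}$ converts the equality above into the one-sided differential inequality
\begin{equation*}
\nu\frac{d\hat{\frak{y}}}{dt}\;\leq\;\hat g(\hat{\frak{y}})+\frac{d\hat{\frak{b}}}{dt}, \qquad \hat g(\hat{\frak{y}})\eqdefa p(e^{-\hat{\frak{y}}})-a.
\end{equation*}
I would then invoke Lemma \ref{lem-uperb} applied to $\hat{\frak{y}}$ (its barrier-argument proof carries over verbatim to one-sided inequalities of this form). Noting that $\hat g(\hat{\frak{y}})\leq 0$ as soon as $\hat{\frak{y}}\geq 0$, one takes $N_{1}=0$ and $\frak{y}_{1}=0$; the bound $|\hat{\frak{b}}|=|\tilde I(\eta w)|\leq 2\bar{E}_{00}^{1/2}$ from \eqref{I(rv)} provides $N_{0}=4\bar{E}_{00}^{1/2}/\nu$. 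The lemma outputs
\begin{equation*}
\hat{\frak{y}}(t)\;\leq\;\max\bigl\{-\log\underline{\varsigma}_{0},\,0\bigr\}+4\bar{E}_{00}^{1/2}/\nu \qquad\forall\,t\in\R^{+},
\end{equation*}
and exponentiating then transferring through the $\TT$-diffeomorphism $x\mapsto X_{w}(t,x,y)$ yields \eqref{lowerb-vtr} with, for instance, $\underline{\eta}\eqdefa\min\{1,\underline{\varsigma}_{0}\}\exp(-4\bar{E}_{00}^{1/2}/\nu)$, whose dependence on $(a,\gamma,\nu,\underline{\varsigma}_{0},\bar{E}_{00})$ matches the one claimed in the statement.

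The single conceptual step is the Jensen-based lower bound on $\langle \eta w^{2}+p(\eta)\rangle$; without it, the right-hand side of \eqref{tI-mm-6} has no definite sign as $\frak{y}\to-\infty$ and the comparison-with-a-barrier argument behind Lemma \ref{lem-uperb} cannot be closed. Everything else is the exact dual of the upper-bound proof and uses no estimate beyond the conservation identities \eqref{csv-m-m-1} and the elementary bound $|\tilde I(\eta w)|\leq 2\bar{E}_{00}^{1/2}$ already established in this section; in particular, no use of the $L^{2}$-decay of Proposition \ref{prop-decay-exp-L2} is required at this stage.
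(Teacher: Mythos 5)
Your proof is correct, but it takes a genuinely different route from the paper's. The paper's argument is two-staged: it first derives a crude lower bound $\underline{\eta}_1(t)$ that decays exponentially in time (useful only on finite time intervals), and then, for $t\geq T_1$, it invokes the $L^2$ decay of $\eta-1$ from Proposition \ref{prop-decay-exp-L2} to show $\langle p(\eta)\rangle \geq a - a\g\bar\eta^{\g-1}(Ce^{-\a t})^{1/2}\geq a/2$, after which a Gronwall argument on the exponential weight $\frak{y}_2=\exp(\l(\nu\frak{y}_1-\tilde I(\eta w)))$ closes the large-time bound; the two regimes are then glued at $T_1$. Your single observation --- that $p(\rho)=a\rho^\g$ with $\g\geq1$ is convex, so Jensen's inequality and the mass conservation $\langle\eta\rangle=1$ give $\langle \eta w^2+p(\eta)\rangle\geq\langle p(\eta)\rangle\geq p(1)=a$ for \emph{all} $t$ --- removes the need for the decay estimate, the time-splitting, and the upper bound $\bar\eta$ altogether, and reduces the whole proof to one application of the barrier argument with $N_1=0$. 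Two small points of care, both of which you handle correctly: Lemma \ref{lem-uperb} is stated for an equality with $g(+\infty)=-\infty$, whereas you have a one-sided inequality with $\hat g(+\infty)=-a$ finite; since the hypothesis $g(+\infty)=-\infty$ is only used to guarantee the existence of $\frak{y}_1$ with $g\leq -N_1$ beyond $\frak{y}_1$, and you exhibit $\frak{y}_1=0$ explicitly for $N_1=0$, the barrier proof does go through, but strictly speaking you are re-proving a variant of the lemma rather than citing it. The one thing the paper's longer route buys is that it does not use convexity of $p$ and so would survive for pressure laws that are merely nondecreasing; for the power law actually assumed in \eqref{pressure}, your argument is strictly simpler and yields an explicit constant $\underline\eta=\min\{1,\underline\varsigma_0\}\exp(-4\bar E_{00}^{1/2}/\nu)$ with the admissible dependence.
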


\begin{proof}
Let
$
\frak{y}_1 \eqdefa  \log \frac{1}{ \eta } = - \log \eta .
$
Then it follows from \eqref{tI-mm-4} that
\ba\label{density-lower1}
 D_t\big(\nu \frak{y}_1 - \tilde I( \eta  w )\big) =  p( \eta ) - \langle  \eta  w ^2 +  p( \eta )  \rangle,
\nn \ea
from which, we deduce that for each $t\in\R^{+}$,
\ba\label{density-lower2}
 \max_{x\in \TT} \big(\nu \frak{y}_1 - \tilde I( \eta  w )\big)(t,\cdot) &\leq   \max_{x\in \TT} \big(\nu \frak{y}_1 - \tilde I( \eta  w )\big)(0,\cdot)  + \int_0^t   \max_{x\in \TT}\big(p( \eta )  + \langle  \eta  w ^2 +  p( \eta )  \rangle\big) \dt'\\
 & \leq - \nu \log\underline \varsigma_0 + E_{00}^{\frac{1}{2}} + (a \bar{ \eta }^\g + \g E_{00}) t.
 \nonumber
\ea
This implies
\ba
 \frak{y}_1 (t) \leq  -\log\underline \varsigma_0 + 2 \nu^{-1}E_{00}^{\frac{1}{2}} +  \nu^{-1} (a \bar{ \eta }^\g + \g E_{00}) t,
\nn \ea
which is equivalent to
\ba\label{density-lower4}
 \eta (t) &\geq  \underline \varsigma_0 \exp\big(- 2 \nu^{-1}\bar E_{00}^{\frac{1}{2}} -  \nu^{-1} (a \bar{ \eta }^\g + \g \bar E_{00}) t \big) \eqdefa
 \underline{ \eta }_1(t).
\ea

To obtain a lower bound of the density function for large time, we
define
$$
\frak{y}_2 = \exp(\l(\nu\frak{y}_1 -  \tilde I( \eta  w ))), \quad \l >0
$$
which solves
\ba\label{density-lower5}
 D_t \frak{y}_2 &  =  \l \exp(\l(\nu\frak{y}_1 -  \tilde I( \eta  w ))) \big(p( \eta ) - \langle  \eta  w ^2 +  p( \eta )  \rangle\big)\\
 & =  - \l \frak{y}_2  \langle  \eta  w ^2 +  p( \eta )  \rangle + \l   \eta ^{-\l\nu} a  \eta ^\g \exp( -  \l \tilde I( \eta  w )).
\nonumber\ea
Taking $\l$ so that $\l \nu = \g$ gives rise to
\ba\label{density-lower6}
 D_t \frak{y}_2 + \nu^{-1} \g \frak{y}_2  \langle  \eta  w ^2 +  p( \eta )  \rangle  =   a \nu^{-1} \g  \exp( - \l \tilde I( \eta  w ) ).
\ea
Yet we observe from   Proposition \ref{prop-decay-exp-L2} that
\ba\label{density-lower6.5}
\langle p( \eta ) \rangle  &  =   \int_{\TT}  p( \eta )  \, \dx
 \geq  p(1) - \int_{\TT}  |p( \eta ) - p(1)| \, \dx \\
& \geq p(1) - p'(\bar{ \eta }) \int_{\TT}  | \eta - 1| \, \dx \geq a - a \g \bar{ \eta }^{\g - 1} \big(C e^{-\a t}\big)^{\frac{1}{2}}.
\ea
Let $T_1$ be such that
\ba\label{T1-def}
 \g \bar{ \eta }^{\g - 1} \big(C e^{-\a T_1}\big)^{\frac{1}{2}}  = \frac{1}{2}.
\ea
Then for all $t\geq T_1$, there holds
$$
\langle  \eta  w ^2 +  p( \eta )  \rangle  \geq \langle p( \eta )  \rangle  \geq \frac{a}{2},
$$
so that we deduce from \eqref{density-lower6} that  for $t\geq T_1$,
\ba\label{density-lower7}
 \frak{y}_2 (t) \leq e^{- \tilde a t}\frak{y}_2(T_1) + \int_{T_1}^t e^{- \tilde a (t-t')}  a \nu^{-1} \g  \exp( - \l \tilde I( \eta  w ) )(t')\,\dt',
\ea
where $\tilde a\eqdefa \frac{a \nu^{-1} \g}{2}>0$. Together with  \eqref{I(rv)}, \eqref{density-lower7} ensures that
for  $t\geq T_1$,
\ba
 \frak{y}_2 (t) \leq e^{- \tilde a t}\frak{y}_2 (T_1) + a\left(\tilde a \nu\right)^{-1} \g   e^{2\l E_{00}^{\frac 12}},
\nn \ea
from
which, we infer
\ba\label{density-lower10}
 \eta ^{-\lambda\g} (t)   \leq e^{- \tilde a t} \eta ^{-\lambda\g} (T_1) e^{2 \l E_{00}^{\frac 12}}  +
  a\left(\tilde a \nu\right)^{-1} \g  e^{4 \l E_{00}^{\frac 12}},
\nn \ea
 that is,
 for  $t\geq T_1$  there holds
\ba\label{density-lower12}
 \eta (t)   \geq  \underline{ \eta }_2(T_1) \eqdefa \Big( e^{- \tilde a T_1}\underline{ \eta }_1^{-\lambda\g} (T_1) e^{2 \l \bar E_{00}^{\frac 12}}
 + a \left(\tilde a\nu\right)^{-1} \g  e^{4 \l \bar E_{00}^{\frac 12}}\Big)^{-\frac{1}{\lambda\g}}.
\ea

Combining \eqref{density-lower4} and \eqref{density-lower12}, we deduce that \eqref{lowerb-vtr} holds with
\ba\label{density-lower13}
\underline{ \eta } \eqdefa \min\left\{\underline{ \eta }_1(T_1),\underline{ \eta }_2(T_1) \right\}.
\ea
This completes the proof of the proposition.
\end{proof}

\subsection{Decay estimate of $\| w _x(t)\|_{L^2_\h}$}

It follows from  \eqref{ass-ini-1} that
\be\label{ini-3}
  E_{10} (y) \eqdefa  \|\varsigma_0-1\|_{H^{1}_\h}^2 + \| w _0\|_{H^{1}_\h}^2 \in ( L^{1} \cap L^{\infty})(\R)
\andf \bar E_{10} \eqdefa  \sup_{y\in \R}E_{10} (y)  <\infty.
\ee

We start with the following two lemmas:
\begin{lem}\label{prop-est-rv4}
{\sl There holds for all $t\in \R^{+}$
\be\label{est-rv4}
\frac{\rm d}{\dt}\int_{\TT}  \eta  w ^4 \,\dx + 6 \nu \int_{\TT}  | w  |^2 | w _x|^2\,\dx \leq 24a \nu^{-1}\bar{ \eta }^{2\g-1} \int_{\TT}  w _x^2\,\dx.
\ee}
\end{lem}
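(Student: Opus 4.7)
The plan is to test the momentum equation of \eqref{CNS-limit} against $4w^3$ and combine it with the continuity equation to produce $\tfrac{d}{dt}\int_\TT \eta w^4\,\dx$ on the left. Concretely, using $\d_t\eta+\d_x(\eta w)=0$ together with integration by parts (which eliminates boundary terms on $\TT$), one gets the identity
\[
\frac{\rm d}{\dt}\int_\TT \eta w^4\,\dx \;=\; 4\int_\TT \eta w^3 D_t w\,\dx.
\]
Then the momentum equation gives $\eta D_t w=\nu w_{xx}-p(\eta)_x$, so that
\[
\frac{\rm d}{\dt}\int_\TT \eta w^4\,\dx \;=\; 4\nu\int_\TT w^3 w_{xx}\,\dx \;-\; 4\int_\TT w^3 p(\eta)_x\,\dx.
\]

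Next, I would integrate by parts in each term on the right. The viscous term becomes $-12\nu\int_\TT w^2 w_x^2\,\dx$, producing the coercive quantity that we want to keep on the left. The pressure term becomes $12\int_\TT w^2 w_x\,p(\eta)\,\dx$, which is the only nontrivial contribution to estimate.

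To handle that pressure contribution, I would apply Young's inequality with a weight tuned so that half of the viscous dissipation is absorbed; schematically,
\[
12\int_\TT w^2 w_x\,p(\eta)\,\dx \;\le\; 6\nu\int_\TT w^2 w_x^2\,\dx \;+\; \frac{C}{\nu}\int_\TT w^2 p(\eta)^2\,\dx .
\]
Then using the pointwise bound $\eta\le\bar\eta$ from Proposition \ref{prop-upperbd-density} together with $p(\eta)=a\eta^\gamma$, we have $p(\eta)^2 = a^2\eta^{2\gamma}\le a^2\bar\eta^{2\gamma-1}\eta$ (since $\gamma\ge1$), so that
\[
\int_\TT w^2 p(\eta)^2\,\dx \;\le\; a^2 \bar\eta^{2\gamma-1}\int_\TT \eta w^2\,\dx .
\]
At this point I would invoke Lemma \ref{prop-kinetic-tx} (which itself rests on the momentum-conservation hypothesis \eqref{ini-1}) to convert $\int_\TT \eta w^2\,\dx$ into a multiple of $\int_\TT w_x^2\,\dx$. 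Putting these together yields the claimed differential inequality, up to adjusting the Young's constant and exploiting $\eta\le\bar\eta$ one more time to land on the stated coefficient $24a\nu^{-1}\bar\eta^{2\gamma-1}$.

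The argument is almost entirely routine once the identity $\tfrac{d}{dt}\int\eta w^4 = 4\int \eta w^3 D_t w$ is noted. The only mild subtlety, and hence the step I would be most careful with, is the passage from $\int_\TT w^2 p(\eta)^2\,\dx$ to a multiple of $\int_\TT w_x^2\,\dx$: one must combine the pointwise upper bound on $\eta$ (to trade $\eta^{2\gamma}$ for $\eta$ times a power of $\bar\eta$) with the Poincaré-type Lemma \ref{prop-kinetic-tx} in the correct order to obtain a constant that does not degrade the factor $\bar\eta^{2\gamma-1}$ advertised in the statement.
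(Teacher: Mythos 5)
Your proposal is correct and follows essentially the same route as the paper: test the momentum equation against $4w^3$ (equivalently, use $\frac{\rm d}{\dt}\int_{\TT}\eta w^4\,\dx=4\int_{\TT}\eta w^3 D_t w\,\dx$), integrate by parts to produce $-12\nu\int_{\TT}w^2w_x^2\,\dx$ and $12\int_{\TT}p(\eta)w^2w_x\,\dx$, absorb half the dissipation by Young's inequality, and control the remainder via $\eta\le\bar\eta$ and Lemma \ref{prop-kinetic-tx}. The constant-tracking subtlety you flag at the end is real but equally present in the paper's own computation and is immaterial to how the lemma is used.
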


\begin{proof}
Multiplying $4  w ^3$ to the momentum equation of \eqref{CNS-limit} and integrating the resulting equation  over $\TT$ gives
\ba\label{est-rv4-1}
\frac{\rm d}{\dt}\int_{\TT}  \eta  w ^4 \,\dx + 12 \nu \int_{\TT}  | w  |^2 | w _x|^2\,\dx   & = 12 \int_{\TT} p( \eta )  w ^2  w _x\,\dx  \\
& \leq 6\nu \int_{\TT}  | w  |^2 | w _x|^2\,\dx + 24a \nu^{-1} \int_{\TT}   \eta ^{2\g}  w ^2\,\dx \\
& \leq 6\nu \int_{\TT}  | w  |^2 | w _x|^2\,\dx + 24 a\nu^{-1} \bar{ \eta }^{2\g-1} \int_{\TT}  w _x^2\,\dx,
\nonumber\ea
where we used Lemma \ref{prop-kinetic-tx} in the last step. And \eqref{est-rv4} follows.
\end{proof}

\begin{lem}\label{prop-est-vx}
{\sl There exist  positive  constants $B_1$ and $B_2$ solely depending on $(a, \g, \nu , \bar\varsigma_0, \bar E_{00})$, so that for all $t\in \R^{+}$
\ba\label{est-vx}
  \frac{\rm d}{\dt} \int_{\TT}\Bigl( \frac\nu2  w _x^2  + \frac{a^2}{2\nu} \big( \eta ^{2\g} & - 1 - 2\g ( \eta -1)\big) -  \big(p( \eta ) - p(1)\big)  w _{x}\Bigr) \,\dx + \frac{1}{2} \int_{\TT}  \eta  w _t^2 \,\dx \\
  & \qquad\quad \leq 5 \bar{ \eta }\int_{\TT} | w  |^2 |  w _x |^2 \,\dx  + B_{1} \int_{\TT}  w _x^2 \,\dx + B_2 \int_{\TT} ( \eta - 1)^2 \,\dx.
\ea}
\end{lem}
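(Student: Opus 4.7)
The plan is to multiply the momentum equation in \eqref{CNS-limit} by $w_t$ and integrate over $\TT$ to produce the basic balance whose principal dissipation is $\int \eta w_t^2\,dx$. Integration by parts in $x$ turns the viscous term into $\tfrac{\nu}{2}\tfrac{d}{dt}\int w_x^2\,dx$, and the convective term $\int \eta w w_x w_t\,dx$ is split by Cauchy--Schwarz and Young so that half of the inertial dissipation is retained on the left; this accounts for the coefficient $\tfrac12$ in front of $\int \eta w_t^2\,dx$ in the statement, with the residue controlled by a multiple of $\bar\eta\int |w|^2 w_x^2\,dx$.

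The delicate step is the pressure contribution $-\int p(\eta)_x w_t\,dx$. I would integrate by parts in $x$, insert $-p(1)$ freely since $\langle w_x\rangle = 0$ on $\TT$, and commute the time derivative outside to generate the total derivative $-\tfrac{d}{dt}\int (p(\eta)-p(1)) w_x\,dx$, which matches the $-(p(\eta)-p(1))w_x$ term in \eqref{est-vx}. The leftover $\int p'(\eta)\eta_t w_x\,dx$ is treated with continuity $\eta_t = -(\eta w)_x$: integration by parts splits it into the harmless $\int p'(\eta)\eta w_x^2\,dx$ (absorbed into $B_1\int w_x^2\,dx$ by $\eta\le\bar\eta$) and the cubic $\int p(\eta)_x w w_x\,dx$. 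For the cubic I would replace $w_{xx}$ using the momentum equation, which produces $\nu^{-1}\int p(\eta)\eta w w_t\,dx$ (absorbed via Young plus Lemma \ref{prop-kinetic-tx}), $\nu^{-1}\int p(\eta)\eta w^2 w_x\,dx$ (bounded by $C\int|w|^2 w_x^2\,dx + C\int w_x^2\,dx$), and the key term $-\tfrac{1}{2\nu}\int (p(\eta)-p(1))^2 w_x\,dx$ arising from $\int p(\eta) w p(\eta)_x\,dx = -\tfrac12\int w_x p(\eta)^2\,dx$. This last piece equals $-\tfrac{a^2}{2\nu}\int(\eta^{2\gamma}-1)w_x\,dx$ after using $\langle w_x\rangle=0$, and it is precisely what the renormalized potential $\tfrac{a^2}{2\nu}(\eta^{2\gamma}-1-2\gamma(\eta-1))$ is designed to absorb: a direct computation from continuity, together with the torus cancellation $\langle w_x\rangle = 0$ (used twice, to drop a constant and to insert the $-2\gamma(\eta-1)$ correction), yields $\tfrac{d}{dt}\int \tfrac{a^2}{2\nu}(\eta^{2\gamma}-1-2\gamma(\eta-1))\,dx$ proportional to $-\int(\eta^{2\gamma}-1)w_x\,dx$, so the cubic becomes a total time derivative up to the stated proportionality constant.

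After this bookkeeping, the three total-time-derivatives combine into the functional under $\tfrac{d}{dt}$ on the left of \eqref{est-vx}. The remaining residues are all either quadratic in $w_x$, hence controlled by $B_1\int w_x^2\,dx$ via $\eta,p'(\eta),p(\eta)$ bounded on $[\underline\eta,\bar\eta]$, or of the form $(p(\eta)-p(1))^2 \lesssim (\eta-1)^2$ by the mean value theorem, hence controlled by $B_2\int(\eta-1)^2\,dx$; the uniform density bounds from Propositions \ref{prop-upperbd-density} and \ref{prop-lowerbd-density} together with Lemma \ref{prop-kinetic-tx} are what make $B_1,B_2$ depend only on $(a,\gamma,\nu,\bar\varsigma_0,\bar E_{00})$. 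The main obstacle is the renormalization identifying $\eta^{2\gamma}-1-2\gamma(\eta-1)$ as the pseudo-potential of $p(\eta)^2$: without it the cubic $\int(\eta^{2\gamma}-1)w_x\,dx$ produced by the pressure--times--$w_{xx}$ substitution cannot be written as a time derivative plus controllable remainders, and the argument rests crucially on the zero-mean property of $w_x$ on $\TT$, which is what makes this genuinely a one-dimensional periodic estimate.
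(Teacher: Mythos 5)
Your proposal is correct in substance and follows essentially the same route as the paper: multiply the momentum equation by $w_t$, convert the pressure term into $-\frac{\rm d}{\dt}\int_{\TT}(p(\eta)-p(1))w_x\,\dx$ plus a remainder, use the continuity equation and the substitution $\nu w_{xx}-p(\eta)_x=\eta D_t w$ on that remainder, and close with Lemma \ref{prop-kinetic-tx}, the upper bound $\eta\le\bar\eta$ and $\langle w_x\rangle=0$. Two bookkeeping points are worth flagging. First, the paper extracts the potential term directly and exactly from $\nu^{-1}\int_{\TT} p(\eta)_t\,p(\eta)\,\dx=\frac{a^2}{2\nu}\frac{\rm d}{\dt}\int_{\TT}\eta^{2\gamma}\,\dx$, whereas your route reaches $-\frac{a^2}{2\nu}\int_{\TT}(\eta^{2\gamma}-1)w_x\,\dx$ and converts it back to a time derivative via the renormalized continuity identity $\frac{\rm d}{\dt}\int_{\TT}\eta^{2\gamma}\,\dx=-(2\gamma-1)\int_{\TT}\eta^{2\gamma}w_x\,\dx$; this produces the coefficient $\frac{a^2}{2\nu(2\gamma-1)}$ rather than $\frac{a^2}{2\nu}$ in the functional, which is harmless for every later use of the lemma but is not literally \eqref{est-vx}. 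Second, since a further quarter of $\int_{\TT}\eta w_t^2\,\dx$ must be spent to absorb $\nu^{-1}\int_{\TT} p(\eta)\eta w w_t\,\dx$, you need to retain $\frac34$ (not $\frac12$) of the inertial dissipation at the first Young step in order to end with the stated $\frac12$. Finally, the lower bound of $\eta$ is never needed here (only $\bar\eta$ enters), which is what makes $B_1,B_2$ depend only on $(a,\gamma,\nu,\bar\varsigma_0,\bar E_{00})$, so your appeal to Proposition \ref{prop-lowerbd-density} is unnecessary.
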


\begin{proof}
Multiplying $ w _t$ to the momentum equation of \eqref{CNS-limit} and integrating the resulting equation over $\TT$ gives
\be\label{est-vx-1}
\int_{\TT} ( \eta  w )_t  w _t \,\dx + \int_{\TT} ( \eta  w ^2)_x  w _t \,\dx + \frac\nu2 \frac{\rm d}{\dt} \int_{\TT}  w _x^2 \,\dx  + \int_{\TT} p( \eta )_x  w _t \,\dx  = 0.
\ee
We now handle term by term above. Firstly, it follows from the continuity equation of \eqref{CNS-limit} that
\begin{align*}\label{est-vx-2}
\int_{\TT} ( \eta  w )_t  w _t \,\dx
& = \int_{\TT}  \eta  w _t^2 \,\dx -  \int_{\TT} ( \eta  w ^2)_x   w _t \,\dx + \int_{\TT}  \eta  w    w _x   w _t \,\dx.
\end{align*}
This implies
\ba\label{est-vx-3}
\int_{\TT} ( \eta  w )_t  w _t \,\dx + \int_{\TT} ( \eta  w ^2)_x  w _t \,\dx  =& \int_{\TT}  \eta  w _t^2 \,\dx + \int_{\TT}  \eta  w    w _x   w _t \,\dx\\
\geq & \frac{3}{4} \int_{\TT}  \eta  w _t^2 \,\dx - 4 \bar{ \eta }\int_{\TT} | w  |^2 |  w _x |^2 \,\dx.
\ea

It is rather complicated to estimate the term related to the pressure in \eqref{est-vx-1}. We compute
\beq\label{est-vx-4}
\int_{\TT} p( \eta )_x  w _t \,\dx
 = - \frac{\rm d}{\dt} \int_{\TT} \big(p( \eta ) - p(1)\big)  w _{x} \,\dx + \int_{\TT} p( \eta )_t  w _{x} \,\dx.
\eeq
For the last term of \eqref{est-vx-4}, we decompose it as
\begin{align*}\label{est-vx-5}
 \int_{\TT} p( \eta )_t  w _{x} \,\dx & =  \nu^{-1}\int_{\TT} p( \eta )_t \big(\nu w _{x} - p( \eta )\big) \,\dx + \nu^{-1}\int_{\TT} p( \eta )_t p( \eta ) \,\dx \\
  & =  \nu^{-1}\int_{\TT} p( \eta ) w   \big(\nu w _{xx} - p( \eta )_x\big) \,\dx  -  a (\g-1)\nu^{-1}\int_{\TT} \eta ^\g  w _x \big(\nu w _{x} - p( \eta )\big) \,\dx \\
  & \quad +  \frac{a^2}{2\nu}\frac{\rm d}{\dt}\int_{\TT} \big( \eta ^{2\g}  - 1 - 2\g ( \eta -1)\big)\,\dx.
\end{align*}
Observing that
\ba
 \nu^{-1}\int_{\TT} p( \eta ) w   \big(\nu w _{xx} - p( \eta )_x\big) \,\dx= \nu^{-1}\int_{\TT} p( \eta ) w    \eta \big( w _t +  w   w _x)\big) \,\dx.
 \nn \ea
Applying Young's inequality yields
\ba
\nu^{-1}\bigl|\int_{\TT} p( \eta ) w    \eta  w _t \,\dx \bigr|
& \leq \frac{1}{4}\int_{\TT}  \eta  w _t^2 \,\dx + 4 \nu^{-2} a \bbeta^{2\g+1} \int_{\TT}  w _x^2 \,\dx,\\
\nu^{-1}\bigl| \int_{\TT} p( \eta ) w    \eta  w   w _x \,\dx  \bigr|
& \leq  a^2\nu^{-2} \bbeta^{2\g+2} \int_{\TT}  w _x^2 \,\dx + \bbeta \int_{\TT}  | w |^2 | w _x|^2 \,\dx,
\nonumber
\ea
where we used Lemma \ref{prop-kinetic-tx} in the second inequality.

We finally compute
\ba
 a (\g-1)\bigl| \int_{\TT} \eta ^\g  w _x^2  \,\dx \bigr|  \leq a (\g-1) \bbeta^\g \int_{\TT}  w _x^2  \,\dx,
\nn \ea
and
\begin{align*}
\nu^{-1}a (\g-1)\bigl|  \int_{\TT}  \eta ^\g  w _x  p( \eta ) \,\dx\bigr|
& =  \nu^{-1}  a (\g-1)  \bigl| \int_{\TT} ( \eta ^{2\g} -1)  w _x  \,\dx \bigr| \\
& \leq  \nu^{-1}  a (\g-1) \Big(  \int_{\TT} ( \eta ^{2\g} -1)^2  \,\dx +  \int_{\TT}   w _x^2  \,\dx\Big) \\
& \leq \nu^{-1} a (\g-1)  \Big( \bigl(2\g \bbeta^{2\g-1}\bigr)^2 \int_{\TT} ( \eta -1)^2  \,\dx +  \int_{\TT}   w _x^2  \,\dx\Big).
\end{align*}

By inserting the above estimates into \eqref{est-vx-4}, we achieve
\beq\label{est-vx-6}
\begin{split}
\int_{\TT} p( \eta )_x  w _t \,\dx\geq  & \frac{\rm d}{\dt} \int_{\TT} \Bigl( \frac{a^2}{2\nu} \big( \eta ^{2\g}  - 1 - 2\g ( \eta -1)\big)
-  \big(p( \eta ) - p(1)\big)  w _{x}\Bigr) \,\dx\\
&-\Bigl(\bar{ \eta }\int_{\TT} | w  |^2 |  w _x |^2 \,\dx+\frac14\int_{\TT}\eta w_t^2\,\dx
+B_{1} \int_{\TT}  w _x^2 \,\dx + B_2 \int_{\TT} ( \eta - 1)^2 \,\dx\Bigr),
\end{split}
\eeq
where
\ba
&B_1\eqdefa 4 \nu^{-2} a \bbeta^{2\g+1} + \nu^{-2} a^2 \bbeta^{2\g+2}  + a (\g-1) \bigl(\bbeta^\g  + \nu^{-1}\bigr)
 \andf B_2\eqdefa 4a \g^2(\g-1) \nu^{-1 }  \bbeta^{2(2\g-1)}.
\nn \ea

By substituting \eqref{est-vx-3} and \eqref{est-vx-6} into \eqref{est-vx-1}, we achieve \eqref{est-vx}.
\end{proof}

  By multiplying \eqref{est-rv4} by a large enough positive constant $A_2,$ which
   depends on $(\nu, \bbeta),$ and summing up the resulting inequality
  with \eqref{est-vx}, we obtain

\begin{corollary}
{\sl Let $A_2$ and $B_3$ be determined by
\ba\label{A2-B3}
 6 A_2 \nu  = 5\bbeta + 1, \quad B_3 = B_1 + 24 A_2 \nu^{-1} a \bar{ \eta }^{\g+1}.
\nn \ea There holds
\ba\label{est-vx-rv4}
 & \frac{\rm d}{\dt}   \int_{\TT} \Bigl(A_2  \eta  w ^4 + \nu  w _x^2  + \frac{a^2}{2\nu} \big( \eta ^{2\g}  - 1 - 2\g ( \eta -1)\big) -  \big(p( \eta ) - p(1)\big)  w _{x} \Bigr)\,\dx \\
  & \qquad\quad + \frac{1}{2} \int_{\TT}  \eta  w _t^2 \,\dx + \int_{\TT} | w  |^2 |  w _x |^2 \,\dx \leq  B_{3} \int_{\TT}  w _x^2 \,\dx + B_2 \int_{\TT} ( \eta - 1)^2 \,\dx.
\ea
 }
\end{corollary}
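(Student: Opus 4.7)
The strategy is a direct linear combination: take $A_2\cdot$\eqref{est-rv4} and add it to \eqref{est-vx}, choosing $A_2$ so that the dissipation term $6A_2\nu\int_\TT |w|^2|w_x|^2\,\dx$ produced by \eqref{est-rv4} strictly dominates the bad quartic term $5\bar\eta\int_\TT |w|^2|w_x|^2\,\dx$ on the right-hand side of \eqref{est-vx}. Requiring that the net coefficient on the dissipation side equal $1$ gives precisely $6A_2\nu=5\bar\eta+1$, i.e.\ the prescribed value of $A_2$. This absorption step is the only real obstacle—everything else is bookkeeping—because the quartic term $\int|w|^2|w_x|^2\,\dx$ in \eqref{est-vx} cannot be controlled by the basic dissipation $\int w_x^2\,\dx$, and Lemma~\ref{prop-est-rv4} is tailor-made to provide this missing dissipation.

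After forming this linear combination, the left-hand side collects into the time derivative of
\[
 \int_\TT\Bigl(A_2\,\eta w^4+\tfrac{\nu}{2}w_x^2+\tfrac{a^2}{2\nu}\bigl(\eta^{2\g}-1-2\g(\eta-1)\bigr)-\bigl(p(\eta)-p(1)\bigr)w_x\Bigr)\dx,
\]
together with the dissipation $\tfrac12\int_\TT\eta w_t^2\,\dx+\int_\TT |w|^2|w_x|^2\,\dx$ (the latter coming from $6A_2\nu-5\bar\eta=1$). The right-hand side becomes
\[
\bigl(B_1+24A_2\nu^{-1}a\bar\eta^{2\g-1}\bigr)\int_\TT w_x^2\,\dx+B_2\int_\TT(\eta-1)^2\,\dx .
\]

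Finally, to match the statement I would bound $\bar\eta^{2\g-1}\le \bar\eta^{\g+1}$ (valid since $\g\in[1,2]$ and $\bar\eta\ge 1$, which one may always assume by enlarging $\bar\eta$), absorbing the pre-factor into the definition $B_3:=B_1+24A_2\nu^{-1}a\bar\eta^{\g+1}$ exactly as in the statement. This yields inequality \eqref{est-vx-rv4}. No new estimates are required: the proof is a purely algebraic combination of the two preceding lemmas plus an elementary comparison of powers of $\bar\eta$. The only conceptual point worth highlighting—and the one I would make explicit in the final write-up—is why the quartic cross-term $\eta w w_x w_t$ in Lemma~\ref{prop-est-vx} forces us to pair that lemma with Lemma~\ref{prop-est-rv4} rather than with the basic energy identity alone.
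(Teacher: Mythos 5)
Your proposal is correct and follows essentially the same route as the paper: multiply \eqref{est-rv4} by $A_2$ with $6A_2\nu=5\bbeta+1$ and add it to \eqref{est-vx} so that the quartic dissipation absorbs the term $5\bbeta\int_{\TT}|w|^2|w_x|^2\,\dx$, leaving coefficient $1$. The two minor constant discrepancies you flag (the $\tfrac{\nu}{2}w_x^2$ versus $\nu w_x^2$ in the functional, and $\bbeta^{2\g-1}$ versus $\bbeta^{\g+1}$ in $B_3$) are inessential typos/enlargements that do not affect the subsequent use of the corollary.
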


\begin{prop}\label{prop-decay-exp-H1}
{\sl There exist positive constants $\a$ and $C$ solely depending on $(a, \g, \nu, \bar \varsigma_0, \underline \varsigma_{0}, \bar E_{00})$ such that
\ba\label{decay-exp-H1-2}
\| w(t)\|_{H^1_\h} + \| w(t) \|_{C^{0,\frac{1}{2}}_\h}  \leq C E_{10}^{\frac{1}{2}}(y) e^{-\a t}, \quad \forall \, t\in \R^{+}, \ y\in \R.
\ea
}
\end{prop}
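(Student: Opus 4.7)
The plan is to promote the $L^2$ decay of Proposition \ref{prop-decay-exp-L2} into decay of $\|w_x\|_{L^2_\h}$ by constructing a Lyapunov functional from \eqref{est-vx-rv4} together with a large multiple of the differential inequality \eqref{decay-exp-L2-1} that drives the proof of Proposition \ref{prop-decay-exp-L2}. Denote by $\mathcal{E}_0(t)$ the quantity under $\frac{d}{dt}$ in \eqref{decay-exp-L2-1}, i.e.
$$\mathcal{E}_0(t)\eqdefa \int_{\TT}\Bigl(\tfrac{A_1}{2}\eta w^2+A_1\bigl(P(\eta)-P(1)-P'(1)(\eta-1)\bigr)-(\eta w)\, I(\eta-1)\Bigr)\dx,$$
and by $\mathcal{E}_1(t)$ the quantity under $\frac{d}{dt}$ in \eqref{est-vx-rv4}. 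For a constant $K\geq 1$ to be chosen, set $\mathcal{F}(t)\eqdefa K\mathcal{E}_0(t)+\mathcal{E}_1(t)$; then combining $K\times$\eqref{decay-exp-L2-1} with \eqref{est-vx-rv4} yields
$$\mathcal{F}'(t)+(K-B_3)\int_{\TT}w_x^2\,\dx+\Bigl(\tfrac{Ka}{2}-B_2\Bigr)\int_{\TT}(\eta-1)^2\,\dx+\tfrac{1}{2}\int_{\TT}\eta w_t^2\,\dx+\int_{\TT}|w|^2|w_x|^2\,\dx\leq 0.$$
Choosing $K$ large enough (depending only on $a,\gamma,\nu,\bar\eta$) makes both dissipation coefficients strictly positive.

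Next I would establish the two-sided comparison
$$c_1\bigl(\|w_x\|_{L^2_\h}^2+\|\eta-1\|_{L^2_\h}^2+\|\sqrt{\eta}\, w\|_{L^2_\h}^2+\textstyle\int_{\TT}\eta w^4\,\dx\bigr)\leq\mathcal{F}(t)\leq c_2\bigl(\text{same quantity}\bigr).$$
The upper bound is immediate using $0<\underline\eta\leq\eta\leq\bar\eta$ (Propositions \ref{prop-upperbd-density} and \ref{prop-lowerbd-density}) and the Taylor-type bound $0\leq P(\eta)-P(1)-P'(1)(\eta-1)\lesssim(\eta-1)^2$. For the lower bound, one uses $\eta^{2\gamma}-1-2\gamma(\eta-1)\gtrsim(\eta-1)^2$ on the bounded range $[\underline\eta,\bar\eta]$, applies Young's inequality to the cross term $(p(\eta)-p(1))w_x$ to absorb it into $\tfrac\nu2 w_x^2$ at the price of a constant multiple of $(\eta-1)^2$, and enlarges $K$ once more so that the net coefficient of $\|\eta-1\|^2$ in $\mathcal{F}$ remains positive. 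Lemma \ref{prop-kinetic-tx} then relates $\|\sqrt{\eta}w\|^2$ to $\|w_x\|^2$.

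With this coercivity, Lemma \ref{prop-kinetic-tx} gives $\|\sqrt\eta w\|^2\leq\bar\eta^2\|w_x\|^2$, so the dissipation on the left of the $\mathcal{F}'$ inequality dominates a constant multiple of $\mathcal{F}(t)$ minus a quartic remainder $\int\eta w^4\lesssim\|w\|_{L^\infty_\h}^2\|\sqrt\eta w\|^2\lesssim\|w_x\|^2\cdot\|w_x\|^2$, which is genuinely higher order. A bootstrap yielding uniform-in-time boundedness of $\|w_x(t)\|_{L^2_\h}$ (obtained first from $\mathcal{F}'\leq 0$ without using decay, combined with the already-established exponential smallness of $\|\eta-1\|_{L^2_\h}$) lets us treat $\int\eta w^4$ as a small perturbation for $t$ large; a time-shift argument then produces the desired differential inequality $\mathcal{F}'(t)+\alpha\mathcal{F}(t)\leq 0$. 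Grönwall together with $\mathcal{F}(0)\lesssim E_{10}(y)$ gives $\|w_x(t)\|_{L^2_\h}^2\leq C E_{10}(y)e^{-\alpha t}$. Combined with \eqref{decay-exp-L2-4}, this furnishes the $H^1_\h$ part of \eqref{decay-exp-H1-2}, and the Hölder part follows from the one-dimensional Sobolev embedding $H^1(\TT)\hookrightarrow C^{0,1/2}(\TT)$.

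The main obstacle is the quartic term $\int\eta w^4$: it is not linearly dominated by $\|w_x\|^2+\|\eta-1\|^2$, so one cannot directly close a linear Grönwall estimate for $\mathcal{F}$. Handling this requires the two-step bootstrap outlined above, and careful bookkeeping is needed to ensure that all constants $K$, $\alpha$, $C$ depend only on $(a,\gamma,\nu,\bar\varsigma_0,\underline\varsigma_0,\bar E_{00})$ and are therefore uniform in the parameter $y\in\R$—a uniformity that will be crucial when these one-dimensional decay estimates are later integrated in $y$ to produce the two-dimensional convergence result in Theorem \ref{thm2}.
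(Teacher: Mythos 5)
Your proposal is correct and follows essentially the same route as the paper: the paper forms the same Lyapunov functional $F_2=A_3\mathcal{E}_0+\mathcal{E}_1$ in \eqref{decay-exp-H1-3}, derives the same dissipation inequality \eqref{decay-exp-H1-1} and the same two-sided equivalence \eqref{decay-exp-H1-5}, and closes via Lemma \ref{prop-kinetic-tx} and \eqref{decay-exp-L2-4}. The only difference is that you spell out the bootstrap needed to dominate the quartic term $\int_{\TT}\eta w^4\,\dx$ by the dissipation (first uniform boundedness of $\|w_x\|_{L^2_\h}$ from $\mathcal{F}'\leq 0$, then the linear Gr\"onwall inequality), a step the paper leaves implicit; this is a correct and welcome filling-in rather than a departure.
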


\begin{proof}
Thanks to  \eqref{decay-exp-L2-1} and \eqref{est-vx-rv4}, we deduce that
there exists a  large enough  positive constant $A_3$  depending only on  $(a, \g, \nu , \bar\varsigma_0, \bar E_{00})$ so that
\ba\label{decay-exp-H1-1}
& \frac{\rm d}{\dt} F_2(t)
  +   \int_{\TT} \bigl( w _x^2 +  ( \eta -1)^2 +  | w  |^2 |  w _x |^2  + \frac{1}{2} \eta  w _t^2\bigr) \,\dx  \leq  0,
\ea
where
\ba\label{decay-exp-H1-3}
F_{2}(t)   \eqdefa \int_{\TT} \Bigl(&\frac{A_1 A_3}{2}  \eta  w ^2 + A_1 A_3 \big(P( \eta ) - P(1) - P'(1)( \eta - 1)\big) - A_3 ( \eta  w ) I( \eta -1) \\
&+  A_2  \eta  w ^4 + \nu  w _x^2   + \frac{a^2}{2\nu} \big( \eta ^{2\g}  - 1 - 2\g ( \eta -1)\big) -  \big(p( \eta ) - p(1)\big)  w _{x} \Bigr) \,\dx.
\ea
 Observing that
\begin{align*}\label{decay-exp-H1-4}
\bigl|\int_{\TT} \big(p( \eta ) - p(1)\big)  w _{x}\,\dx \bigr| & \leq (2 \nu)^{-1}\int_{\TT} |p( \eta ) - p(1)|^2 \,\dx  +  \frac{\nu}{2}\int_{\TT} | w _{x}|^2\,\dx \\
&\leq (2 \nu)^{-1}p'(\bbeta)^2 \int_{\TT} | \eta - 1|^2 \,\dx  +  \frac{\nu}{2}\int_{\TT} | w _{x}|^2\,\dx.
\end{align*}
By choosing $A_3$ sufficiently large, we have
\ba\label{decay-exp-H1-5}
 \int_{\TT}  \left(\eta  w ^2 + ( \eta -1)^2 +   \eta  w ^4 +   w _x^2\right)
  \,\dx \leq  F_{2}  \leq C \int_{\TT} \left( \eta  w ^2 + ( \eta -1)^2 +   \eta  w ^4 +  w _x^2\right) \,\dx,
\ea
where $C$  solely depends on $(a, \g, \nu , \bar\varsigma_0, \bar E_{00})$. Then \eqref{decay-exp-H1-2} follows from \eqref{decay-exp-H1-1},
\eqref{decay-exp-H1-5} and \eqref{decay-exp-L2-4}.
\end{proof}

We remark that up to now, the strictly lower bound of $\eta$ in \eqref{lowerb-vtr} is not really needed.
Indeed let us  recall

 \begin{lem}[Lemma 3.2 in \cite{F-book}]\label{lem-Poincare}
 {\sl Let $\Omega\subset \R^d$ be a bounded domain with $d\geq 1.$ Let $\rho$ be a non-negative function satisfying
 $$
 \int_\Omega \rho \,\dx \geq M >0 \andf \int_\Omega \rho^q  \,\dx \leq  E_0 <\infty,
 $$
 for some $q>1.$ Then for each $u\in H^1(\Omega)$, there holds
 $$
 \|u\|_{L^2(\Omega)}^2 \leq C(M, E_0) \left(\|\nabla u\|_{L^2(\Omega)}^2 + \Big(\int_\Omega \rho |u|\,\dx \Big)^2\right).
 $$
}
 \end{lem}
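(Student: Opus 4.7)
The plan is to argue by contradiction. Suppose the inequality fails. Then for every $n \in \N$ there exists $u_n \in H^1(\Omega)$ such that
$$
\|u_n\|_{L^2(\Omega)}^2 > n\Bigl(\|\nabla u_n\|_{L^2(\Omega)}^2 + \Bigl(\int_\Omega \rho |u_n|\,\dx\Bigr)^2\Bigr).
$$
After rescaling so that $\|u_n\|_{L^2(\Omega)} = 1$, this forces
$$
\|\nabla u_n\|_{L^2(\Omega)} \to 0 \andf \int_\Omega \rho |u_n|\,\dx \to 0.
$$

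Since $(u_n)$ is bounded in $H^1(\Omega)$ and $\Omega$ is a bounded (Lipschitz) domain, the Rellich--Kondrachov compact embedding extracts a subsequence, still denoted $(u_n)$, and a limit $u \in H^1(\Omega)$ with $u_n \to u$ strongly in $L^2(\Omega)$ and in $L^{q'}(\Omega)$ (where $q'=q/(q-1)$), and $\nabla u_n \rightharpoonup \nabla u$ weakly in $L^2(\Omega)$. Weak lower semicontinuity yields $\|\nabla u\|_{L^2(\Omega)} = 0$, so $u$ is constant on each connected component of $\Omega$; in particular, assuming $\Omega$ connected (the standing hypothesis), $u \equiv c$ for some $c \in \R$, and $\|u\|_{L^2(\Omega)} = 1$ gives $|c|\,|\Omega|^{1/2} = 1$, hence $c \neq 0$.

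To pass to the limit in the weighted integral, I would use H\"older's inequality together with $\rho \in L^q(\Omega)$:
$$
\Bigl|\int_\Omega \rho |u_n|\,\dx - \int_\Omega \rho |u|\,\dx\Bigr| \leq \int_\Omega \rho \bigl||u_n| - |u|\bigr|\,\dx \leq \|\rho\|_{L^q(\Omega)}\,\|u_n - u\|_{L^{q'}(\Omega)} \to 0,
$$
so $\int_\Omega \rho |u|\,\dx = 0$. But with $u \equiv c \neq 0$ this gives $|c| \int_\Omega \rho\,\dx = 0$, contradicting $\int_\Omega \rho\,\dx \geq M > 0$.

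The main technical obstacle is the convergence $\int_\Omega \rho|u_n|\,\dx \to \int_\Omega \rho|u|\,\dx$: it requires the compact embedding $H^1(\Omega) \hookrightarrow L^{q'}(\Omega)$, which is automatic for $d=1,2$ (all $q'<\infty$) but in dimension $d\geq 3$ restricts to $q' < 2d/(d-2)$, i.e.\ $q > 2d/(d+2)$. In the applications of this paper $\OO=\TT\times\R$ is effectively two-dimensional, and the densities we consider lie in $L^q$ for every $q<\infty$ thanks to the uniform upper bound \eqref{upperb-vtr}, so this restriction is harmless here.
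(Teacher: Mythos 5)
The paper offers no proof of this lemma at all --- it is quoted directly from Lemma 3.2 of \cite{F-book} --- so there is no internal argument to compare against; your compactness--contradiction scheme is indeed the standard (and Feireisl's) route. However, as written it contains one genuine gap: you negate the inequality for a \emph{fixed} $\rho$ and extract only a sequence $(u_n)$, so the constant you ultimately obtain is allowed to depend on $\rho$ itself, whereas the statement asserts $C=C(M,E_0)$, uniform over all admissible densities. This uniformity is exactly what the paper needs: in the Remark following the lemma it is applied with $\rho=\eta(t,\cdot,y)$, a whole family of densities varying in $t$ and $y$, and the resulting bound must be independent of $t$ and $y$. To negate the actual statement you must take sequences $(\rho_n,u_n)$ with every $\rho_n$ satisfying $\int_\Omega\rho_n\,\dx\geq M$ and $\int_\Omega\rho_n^q\,\dx\leq E_0$.

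The repair is routine but has to be carried out. Since $q>1$, the space $L^q(\Omega)$ is reflexive, so after a further extraction $\rho_n\rightharpoonup\rho$ weakly in $L^q(\Omega)$ with $\rho\geq 0$; testing against the constant function $1\in L^{q'}(\Omega)$ (boundedness of $\Omega$ enters here) gives $\int_\Omega\rho\,\dx\geq M$ in the limit. The convergence $\int_\Omega\rho_n|u_n|\,\dx\to |c|\int_\Omega\rho\,\dx$ then follows by splitting $\int_\Omega\rho_n|u_n|\,\dx-\int_\Omega\rho\,|c|\,\dx=\int_\Omega\rho_n(|u_n|-|c|)\,\dx+\int_\Omega(\rho_n-\rho)|c|\,\dx$: the first term is controlled by $E_0^{1/q}\||u_n|-|c|\|_{L^{q'}}$ via the same compact embedding you already invoke, and the second tends to zero by weak convergence. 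The rest of your argument is then unchanged and the contradiction with $\int_\Omega\rho\,\dx\geq M>0$ stands. Your closing caveat about needing $q>2d/(d+2)$ when $d\geq 3$ is a fair observation about the statement as transcribed (the compact embedding into $L^{q'}$, or equivalently the equi-integrability of $\rho_n|u_n|$, genuinely requires it); in this paper the lemma is only ever applied on $\TT$, i.e.\ $d=1$, with $\eta$ bounded above by \eqref{upperb-vtr}, so nothing is lost there.
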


\begin{remark}
By integrating \eqref{decay-exp-H1-1} over $[0,t],$ we obtain for all $t\in \R^{+}$
\ba\label{decay-exp-H1-6}
\int_{\TT}  \bigl(\eta  w ^2 + ( \eta -1)^2 +   \eta  w ^4 +   w _x^2\bigr)(t) \,\dx
&+ \int_0^t \int_{\TT} \bigl(( \eta -1)^2  + | w  |^2 |  w _x |^2  + \frac{1}{2}  \eta  w _t^2\bigr) \,\dx\,\dt'   \\
& \leq C \int_{\TT}  \bigl(\eta _0  w _0^2 + ( \eta _0 -1)^2 +   \eta _0  w _0^4 +  w _{0,x}^2\bigr) \,\dx < \infty,
\nonumber
\ea
from which and Lemma \ref{lem-Poincare}, we infer
$$
\int_{\TT} |w|^{2} \, \dx\leq \int_{\TT} \bigl(\eta  w ^2 +   w _x^2\bigr) \,\dx \in L^{\infty}(\R_{+}).
$$
Then along the same lines of proof of \eqref{decay-exp-H1-2} , yet without using the positive lower bound of $\eta$,
we have
$$
\bigl\|\bigl( \eta ^{\frac{1}{2}} w,  \eta -1, \eta ^{\frac{1}{2}} w ^2, w _x\bigr)\bigr\|_{L^2_\h} + \| w \|_{L^\infty_\h}  \leq C E_{10}^{\frac{1}{2}}(y) e^{-\a t}, \quad \forall \, t\in \R^{+},  \ y\in \R,
$$
where $\a>0$ solely depending on $(a, \g, \nu, \bar \varsigma_0)$ and $C>0$ solely depending on $(a, \g, \nu, \bar \varsigma_0, \bar E_{00})$.

\end{remark}

\subsection{Decay estimate of $\| \eta _x(t)\|_{L^2_x}$}
From this subsection on,
 we need to use the lower bound of $ \eta $ obtained in Proposition \ref{prop-lowerbd-density}.

\begin{prop}\label{prop-decay-exp-rx}
{\sl
There exist positive constants $C$ and $\a$  depending  solely on $(a, \g, \nu, \bar \varsigma_0, \bar E_{00}, \underline{\varsigma}_0)$  so that
\be\label{decay-exp-rx}
\| \eta_x(t)\|_{L^2_\h} \leq C E_{10}^{\frac{1}{2}} e^{-\a t}, \quad \forall\, t\in \R^{+}.
\ee
}
\end{prop}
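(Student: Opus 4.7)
The plan is to exploit the one-dimensional ``effective viscous flux'' structure by introducing the good unknown
\[
\Psi \eqdefa w + \nu\,\frac{\eta_x}{\eta^2},
\]
which should be thought of as an ``effective velocity'' incorporating the density gradient. First I would compute $D_t\Psi$ directly. Differentiating the continuity equation yields $D_t\eta_x = -2w_x\eta_x - \eta w_{xx}$, and combined with $D_t\eta = -\eta w_x$ one finds the clean identity $D_t(\eta_x/\eta^2)=-w_{xx}/\eta$. Using the momentum equation in the form $D_tw = (\nu w_{xx}-p'(\eta)\eta_x)/\eta$, the $w_{xx}$ terms cancel miraculously, leaving
\[
D_t\Psi=-\frac{p'(\eta)\eta_x}{\eta}.
\]
This is the crucial step: unlike a direct estimate on $\eta_x$ or $(\log\eta)_x$, the equation for $\Psi$ has no $w_x\Psi$ drift term, which would otherwise cost an uncontrollable $\|w_x\|_{L^\infty}$ factor.

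Next I would multiply by $\eta\Psi$ and integrate over $\TT$, using the renormalization identity $\int\eta\,D_tf\,\dx=\frac{\rm d}{\dt}\int\eta f\,\dx$ (a direct consequence of the continuity equation) to get $\int\eta\Psi D_t\Psi\,\dx=\frac{\rm d}{\dt}\int\eta\Psi^2/2\,\dx$. The right-hand side becomes $\int p(\eta)\,\Psi_x\,\dx$ after integration by parts; expanding $\Psi_x=\nu\,\partial_x(\eta_x/\eta^2)+w_x$ and integrating by parts once more in the first piece produces the dissipation $-\nu\int p'(\eta)\eta_x^2/\eta^2\,\dx$, yielding the energy identity
\[
\frac{\rm d}{\dt}\int_\TT\frac{\eta\Psi^2}{2}\,\dx+\nu\int_\TT\frac{p'(\eta)\eta_x^2}{\eta^2}\,\dx=\int_\TT \bigl(p(\eta)-p(1)\bigr)w_x\,\dx.
\]

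Finally I would close the Gronwall loop. Proposition \ref{prop-lowerbd-density} and Proposition \ref{prop-upperbd-density} give $\underline\eta\le\eta\le\bar\eta$, so the dissipation is bounded below by $c_1\|\eta_x\|_{L^2_\h}^2$, while the right-hand side is controlled by Cauchy--Schwarz and the exponential decay from Propositions \ref{prop-decay-exp-L2} and \ref{prop-decay-exp-H1}:
\[
\Bigl|\int_\TT(p(\eta)-p(1))w_x\,\dx\Bigr|\le C\|\eta-1\|_{L^2_\h}\|w_x\|_{L^2_\h}\le CE_{10}(y)\,e^{-2\alpha t}.
\]
Expanding $\eta\Psi^2=\nu^2\eta_x^2/\eta^3+2\nu w\eta_x/\eta+\eta w^2$ and using Cauchy--Schwarz together with the uniform bounds on $\eta$, one obtains the two-sided comparison $c_2\|\eta_x\|_{L^2_\h}^2-C\|w\|_{L^2_\h}^2\le\int\eta\Psi^2\,\dx\le C\|\eta_x\|_{L^2_\h}^2+C\|w\|_{L^2_\h}^2$. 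Substituting $\|w(t)\|_{L^2_\h}^2\le CE_{10}(y)e^{-2\alpha t}$, the energy inequality reduces to
\[
\frac{\rm d}{\dt}\calE(t)+\alpha'\,\calE(t)\le CE_{10}(y)e^{-2\alpha t},\qquad\calE(t)\eqdefa\int_\TT\frac{\eta\Psi^2}{2}\,\dx,
\]
and Gronwall delivers $\calE(t)\le CE_{10}(y)e^{-2\alpha'' t}$; reversing the two-sided comparison then gives \eqref{decay-exp-rx}. The hard part is the discovery of the ``good unknown'' $\Psi$: any more naive multiplier (e.g.\ $\eta_x$ itself or $(\log\eta)_x$) generates a residual term $\int w_x\,\psi^2\,\dx$ whose control on the torus---without a Poincaré inequality for $w$ and without an $L^\infty$ bound on $w_x$ at this stage---is out of reach. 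The cancellation in Step 1, specific to the 1D effective flux structure, is what makes the whole argument go through.
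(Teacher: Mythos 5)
Your proposal is correct and is essentially the paper's own proof: your good unknown $\Psi = w + \nu\,\eta_x/\eta^2$ is exactly the paper's $w-\nu\zeta_x$ with $\zeta=\eta^{-1}$, and the energy identity, the dissipation $\nu\int p'(\eta)\eta_x^2/\eta^2\,\dx$, and the treatment of $\int(p(\eta)-p(1))w_x\,\dx$ all coincide with \eqref{decay-exp-rx-6}--\eqref{decay-exp-rx-0}. The only (harmless) difference is in closing the argument: the paper adds a large multiple of the basic energy inequality \eqref{decay-exp-L2-1} to build a combined Lyapunov functional $F_3$, whereas you treat the already-established decay of $\|\eta-1\|_{L^2_\h}$, $\|w\|_{L^2_\h}$ and $\|w_x\|_{L^2_\h}$ as an exponentially decaying forcing and apply Gronwall to $\int_\TT\eta\Psi^2\,\dx$ directly.
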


\begin{proof} Let  $\zeta \eqdefa  \eta ^{-1}.$ By  multiplying $- \eta ^{-2}$ to the density equation
of \eqref{CNS-limit}, we get
\ba\label{decay-exp-rx-1}
\zeta_t +  w  \zeta_x - \zeta  w _x = 0.
\nn \ea
Applying $\d_x$ to the above equation gives
\be\label{decay-exp-rx-2}
\zeta_{tx} +  w  \zeta_{xx} - \zeta  w _{xx} = 0 \Longrightarrow  \zeta  w _{xx} = \zeta_{tx} +  w  \zeta_{xx}.
\ee
While we rewrite the momentum equation of \eqref{CNS-limit} as
 \ba\label{decay-exp-rx-4}
 w _t +  w   w _x - \nu (\zeta_{tx} +  w  \zeta_{xx}) - \zeta^{-1} p'( \eta )\zeta_x = 0.
\nn \ea
Multiplying the above equation by $\eta$ yeilds
\be\label{decay-exp-rx-6}
 \eta \big(( w  - \nu \zeta_x)_t +  w  ( w  - \nu \zeta_x)_x \big) -  \eta ^2 p'( \eta )\zeta_x = 0.
\ee
By multiplying $( w  - \nu \zeta_x)$ to \eqref{decay-exp-rx-6} and integrating the resulting equation over $\TT,$ we find
\be\label{decay-exp-rx-7}
\frac{1}{2}\frac{\rm d}{\dt }\int_{\TT}  \eta ( w  - \nu \zeta_x)^2\,\dx  - \int_{\TT}  \eta ^2 p'( \eta )\zeta_x ( w  -\nu\zeta_x)\,\dx  = 0.
\ee
We compute
\begin{align*}\label{decay-exp-rx-8}
- \int_{\TT}  \eta ^2 p'( \eta )\zeta_x ( w -\nu\zeta_x)\,\dx
& =\nu a \g  \int_{\TT}  \eta ^{\g+1} \zeta_x^2 \,\dx +  \int_{\TT}  p( \eta )_x  w  \,\dx.
\end{align*}
In view of  Proposition \ref{prop-lowerbd-density}, we have $ \eta \geq \underline{ \eta }>0,$  so that
\ba\label{decay-exp-rx-9}
 \nu a \g  \int_{\TT} \eta ^{\g+1} \zeta_x^2 \,\dx \geq \nu a \g \underline{ \eta }^{\g+1} \int_{\TT} \zeta_x^2 \,\dx.
\nn \ea
We observe that
\begin{align*}
\bigl|\int_{\TT}  p( \eta )_x  w  \,\dx\bigl| & = \bigl|\int_{\TT}  p( \eta )  w _x \,\dx\bigl|  = \bigl|\int_{\TT}  (p( \eta )-p(1))  w _x \,\dx\bigl| \\
& \leq \int_{\TT}  (p( \eta )-p(1))^2\,\dx + \int_{\TT}  w _x^2 \,\dx \\
& \leq  a^2 \g^2 \bbeta^{2\g-2} \int_{\TT}  ( \eta -1)^2\,\dx + \int_{\TT}  w _x^2 \,\dx.
\end{align*}

By substituting the above estimates into \eqref{decay-exp-rx-7}, we achieve
\be\label{decay-exp-rx-0}
\frac{1}{2}\frac{\rm d}{\dt }\int_{\TT}  \eta ( w  - \nu \zeta_x)^2\,\dx  + \nu a \g \underline{ \eta }^{\g+1} \int_{\TT} \zeta_x^2 \,\dx \leq a^2 \g^2 \bbeta^{2\g-2} \int_{\TT}  ( \eta -1)^2\,\dx + \int_{\TT}  w _x^2 \,\dx.
\ee

Let $A_4$ be a  large enough positive constant  so that
\be\label{A4}
A_4 \geq 4 + \nu \andf \frac{a}{2}A_4  \geq  \a^2 \g^2 \bbeta^{2\g-2} + 1.
\ee
Then by multiplying \eqref{decay-exp-L2-1} by $A_4$ and summing up the resulting inequality with  \eqref{decay-exp-rx-0}, we obtain
\ba\label{decay-exp-rx-11}
 \frac{\rm d}{\dt} F_3(t) +  \int_{\TT} \bigl( w _x^2 + ( \eta -1)^2  +& \nu a \g \underline{ \eta }^{\g+1} \zeta_x^2\bigr) \,\dx   \leq  0
\with\\
F_3(t) \eqdefa \int_{\TT} \Bigl(\frac{A_1A_4}{2}  \eta  w ^2 +& A_1 A_4 \big(P( \eta ) - P(1) - P'(1)( \eta - 1)\big)\\
  &- A_4( \eta  w ) I( \eta -1) + \frac{1}{2} \eta ( w  - \nu \zeta_x)^2\Bigr)\,\dx.
\ea
Notice that
\ba
2 a^2 + (a-b)^2  = a^2 + 2 (a-\frac{b}{2})^2 + \frac{b^2}{2} \geq  a^2 + \frac{b^2}{2},\nn
\ea
by choosing $A_4$ sufficiently large, we find
\ba\label{decay-exp-rx-13}
 \int_{\TT}  \bigl(\eta  w ^2 + ( \eta - 1)^2  + \frac{\nu \underline{ \eta }}{4} \zeta_x^2\bigr)\,\dx
 \leq F_3(t) \leq C \int_{\TT}  \bigl(\eta  w ^2 + ( \eta - 1)^2 +  \frac{\nu}{2} \zeta_x^2\bigr)\,\dx,
\ea
where $C$ depends solely on $(a,\g, \nu , \bar\varsigma_0, \bar E_{00})$.

It follows from \eqref{decay-exp-rx-11}--\eqref{decay-exp-rx-13} that
\ba\label{decay-exp-rx-14}
 \|\zeta_x(t)\|_{L^2_\h} \leq CE_{10}^{\frac12} \e^{-\a t}, \quad \forall\, t\in \R^{+},
\nn \ea
with $\a$ and $C$ satisfying the assumptions in the proposition.
 Then \eqref{decay-exp-rx} follows from the fact that
$$
\| \eta _x(t)\|_{L^2_\h} = \| \eta ^{2} \zeta_x(t) \|_{L^2_\h} \leq \bbeta^2 \|\zeta_x(t)\|_{L^2_\h}.
$$
This completes the proof of Proposition \ref{prop-decay-exp-rx}.
\end{proof}

\subsection{Decay estimates of $H^2$ norms}
We first deduce from  \eqref{ass-ini-1} that
\be\label{ini-4}
  E_{20} (y) \eqdefa \|\varsigma_0-1\|_{H^{2}_\h}^2 + \| w _0\|_{H^{2}_\h}^2 \in (L^1 \cap  L^{\infty})(\R)\andf
 \bar E_{20} \eqdefa  \sup_{y\in \R}E_{20} (y)  <\infty.
\ee

\begin{lem}\label{prop-vxx-L2}
{\sl We have
\be\label{vxx-L2-1}
\int_0^\infty \int_{\TT}  w _{xx}^2\,\dx\,\dt \leq C E_{10},
\ee
where $C$ solely depends on $(a, \g, \nu, \bar \varsigma_0, \bar E_{00}, \underline{\varsigma}_0)$.}
\end{lem}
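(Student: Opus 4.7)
The plan is to extract $w_{xx}$ algebraically from the momentum equation and then show that each of the resulting terms is already integrable on $\R^+\times\TT$ thanks to the dissipation identities and decay estimates established in the previous subsections.

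More precisely, from the momentum equation of \eqref{CNS-limit} I would first write
\[
\nu w_{xx} \;=\; \eta\, w_t \;+\; \eta\, w\, w_x \;-\; p'(\eta)\,\eta_x,
\]
so that, using the uniform upper bound $\eta\leq \bar\eta$ from Proposition \ref{prop-upperbd-density} and $p'(\eta)\leq p'(\bar\eta)$, one gets pointwise
\[
\nu^2\, w_{xx}^{2} \;\leq\; 3\bar\eta\bigl(\eta\, w_t^{2}\bigr) \;+\; 3\bar\eta^{2}\bigl(w^{2} w_x^{2}\bigr) \;+\; 3\,p'(\bar\eta)^{2}\,\eta_x^{2}.
\]

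Next I would integrate this pointwise inequality over $\R^+\times\TT$ and handle the three terms on the right separately. The first two terms are directly controlled by integrating the dissipation identity \eqref{decay-exp-H1-1} in time: together with the upper bound \eqref{decay-exp-H1-5} on $F_2(0)$ in terms of the initial data, this yields
\[
\int_{0}^{\infty}\!\!\int_{\TT}\bigl(\eta\, w_t^{2} + w^{2}w_x^{2}\bigr)\,\dx\,\dt' \;\leq\; C F_2(0) \;\leq\; C E_{10}(y),
\]
with $C$ depending only on $(a,\g,\nu,\bar\varsigma_0,\bar E_{00})$. For the third term, I would invoke the exponential decay \eqref{decay-exp-rx} from Proposition \ref{prop-decay-exp-rx}, which gives
\[
\int_{0}^{\infty}\!\!\int_{\TT}\eta_x^{2}\,\dx\,\dt' \;\leq\; C E_{10}(y)\int_{0}^{\infty} e^{-2\a t}\,\dt' \;\leq\; C\a^{-1}E_{10}(y),
\]
with $C,\a$ depending on $(a,\g,\nu,\bar\varsigma_0,\bar E_{00},\underline{\varsigma}_0)$.

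Combining the three estimates yields \eqref{vxx-L2-1}. I do not foresee a real obstacle here: the identity for $w_{xx}$ is algebraic, and the three integrals on the right are exactly the quantities that have already been controlled. The only delicate point is that the $\eta_x$ contribution requires the lower bound $\eta\geq\underline\eta$ obtained in Proposition \ref{prop-lowerbd-density} (through the proof of Proposition \ref{prop-decay-exp-rx}), which explains the dependence of $C$ on $\underline{\varsigma}_0$.
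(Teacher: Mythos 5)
Your argument is correct, but it takes a genuinely different route from the paper. The paper multiplies the momentum equation by $\eta^{-1}w_{xx}$ and integrates over $\TT$, turning the $w_t$ contribution into $-\frac12\frac{\rm d}{\dt}\int_\TT w_x^2\,\dx$ by integration by parts; this produces the differential inequality \eqref{vxx-L2}, namely $\frac{\rm d}{\dt}\int_\TT w_x^2\,\dx + \frac{\nu}{\bar\eta}\int_\TT w_{xx}^2\,\dx \lesssim \int_\TT w^2w_x^2\,\dx + \int_\TT \eta^{2\g-4}\eta_x^2\,\dx$, whose right-hand side decays like $E_{10}e^{-\a t}$ by the pointwise-in-time bounds $\|w\|_{L^\infty_\h}\|w_x\|_{L^2_\h}\leq CE_{10}^{1/2}e^{-\a t}$ and \eqref{decay-exp-rx}; integrating in $t$ gives the lemma. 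You instead solve the momentum equation algebraically for $\nu w_{xx}$ and square it pointwise, which forces you to control $\int_0^\infty\int_\TT \eta w_t^2\,\dx\,\dt$ — a quantity the paper's proof of this lemma never needs. That space--time bound is indeed available from integrating \eqref{decay-exp-H1-1} (cf.\ \eqref{decay-exp-H1-6}), and your bound $F_2(0)\leq CE_{10}$ via \eqref{decay-exp-H1-5} is legitimate (the $\int_\TT\varsigma_0w_0^4\,\dx$ term is $\leq C\bar\varsigma_0\|w_0\|_{H^1_\h}^2\|w_0\|_{L^2_\h}^2\leq CE_{10}$ with $C$ depending on $\bar E_{00}$ and $\underline\varsigma_0$, all within the allowed dependencies), so the proof closes. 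What you lose relative to the paper is the differential inequality \eqref{vxx-L2} itself, which is reused later (it is multiplied by $A_5$ and added to \eqref{Dtv-L2} in the proof of Proposition \ref{prop-Dtv-L2}); what you gain is a shorter, purely algebraic argument with no absorption step. One cosmetic remark: your stated dependence of the constant in the first estimate on only $(a,\g,\nu,\bar\varsigma_0,\bar E_{00})$ should also include $\underline\varsigma_0$ (or be routed through $\|w_0\|_{L^2_\h}^2\leq 2\underline\varsigma_0^{-1}\bar E_{00}$), but this stays within the dependencies permitted by the lemma.
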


\begin{proof} Indeed by multiplying $ \eta ^{-1}  w _{xx}$ to the momentum equation  of \eqref{CNS-limit} and integrating
 the resulting equation over $\TT,$ we find
\ba\label{vxx-L2}
\frac{\rm d}{\dt} \int_{\TT}  w _x^2 \, \dx + \frac{\nu}{\bbeta} \int_{\TT}  w _{xx}^2\,\dx \leq \frac{4 \bbeta}{\nu}\Big(\int_{\TT}  w ^2 w _x^2 \,\dx + a^2 \g^2 \int_{\TT}  \eta ^{2\g - 4}  \eta _x^2\,\dx \Big).
\ea
Yet by virtue of \eqref{decay-exp-H1-2} and \eqref{decay-exp-rx}, we have
\ba
& \int_{\TT}  w ^2 w _x^2(t) \,\dx \leq \| w(t) \|_{L^\infty_\h}^2 \| w _x(t)\|_{L^2_\h}^2 \leq C  E_{10} e^{-\a  t}, \\
&\int_{\TT}  \eta ^{2\g - 4}  \eta _x^2(t)\,\dx \leq \underline{ \eta }^{2\g - 4} \| \eta _x(t)\|_{L^2_\h}^2 \leq C E_{10}  e^{-\a  t},
\nn\ea
where $C$ solely depends on $(a, \g, \nu, \bar \varsigma_0, \bar E_{00}, \underline{\varsigma}_0)$.
Then integrating \eqref{vxx-L2} over $\R^+$ leads to \eqref{vxx-L2-1}.
\end{proof}

\begin{prop}\label{prop-Dtv-L2}
{\sl Let   $D_t\eqdefa\d_t +  w  \d_x$ be the material derivative. Then there exist positive constants  $C$ and $\a$  depending
 on $(a, \g, \nu, \bar \varsigma_0, \underline{\varsigma}_0,\bar E_{10})$ so that
\be\label{Dtv-L2-1}
\int_{\TT}  \eta |D_t w |^2 \, \dx\leq C  E_{20} (y) e^{-\a  t} \andf \int_0^\infty \int_{\TT} |(D_t w )_x|^2\,\dx\,\dt \leq C  E_{20}(y).
\ee
}
\end{prop}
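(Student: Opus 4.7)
The plan is to derive an evolution equation for $\int_\TT \eta |D_tw|^2\,\dx$ by applying the material derivative $D_t=\d_t+w\d_x$ to the momentum equation of \eqref{CNS-limit} rewritten as $\eta D_tw = \nu w_{xx}+\d_x p(\eta)$, then testing against $D_tw$. Using $D_t\eta=-\eta w_x$ from the transport equation, the commutator identity $D_t(w_{xx})=(D_tw)_{xx}-3w_xw_{xx}$, and $D_tp(\eta)=-a\g\eta^\g w_x$, we obtain
\[
\eta D_t^2w = \eta w_x D_tw + \nu(D_tw)_{xx}-3\nu w_xw_{xx} + D_t(\d_xp(\eta)).
\]
The key identity $\int_\TT \eta f D_tf\,\dx = \tfrac{1}{2}\tfrac{\rm d}{\dt}\int_\TT \eta f^2\,\dx$ (a direct consequence of $\d_t\eta+\d_x(\eta w)=0$) applied with $f=D_tw$, together with integration by parts in the viscous term, produces the prototype identity
\[
\tfrac{1}{2}\tfrac{\rm d}{\dt}\int_\TT \eta|D_tw|^2\,\dx + \nu\int_\TT |(D_tw)_x|^2\,\dx = \int_\TT \eta w_x|D_tw|^2\,\dx - 3\nu\int_\TT w_xw_{xx}D_tw\,\dx + \int_\TT D_t(\d_xp(\eta))\,D_tw\,\dx.
\]

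To avoid needing pointwise-in-time control of $w_{xx}$, I would substitute $w_{xx}=\nu^{-1}(\eta D_tw-\d_xp(\eta))$ into the $w_xw_{xx}D_tw$ integral, producing a convenient partial cancellation with $\int\eta w_x|D_tw|^2\,\dx$. Expanding $D_t(\d_xp(\eta))=\d_x(D_tp(\eta))-w_x\d_xp(\eta)$ and integrating by parts once to move the derivative onto $D_tw$, I would absorb the top-order term $\int a\g\eta^\g w_x(D_tw)_x\,\dx$ into a fraction of $\nu\|(D_tw)_x\|_{L^2_\h}^2$ by Young's inequality. The remaining terms are controlled using only $\|w_x\|_{L^\infty_\h}$, $\|\eta_x\|_{L^2_\h}$, and $\|w_x\|_{L^2_\h}$, yielding a differential inequality of the form
\[
\tfrac{\rm d}{\dt}\int_\TT \eta|D_tw|^2\,\dx + \nu\int_\TT |(D_tw)_x|^2\,\dx \leq C\|w_x\|_{L^\infty_\h}\int_\TT \eta|D_tw|^2\,\dx + C\bigl(\|w_x\|_{L^\infty_\h}^2\|\eta_x\|_{L^2_\h}^2 + \|w_x\|_{L^2_\h}^2\bigr).
\]

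Closing the estimate relies on the decay bounds already at hand: Propositions \ref{prop-decay-exp-H1} and \ref{prop-decay-exp-rx} give $\|w_x\|_{L^2_\h}+\|\eta_x\|_{L^2_\h}\leq CE_{10}^{1/2}e^{-\a t}$, while the 1D Sobolev interpolation $\|w_x\|_{L^\infty_\h}^2\lesssim \|w_x\|_{L^2_\h}\|w_{xx}\|_{L^2_\h}$ combined with Lemma \ref{prop-vxx-L2} places the time-integral $\int_0^\infty\|w_x\|_{L^\infty_\h}^2\,\dt$ under control with exponentially decaying envelope. The initial value $\int_\TT\eta|D_tw|^2\big|_{t=0}$ is bounded by $E_{20}(y)$ via $D_tw|_{t=0}=w_t(0)+w_0w_{0,x}$ together with the momentum equation at $t=0$, expressing $w_t(0)$ in terms of $w_{0,xx}$, $\eta_{0,x}$, and $w_0w_{0,x}$.

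The main obstacle is upgrading from boundedness to genuine \emph{exponential} pointwise decay of $\int_\TT\eta|D_tw|^2\,\dx$, since a direct Gr\"onwall only uses the integrability of $\|w_x\|_{L^\infty_\h}$ and yields a uniform bound. To overcome this, I would follow the strategy employed repeatedly in the preceding subsections: add a small multiple of the present inequality to the coercive inequality \eqref{decay-exp-H1-1} (and, if needed, its $\eta_x$-refinement used in Proposition \ref{prop-decay-exp-rx}) to build a composite Lyapunov functional whose dissipation controls the whole functional up to an exponentially decaying inhomogeneity; the resulting Gr\"onwall inequality with exponentially small source then delivers the pointwise $e^{-\a t}$ decay. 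Integrating the same differential inequality over $(0,\infty)$ simultaneously provides the $L^2$-in-time bound on $(D_tw)_x$ claimed in \eqref{Dtv-L2-1}.
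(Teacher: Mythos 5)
Your proposal is correct and follows essentially the same route as the paper: apply $D_t$ to the momentum equation, test with $D_tw$, use the momentum equation itself to reorganize the cubic terms, and close via a composite Lyapunov functional built on \eqref{decay-exp-H1-1}/$F_3$ together with the $w_{xx}$-dissipation of Lemma \ref{prop-vxx-L2}. The only (harmless) variation is where you substitute $\nu w_{xx}=\eta D_tw-\d_xp(\eta)$ --- into the $w_xw_{xx}D_tw$ term rather than into $D_t\eta\,D_tw$ as the paper does --- which leaves you a $\|w_x\|_{L^\infty_\h}\int_{\TT}\eta|D_tw|^2\,\dx$ term to absorb by Gr\"onwall where the paper instead gets $\int_{\TT}w_x^4\,\dx$ and $\int_{\TT}w_{xx}^2\,\dx$; when assembling the Lyapunov functional you should make explicit, as the paper does, that $\int_{\TT}\eta|D_tw|^2\,\dx\lesssim\int_{\TT}(\eta w_t^2+\eta w_x^2)\,\dx$, so that the dissipation of \eqref{decay-exp-H1-1} indeed dominates the new piece of the functional and the pointwise exponential decay follows.
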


\begin{proof}
Applying the material derivative $D_t$ to the momentum equation of \eqref{CNS-limit} gives
\ba\label{Dtv-L3}
 \eta D_t^2  w  + D_t  \eta D_t  w  - \nu D_t  w _{xx} + D_t p( \eta )_x = 0.
\nn \ea
We compute
\ba
D_t  \eta D_t  w   = -  \eta  w _x D_t  w  = - w _x (\nu  w _{xx} - p( \eta )_x) = - \frac{\nu}{2}( w _x^2)_x +  w _x p( \eta )_x,
\nn \ea
and
\ba
- \nu D_t  w _{xx} = -\nu ( w _{txx} +  w   w _{xxx})   = -\nu (D_t w )_{xx} + \frac{3\nu}{2}( w _x^2)_x.
\nn \ea
As a result, it comes out
\ba\label{Dtv-L7}
 \eta D_t^2  w  -\nu (D_t w )_{xx} +\nu ( w _x^2)_x  + \big( w  p( \eta )_x \big)_x + p( \eta )_{tx} = 0.
\ea
Multiplying \eqref{Dtv-L7} by $D_t  w $ and integrating the resulting equation over $\TT$ yields
\ba\label{Dtv-L8}
\frac{1}{2}& \frac{\rm d}{\dt} \int_{\TT}   \eta |D_t w |^2 \, \dx + \nu \int_{\TT} |(D_t w )_x|^2\,\dx \\
 & = \nu\int_{\TT}  w _x^2 (D_t w )_x\,\dx + \int_{\TT}  w  p( \eta )_x (D_t w )_x\,\dx + \int_{\TT} p( \eta )_t (D_t w )_x\,\dx\\
& \leq \frac{\nu}{2}\int_{\TT} |(D_t w )_x|^2\,\dx + 8 \nu^{-1} \int_{\TT}  w _x^4\,\dx + C \int_{\TT}  (\eta _x^2 +  w _x^2)\,\dx,
\ea
where $C$ solely depend on $(a, \g, \nu, \bar \varsigma_0, \bar E_{00}, \underline{\varsigma}_0)$, and we used the uniform boundedness of $\| \eta \|_{L^\infty}$ and $\| w \|_{L^\infty}$.

While by applying  Sobolev embedding theorem and H\"older inequality, one has
\ba\label{Dtv-L9}
 \| w _x\|_{L^4_\h}^4 \leq   \| w _x\|_{L^2_\h}^2  \| w _x\|_{L^\infty_\h}^2 \leq C \| w _x\|_{L^2_\h}^2 \| w _x\|_{H^1_\h}^2 \leq C e^{- 2\a  t} \big(\| w _x\|_{L^2_\h}^2 + \| w _{xx}\|_{L^2_{\h}}^2\big).
\nn \ea
Then we deduce from   \eqref{Dtv-L8} that there exists $C$ solely depending on $(a, \g, \nu, \bar \varsigma_0, \bar E_{00}, \underline{\varsigma}_0,\bar E_{10})$ so that
\ba\label{Dtv-L2}
\frac{\rm d}{\dt} \int_{\TT}  \eta |D_t w |^2 \, \dx + \nu \int_{\TT} |(D_t w )_x|^2\,\dx \leq C \int_{\TT} \bigl( w _x^2 +  w _{xx}^2 +  \eta _x^2\bigr)\,\dx.
\ea

By multiplying \eqref{vxx-L2} by a sufficiently large constant $A_5$ and summing up the resulting  inequality with \eqref{Dtv-L2}, we get
\ba\label{vxx-Dtv-L2}
\frac{\rm d}{\dt} \int_{\TT} \bigl(A_5  w _x^2 +  \eta |D_t w |^2\bigr) \, \dx +  \int_{\TT}  w _{xx}^2\,\dx  + \nu \int_{\TT} |(D_t w )_x|^2\,\dx  \leq C \int_{\TT}  \bigl(w _x^2 +  \eta _x^2\bigr)\,\dx.
\ea
Observing that
\ba\label{vxx-Dtv-L2-2}
 \int_{\TT}   \eta |D_t w |^2 \, \dx  \leq  C \int_{\TT}   \bigl(\eta  w _t^2 +  \eta  w _x^2\bigr) \, \dx.
\nn \ea
Then by virtue of  \eqref{decay-exp-H1-1} and \eqref{decay-exp-rx-11}, we can find a large enough constant $A_6$ such that the quantity
\ba\label{def-F4}
F_4(t)\eqdefa A_6 F_3(t) + \int_{\TT} \bigl( A_5  w _x^2 +  \eta |D_t w |^2 \bigr)\, \dx
\ea
satisfies
\begin{align*}\label{vxx-Dtv-L2-4}
\int_{\TT}  \bigl(\eta  w ^2 + ( \eta - 1)^2 + &  \eta _x^2  +  \eta  w _x^2 +  w _x^2 +  \eta |D_t w |^2\bigr)\,\dx  \leq F_4(t) \\
&  \leq C \int_{\TT} \bigl( \eta  w ^2 + ( \eta - 1)^2 +   \eta _x^2 +  \eta  w _x^2 +  \eta |D_t w |^2\bigr)\,\dx,
\end{align*}
and
\ba\label{vxx-Dtv-L2-5}
\frac{\rm d}{\dt}F_4(t) + \int_{\TT} \bigl( w _x^2 + ( \eta -1)^2  +  | w  |^2 |  w _x |^2 +  \eta  w _t^2 +  \eta _x^2 + w_{xx}^2\bigr)\,\dx \leq 0.
\nn \ea
Here $F_3(t)$ is defined in  \eqref{decay-exp-rx-11} and $C$ solely depends on $(a, \g, \nu, \bar \varsigma_0, \bar E_{00}, \underline{\varsigma}_0,\bar E_{10})$. Then there exists $\a>0$ solely depending on $(a, \g, \nu, \bar \varsigma_0, \bar E_{00}, \underline{\varsigma}_0)$ such that
\ba\label{def-F4-est}
F_4(t)\leq C E_{20} e^{-\a t}.
\nn \ea
And \eqref{Dtv-L2-1} follows.
\end{proof}

\subsection{Proof of Proposition \ref{S2prop1}} \label{sec-decay-all}

With the estimates obtained in the previous sections, we shall prove  Proposition \ref{S2prop1}
by induction method and along  the same line as that of  Propositions \ref{prop-upperbd-density},
\ref{prop-decay-exp-L2}, \ref{prop-decay-exp-H1}, \ref{prop-decay-exp-rx}
and \ref{prop-Dtv-L2}. Since it involves only technicalities, we postpone the proof in Appendix \ref{appa}.

\section{Decay estimates of  $(\eta_{y},w_{y})$}\label{sec:1dNS-y}

In this section, we investigate the decay in time estimates of $(\eta _y,  w _y)$.  We first get, by applying $\d_y$ to \eqref{CNS-limit}, that
\be\label{CNS-1d-dy}
\left\{\begin{aligned}
& \eta _{yt}  + ( \eta  w )_{yx}= 0,\\
&( \eta  w )_{yt} + ( \eta  w ^2)_{yx} - \nu w _{yxx} + p( \eta )_{yx} = 0.
\end{aligned}\right.
\ee
Integrating \eqref{CNS-1d-dy} with respect to $x$ over $\TT$ gives
\ba\label{csv-m-m-dy}
\frac{\rm d}{\dt}\int_{\TT}  \eta _y \,\dx = 0 \andf \frac{\rm d}{\dt}\int_{\TT} ( \eta  w )_y \,\dx = 0.
\nn \ea
It follows from  \eqref{ini-1} that
\ba\label{ini-1-dy}
\int_{\TT} \varsigma_{0y}\,\dx = 0 \andf \int_{\TT} (\varsigma_0  w _0)_y \,\dx = 0.
\nn \ea
This implies
\ba\label{csv-m-m-1-dy}
\int_{\TT}  \eta _y \,\dx = 0 \andf \int_{\TT} ( \eta  w )_y\,\dx = 0, \quad \forall \, t\in\R_{+}.
\ea

\subsection{Decay estimates of $L^2$ norms}

Without loss of generality, we may assume that
\be\label{pressure-2}
  p'(1) = 1.
\ee
Note that this assumption \eqref{pressure-2} can always hold  after a suitable normalization.
In view of  \eqref{ass-ini-1}, one has
\be\label{ini-6}
 E_{00}^{(1)} \eqdefa   \|(\d_y \varsigma_{0},\d_y  w _{0})\|_{L^2_\h}^2 \in (L^{1} \cap L^{\infty})(\R) \andf \bar E_{00}^{(1)} \eqdefa  \sup_{y\in \R} E_{00}^{(1)} <\infty.
\ee

Throughout this subsection, $A, \a$ and $C$ are positive numbers solely depending on $(a, \g, \nu, \bar \varsigma_0, \bar E_{10}, \underline{\varsigma}_0),$ which may differ from line to line.

\begin{lem}\label{energy-basic-y}
{\sl For all $t\in \R^{+}$,
one has
\ba\label{energy-0-0}
\int_{\TT} \bigl( \eta  w _y^2 +  \eta _y^2\bigr) \,\dx + \nu \int_0^t \int_{\TT}| w _{yx}|^2 \,\dx  \leq C E_{00}^{(1)} .
\ea}
\end{lem}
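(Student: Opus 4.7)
The plan is to derive an energy identity for $(\eta_y,w_y)$ by testing the $y$-differentiated system \eqref{CNS-1d-dy}, and to close it using the decay estimates for the background $(\eta-1,w)$ already obtained in Propositions~\ref{prop-decay-exp-L2}--\ref{prop-Dtv-L2} and Lemma~\ref{prop-vxx-L2}, together with the uniform bounds $\underline{\eta}\le\eta\le\bar{\eta}$.

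First I would rewrite the momentum equation in the material-derivative form $\eta D_tw=\nu w_{xx}-p(\eta)_x$, differentiate once in $y$, and use the commutator $\partial_yD_t=D_t\partial_y+w_y\partial_x$ to obtain
\begin{equation*}
\eta D_tw_y+\eta_y D_tw+\eta w_xw_y=\nu w_{yxx}-p'(\eta)\eta_{yx}-p''(\eta)\eta_x\eta_y.
\end{equation*}
Testing against $w_y$, exploiting the transport identity $\int\eta f\,D_tf\,dx=\tfrac12\tfrac{d}{dt}\int\eta f^2\,dx$ (a consequence of the continuity equation), and integrating by parts in the viscous and pressure contributions produces
\begin{equation*}
\tfrac12\tfrac{d}{dt}\int\eta w_y^2\,dx+\nu\int w_{yx}^2\,dx=\int p'(\eta)\eta_yw_{yx}\,dx-\int\eta_y(D_tw)w_y\,dx-\int\eta w_xw_y^2\,dx.
\end{equation*}

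The key step is to absorb the coupling integral into a time derivative using the differentiated continuity equation. From \eqref{CNS-1d-dy}, $\eta w_{yx}=-D_t\eta_y-w_x\eta_y-\eta_xw_y$; substituting, and applying the transport identity once more with coefficient $p'(\eta)/\eta$, produces
\begin{equation*}
\int p'(\eta)\eta_yw_{yx}\,dx=-\tfrac12\tfrac{d}{dt}\int\tfrac{p'(\eta)}{\eta}\eta_y^2\,dx+\mathcal{R}(t),
\end{equation*}
where $\mathcal{R}(t)$ is a sum of integrals quadratic in $(\eta_y,w_y)$ with coefficients involving $w_x$, $\eta_x$, and $p''(\eta)\eta_x$. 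Combining the two identities and handling the cross term $\eta_x\eta_yw_y$ through $\|w_y\|_{L^\infty_\h}\le C\|w_y\|_{L^2_\h}^{1/2}\|w_{yx}\|_{L^2_\h}^{1/2}$ followed by Young's inequality yields
\begin{equation*}
\tfrac{d}{dt}\int\Bigl(\eta w_y^2+\tfrac{p'(\eta)}{\eta}\eta_y^2\Bigr)dx+\nu\int w_{yx}^2\,dx\le\Psi(t)\int(\eta_y^2+w_y^2)\,dx,
\end{equation*}
with $\Psi(t)$ controlled by $\|w_x\|_{L^\infty_\h}^2+\|\eta_x\|_{L^2_\h}^{4/3}+\|D_tw\|_{L^\infty_\h}^2$.

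To conclude I would verify that $\Psi\in L^1(\R^+)$. The 1D interpolation $\|f\|_{L^\infty_\h}^2\le C(\|f\|_{L^2_\h}^2+\|f\|_{L^2_\h}\|f_x\|_{L^2_\h})$, the exponential decay $\|w_x\|_{L^2_\h}+\|\eta_x\|_{L^2_\h}\le CE_{10}^{1/2}e^{-\alpha t}$ from Propositions~\ref{prop-decay-exp-H1} and~\ref{prop-decay-exp-rx}, the finite dissipation $\int_0^\infty\|w_{xx}\|_{L^2_\h}^2\,dt\le CE_{10}$ from Lemma~\ref{prop-vxx-L2}, and the analogous bounds for $D_tw$ from Proposition~\ref{prop-Dtv-L2} together give $\|\Psi\|_{L^1(\R^+)}\le C$ with $C$ depending only on $(a,\gamma,\nu,\bar{\varsigma}_0,\underline{\varsigma}_0,\bar E_{10})$. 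Since $\underline{\eta}\le\eta\le\bar{\eta}$ makes $\int(\eta w_y^2+\tfrac{p'(\eta)}{\eta}\eta_y^2)\,dx$ comparable to $\int(\eta_y^2+w_y^2)\,dx$, and since at $t=0$ we have $\eta_y(0)=\partial_y\varsigma_0$ and $w_y(0)=\partial_y w_0$ so that the initial quantity is bounded by $\bar{\varsigma}_0 E_{00}^{(1)}(y)$ via~\eqref{ini-6}, Gronwall's inequality yields the stated uniform-in-time estimate; a further integration in $t$ produces the dissipation part of~\eqref{energy-0-0}. The main obstacle is mere bookkeeping: every term generated by substituting $\eta w_{yx}$ in the coupling integral and by expanding the material-derivative structure must either be quadratic in $(\eta_y,w_y)$ with an $L^1(\R^+)$ prefactor or be absorbable into $\tfrac{\nu}{2}\|w_{yx}\|_{L^2_\h}^2$ through Young's inequality, and no new estimate beyond those already available at the $\bar E_{10}$ level is required.
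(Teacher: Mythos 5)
Your argument is correct, but it takes a genuinely different route from the paper. The paper works with the conservative form of \eqref{CNS-1d-dy}: it tests the momentum equation with $w_y$ and the continuity equation with $\eta_y$, sums, and observes that the two dangerous coupling integrals combine into $\int_{\TT}(p'(\eta)-\eta)\eta_y w_{yx}\,\dx$; this is then killed by the normalization $p'(1)=1$ from \eqref{pressure-2} together with the decay of $\eta-1$, so the unweighted energy $\int_{\TT}(\eta w_y^2+\eta_y^2)\,\dx$ closes under Gronwall. You instead work with the material-derivative form and eliminate the coupling term $\int_{\TT}p'(\eta)\eta_y w_{yx}\,\dx$ exactly, by substituting $\eta w_{yx}=-D_t\eta_y-w_x\eta_y-\eta_x w_y$ and absorbing the $D_t\eta_y$ contribution into the weighted energy $\int_{\TT}\bigl(\eta w_y^2+\tfrac{p'(\eta)}{\eta}\eta_y^2\bigr)\,\dx$ (note $\tfrac{p'(\eta)}{\eta}=P''(\eta)$, so this is the natural relative-entropy weight). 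What your approach buys is independence from the normalization \eqref{pressure-2}, and it closes using only the $\bar E_{10}$-level estimates (exponential decay of $\|w_x\|_{L^2_\h}$, $\|\eta_x\|_{L^2_\h}$ and $L^1$-in-time integrability of $\|w_{xx}\|_{L^2_\h}^2$ and $\|(D_tw)_x\|_{L^2_\h}^2$) via an $L^1$ Gronwall factor $\Psi$, whereas the paper invokes the pointwise $L^\infty$ decay \eqref{decay-exp-all-1}--\eqref{decay-exp-all-3} from the full Proposition \ref{S2prop1}; the price is the extra commutator bookkeeping coming from $D_t$ acting on the weight $p'(\eta)/\eta$. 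Two trivial points to fix in the write-up: the interpolation $\|w_y\|_{L^\infty_\h}\le C\|w_y\|_{L^2_\h}^{1/2}\|w_{yx}\|_{L^2_\h}^{1/2}$ requires zero mean, which $w_y$ need not have, so you should keep the lower-order term $\|w_y\|_{L^2_\h}$ (this changes nothing since its prefactor $\|\eta_x\|_{L^2_\h}$ is integrable in time); and the estimate of $\int_{\TT}\eta_y(D_tw)w_y\,\dx$ is most cleanly done with $\|D_tw\|_{L^\infty_\h}$ to the first power, which lies in $L^1(\R^+)$ by H\"older from Proposition \ref{prop-Dtv-L2}.
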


\begin{proof}
Taking $L^2(\TT)$ inner product of $\eqref{CNS-1d-dy}_2$ with $ w _y$ gives
\ba\label{energy-0-1}
\int_{\TT}( \eta  w )_{yt}  w _y \,\dx + \int_{\TT}( \eta  w ^2)_{yx}  w _y \,\dx  + \nu \int_{\TT}| w _{yx}|^2 \,\dx + \int_{\TT} p( \eta )_{yx}  w _y \,\dx = 0.
\ea
Next we handle term by term above.
For the first term in \eqref{energy-0-1}, we have
\begin{align*}
\int_{\TT}( \eta  w )_{yt}  w _y \,\dx & =  \int_{\TT}( \eta  w _y +  \eta _y  w )_{t}  w _{y} \,\dx\\
& = \frac{1}{2}\frac{\rm d}{\dt}  \int_{\TT}  \eta  w _y^2  \,\dx + \frac{1}{2} \int_{\TT}  \eta _t  w _y^2  \,\dx+ \int_{\TT}( \eta  w )_{y} ( w   w _{y})_x \,\dx +  \int_{\TT} w _t   \eta _y  w _y \,\dx.
\end{align*}
For the second  term in \eqref{energy-0-1}, we have
\ba\label{energy-0-5}
\int_{\TT}( \eta  w ^2)_{yx}  w _y \,\dx  = - \int_{\TT}( \eta  w ^2)_{y}  w _{yx} \,\dx  = - \int_{\TT}(2  \eta  w   w _y +  w ^2  \eta _y)  w _{yx} \,\dx .
\nn \ea
And
for the last term in \eqref{energy-0-1}, one has
\ba\label{energy-0-6}
\int_{\TT} p( \eta )_{yx}  w _y \,\dx = - \int_{\TT} p( \eta )_{y}  w _{yx} \,\dx = - \int_{\TT} p'( \eta )  \eta _{y}  w _{yx} \,\dx.
\nn \ea

By substituting the above equalities into \eqref{energy-0-1}, we achieve
\ba\label{energy-0-7}
\frac{1}{2}& \frac{\rm d}{\dt}  \int_{\TT}  \eta  w _y^2  \,\dx + \nu \int_{\TT}| w _{yx}|^2 \,\dx   \\
 & \quad =  - \frac{1}{2} \int_{\TT}  \eta _t  w _y^2  \,\dx - \int_{\TT}( w   w _x  \eta _y  w _y +  \eta  w _x  w _y^2 ) \,\dx\\
 & \qquad -  \int_{\TT} w _t   \eta _y  w _y \,\dx + \int_{\TT}   \eta  w   w _y   w _{yx} \,\dx  + \int_{\TT} p'( \eta )  \eta _{y}  w _{yx} \,\dx.
\ea

On the other hand, by
taking $L^2(\TT)$ inner product of  $\eqref{CNS-1d-dy}_1$ with $ \eta _y,$ we find
\ba\label{energy-0-9}
\frac{1}{2} \frac{\rm d}{\dt} \int_{\TT}  \eta _y^2  \,\dx & =  - \int_{\TT}( \eta  w )_{yx}  \eta _y \,\dx \\
 & = -  \int_{\TT}  w   \eta _{yx}   \eta _y  \,\dx - \int_{\TT} (  w _x  \eta _{y}^2  +  \eta _{x}   w _y  \eta _y) \,\dx - \int_{\TT}   \eta   w _{yx}  \eta _y  \,\dx.
\ea

Summing up \eqref{energy-0-7} and \eqref{energy-0-9} gives rise to
\begin{align*}
&\frac{1}{2} \frac{\rm d}{\dt}  \int_{\TT}  \bigl(\eta  w _y^2 +  \eta _y^2\bigr) \,\dx + \nu \int_{\TT}| w _{yx}|^2 \,\dx   \\
& \quad =  -\int_{\TT}\big( \frac{1}{2}   \eta _t + \eta  w _x   \big)   w _y^2  \,\dx  -
 \int_{\TT}\big( w   w _x + w _t  + \eta _x  \big)   \eta _y  w _y  \,\dx  \\
& \qquad+ \frac{1}{2} \int_{\TT}  w _x  \eta _{y}^2 \,\dx +  \int_{\TT}{( p'( \eta ) -  \eta )}  \eta _y  w _{yx}  \,\dx + \int_{\TT}   \eta  w   w _y   w _{yx} \,\dx,
\end{align*}
from which,  \eqref{pressure-2}, \eqref{decay-exp-all-1} and  \eqref{decay-exp-all-2}, we deduce
\ba\label{energy-0}
 \frac{\rm d}{\dt}  \int_{\TT}  \bigl(\eta  w _y^2 +  \eta _y^2\bigr) \,\dx + \nu \int_{\TT} w _{yx}^2 \,\dx  \leq C e^{-\a t}
  \int_{\TT} \bigl( \eta  w _y^2 +  \eta _y^2\bigr) \,\dx.
\ea
Applying Gronwall's inequality leads to
\eqref{energy-0-0}.
\end{proof}

\begin{prop}\label{prop-decay-L2-y}
{\sl  We have
\ba\label{decay-L2-y-2}
\int_{\TT} \bigl( \eta  w _y^2 +  \eta _y^2\bigr)(t) \,\dx    \leq C  E_{00}^{(1)}(y) e^{-\a t} \andf
 \int_0^\infty \int_{\TT} \bigl(w _{yx}^2 + \eta _y^2\bigr) \,\dx \,\dt  \leq C E_{00}^{(1)}(y).
\ea}
\end{prop}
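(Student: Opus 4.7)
The plan is to upgrade the uniform-in-time boundedness from Lemma \ref{energy-basic-y} to an exponential decay estimate by mimicking the strategy used in the proof of Proposition \ref{prop-decay-exp-L2}. Lemma \ref{energy-basic-y} already gives
\[
\frac{d}{dt}\int_\TT(\eta w_y^2+\eta_y^2)\,dx+\nu\int_\TT w_{yx}^2\,dx \le C e^{-\alpha t}\int_\TT(\eta w_y^2+\eta_y^2)\,dx,
\]
but, lacking coercivity in $\|\eta_y\|_{L^2}$, this only yields uniform boundedness through Gronwall's inequality. To produce the missing coercivity I would build, by an $I$-operator argument, a differential inequality controlling $\|\eta_y\|_{L^2}^2$ itself.

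Concretely, I would test the momentum equation in \eqref{CNS-1d-dy} against $I(\eta_y)$, which is well-defined and vanishes at $x=0,1$ thanks to the conservation identity $\int_\TT \eta_y\,dx=0$ from \eqref{csv-m-m-1-dy}. Integration by parts converts the pressure term into $-\int_\TT p'(\eta)\eta_y^2\,dx$, and Proposition \ref{prop-lowerbd-density} yields $p'(\eta)\ge p'(\underline\eta)>0$, which is the desired coercive quantity. The viscous contribution becomes $\nu\int_\TT w_{yx}\eta_y\,dx$, absorbable into $\|w_{yx}\|_{L^2}^2$ plus a small multiple of $\|\eta_y\|_{L^2}^2$ via Young's inequality. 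For the time-derivative term I would use
\[
\int_\TT(\eta w)_{yt}\,I(\eta_y)\,dx = \frac{d}{dt}\int_\TT(\eta w)_y\,I(\eta_y)\,dx - \int_\TT(\eta w)_y\,\partial_t I(\eta_y)\,dx,
\]
together with $\partial_t I(\eta_y)=I(\eta_{yt})=-I\bigl((\eta w)_{yx}\bigr)=-\bigl[(\eta w)_y-(\eta w)_y|_{x=0}\bigr]$; combined with $\int_\TT(\eta w)_y\,dx=0$, this expresses the second piece as $\|(\eta w)_y\|_{L^2}^2$, bounded by $\|w\|_{L^\infty}^2\|\eta_y\|_{L^2}^2+\|\eta\|_{L^\infty}^2\|w_y\|_{L^2}^2$. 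The remaining convective term $-\int_\TT(\eta w^2)_y\eta_y\,dx$ gains a factor $e^{-\alpha t}$ from Proposition \ref{S2prop1}. Altogether I would arrive at
\[
c_0\|\eta_y\|_{L^2}^2 \le \frac{d}{dt}\int_\TT(\eta w)_y\,I(\eta_y)\,dx + Ce^{-\alpha t}\|\eta_y\|_{L^2}^2 + C\|w_{yx}\|_{L^2}^2 + C\|w_y\|_{L^2}^2.
\]

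I would then add a sufficiently large multiple $A$ of the Lemma \ref{energy-basic-y} estimate to the above display. Setting $G(t)\eqdefa A\int_\TT(\eta w_y^2+\eta_y^2)\,dx+\int_\TT(\eta w)_y\,I(\eta_y)\,dx$, the control $\bigl|\int_\TT(\eta w)_y I(\eta_y)\bigr|\le \|(\eta w)_y\|_{L^1}\|I(\eta_y)\|_{L^\infty}\lesssim \|\eta_y\|_{L^2}^2+\|w_y\|_{L^2}^2$ makes $G$ equivalent to $\int_\TT(\eta w_y^2+\eta_y^2)\,dx$ for $A$ large. Choosing $A$ so that the good terms $A\nu\|w_{yx}\|^2+c_0\|\eta_y\|^2$ absorb all the $\|w_{yx}\|^2,\|w_y\|^2$ remainders (using Poincar\'e with zero-mean correction for $w_y$, as explained below), we obtain $\tfrac{d}{dt}G+\alpha_0 G\le Ce^{-\alpha t}G$, and Gronwall yields $G(t)\le CE_{00}^{(1)}(y)e^{-\alpha_0 t}$, which is the first half of \eqref{decay-L2-y-2}. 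The second half then follows by integrating Lemma \ref{energy-basic-y} over $\R_+$ and using the just-proved pointwise decay to integrate the $\|\eta_y\|_{L^2}^2$ contribution.

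The main obstacle will be the careful bookkeeping in the $I(\eta_y)$ argument, in particular bounding $\|w_y\|_{L^2}^2$ in terms of $\|w_{yx}\|_{L^2}^2$ on $\TT$: since $\langle w_y\rangle$ is not automatically zero, I would use the identity $\langle\eta\rangle\langle w_y\rangle = -\int_\TT(\eta-1)(w_y-\langle w_y\rangle)\,dx - \int_\TT\eta_y w\,dx$ coming from $\int_\TT(\eta w)_y\,dx=0$ and $\langle\eta\rangle=1$, which together with the exponentially small factors $\|\eta-1\|_{L^2},\|w\|_{L^\infty}$ of Proposition \ref{S2prop1} lets one absorb $\|w_y\|^2$ into $\|w_{yx}\|^2$ plus a small, decaying multiple of the energy. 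Once that absorption is justified, the rest of the argument is standard.
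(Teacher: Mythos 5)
Your proposal follows essentially the same route as the paper: testing the $y$-differentiated momentum equation against $I(\eta_y)$, using $\int_\TT\eta_y\,\dx=\int_\TT(\eta w)_y\,\dx=0$ to recover coercivity in $\|\eta_y\|_{L^2}^2$ and to absorb $\|w_y\|_{L^2}^2$ into $\|w_{yx}\|_{L^2}^2$ plus decaying remainders, and then forming a Lyapunov functional with a large multiple of the basic energy. The only slip is the sign of the cross term: since your key inequality reads $c_0\|\eta_y\|^2\le \frac{\rm d}{\dt}\int_\TT(\eta w)_y I(\eta_y)\,\dx+\cdots$, the functional must be $G=A\int_\TT(\eta w_y^2+\eta_y^2)\,\dx-\int_\TT(\eta w)_y I(\eta_y)\,\dx$ (as in the paper's $F_1^{(1)}$), not with a plus sign.
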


\begin{proof} Recall \eqref{I-pt2}, we get, by
multiplying the momentum equation of \eqref{CNS-1d-dy}  by $I( \eta _y)$ and integrating the resulting equality over $\TT,$ that
\ba\label{decay-L2-y-3}
\int_{\TT} p( \eta )_{y}  \eta _y \,\dx = \int_{\TT} ( \eta  w )_{yt} I( \eta _y)\,\dx  - \int_{\TT}  ( \eta  w ^2)_{y}  \eta _y\,\dx   + \nu\int_{\TT} w _{yx}  \eta _y \,\dx .
\ea
It is easy to observe that
\begin{align*}
-  \int_{\TT}  &( \eta  w ^2)_{y}  \eta _y \,\dx   = - \int_{\TT}  w ^2  \eta _y^2 \,\dx  - \int_{\TT}  2  \eta  w    w _{y}  \eta _y \,\dx,\\
&\nu\int_{\TT}  w _{yx}  \eta _y \,\dx \leq  C\int_{\TT}  w _{yx}^2 \,\dx + \de \int_{\TT}  \eta _y^2\,\dx,
\end{align*}
and
\ba\label{decay-L2-y-6}
 \int_{\TT} ( \eta  w )_{yt} I( \eta _y)\,\dx  = \frac{\rm d}{\dt}\int_{\TT} ( \eta  w )_{y} I( \eta _y)\,\dx - \int_{\TT} ( \eta  w )_{y} I( \eta _{yt})\,\dx.
\nn \ea
By virtue of \eqref{csv-m-m-1-dy}, one has
\begin{align*}
- \int_{\TT} ( \eta  w )_{y}  I( \eta _{yt})\,\dx & = \int_{\TT} ( \eta  w )_{y} I\left(( \eta  w )_{yx}\right)\,\dx  \\
&  =  \int_{\TT} ( \eta  w )_{y} (( \eta  w )_{y} - ( \eta  w )(t,0,y))\,\dx  =   \int_{\TT} ( \eta  w )_{y}^2 \,\dx.
\end{align*}

By inserting the above estimates into \eqref{decay-L2-y-3}, we obtain
\begin{align*}
\int_{\TT} p'( \eta )  \eta _y^2 \,\dx\leq \frac{\rm d}{\dt}\int_{\TT} ( \eta  w )_{y} I( \eta _y)\,\dx+  \int_{\TT}  \eta ^2  w _y^2  \,\dx
+ C\int_{\TT}  w _{yx}^2 \,\dx + \de \int_{\TT}  \eta _y^2\,\dx.
 \end{align*}
 Choosing $\de = \frac{p'(\underline{\eta})}{2}$ in the above inequality gives rise to
\ba\label{decay-L2-y-8}
\frac{p'(\underline{\eta})}{2}\int_{\TT}  \eta _y^2 \,\dx  - \frac{\rm d}{\dt}\int_{\TT} ( \eta  w )_{y} I( \eta _y)\,\dx \leq  C \int_{\TT}  w _{yx}^2 \,\dx +  \int_{\TT}  \eta ^2  w _y^2  \,\dx.
\ea
Notice from  Lemma \ref{prop-kinetic-tx} and \eqref{csv-m-m-1-dy} that
\ba\label{decay-L2-y-9}
 \int_{\TT}  \eta  w _y^2 - \langle  \eta  w _y \rangle^2 \,\dx =   \int_{\TT}  \eta ( w _y - \langle  \eta  w _y \rangle)^2 \,\dx  \leq \bbeta^2 \int_{\TT}  w _{yx}^2 \,\dx,
\nn \ea from which and
 \eqref{csv-m-m-1-dy},  we infer
\ba\label{decay-L2-y-10}
 \int_{\TT}  \eta  w _y^2
 & \leq \bbeta^2 \int_{\TT}  w _{yx}^2 \,\dx +  \langle  \eta _y  w  \rangle^2 \\
 & \leq \bbeta^2 \int_{\TT}  w _{yx}^2 \,\dx +  \int_{\TT}  \eta _y^2  w ^2 \,\dx.
\ea
Hence thanks to \eqref{decay-exp-H1-2}, we deduce from \eqref{decay-L2-y-8} that
\ba\label{decay-L2-y-8a}
\frac{p'(\underline{\eta})}{2}\int_{\TT}  \eta _y^2 \,\dx  - \frac{\rm d}{\dt}\int_{\TT} ( \eta  w )_{y} I( \eta _y)\,\dx \leq  C \int_{\TT}  w _{yx}^2 \,\dx + Ce^{-\alpha t} \int_{\TT}  \eta_y^2 \,\dx.
\ea

Let $A$ be a sufficiently large constant, we denote
\be\label{F1-1-def}
F_1^{(1)}\eqdefa \int_{\TT} \bigl(A ( \eta  w _y^2 +  \eta _y^2) - ( \eta  w )_y I( \eta _y) \bigr)\,\dx.
\ee
Thanks to Lemma \ref{energy-basic-y}, we get, by multiplying \eqref{energy-0} by $A$ and summing up the resulting inequality with \eqref{decay-L2-y-8a}, that
\ba\label{decay-L2-y-1}
 \frac{\rm d}{\dt} F_1^{(1)}(t) + \int_{\TT}\bigl( w _{yx}^2 + \frac{ p'(\underline{\eta})}{2}  \eta _y^2 \bigr)\,\dx  \leq C E_{00}^{(1)} e^{-\a t}.
\ea
Due to
\ba \label{decay-L2-y-12}
\bigl|\int_{\TT}( \eta  w )_y I( \eta _y) \,\dx\bigr|  \leq \| \eta _y\|_{L^1_\h} \bigl(\| w   \eta _y\|_{L^1_\h} + \| \eta  w _y\|_{L^1_\h}\bigr)
\leq (1 + \| w \|_{L^\infty}) \int_{\TT}  ( \eta  w _y^2 +  \eta _y^2)\,\dx,
\nn \ea
we deduce from \eqref{decay-L2-y-10} that
\ba\label{F1-1-pt}
\int_{\TT}  \bigl(\eta  w _y^2 +  \eta _y^2\bigr) \,\dx  \leq F_1^{(1)}(t) \leq C \int_{\TT}
\bigl(\eta  w _y^2 +  \eta _y^2\bigr) \,\dx \leq C \int_{\TT}  \bigl(w _{yx}^2 +  \eta _y^2\bigr) \,\dx,
\nn \ea
which together with \eqref{decay-L2-y-1} ensures \eqref{decay-L2-y-2}.
\end{proof}

\subsection{Decay  estimates of $H^1$ norms}

It follows from  \eqref{ass-ini-1} that
\be\label{ini-7}
E_{10}^{(1)} \eqdefa  \|\d_y \varsigma_{0} \|_{H^1_\h}^2 + \|\d_y  w _{0}\|_{H^1_\h}^2 \in (L^{1} \cap L^{\infty}) (\R) \andf
 \bar E_{10}^{(1)} \eqdefa  \sup_{y\in \R} E_{10}^{(1)} <\infty.
\ee

Throughout this subsection,  $A, \a, C$ are positive constants solely depending on $(a, \g, \nu, \bar \varsigma_0, \bar E_{10}, \underline{\varsigma}_0, \bar E_{00}^{(1)}),$ which may differ from line to line.

\begin{lem}\label{prop-H1-w-y}
{\sl For all $t\in \R^{+}$, there holds
\ba\label{decay-H1-y-0}
 \frac{\rm d}{\dt} \int_{\TT}  w_{yx}^2\,\dx + \nu \int_{\TT} \eta^{-1} w_{yxx}^2 \,\dx  \leq  C e^{-\a t}
  \int_{\TT}\bigl( w_{yx}^2 + \eta_y^2\bigr) \,\dx +  C \int_{\TT} \eta_{yx}^2 \,\dx.
\ea
}
\end{lem}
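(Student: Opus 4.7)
The plan is to derive an $H^1_\h$ energy identity for $w_y$ by applying $\d_y$ to the momentum equation in \eqref{CNS-limit} and testing the result against a carefully chosen multiplier.  Differentiating in $y$ yields
\begin{equation*}
\eta w_{yt} + \eta w w_{yx} + \eta w_y w_x + \eta_y(w_t + w w_x) - \nu w_{yxx} + p(\eta)_{yx} = 0,
\end{equation*}
and the natural test function is $-\eta^{-1} w_{yxx}$.  The weight $\eta^{-1}$ is chosen so that, after one integration by parts in $x$, the time-derivative term produces exactly $\frac{1}{2}\frac{\rm d}{\dt}\int_\TT w_{yx}^2\,\dx$, while the viscous term produces the coercive quantity $\nu\int_\TT \eta^{-1}w_{yxx}^2\,\dx$.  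These two contributions form the left-hand side of \eqref{decay-H1-y-0} up to a harmless factor of two.

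It then remains to estimate the five leftover quadratic terms on the right-hand side,
\begin{equation*}
\int_\TT \eta^{-1}w_{yxx}\bigl\{\eta w w_{yx} + \eta w_y w_x + \eta_y w_t + \eta_y w w_x + p(\eta)_{yx}\bigr\}\,\dx.
\end{equation*}
In each of them, Young's inequality lets me absorb a small piece $\de\,\|w_{yxx}\|_{L^2_\h}^2$ into the coercive contribution (using $\eta^{-1}\geq \bar\eta^{-1}$), while the remaining factor must be seen to fit the target upper bound $Ce^{-\a t}(w_{yx}^2+\eta_y^2)\,\dx + C\eta_{yx}^2\,\dx$.  For four of the five terms the required decay is supplied by Proposition \ref{S2prop1}, which provides $\|w\|_{L^\infty_\h} + \|w_x\|_{L^\infty_\h} + \|w_t\|_{L^\infty_\h} + \|\eta_x\|_{L^\infty_\h} \lesssim e^{-\a t}$, together with the $L^2$ control of $\eta_y$ from Proposition \ref{prop-decay-L2-y}.

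The pressure contribution is the source of the $C\int_\TT\eta_{yx}^2\,\dx$ remainder with no exponential decay factor: writing $p(\eta)_{yx} = p'(\eta)\eta_{yx} + p''(\eta)\eta_x\eta_y$, the first summand produces precisely that term after Young's inequality, whereas the second inherits an $e^{-\a t}$ from $\|\eta_x\|_{L^\infty_\h}$ and contributes to the $\eta_y^2$ bound.  The main obstacle will be the term $\int_\TT w_y w_x w_{yxx}\,\dx$: a direct Cauchy--Schwarz bound calls for $\|w_y\|_{L^2_\h}$, which does not enjoy a Poincaré control by $\|w_{yx}\|_{L^2_\h}$ alone because $w_y$ has nonzero mean.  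I plan to sidestep this by reusing inequality \eqref{decay-L2-y-10} together with the lower bound from Proposition \ref{prop-lowerbd-density} to obtain $\|w_y\|_{L^2_\h}^2 \leq C\|w_{yx}\|_{L^2_\h}^2 + Ce^{-2\a t}\|\eta_y\|_{L^2_\h}^2$, which when combined with $\|w_x\|_{L^\infty_\h}\lesssim e^{-\a t}$ yields the desired bound on this problematic term.
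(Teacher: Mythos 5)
Your proposal is correct and follows essentially the same route as the paper: test the $\d_y$-differentiated momentum equation against $\eta^{-1}w_{yxx}$, absorb the viscous piece by Young's inequality, split $p(\eta)_{yx}=p'(\eta)\eta_{yx}+p''(\eta)\eta_x\eta_y$ to isolate the non-decaying $\eta_{yx}^2$ remainder, and control the mean-free obstruction for $w_y$ via \eqref{decay-L2-y-10}. Nothing essential is missing.
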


\begin{proof}
We rewrite the equation $\eqref{CNS-1d-dy}_2$ as
\beq\label{mm-y-1}
\eta (w_{yt} + w w_{yx} + w_y w_x) + \eta_y (w_t + w w_x) - \nu w _{yxx} + p( \eta )_{yx} = 0.
\eeq
Multiplying the above equation by $\eta^{-1} w_{yxx}$ and integrating the resulting equation over $\TT$ yields
\ba\label{decay-H1-y-1}
\frac{1}{2}& \frac{\rm d}{\dt} \int_{\TT}  w_{yx}^2\,\dx + \nu \int_{\TT} \eta^{-1} w_{yxx}^2 \,\dx \\
&\quad  =   \int_{\TT} \big(\eta (w w_{yx} + w_y w_x) + \eta_y (w_t + w w_x) + p(\eta)_{yx}\big) \eta^{-1} w_{yxx} \,\dx.
\ea
Applying Young's inequality and using \eqref{decay-exp-all-2} gives
\begin{align*}
& \int_{\TT} \big(\eta (w w_{yx} + w_y w_x) + \eta_y (w_t + w w_x) + p(\eta)_{yx}\big) \eta^{-1} w_{yxx} \,\dx \\
& \quad \leq \frac{\nu}{2}  \int_{\TT} \eta^{-1} w_{yxx}^2 \,\dx +  C e^{-\a t} \int_{\TT}\left( w_{yx}^2  + w_y^2 + \eta_y^2 \right)\,\dx +  C \int_{\TT} |p(\eta)_{yx}|^2 \,\dx.
\end{align*}
Notice that
\ba\label{decay-H1-y-3}
\int_{\TT} |p(\eta)_{yx}|^2 \,\dx = \int_{\TT} |p'(\eta)\eta_{yx} + p''(\eta) \eta_y \eta_x|^2 \,\dx.
\nn \ea
As a consequence, thanks to \eqref{decay-L2-y-10}, we thus deduce \eqref{decay-H1-y-0} from \eqref{decay-H1-y-1}.
\end{proof}

\begin{lem}\label{prop-H1-eta-y}
{\sl  For all $t\in \R^{+}$, there holds
\ba\label{decay-H1-eta-y-1}
 \frac{\rm d}{\dt} \int_{\TT}  \eta(w_y - \nu (\eta^{-1})_{yx})^2 \,\dx + \nu \int_{\TT} p'(\eta) \eta^{-2} \eta_{yx}^2 \,\dx
  \leq  C  \int_{\TT}\bigl( w_{yx}^2 + \eta_y^2\bigr) \,\dx.
\ea
}
\end{lem}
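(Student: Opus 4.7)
The plan is to parallel the strategy of Proposition \ref{prop-decay-exp-rx} by differentiating its key identity in $y$. Set $\zeta \eqdefa \eta^{-1}$ and $q \eqdefa w - \nu \zeta_x$. Recall that \eqref{decay-exp-rx-6} gives the concise equation
\ba
\eta (q_t + w q_x) = \eta^2 p'(\eta)\zeta_x,
\nn \ea
equivalently $q_t + w q_x = \eta p'(\eta)\zeta_x$. Apply $\d_y$ to this equation; using the latter identity to rewrite $-\eta_y(q_t+wq_x) = -\eta_y \eta p'(\eta)\zeta_x$, one obtains
\ba
\eta(q_{yt} + w q_{yx}) = \eta^2 p'(\eta)\zeta_{yx} + \d_y\big(\eta^2 p'(\eta)\big)\zeta_x - \eta_y \eta p'(\eta)\zeta_x - \eta w_y q_x.
\nn \ea
The point is that $q_y = w_y - \nu(\eta^{-1})_{yx}$ is exactly the quantity appearing in the lemma, and every term on the right-hand side above other than the first is either quadratic in $y$-derivatives multiplied by something $x$-regular, or carries a factor $\zeta_x$ or $\eta_x$ that is exponentially small by Proposition \ref{S2prop1}.

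Next I would take the $L^2(\TT)$ inner product with $q_y$. Using the continuity equation $\eta_t + (\eta w)_x = 0$, the transport part on the left reduces cleanly to $\tfrac{1}{2}\tfrac{\rm d}{\dt} \int_{\TT} \eta q_y^2 \, \dx$, which is exactly the time derivative stated in the lemma. The leading right-hand side term splits as
\ba
\int_{\TT} \eta^2 p'(\eta)\zeta_{yx}\,q_y\,\dx = \int_{\TT} \eta^2 p'(\eta)\zeta_{yx}\,w_y\,\dx - \nu \int_{\TT} \eta^2 p'(\eta)\,\zeta_{yx}^2\,\dx,
\nn \ea
producing the dissipation with the correct sign. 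Young's inequality absorbs $\tfrac{\nu}{2}\int \eta^2 p'(\eta)\zeta_{yx}^2$ and leaves an $L^2$ norm of $w_y$, which is controlled by $\int(w_{yx}^2+\eta_y^2)$ using the Poincar\'e-type bound \eqref{decay-L2-y-10} together with the $L^\infty$ decay of $w$. The three remaining ``error'' terms $-\eta_y \eta p'(\eta)\zeta_x q_y$, $\d_y(\eta^2 p'(\eta))\zeta_x q_y$, and $-\eta w_y q_x q_y$ are handled with Young's inequality: the exponential smallness of $\|\zeta_x\|_{L^\infty}$, $\|\eta_x\|_{L^\infty}$, and $\|q_x\|_{L^\infty}$ (all supplied by Proposition \ref{S2prop1}) lets one absorb any $q_y^2$ contribution into $\int \eta q_y^2$ through the identity $q_y^2 \le 2 w_y^2 + 2\nu^2 \zeta_{yx}^2$ (the first piece bounded as above, the second into the dissipation with a tiny coefficient).

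The final step is to convert the dissipation $\nu \int \eta^2 p'(\eta)\zeta_{yx}^2$ to the form $\nu \int p'(\eta)\eta^{-2}\eta_{yx}^2$ stated in the lemma. A direct computation yields
\ba
\eta_{yx} = 2\eta^{-1}\eta_x\eta_y - \eta^2\zeta_{yx}, \quad\Longrightarrow\quad \eta^{-2}\eta_{yx}^2 = \eta^2\zeta_{yx}^2 - 4\eta^{-1}\eta_x\eta_y\,\zeta_{yx} + 4\eta^{-4}\eta_x^2\eta_y^2,
\nn \ea
so the two expressions agree modulo cross terms carrying a factor $\eta_x$ that is exponentially small. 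By Young's inequality these corrections can be split into a small multiple of $\int \eta^2 p'(\eta)\zeta_{yx}^2$ (absorbed on the left) plus a multiple of $\int \eta_y^2$ (fitting the right-hand side of the lemma).

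The main obstacle I anticipate is bookkeeping: several error terms have the form $\d_y$-derivative times an $x$-derivative of $\eta$ or $w$, and one must verify that none of them generates a contribution that would require $\int \eta_{yx}^2\,\dx$ on the right-hand side (which would be incompatible with the stated inequality). Everything hinges on using the uniform $L^\infty$ smallness of $\eta_x,\eta_{xx},w_x,w_{xx}$ from Proposition \ref{S2prop1} to dump such contributions into either $\int \eta_y^2$ or the dissipation itself.
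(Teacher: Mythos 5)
Your proposal is correct and follows essentially the same route as the paper's proof: both hinge on the effective velocity $w_y-\nu\zeta_{yx}$, test its equation against $\eta(w_y-\nu\zeta_{yx})$, extract the dissipation $\nu\int p'(\eta)\eta^{-2}\eta_{yx}^2$ from the pressure term via the identity relating $\zeta_{yx}$ and $\eta_{yx}$, and absorb the remaining terms using Young's inequality together with the decay of the $x$-derivative norms from Proposition \ref{S2prop1} and the Poincar\'e-type bound \eqref{decay-L2-y-10}. The only (cosmetic) difference is that you obtain the equation for $q_y$ by applying $\d_y$ to the compact relation $D_tq=\eta p'(\eta)\zeta_x$ from \eqref{decay-exp-rx-6}, whereas the paper differentiates the system in $y$ first and then substitutes the $y$-derivative of the continuity equation; the resulting energy identities coincide up to rearrangement of the lower-order terms.
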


\begin{proof}
We rewrite the equation $\eqref{CNS-1d-dy}_2$ as
\beq\label{mm-y-2}
D_t w_y + w_x w_y + \eta^{-1} \eta_y D_t w  - \nu \eta^{-1} w _{yxx} + \eta^{-1} p( \eta )_{yx} = 0.
\eeq
Let $\zeta \eqdefa \eta^{-1}$. Then  we deduce from equation $\eqref{CNS-1d-dy}_1$ that
\ba\label{mass-y-2}
\zeta w_{yxx} =  (\zeta_{tx} + w \zeta_{xx})_y  - \zeta_y w_{xx} = D_t \zeta_{yx} + w_y \zeta_{xx}  - \zeta_y w_{xx} .
\nn \ea
Therefore, we obtain
\ba\label{decay-H1-eta-y-2}
D_t (w_y - \nu \zeta_{yx})  + w_x w_y + \eta^{-1} \eta_y D_t w  - \nu(w_y \zeta_{xx}  - \zeta_y w_{xx}) +  \eta^{-1} p( \eta )_{yx} = 0.
\nn \ea
By taking $L^2$ inner product of the above equation with  $\eta (w_y - \nu \zeta_{yx}),$ we find
\ba\label{decay-H1-eta-y-4}
 \frac{1}{2}&\frac{\rm d}{\dt} \int_{\TT} \eta (w_y - \nu \zeta_{yx})^2\,\dx   + \int_{\TT}  p( \eta )_{yx}(w_y - \nu \zeta_{yx})  \,\dx \\
 &  +   \int_{\TT}  \big( \eta w_x w_y +  \eta_y D_t w  - \nu \eta (w_y \zeta_{xx}  - \zeta_y w_{xx}) \big) (w_y - \nu \zeta_{yx})  \,\dx = 0.
\ea
%Observes that
%$$
%\zeta_y  = - \eta^{-2} \eta_y, \quad \zeta_{yx}  = 2 \eta^{-3} \eta_x \eta_y  - \eta^{-2} \eta_{yx} , \quad \zeta_{xx} = (-\eta^{-2}\eta_{x})_x = 2 %\eta^{-3} \eta_x^2 - \eta^{-2} \eta_{xx},
%$$ and $$
% p( \eta )_{yx} = p'(\eta)\eta_{yx} + p''(\eta) \eta_y \eta_x. $$
Notice that
\begin{align*}
\int_{\TT}  p( \eta )_{yx}(w_y - \nu \zeta_{yx})  \,\dx  &  = \int_{\TT}  (p'(\eta)\eta_{yx} + p''(\eta) \eta_y \eta_x) (w_y - \nu (2 \eta^{-3} \eta_x \eta_y  - \eta^{-2} \eta_{yx}))  \,\dx  \\
& = \nu \int_{\TT} p'(\eta) \eta^{-2} \eta_{yx}^2 \,\dx +\int_{\TT}  p'(\eta)\eta_{yx}  (w_y - 2 \nu  \eta^{-3} \eta_x \eta_y )  \,\dx \\
 & \quad + \int_{\TT}   p''(\eta) \eta_y \eta_x \bigl(w_y - 2 \nu  \eta^{-3} \eta_x \eta_y  +\nu \eta^{-2} \eta_{yx}\bigr))  \,\dx.
\end{align*}
Hence thanks to \eqref{decay-exp-all-3}, we deduce from \eqref{decay-H1-eta-y-4} that
\ba\label{decay-H1-eta-y-6}
&\frac{1}{2}\frac{\rm d}{\dt} \int_{\TT}  \eta (w_y - \nu \zeta_{yx})^2\,\dx   + \nu \int_{\TT} p'(\eta) \eta^{-2} \eta_{yx}^2 \,\dx \leq  \de  \int_{\TT} \eta_{yx}^2 \,\dx  + C  \int_{\TT}  \bigl(w_y^2 + \eta_y^2\bigr)   \,\dx .
\ea
Choosing $\de>0$ small so that
$
\de \leq \frac{\nu}{2}  p'(\underline \eta) {\underline \eta}^{-2},
$
and using \eqref{decay-L2-y-10},  we conclude the proof of \eqref{decay-H1-eta-y-1}.
\end{proof}

\begin{prop}\label{prop-decay-H1-y}
{\sl  For all $t\in \R^{+}$, there holds
\ba\label{decay-H1-eta-w-y-1}
 \int_{\TT} \left(w_{yx}^2 + \eta_{yx}^2\right)(t) \,\dx \leq C E_{10}^{(1)}(y) e^{-\a t}, \quad  \int_0^\infty \int_{\TT} w _{yxx}^2  \,\dx \,\dt  \leq C E_{10}^{(1)}(y),
\ea
and
\be\label{decay-H1-eta-w-y-t-2}
 \int_0^\infty\int_{\TT} \eta w_{yt}^2\,\dx\,\dt  \leq C E_{10}^{(1)}(y).
\ee }
\end{prop}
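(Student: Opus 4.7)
The plan is to combine Lemmas \ref{prop-H1-w-y} and \ref{prop-H1-eta-y} into a single energy functional controlling both $\|w_{yx}\|_{L^2_\h}^2$ and $\|\eta_{yx}\|_{L^2_\h}^2$, then close by a Gronwall-type argument exploiting the $L^2$ decay of $(\eta_y, w_y)$ already obtained in Proposition \ref{prop-decay-L2-y}. The third estimate will follow by testing the $y$-differentiated momentum equation against $w_{yt}$.

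First I would multiply the inequality of Lemma \ref{prop-H1-eta-y} by a constant $A > 0$ chosen so large that the dissipation $A\nu p'(\underline{\eta}) \underline{\eta}^{-2} \int_{\TT} \eta_{yx}^2 \,\dx$ it produces dominates twice the coefficient of $\int_{\TT} \eta_{yx}^2 \,\dx$ on the right-hand side of \eqref{decay-H1-y-0}. However, the right-hand side of Lemma \ref{prop-H1-eta-y} carries a non-small $C\int_{\TT}(w_{yx}^2 + \eta_y^2)\,\dx$ which summing alone cannot absorb. The remedy is to add a further, sufficiently large, multiple of the estimate \eqref{decay-L2-y-1} derived in the proof of Proposition \ref{prop-decay-L2-y}, whose dissipation already contains $\int_\TT(w_{yx}^2 + \tfrac{p'(\underline\eta)}{2}\eta_y^2)\,\dx$. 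This produces a composite functional
\begin{equation*}
\tilde F_2^{(1)}(t) \eqdefa B F_1^{(1)}(t) + A \int_{\TT}\eta\bigl(w_y - \nu(\eta^{-1})_{yx}\bigr)^2\,\dx + \int_{\TT} w_{yx}^2\,\dx
\end{equation*}
for which a choice of $A,B$ depending only on the stated parameters yields a differential inequality of the form
\begin{equation*}
\frac{\rm d}{\dt} \tilde F_2^{(1)}(t) + c_0 \int_{\TT}\bigl(w_{yx}^2 + \eta_y^2 + \eta_{yx}^2 + \eta^{-1}w_{yxx}^2\bigr)\,\dx \leq C E_{00}^{(1)}(y)\,e^{-\a t},
\end{equation*}
where the right-hand side has been simplified using Proposition \ref{prop-decay-L2-y} to absorb the $e^{-\a t}\int_\TT\eta_y^2\,\dx$ contribution.

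Next I would verify that $\tilde F_2^{(1)}$ is equivalent to $\int_{\TT}(w_{yx}^2 + \eta_{yx}^2 + \eta w_y^2 + \eta_y^2)\,\dx$ by expanding $(\eta^{-1})_{yx} = \eta^{-2}\eta_{yx} - 2\eta^{-3}\eta_x\eta_y$ and invoking the uniform two-sided bounds on $\eta$ (Propositions \ref{prop-upperbd-density}--\ref{prop-lowerbd-density}) together with the exponential smallness of $\eta_x$ from Proposition \ref{prop-decay-exp-rx}. Combined with the bound $\int_\TT \eta w_y^2 \,\dx \lesssim \int_\TT w_{yx}^2\,\dx + e^{-\a t}\int_\TT \eta_y^2\,\dx$ supplied by \eqref{decay-L2-y-10} and Proposition \ref{decay-exp-H1-2}, the dissipation on the left is seen to dominate $\tilde F_2^{(1)}(t)$ itself up to an $e^{-\a t}$ source. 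Gronwall's inequality (after slightly shrinking $\a$ if necessary) then yields $\tilde F_2^{(1)}(t) \leq C E_{10}^{(1)}(y)\,e^{-\a t}$, and integration of the differential inequality over $\R^+$ produces the global-in-time $L^2_t L^2_x$ bound for $w_{yxx}$; together these give both statements in \eqref{decay-H1-eta-w-y-1}.

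Finally, for \eqref{decay-H1-eta-w-y-t-2} I would take the $L^2(\TT)$ inner product of the $y$-differentiated momentum equation \eqref{mm-y-1} with $w_{yt}$. The viscous term contributes $\frac{\nu}{2}\frac{\rm d}{\dt}\int_{\TT} w_{yx}^2\,\dx$ after integration by parts in $x$, while each of the source terms $\eta w w_{yx}$, $\eta w_y w_x$, $\eta_y(w_t + w w_x)$ and $p(\eta)_{yx}$ is split by Young's inequality of the form $|\Phi \cdot w_{yt}| \leq \tfrac12 \eta w_{yt}^2 + C\eta^{-1}|\Phi|^2$, so that half of $\int_\TT \eta w_{yt}^2\,\dx$ remains on the left. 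Time-integrating over $\R^+$, every resulting spacetime integral on the right is finite thanks to Propositions \ref{S2prop1}, \ref{prop-decay-L2-y} and the $L^2_tL^2_x$ bound on $w_{yxx}$ just established, giving the claim. The main obstacle is the algebraic bookkeeping that controls the indefinite cross-term $-2\nu\eta w_y(\eta^{-1})_{yx}$ in $\tilde F_2^{(1)}$: ensuring it does not destroy the equivalence between $\tilde F_2^{(1)}$ and $\int_\TT(w_{yx}^2 + \eta_{yx}^2 + \eta w_y^2 + \eta_y^2)\,\dx$ hinges simultaneously on the uniform positive lower bound $\underline{\eta} > 0$ and on the exponential smallness of $\eta_x$.
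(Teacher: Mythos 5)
Your proposal is correct and follows essentially the same route as the paper: the first two bounds are obtained exactly as the paper indicates, by assembling Lemmas \ref{prop-H1-w-y} and \ref{prop-H1-eta-y} together with a large multiple of \eqref{decay-L2-y-1} into a composite Lyapunov functional equivalent to $\int_{\TT}(w_{yx}^2+\eta_{yx}^2+\eta w_y^2+\eta_y^2)\,\dx$ and applying the Gronwall-type argument of Proposition \ref{prop-decay-exp-rx}, and the bound \eqref{decay-H1-eta-w-y-t-2} is obtained, as in the paper, by testing \eqref{mm-y-1} with $w_{yt}$ and integrating in time. Your explicit attention to the cross term $-2\nu\eta w_y\zeta_{yx}$ and to the decomposition $\zeta_{yx}=-\eta^{-2}\eta_{yx}+2\eta^{-3}\eta_x\eta_y$ correctly identifies where the two-sided bounds on $\eta$ and the decay of $\eta_x$ enter.
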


\begin{proof} Thanks to Lemmas  \ref{prop-H1-w-y} and \ref{prop-H1-eta-y}, we deduce \eqref{decay-H1-eta-w-y-1}
by similar arguments using in the proof of Proposition \ref{prop-decay-exp-rx}.

While we get, by taking $L^2(\TT)$ inner product of \eqref{mm-y-1} with $w_{yt},$ that
\ba\label{decay-H1-eta-w-y-t-3}
\int_{\TT} \eta  w_{yt}^2 \,\dx  + \nu  \frac{\rm d}{\dt} \int_{\TT}  w_{yx}^2 \,\dx  = - \int_{\TT}\big(w w_{yx} + w_y w_x + \eta_y (w_t + w w_x) + p( \eta )_{yx} \big)  w_{yt}\,\dx,
\nn \ea
which implies
\ba\label{decay-H1-eta-w-y-t-1}
 \nu\frac{\rm d}{\dt} \int_{\TT}  w_{yx}^2\,\dx + \frac{1}{2} \int_{\TT} \eta w_{yt}^2\,\dx \leq C \int_{\TT}\bigl( w_{yx}^2 + w_y^2 + \eta_y^2 + \eta_{yx}^2\bigr)\,\dx.
\ea
By integrating \eqref{decay-H1-eta-w-y-t-1} over $[0,t]$ and using \eqref{decay-H1-eta-w-y-1},
we obtain \eqref{decay-H1-eta-w-y-t-2}.
\end{proof}

\subsection{Decay estimate of $\|w_y\|_{H^{2}}$}

In view of \eqref{ass-ini-1}, we have
\be\label{ini-8}
 E_{20}^{(1)} \eqdefa  \|\d_y \varsigma_{0}\|_{H^2_\h}^2+ \|\d_y  w _{0}\|_{H^2_\h}^2 \in (L^{1}\cap L^{\infty}) (\R)
 \andf \bar E_{20}^{(1)} \eqdefa  \sup_{y\in \R} E_{20}^{(1)} <\infty.
\ee

Throughout this subsection,  $A, \a, C$ are positive numbers solely depending on $(a, \g, \nu, \bar \varsigma_0, \bar E_{20}, \underline{\varsigma}_0, \bar E_{10}^{(1)}),$ which  may change  from line to line.

\begin{prop}\label{prop-Dtv-y-L2}
{\sl Let  $D_t\eqdefa\d_t +  w  \d_x$ be  the material derivative. Then for all $t \in \R^{+}$, there holds
\be\label{Dtv-L2-y-1}
\int_{\TT}  \eta |D_t w_y |^2 \, \dx\leq C  E_{20}^{(1)} (y) e^{-\a t} \andf \int_0^\infty \int_{\TT} |(D_t w_y )_x|^2\,\dx\,\dt \leq C  E_{20}^{(1)} (y) ,
\ee and
\be\label{vxx-L2-y-1}
\int_{\TT}  \bigl(|\eta_{yt}|^2  + |w_{yt}|^2 + |w_{yxx}|^2\bigr) \, \dx\leq C  E_{20}^{(1)} (y)e^{-\a t}.
\ee
}
\end{prop}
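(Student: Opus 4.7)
My plan is to mimic the strategy used in Proposition~\ref{prop-Dtv-L2}, this time carrying out the energy estimate at the level of the material derivative $D_t w_y$ rather than $D_t w$. The starting point is the $y$-differentiated momentum equation \eqref{mm-y-1}. Applying $D_t=\partial_t+w\partial_x$ to it, and using $D_t\eta=-\eta w_x$ from the continuity equation together with the commutator identity
$$D_t w_{yxx}=(D_t w_y)_{xx}-2w_x w_{yxx}-w_{xx}w_{yx},$$
I obtain an equation of the schematic form
$$\eta D_t^2 w_y-\nu(D_t w_y)_{xx}=\mathcal{G},$$
where $\mathcal{G}$ collects $D_t$ of the lower-order nonlinearities $\eta w_y w_x+\eta_y D_t w$, $D_t$ of the pressure term $p(\eta)_{yx}$, and the viscous commutator remainder. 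Every time derivative of $\eta$ appearing in $\mathcal G$ will be converted into spatial derivatives via $\eta_t=-(\eta w)_x$ and its $y$-derivative.

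The second step is to take the $L^2(\TT)$ inner product with $D_t w_y$ and integrate by parts in the viscous term, producing
$$\frac{1}{2}\frac{d}{dt}\int_\TT\eta|D_t w_y|^2\,dx+\nu\int_\TT|(D_t w_y)_x|^2\,dx=\int_\TT\mathcal{G}\cdot D_t w_y\,dx.$$
I will split each term on the right using H\"older's inequality so that the high-regularity factor is absorbed into $\nu\|(D_t w_y)_x\|_{L^2}^2$ by Young's inequality, while the remaining factors carry exponentially decaying $L^\infty$ or $L^2$ bounds furnished by Propositions~\ref{S2prop1}, \ref{prop-decay-L2-y}, and \ref{prop-decay-H1-y}. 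The outcome I expect is a differential inequality of the form
$$\frac{d}{dt}\int_\TT\eta|D_t w_y|^2\,dx+\frac{\nu}{2}\int_\TT|(D_t w_y)_x|^2\,dx\leq Ce^{-\alpha t}\int_\TT\eta|D_t w_y|^2\,dx+CE_{20}^{(1)}(y)e^{-\alpha t}.$$
Since the initial value $\int_\TT\eta|D_t w_y|^2\,dx\big|_{t=0}$ is controlled by $CE_{20}^{(1)}(y)$, as read off \eqref{mm-y-1} at $t=0$, Gronwall's lemma will yield both bounds in \eqref{Dtv-L2-y-1}.

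For \eqref{vxx-L2-y-1} I would then exploit \eqref{mm-y-1} as an algebraic relation
$$\nu w_{yxx}=\eta D_t w_y+\eta w_y w_x+\eta_y D_t w+p(\eta)_{yx},$$
combining it with \eqref{Dtv-L2-y-1}, the lower bound $\eta\geq\underline{\eta}$, and the $L^\infty$ decay of $w,w_x,\eta_y,\eta_x$ from Proposition~\ref{S2prop1} and \eqref{decay-H1-eta-w-y-1} to obtain $\|w_{yxx}\|_{L^2}^2\leq CE_{20}^{(1)}(y)e^{-\alpha t}$. The bound for $w_{yt}$ is then read off $w_{yt}=D_t w_y-w w_{yx}$, and the bound for $\eta_{yt}$ follows from the $y$-differentiated continuity equation $\eta_{yt}=-(\eta w)_{yx}$, both inheriting the decay already established.

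The main obstacle I anticipate is the bookkeeping for the source $\mathcal G$: each nonlinear product coming from $D_t p(\eta)_{yx}$ and $D_t(\eta_y D_t w)$ must be paired against a factor with known exponential decay, and one must verify that no undesirable second-order time derivative (such as a stray $w_{ytt}$) survives on the right-hand side. In particular, $D_t$ acting on $\eta_y D_t w$ will produce $D_t^2 w$, which has to be eliminated via $\eta D_t w=\nu w_{xx}-p(\eta)_x$, trading a time derivative for spatial ones whose decay is already quantified by Proposition~\ref{S2prop1}. Once this substitution is carried out cleanly, all remaining routine estimates fall into place.
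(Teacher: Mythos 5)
Your overall strategy --- apply $D_t$ to the $y$-differentiated momentum equation, test against $D_t w_y$, integrate by parts in the viscous term, and read \eqref{vxx-L2-y-1} off the equations afterwards --- is exactly the paper's (it applies $D_t$ to $\eta\times$\eqref{mm-y-2} and arrives at \eqref{Dtv-L2-y}). There is, however, a genuine gap at the step where you claim the differential inequality with right-hand side $Ce^{-\a t}\int_{\TT}\eta|D_t w_y|^2\,\dx + CE_{20}^{(1)}(y)e^{-\a t}$ and then invoke Gronwall. The source $\mathcal{G}$ unavoidably contains $w_{yxx}$: it enters at least through the viscous commutator term $2\nu w_x w_{yxx}$, and also through $D_t\eta\, D_t w_y = w_x\bigl(\eta w_x w_y + \eta_y D_t w - \nu w_{yxx} + p(\eta)_{yx}\bigr)$ if one substitutes the equation as the paper does. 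At this stage the only information on $w_{yxx}$ is $\int_0^\infty\| w_{yxx}\|_{L^2_\h}^2\,\dt \leq CE_{10}^{(1)}$ from Proposition \ref{prop-decay-H1-y}; its pointwise-in-time exponential decay is precisely \eqref{vxx-L2-y-1}, part of what is being proved, so the contribution of $w_{yxx}$ cannot be bounded by $CE_{20}^{(1)}e^{-\a t}$. With an inhomogeneity that is merely $L^1$ in time, Gronwall yields boundedness of $\int_{\TT}\eta|D_t w_y|^2\,\dx$, not exponential decay; and the decay cannot come from the dissipation alone either, since $\int_{\TT}|(D_t w_y)_x|^2\,\dx$ does not dominate $\int_{\TT}\eta|D_t w_y|^2\,\dx$ (no Poincar\'e inequality is available, $D_t w_y$ having no reason to be mean-free).

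The repair is the one the paper uses, and which you in effect already used for Proposition \ref{prop-Dtv-L2}: stop at the weaker inequality \eqref{Dtv-L2-y}, whose right-hand side is $C\int_{\TT}(w_y^2 + w_{yx}^2 + w_{yxx}^2 + \eta_y^2 + \eta_{yx}^2)\,\dx$, and add to it a large multiple of the differential inequalities of Lemmas \ref{prop-H1-w-y} and \ref{prop-H1-eta-y} and of \eqref{decay-H1-eta-w-y-t-1}. Their dissipation terms $\int\eta^{-1}w_{yxx}^2$, $\int p'(\eta)\eta^{-2}\eta_{yx}^2$ and $\int\eta w_{yt}^2$ absorb the bad term $C\int w_{yxx}^2$ and, through $\int_{\TT}\eta|D_t w_y|^2\,\dx \leq C\int_{\TT}(\eta w_{yt}^2 + w^2 w_{yx}^2)\,\dx$, dominate the combined Lyapunov functional, which therefore decays exponentially. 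The rest of your plan --- the elimination of $D_t^2 w$ via the momentum equation, and the derivation of \eqref{vxx-L2-y-1} from \eqref{mm-y-1} and the $y$-differentiated continuity equation --- matches the paper and is fine.
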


\begin{proof}
Applying $D_t$ to $\eta\times$\eqref{mm-y-2} gives
\ba\label{Dtv-L2-y-3}
\eta D_t^2 w_y + D_t \eta D_t w_y  + D_t (\eta w_x w_y) + D_t (\eta_y D_t w ) - \nu  D_t w _{yxx} +  D_t p( \eta )_{yx} = 0.
\ea
It is easy to observe that
\begin{align*}%\label{Dtv-L2-y-4}
D_t  \eta D_t  w_y  & = -  \eta  w _x D_t  w_y  =  w_x (\eta w_x w_y +  \eta_y D_t w  - \nu  w _{yxx} +  p( \eta )_{yx} ),
\\
 D_t (\eta w_x w_y) &= D_t (\eta w_x) w_y + \eta w_x D_t w_y,
\\
D_t (\eta_y D_t w )  & = -(\eta_y w_x + \eta_x w_y + \eta w_{yx}) D_t w + \eta_y D_t^2 w,
\\
- \nu D_t  w _{yxx} &
  = -\nu (D_t w_y )_{xx} + \nu( 2 w_x w_{yxx} + w_{xx} w_{yx}),
\end{align*}
and
\ba
D_t p( \eta )_{yx} = p( \eta )_{tyx} +  w  p( \eta )_{yxx}.
\nn \ea

Then by taking $L^2(\TT)$ inner product of \eqref{Dtv-L2-y-3} with $D_t w_y$  and using the fact
$$
w_x p( \eta )_{yx} + w  p( \eta )_{yxx} = (w p(\eta)_{yx})_x,
$$ we find
\begin{align*}
\frac{1}{2}& \frac{\rm d}{\dt} \int_{\TT}  \eta |D_t w_y |^2 \, \dx + \nu \int_{\TT} |(D_t w_y )_x|^2\,\dx \\
&  \leq  \frac{\nu}{2} \int_{\TT} |(D_t w_y )_x|^2\,\dx + C \int_{\TT} \bigl( w_y^2 + w_{yx}^2 +  w _{yxx}^2 +  \eta_y^2 +\eta_{yx}^2\bigr)\,\dx.
\end{align*}
This implies
  \ba\label{Dtv-L2-y}
\frac{\rm d}{\dt} \int_{\TT}  \eta |D_t w_y |^2 \, \dx
+ \nu \int_{\TT} |(D_t w_y )_x|^2\,\dx \leq C \int_{\TT}  \bigl(w_y^2 + w_{yx}^2 +  w _{yxx}^2 +  \eta_y^2 +\eta_{yx}^2\bigr)\,\dx,
\ea
from which,
  Propositions \ref{prop-decay-L2-y} and \ref{prop-decay-H1-y},  Lemmas \ref{prop-H1-w-y} and \ref{prop-H1-eta-y},
   we deduce \eqref{Dtv-L2-y-1} by the similar argument used in the proof of Proposition \ref{prop-decay-exp-rx}.

Thanks to \eqref{Dtv-L2-y-1}, we conclude \eqref{vxx-L2-y-1} by using the mass and momentum equations \eqref{CNS-1d-dy}.
\end{proof}

\subsection{Proof of Proposition \ref{S2prop2}} \label{sec-decay-y-all}

  \begin{proof}[Proof of Proposition \ref{S2prop2}] By summarizing Propositions \ref{prop-decay-L2-y}, \ref{prop-decay-H1-y} and \ref{prop-Dtv-y-L2},
we deduce that there exist two positive constants $C$ and $\a$ solely depends on $(a, \g, \nu, \bar \varsigma_0, \bar E_{20}, \underline{\varsigma}_0, \bar E_{10}^{(1)})$ such that
\ba\label{decay-exp-all-y-1}
\| \eta_y \|_{H^1_\h} + \| w_{y} \|_{H^2_\h} + \| \eta _{yt}\|_{L^2_\h} +  \|w_{yt}\|_{L^2_\h} \leq C \left( E_{20}^{(1)}(y)\right)^{\frac 12}  e^{-\a t}, \quad \forall \, t\in \R^{+}.
\ea

In what follows, we shall
follow the same strategy as that of the  proof  of Proposition \ref{S2prop1}.
We first get, by  applying $\d_x$  to \eqref{CNS-1d-dy}, that
\be\label{CNS-1d-dy-dx}
\left\{\begin{aligned}
& \eta _{yxt}  + ( \eta  w )_{yxx}= 0,\\
&( \eta  w )_{yxt} + ( \eta  w ^2)_{yxx} - \nu w _{yxxx} + p( \eta )_{yxx} = 0.
\end{aligned}\right.
\ee
We can also rewrite $\eqref{CNS-1d-dy-dx}_{2}$ as
\beq\label{CNS-1d-dy-dx-1}
\begin{split}
D_{t} w_{yx} &+  (w_{y}w_{xx} + 2 w_{x}w_{yx}) + \eta^{-1} \eta_{y} (D_{t} w_{x} + w_{x}^{2}) \\
&+ \eta^{-1} \eta_{x} (D_{t} w_{y} + w_{x}w_{y}) + \eta^{-1} \eta_{yx} D_{t}w
- \nu \eta^{-1} w_{yxxx} + \eta^{-1} p(\eta)_{yxx} = 0.
\end{split}
\eeq

We split the proof of the remaining estimates in \eqref{decay-exp-all-y-2} into the following steps:

\medskip

\noindent{\bf Step 1.} {Decay estimates for $\eta_{yxx}.$}

\medskip

Recalling that $\zeta \eqdefa \eta^{-1},$
we deduce from $\eqref{CNS-1d-dy-dx}_{1}$ that
\be\label{CNS-1d-dy-dx-2}
 \zeta w_{yxxx} = D_{t}\zeta_{yxx} - \zeta_{yx} w_{xx} - \zeta_{x} w_{yxx} -  \zeta_{y} w_{xxx}  + \zeta_{xxx} w_{y} + \zeta_{yxx} w_{x} + \zeta_{xx} w_{yx}.
\ee
Plugging \eqref{CNS-1d-dy-dx-2} into \eqref{CNS-1d-dy-dx-1} gives rise to
\begin{align*}%\label{CNS-1d-dy-dx-3}
 &D_{t} (w_{yx} - \nu \zeta_{yxx}) +  w_{x}(w_{yx} - \nu\zeta_{yxx}) +  (w_{y}w_{xx} +  w_{x}w_{yx}) \\
& \quad +  \eta^{-1} \eta_{y} (D_{t} w_{x} + w_{x}^{2}) + \eta^{-1} \eta_{x} (D_{t} w_{y} + w_{x}w_{y}) + \eta^{-1} \eta_{yx} D_{t}w  \\
& \quad - \nu \bigl(- \zeta_{yx} w_{xx} - \zeta_{x} w_{yxx} -  \zeta_{y} w_{xxx}  + \zeta_{xxx} w_{y} + \zeta_{xx} w_{yx}\bigr)+ \eta^{-1} p(\eta)_{yxx} = 0.
\end{align*}
By taking $L^2(\TT)$ inner product of the above equation with  $\eta (w_{yx} - \nu \zeta_{yxx}) $ and using the decay estimates we have derived in the previous sections, we obtain
\ba\label{CNS-1d-dy-dx-4}
&\frac{1}{2}\frac{\rm d}{\dt} \int_{\TT}  \eta  (w_{yx} - \nu \zeta_{yxx})^{2}  \,\dx + \int_{\TT}  (w_{yx} - \nu \zeta_{yxx}) p(\eta)_{yxx} \,\dx\\
& \quad \leq   C \left(E_{20}(y)\right)^{\frac 12} e^{-\a t} \int_{\TT}  \eta (w_{yx} - \nu \zeta_{yxx})^{2}  \,\dx + \de \int_{\TT} \eta  (w_{yx} - \nu \zeta_{yxx})^{2}  \,\dx  \\
& \qquad + C \de^{-1}  \left( E_{30}(y) + E_{20}^{(1)}(y) \right) e^{-\a t}.
\ea
To handle the term related to the pressure, we write
\ba\label{CNS-1d-dy-dx-5}
p(\eta)_{yxx} & = p'(\eta)\eta_{yxx} +p''(\eta) \eta_{y}\eta_{xx} +  p''(\eta) 2 \eta_{x}\eta_{yx} + p'''(\eta)\eta_{y} \eta_{x}^{2}\\
&= p'(\eta) (-\zeta^{-2} \zeta_{yxx} +  2 \zeta^{-3} \zeta_{y} \zeta_{xx}  +  4 \zeta^{-3} \zeta_{x}\zeta_{yx}  - 6 \zeta^{-4} \zeta_{y}\zeta_{x}^{2}) \\
& \quad + p''(\eta) \eta_{y}\eta_{xx} +  p''(\eta) 2 \eta_{x}\eta_{yx} + p'''(\eta)\eta_{y} \eta_{x}^{2}\\
& = p'(\eta) \eta^{2} \nu^{-1} (w_{yx} - \nu\zeta_{yxx}) - p'(\eta) \eta^{2} \nu^{-1} w_{yx}  + p'(\eta) \eta^{3}  (2 \zeta_{y} \zeta_{xx}  +  4 \zeta_{x}\zeta_{yx}  - 6 \eta^{2} \zeta_{y}\zeta_{x}^{2}) \\
& \quad + p''(\eta) \eta_{y}\eta_{xx} +  p''(\eta) 2 \eta_{x}\eta_{yx} + p'''(\eta)\eta_{y} \eta_{x}^{2}.
 \ea
Notice that
$$
p'(\eta)\eta \geq p'(\underline \eta) \underline \eta >0,
$$
 by choosing $\de$ sufficiently  small, we deduce from \eqref{CNS-1d-dy-dx-4} and \eqref{CNS-1d-dy-dx-5} that
\begin{align*}%\label{CNS-1d-dy-dx-6}
&\frac{\rm d}{\dt} \int_{\TT} \eta  (w_{yx} - \nu \zeta_{yxx})^{2}\,\dx
 + \nu^{-1} p'(\underline \eta) \underline \eta  \int_{\TT} \eta  (w_{yx} - \nu \zeta_{yxx})^{2} \,\dx\\
& \leq C e^{-\a t}   \int_{\TT}  \eta (w_{yx} - \nu \zeta_{yxx})^{2}  \,\dx + C \left( E_{30}(y) + E_{20}^{(1)}(y) \right) e^{-\a t}.
 \end{align*}
Applying Gronwall's inequality gives
\ba\label{CNS-1d-dy-dx-7}
\int_{\TT} \eta  (w_{yx} - \nu \zeta_{yxx})^{2} \,\dx \leq  C   \left( E_{30}(y) + E_{20}^{(1)}(y) \right) e^{-\a t},
 \nn \ea
from which and \eqref{decay-exp-all-y-1},  we infer
 \ba\label{CNS-1d-dy-dx-8}
\int_{\TT}  \zeta_{yxx}^{2}\,\dx  \leq C  \left( E_{30}(y) + E_{20}^{(1)}(y) \right) e^{-\a t}.
 \ea
 This leads to
  \ba\label{CNS-1d-dy-dx-9}
\int_{\TT}  \eta_{yxx}^{2}\,\dx  \leq C  \left( E_{30}(y) + E_{20}^{(1)}(y) \right) e^{-\a t}.
 \ea

\medskip

\noindent{\bf Step 2.} {Decay estimates for $D_{t} w_{yx}.$}

\medskip

We first rewrite $\eqref{CNS-1d-dy-dx}_{2}$ as
\beq\label{Dt-wys}\begin{split}
&\eta D_{t} w_{yx} - \nu  w_{yxxx} + \frak{g} + p(\eta)_{yxx} = 0,\\
\frak{g} \eqdefa \eta (w_{y}w_{xx} &+ 2 w_{x}w_{yx}) + \eta_{y} (D_{t} w_{x} + w_{x}^{2}) + \eta_{x} (D_{t} w_{y} + w_{x}w_{y}) +  \eta_{yx} D_{t}w.
\end{split}
\eeq
By taking $L^2(\TT)$ inner product of \eqref{Dt-wys} with $\eta^{-1} w_{yxxx},$ we obtain
\ba\label{Dt-wys-2}
\int_{\TT}  D_{t} w_{yx} w_{yxxx}\,\dx-\nu \int_{\TT} \eta^{-1}  |w_{yxxx}|^{2}\,\dx+\int_{\TT} \bigl(\frak{g}
 + p(\eta)_{yxx} \bigr) \eta^{-1} w_{yxxx}\,\dx=0.
 \ea
It is easy to observe that
\begin{align*}
\int_{\TT} D_{t} w_{yx}  w_{yxxx}\,\dx & = \int_{\TT}  \d_{t} w_{yx} w_{yxxx}\,\dx + \int_{\TT}  w w_{yxx} w_{yxxx}\,\dx\\
 & = -\frac{1}{2} \frac{\rm d}{\dt} \int_{\TT}  |w_{yxx}|^{2} - \frac{1}{2}\int_{\TT} w_{x} (w_{yxx})^{2}.
\end{align*}
While it follows from \eqref{CNS-1d-Dtwn-26}, \eqref{decay-exp-all-y-1} and \eqref{CNS-1d-dy-dx-9} that
\ba
\int_{\TT} \bigl(\frak{g} + p(\eta)_{yxx} \bigr) \eta^{-1} w_{yxxx}\,\dx \leq  C \de^{-1}  E(y) e^{-\a t} + \de \int_{\TT}   |w_{yxxx}|^{2}\,\dx.
\nn \ea

By inserting the above estimates into \eqref{Dt-wys-2} and choosing $\de$ suitably small, we achieve
\ba\label{Dt-wys-1}
\frac{\rm d}{\dt} \int_{\TT}  |w_{yxx}|^{2}\,\dx + \nu \bar\eta^{-1} \int_{\TT}   |w_{yxxx}|^{2}\,\dx \leq C  E(y) e^{-\a t}.
\ea

While by taking $L^2(\TT)$ inner product of \eqref{Dt-wys} with $ D_{t} w_{yx},$ we find
\ba\label{Dt-wys-7}
\int_{\TT} \eta  |D_{t}w_{yx}|^{2}\,\dx-\nu\int_{\TT}D_t w_{yxxx}D_{t} w_{yx}\,\dx +\int_{\TT}\bigl( D_t\frak{g} +D_t p(\eta)_{yxx}\bigr)D_{t} w_{yx}\,\dx = 0.
\ea
Observing that
\begin{align*}%\label{Dt-wys-7}
- \nu\int_{\TT}   w_{yxxx}  D_{t} w_{yx}\,\dx & = - \nu \int_{\TT}  w_{yxxx}  \d_{t} w_{yx} \,\dx - \nu  \int_{\TT}  w_{yxxx}  w w_{yxx}\,\dx \\
& = \frac\nu2\frac{\rm d}{\dt} \int_{\TT}  |w_{yxx}|^{2}\,\dx + \frac{\nu}{2} \int_{\TT}     (w_{yxx})^{2}  w_{x}\,\dx\\
& \geq \frac\nu2\frac{\rm d}{\dt} \int_{\TT}  |w_{yxx}|^{2} \,\dx-  C  E(y) e^{-\a t}.
\end{align*}
By using the estimates \eqref{CNS-1d-Dtwn-26}, \eqref{decay-exp-all-y-1} and \eqref{CNS-1d-dy-dx-9},  we find
\ba
\int_{\TT} \bigl(\frak{g} + p(\eta)_{yxx} \bigr) D_{t} w_{yx}\,\dx \leq  C \de^{-1}  E(y) e^{-\a t} + \de \int_{\TT}   | D_{t} w_{yx}|^{2}\,\dx.
\nn \ea

By substituting  the above estimates into \eqref{Dt-wys-7}
 and taking $\de$ to be suitably small, we achieve
\ba\label{Dt-wys-5}
 \nu\frac{\rm d}{\dt} \int_{\TT} |w_{yxx}|^{2}\,\dx + \int_{\TT} \eta  |D_{t}w_{yx}|^{2}\,\dx \leq C  E(y) e^{-\a t}.
\ea

 On the other hand,  we get, by applying  $D_{t}$ to \eqref{Dt-wys}  and then taking $L^2(\TT)$ inner product of the resulting
equation  with $D_{t} w_{yx},$ that
\ba\label{Dt-wys-6}
\int_{\TT}\bigl(D_t(\eta D_{t} w_{yx})- \nu D_t w_{yxxx} + D_t\frak{g} +D_t p(\eta)_{yxx}\bigr)D_{t} w_{yx}\,\dx = 0.
\ea
Next we handle term by term above. We first observe that
\begin{align*}%\label{Dt-wys-11}
 \int_{\TT}D_{t}( \eta D_{t} w_{yx} ) D_{t} w_{yx}\,\dx  & = \int_{\TT} (D_{t} \eta) D_{t} w_{yx} D_{t} w_{yx}\,\dx
  + \int_{\TT}\eta (D_{t} D_{t} w_{yx})D_{t} w_{yx}\,\dx \\
& = \frac{1}{2} \frac{\rm d}{\dt} \int_{\TT}\eta |D_{t} w_{yx}|^{2}\,\dx  + \frac12\int_{\TT} (D_{t} \eta) |D_{t} w_{yx}|^{2}\,\dx \\
& \geq  \frac{1}{2}\frac{\rm d}{\dt} \int_{\TT}\eta |D_{t} w_{yx}|^{2}\,\dx  - C \int_{\TT} |D_{t} w_{yx}|^{2}.
\end{align*}

Notice that
\ba\label{Dt-wys-12}
D_{t} w_{yxxx} = (D_{t} w_{yx})_{xx} - w_{xx} w_{yxx} - 2 w_{x} w_{yxxx},
\nn \ea
we have
\begin{align*}%\label{Dt-wys-13}
- \nu \int_{\TT}  D_{t} w_{yxxx} D_{t} w_{yx}\,\dx  & = -\nu  \int_{\TT}(D_{t} w_{yx})_{xx} D_{t} w_{yx}\,\dx
+  \nu \int_{\TT} \bigl(w_{xx} w_{yxx} + 2 w_{x} w_{yxxx}\bigr)D_{t} w_{yx}\,\dx \\
& \geq \nu \int_{\TT} |(D_{t} w_{yx})_{x}|^{2}\,\dx  - C \int_{\TT} \bigl(|D_{t} w_{yx}|^{2} + |w_{yxxx}|^{2}\bigr)\,\dx.
\end{align*}

We get, by applying the estimates \eqref{CNS-1d-Dtwn-26}, \eqref{decay-exp-all-y-1} and \eqref{CNS-1d-dy-dx-9}, that
\ba\label{Dt-wys-14}
\int_{\TT} (D_{t} \bigl(\frak{g}-\eta_{x} D_{t} w_{y} \bigr) D_{t} w_{yx}\,\dx \leq  C  E(y) e^{-\a t} + C  \int_{\TT}   | D_{t} w_{yx}|^{2} \,\dx.
\nn \ea
In view of \eqref{mm-y-2}, we write
\ba\label{Dt-wys-15}
D_{t} (\eta_{x} D_{t} w_{y}) = D_{t} \eta_{x} D_{t} w_{y} - \eta_{x} D_{t} \big( w_x w_y + \eta^{-1} \eta_y D_t w  - \nu \eta^{-1} w _{yxx} + \eta^{-1} p( \eta )_{yx}   \big).
\nn \ea
It  suffices to deal with the highest order derivative term $\eta_{x}\eta^{-1} D_{t} w_{yxx}$ above. Other terms can be estimated similarly, even easier.
 Observing that
\ba\label{Dt-wys-16}
D_{t} w_{yxx} = (D_{t} w_{yx})_{x} - w_{x} w_{yxx},
\nn \ea
we get
\ba\label{Dt-wys-17}
\int_{\TT} \eta_{x}\eta^{-1} (D_{t} w_{yxx}) D_{t} w_{yx}\,\dx \leq C  E(y) e^{-\a t} + C  \int_{\TT}   | D_{t} w_{yx}|^{2}\,\dx
 + \frac{\nu}{4} \int_{\TT} |(D_{t} w_{yx} )_{x} |^{2}\,\dx.
\nn \ea

For the term related to the pressure in \eqref{Dt-wys-6}, we have
\begin{align*}\label{Dt-wys-18}
&\int_{\TT} (D_{t} p(\eta)_{yxx}) D_{t} w_{yx}\,\dx = \int_{\TT} \bigl(\d_{t} p(\eta)_{yxx} + w p(\eta)_{yxxx}\bigr) D_{t} w_{yx}\,\dx \\
& =  - \int_{\TT} \bigl((\d_{t} p(\eta)_{yx} + w p(\eta)_{yxx}) (D_{t} w_{yx} )_{x}  -  w_{x} p(\eta)_{yxx} D_{t} w_{yx}\bigr)\,\dx\\
& \leq C \int_{\TT} |\d_{t} p(\eta)_{yx} + w p(\eta)_{yxx}|^{2}\,\dx + \frac{\nu}{4} \int_{\TT} |(D_{t} w_{yx} )_{x} |^{2}\,\dx
+  \int_{\TT} \bigl(| w_{x} p(\eta)_{yxx}|^{2}  +    | D_{t} w_{yx}|^{2}\bigr)\,\dx\\
& \leq  C  E(y) e^{-\a t} + C  \int_{\TT}   | D_{t} w_{yx}|^{2}\,\dx + \frac{\nu}{4} \int_{\TT} |(D_{t} w_{yx} )_{x} |^{2}\,\dx.
\end{align*}

By substituting the above estimate into \eqref{Dt-wys-6}, we arrive at
\ba\label{Dt-wys-9}
\frac{\rm d}{\dt} \int_{\TT} \eta |D_{t} w_{yx}|^{2}\,\dx + \nu \int_{\TT} |(D_{t} w_{yx})_{x}|^{2} \,\dx\leq C  E(y) e^{-\a t} + C \int_{\TT}   \bigl(|w_{yxxx}|^{2} + |D_{t}w_{yx}|^{2}\bigr)\,\dx.
\ea

By virtue of \eqref{Dt-wys-1}, \eqref{Dt-wys-5} and \eqref{Dt-wys-9}, we can find a large enough positive constant $A>0$ so that
\ba\label{Dt-wys-19}
\frac{\rm d}{\dt} &\int_{\TT}\bigl( A \bigl(|w_{yxx}|^2+|w_{yxx}|^{2}\bigr) + \eta |D_{t} w_{yx}|^{2}\bigr)\,\dx\\
 &+ \int_{\TT} \bigl(|w_{yxxx}|^{2} + |D_{t}w_{yx}|^{2} +  \nu |(D_{t} w_{yx})_{x}|^{2}\bigr)\,\dx \leq C  E(y) e^{-\a t}.
\ea
Together with \eqref{decay-exp-all-y-1}, we deduce from \eqref{Dt-wys-19} that
\ba\label{Dt-wys-20}
 \|D_{t} w_{yx}\|_{L^{2}_{h}}^{2} \leq C  E(y) e^{-\a t} .
\ea
Together with \eqref{decay-exp-all-y-1}, this implies
\ba\label{Dt-wys-21}
 \|w_{yxxx}\|_{L^{2}_{h}}^{2} \leq C  E(y) e^{-\a t} .
\ea

By summarizing the estimates \eqref{decay-exp-all-y-1}, \eqref{CNS-1d-dy-dx-9}, \eqref{Dt-wys-20} and \eqref{Dt-wys-21}, we conclude the proof of \eqref{decay-exp-all-y-2}.
\end{proof}

\section{Decay estimates of  $(\eta_{yy}, w_{yy})$}\label{sec:1dNS-yy}

In this section, we investigate the decay in time estimates of $(\eta _y,  w _y)$. We first get,
by  applying $\d_y^{2}$ to \eqref{CNS-limit}, that
\be\label{CNS-1d-dyy}
\left\{\begin{aligned}
& \eta _{yyt}  + ( \eta  w )_{yyx}= 0,\\
&( \eta  w )_{yyt} + ( \eta  w ^2)_{yyx} - \nu w _{yyxx} + p( \eta )_{yyx} = 0.
\end{aligned}\right.
\ee
Integrating \eqref{CNS-1d-dyy} with respect to $x$ over $\TT$ gives
\ba\label{csv-m-m-dyy}
\frac{\rm d}{\dt}\int_{\TT}  \eta_{yy} \,\dx = 0 \andf \frac{\rm d}{\dt}\int_{\TT} ( \eta  w )_{yy} \,\dx = 0.
\nn \ea
This implies
\ba\label{csv-m-m-1-dyy}
\int_{\TT}  \eta_{yy} \,\dx = 0 \andf \int_{\TT} ( \eta  w )_{yy}\,\dx = 0, \quad \forall \, t\in\R_{+}.
\ea

\subsection{Decay estimates of $L^{2}$ norms}
 Introduce
\be\label{ini-6-dyy}
 E_{00}^{(2)} (y)\eqdefa   \|(\d_y^{2} \varsigma_{0},\d_y^{2}  w _{0})(y)\|_{L^2_\h}^2 \in (L^{1} \cap L^{\infty})(\R) \andf \bar E_{00}^{(2)} \eqdefa  \sup_{y\in \R} E_{00}^{(2)} <\infty.
\ee
Throughout this subsection, $A, \a$ and $C$ are positive numbers solely depending on $(a, \g, \nu, \bar \varsigma_0, \bar E_{20}, \bar E_{10}^{(1)},  \underline{\varsigma}_0),$ which may differ from line to line. We first give the basic energy estimates:
\begin{lem}\label{energy-basic-yy}
{\sl For all $t\in \R^{+}$,
one has
\ba\label{energy-0-0-yy}
\int_{\TT} \bigl( \eta  w_{yy}^2 +  \eta_{yy}^2\bigr) \,\dx + \nu \int_0^t \int_{\TT}| w _{yyx}|^2 \,\dx  \leq C \left( E_{10}^{(1)} (y) + E_{20}(y)\right).
\ea}
\end{lem}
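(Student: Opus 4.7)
The plan is to mirror, at the second $y$-derivative level, the proof of Lemma \ref{energy-basic-y}: pair $\eqref{CNS-1d-dyy}_2$ with $w_{yy}$ in $L^2_\h$, pair $\eqref{CNS-1d-dyy}_1$ with $\eta_{yy}$ in $L^2_\h$, and add the two identities. The new feature compared to the first-derivative case is the profusion of quadratic source terms in the already-controlled quantities $(\eta_y,w_y,\eta_{yx},w_{yx})$ coming from the Leibniz expansions
\begin{equation*}
(\eta w)_{yy}=\eta w_{yy}+2\eta_y w_y+\eta_{yy}w,\quad (\eta w^2)_{yy}=\eta_{yy}w^2+4\eta_y w w_y+2\eta w_y^2+2\eta w w_{yy},\quad p(\eta)_{yy}=p'(\eta)\eta_{yy}+p''(\eta)\eta_y^2.
\end{equation*}

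After plugging these in and integrating by parts, the leading contributions reproduce the structure of \eqref{energy-0-7}--\eqref{energy-0-9}: on the left we get $\tfrac{1}{2}\tfrac{d}{dt}\int_{\TT}(\eta w_{yy}^2+\eta_{yy}^2)\,dx+\nu\int_{\TT}w_{yyx}^2\,dx$, while the right-hand side organizes into three types of terms. Type (a) consists of cubic expressions of the form $f\cdot(\eta w_{yy}^2+\eta_{yy}^2)$ with $\|f\|_{L^\infty_\h}\le Ce^{-\alpha t}$ (arising from $\eta_t w_{yy}^2$, $w_x w_{yy}^2$, $\eta_x\eta_{yy} w_{yy}$, etc.), controlled via Proposition \ref{S2prop1}. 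Type (b) consists of cross terms containing a single factor of $w_{yy}$ or $\eta_{yy}$; Young's inequality peels off a small multiple of $\int w_{yyx}^2$ (absorbed by the dissipation) and an $L^2_\h$-norm squared of derivatives of $(\eta,w,\eta_y,w_y)$, each decaying like $Ce^{-\alpha t}$ by Propositions \ref{S2prop1}--\ref{S2prop2}. Type (c) consists of pure source terms with no factor of $(w_{yy},\eta_{yy})$.

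The most delicate items are the pressure remainder $\int_{\TT}p''(\eta)\eta_y^2\,w_{yyx}\,dx$ and the continuity-equation cross term $\int_{\TT}2(\eta_y w_y)_x\,\eta_{yy}\,dx$, both of which carry the second $y$-derivative on the test function. For the pressure remainder I bound $\|p''(\eta)\eta_y^2\|_{L^2_\h}\lesssim\|\eta_y\|_{L^\infty_\h}\|\eta_y\|_{L^2_\h}\le C\,E^{1/2}(y)e^{-\alpha t}$ via \eqref{decay-exp-all-y-2} and Sobolev embedding on $\TT_x$, and split the product by $\delta\int w_{yyx}^2+\delta^{-1}\|p''(\eta)\eta_y^2\|_{L^2_\h}^2$; for the cross term, integration by parts in $x$ moves a derivative to $w_y$ or $\eta_y$, producing factors like $\eta_{yx}w_y,\eta_y w_{yx}$ whose $L^2_\h$ norms are $\mathrm{O}(E^{1/2}(y)e^{-\alpha t})$ by Proposition \ref{S2prop2}. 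The net outcome is a differential inequality
\begin{equation*}
\frac{d}{dt}\!\int_{\TT}\!\bigl(\eta w_{yy}^2+\eta_{yy}^2\bigr)\,dx+\frac{\nu}{2}\!\int_{\TT}\!w_{yyx}^2\,dx\le Ce^{-\alpha t}\!\int_{\TT}\!\bigl(\eta w_{yy}^2+\eta_{yy}^2\bigr)\,dx+C\bigl(E_{10}^{(1)}(y)+E_{20}(y)\bigr)e^{-\alpha t}.
\end{equation*}

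Gronwall's inequality together with $\int_0^\infty e^{-\alpha t}\,dt<\infty$ then yields \eqref{energy-0-0-yy}. The main obstacle I anticipate is purely bookkeeping: the expansion of $(\eta w^2)_{yyx}$ paired with $w_{yy}$ spawns roughly a dozen distinct cross terms, and I must verify that every one of them falls into type (a), (b), or (c) as described -- in particular that no term requires an $L^\infty_\h$ bound on $(\eta_{yy},w_{yy})$, which we do not have at this stage of the induction. Thanks to the decay estimates of Propositions \ref{S2prop1} and \ref{S2prop2}, this will indeed be the case, and the constant $C$ depends only on the quantities listed at the beginning of the subsection.
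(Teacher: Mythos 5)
Your overall strategy is the paper's: test $\eqref{CNS-1d-dyy}_2$ by $w_{yy}$, test $\eqref{CNS-1d-dyy}_1$ by $\eta_{yy}$, add, and close with Gronwall using Propositions \ref{S2prop1}--\ref{S2prop2}. But your three-way classification of the right-hand side has a genuine gap at the single term that actually matters: the principal pressure contribution $\int_{\TT}p'(\eta)\,\eta_{yy}\,w_{yyx}\,\dx$ coming from $p(\eta)_{yy}=p'(\eta)\eta_{yy}+p''(\eta)\eta_y^2$ after integrating by parts against $w_{yy}$. This term is not type (a) (the coefficient $p'(\eta)$ does not decay), and it is not type (b) as you describe it: Young's inequality leaves $\delta\int w_{yyx}^2+C_\delta\int p'(\eta)^2\eta_{yy}^2$, and the leftover is $\|\eta_{yy}\|_{L^2_\h}^2$ itself --- precisely the unknown, \emph{not} a quantity controlled by Propositions \ref{S2prop1}--\ref{S2prop2} (which give $x$-derivatives of $\eta_y$, not $\partial_y\eta_y$) --- and it carries a \emph{non-decaying} constant. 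Feeding $\frac{d}{dt}Y\le CY+\dots$ with non-decaying $C$ into Gronwall yields exponential growth, not \eqref{energy-0-0-yy}. You worried about needing an $L^\infty_\h$ bound on $(\eta_{yy},w_{yy})$, but the real danger is a non-decaying multiple of $\|\eta_{yy}\|_{L^2_\h}^2$; you flagged the harmless remainder $\int p''(\eta)\eta_y^2 w_{yyx}$ as the delicate item instead.

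The fix, which is exactly the mechanism already present in the first-order case you say you are mirroring (the combination $\int(p'(\eta)-\eta)\eta_y w_{yx}$ preceding \eqref{energy-0}), is a cancellation rather than an absorption: the continuity equation tested by $\eta_{yy}$ produces $-\int_{\TT}\eta\,\eta_{yy}\,w_{yyx}\,\dx$, and adding the two identities leaves $\int_{\TT}(p'(\eta)-\eta)\,\eta_{yy}\,w_{yyx}\,\dx$. With the normalization \eqref{pressure-2} ($p'(1)=1$) and the decay $\|\eta-1\|_{L^\infty_\h}\le Ce^{-\a t}$ from \eqref{S2eq1}, one has $\|p'(\eta)-\eta\|_{L^\infty_\h}\le Ce^{-\a t}$, so Young's inequality now yields $\frac{\nu}{4}\int w_{yyx}^2 + Ce^{-\a t}\int\eta_{yy}^2$, which is Gronwall-compatible. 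Once this cancellation is made explicit your argument closes; the remaining terms are handled as you describe (note only that for $\int 2(\eta_yw_y)_x\eta_{yy}\,\dx$ you should expand the product rule rather than integrate by parts, since the latter would put $\partial_x$ on $\eta_{yy}$).
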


\begin{proof}
Firstly, we rewrite $\eqref{CNS-1d-dyy}_{2}$ as
\ba\label{CNS-1d-dyy-1}
\eta D_t w_{yy}  + \eta (2 w_{y} w_{yx} + w_{yy}w_{x})+ 2 \eta_{y} (D_{t} w_{y} + w_{y} w_{x}) + \eta_{yy} D_{t} w - \nu w _{yyxx} + p( \eta )_{yyx} = 0.
\ea

By using the decay estimates we have derived in the previous sections and testing \eqref{CNS-1d-dyy-1} by $ w_{yy}$, we  obtain
\ba\label{energy-0-1-dyy}
\frac{1}{2} & \frac{\rm d}{\dt}  \int_{\TT}   \eta  w_{yy}^2  \,\dx + \nu \int_{\TT}| w _{yyx}|^2 \,\dx   \\
 & \quad \leq   \frac{\nu}{8} \int_{\TT}| w _{yyx}|^2 \,\dx  + C e^{-\a t}  \int_{\TT} \bigl( \eta  w_{yy}^2 +  \eta_{yy}^2\bigr) \,\dx + \int_{\TT} p'( \eta )  \eta_{yy}  w _{yyx} + C  E_{10}^{(1)} (y) e^{-\a t}.
 \ea

On the other hand, by taking $L^2(\TT)$ inner product of  $\eqref{CNS-1d-dyy}_1$ with $ \eta_{yy},$ we find
\ba\label{energy-0-2-dyy}
 \frac{1}{2}&\frac{\rm d}{\dt} \int_{\TT}   \eta_{yy}^2  \,\dx  =  - \int_{\TT}( \eta  w )_{yyx}  \eta_{yy} \,\dx \\
 & = -  \int_{\TT} \bigl(   \eta   w_{yyx} + \eta_{yy}  w_{x} +   \eta_{yyx}  w + 2 \eta_{yx}  w_{y} + 2 \eta_{y}  w_{yx} + \eta w_{yyx} + \eta_{x} w_{yy}\bigr) \eta_{yy}\,\dx \\
 %& =  -  \int_{\TT}  \eta   w_{yyx} \eta_{yy}\,\dx - \int_{\TT}  \frac{1}{2} (\eta_{yy}^{2})_{x} w \,\dx - \int_{\TT} (\eta_{yy}  w_{x} + 2 \eta_{yx}  w_{y} + 2 \eta_{y}  w_{yx} + \eta w_{yyx} + \eta_{x} w_{yy}) \eta_{yy}\,\dx\\
 & =   -  \int_{\TT}  \eta   w_{yyx} \eta_{yy}\,\dx - \int_{\TT} \bigl(\frac{1}{2}\eta_{yy}  w_{x} + 2 \eta_{yx}  w_{y} + 2 \eta_{y}  w_{yx} + \eta w_{yyx} + \eta_{x} w_{yy}\bigr) \eta_{yy}\,\dx.
 \ea
By summing up \eqref{energy-0-1-dyy} with \eqref{energy-0-2-dyy}, we achieve
\begin{align*}
&\frac{\rm d}{\dt}  \int_{\TT} \frac{1}{2} \bigl(\eta  w_{yy}^2 +  \eta_{yy}^2\bigr) \,\dx + \nu \int_{\TT}| w _{yyx}|^2 \,\dx   \\
& \quad \leq \frac{\nu}{4} \int_{\TT}| w _{yyx}|^2 \,\dx +   C e^{-\a t}  \int_{\TT} \bigl( \eta  w_{yy}^2 +  \eta_{yy}^2\bigr) \,\dx + \int_{\TT} (p'( \eta ) -\eta) \eta_{yy} w _{yyx}\,\dx + C  E_{10}^{(1)} (y) e^{-\a t},
\end{align*}
from which and  the decay of $p'(\eta) - \eta$ obtained in \eqref{S2eq1},  we infer
\ba\label{energy-0-3-dyy}
&\frac{\rm d}{\dt}  \int_{\TT}  \bigl(\eta  w_{yy}^2 +  \eta_{yy}^2\bigr) \,\dx + \nu \int_{\TT}| w _{yyx}|^2 \,\dx   \\
& \quad \leq  C e^{-\a t}  \int_{\TT} \bigl( \eta  w_{yy}^2 +  \eta_{yy}^2\bigr) \,\dx + C  E_{10}^{(1)} (y) e^{-\a t}.
 \ea
Applying Gronwall's inequality leads to \eqref{energy-0-0-yy}.
\end{proof}

\begin{prop}\label{prop-decay-L2-yy}
{\sl  For all $t\in \R_{+}$,
\ba\label{decay-L2-yy}
\int_{\TT} \bigl( \eta  w_{yy}^2 +  \eta_{yy}^2\bigr) (t)\,\dx  \leq C \left( E_{10}^{(1)} (y) + E_{20}(y)\right) e^{-\a t}.
\ea}
\end{prop}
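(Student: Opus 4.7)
The plan is to mimic, at the second $y$-derivative level, the two-step strategy used in the proof of Proposition \ref{prop-decay-L2-y}: combine the basic energy identity \eqref{energy-0-3-dyy} from Lemma \ref{energy-basic-yy} with a pressure-gain inequality obtained by testing the momentum equation $\eqref{CNS-1d-dyy}_{2}$ against $I(\eta_{yy})$, then close via a suitable Lyapunov functional. All ``source'' terms that appear should be of two types: either pure exponential decay $Ce^{-\alpha t}$ coming from the estimates of $(\eta-1,w)$, $(\eta_y,w_y)$ and $(\eta_{yt},w_{yt})$ already established in Propositions \ref{S2prop1} and \ref{S2prop2}, or terms that can be absorbed by the dissipation $\nu\int w_{yyx}^{2}\,dx$ in \eqref{energy-0-3-dyy}.

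The central computation is to test $\eqref{CNS-1d-dyy}_{2}$ with $I(\eta_{yy})$ and integrate by parts. For the pressure term one expands $p(\eta)_{yy}=p'(\eta)\eta_{yy}+p''(\eta)\eta_{y}^{2}$, so that
\[
\int_{\TT} p(\eta)_{yy}\,\eta_{yy}\,\dx\geq p'(\underline\eta)\int_{\TT}\eta_{yy}^{2}\,\dx -C\int_{\TT}|p''(\eta)|\,\eta_{y}^{2}\,|\eta_{yy}|\,\dx,
\]
and the last term is $O(e^{-\alpha t})$ by Proposition \ref{S2prop2} and Young. For the time-derivative term one writes
\[
\int_{\TT}(\eta w)_{yyt}\,I(\eta_{yy})\,\dx=\frac{\rm d}{\dt}\int_{\TT}(\eta w)_{yy}\,I(\eta_{yy})\,\dx+\int_{\TT}(\eta w)_{yy}\,I\bigl((\eta w)_{yyx}\bigr)\,\dx,
\]
and the identity $I(f_{x})=f-f(0)$ together with the conservation \eqref{csv-m-m-1-dyy} turns the last integral into $\int_{\TT}(\eta w)_{yy}^{2}\,\dx$. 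The convection term $(\eta w^{2})_{yy}$ and the viscous term $-\nu w_{yyx}$ contribute, after integration by parts, quantities bounded by $\delta\int\eta_{yy}^{2}+C_{\delta}\int w_{yyx}^{2}+$ lower order terms with explicit exponential decay coming from Propositions \ref{S2prop1}--\ref{S2prop2}. Choosing $\delta=p'(\underline\eta)/4$ yields
\begin{equation*}
\frac{p'(\underline\eta)}{2}\int_{\TT}\eta_{yy}^{2}\,\dx-\frac{\rm d}{\dt}\int_{\TT}(\eta w)_{yy}\,I(\eta_{yy})\,\dx\leq C\int_{\TT}w_{yyx}^{2}\,\dx+\int_{\TT}(\eta w)_{yy}^{2}\,\dx + Ce^{-\alpha t}E^{(1)}_{10}(y).
\end{equation*}

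The main obstacle is controlling the quadratic term $\int(\eta w)_{yy}^{2}\,\dx$ and, more fundamentally, passing from the $w_{yyx}$-dissipation to control of $\int\eta w_{yy}^{2}$. For the latter I would adapt the Poincar\'e/symmetrization argument of Lemma \ref{prop-kinetic-tx}: from the conservation $\int_{\TT}(\eta w)_{yy}\,\dx=0$ one obtains $\int_{\TT}\eta w_{yy}\,\dx=-\int_{\TT}(\eta_{yy}w+2\eta_{y}w_{y})\,\dx$, whose absolute value is $O(e^{-\alpha t})$ by Propositions \ref{S2prop1}--\ref{S2prop2} and the $L^{1}$-control of $\eta_{yy}$ from Lemma \ref{energy-basic-yy}. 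A Poincar\'e-type bound (as in \eqref{decay-L2-y-10}) then gives
\[
\int_{\TT}\eta w_{yy}^{2}\,\dx\leq \bar\eta^{2}\int_{\TT}w_{yyx}^{2}\,\dx+\Bigl(\int_{\TT}\eta w_{yy}\,\dx\Bigr)^{2}\leq \bar\eta^{2}\int_{\TT}w_{yyx}^{2}\,\dx+Ce^{-\alpha t}E^{(1)}_{10}(y),
\]
and an analogous expansion of $(\eta w)_{yy}$ together with $\|w\|_{L^{\infty}}\lesssim e^{-\alpha t}$ and the bounds on $(\eta_{y},w_{y})$ controls $\int(\eta w)_{yy}^{2}$.

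With these ingredients at hand, introduce the Lyapunov functional
\[
F^{(2)}_{1}(t)\eqdefa\int_{\TT}\Bigl(A\bigl(\eta w_{yy}^{2}+\eta_{yy}^{2}\bigr)-(\eta w)_{yy}\,I(\eta_{yy})\Bigr)\,\dx,
\]
with $A$ chosen large enough that, by the preceding bounds, $F^{(2)}_{1}\sim \int(\eta w_{yy}^{2}+\eta_{yy}^{2})\,\dx$ and, summing $A$ times \eqref{energy-0-3-dyy} with the pressure-gain inequality,
\[
\frac{\rm d}{\dt}F^{(2)}_{1}(t)+\int_{\TT}\bigl(w_{yyx}^{2}+\tfrac{p'(\underline\eta)}{4}\eta_{yy}^{2}\bigr)\,\dx\leq Ce^{-\alpha t}\bigl(E^{(1)}_{10}(y)+E_{20}(y)\bigr).
\]
A Gronwall argument then yields the exponential decay \eqref{decay-L2-yy}. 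The bookkeeping of the many cross-terms arising from expanding $(\eta w^{2})_{yy}$ and $(\eta w)_{yy}$ is the main technical burden, but each such term contains at least one factor of $\eta_{y},w_{y},\eta_{yt},w_{yt}$ or $(\eta-1,w)$, all of which decay exponentially by the previous sections.
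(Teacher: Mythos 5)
Your proposal is correct and follows essentially the same route as the paper: the energy identity of Lemma \ref{energy-basic-yy}, the test function $I(\eta_{yy})$ combined with the conservation law \eqref{csv-m-m-1-dyy}, the Poincar\'e-type bound for $\int_{\TT}\eta w_{yy}^{2}\,\dx$, and the same Lyapunov functional $F_{1}^{(2)}$ followed by Gronwall. The only wrinkle is that $\bigl|\int_{\TT}(\eta w)_{yy}I(\eta_{yy})\,\dx\bigr|$ also produces the lower-order contribution $\int_{\TT}(\eta_{y}^{2}+w_{y}^{2})\,\dx$, so the claimed two-sided equivalence $F_{1}^{(2)}\sim\int_{\TT}(\eta w_{yy}^{2}+\eta_{yy}^{2})\,\dx$ does not hold as stated; the paper repairs this by working with $F_{2}^{(2)}=F_{1}^{(1)}+F_{1}^{(2)}$ and the differential inequality \eqref{decay-L2-y-1}, while in your scheme it suffices to observe that these extra terms decay exponentially by Proposition \ref{prop-decay-L2-y}.
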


\begin{proof} We first get, by multiplying the momentum equation of \eqref{CNS-1d-dyy}  by $I( \eta_{yy})$ and integrating the resulting equality over $\TT,$  that
\ba\label{decay-L2-yy-1}
\int_{\TT} p( \eta )_{yy}  \eta_{yy} \,\dx = \int_{\TT} ( \eta  w )_{yyt} I( \eta_{yy})\,\dx  - \int_{\TT}  ( \eta  w^2)_{yy}  \eta_{yy}\,\dx   +   \nu  \int_{\TT}w_{yyx}  \eta_{yy} \,\dx .
\ea
We now handle term by term above. For the term on the left-hand side of \eqref{decay-L2-yy-1}, we have
\begin{align*}%\label{decay-L2-yy-2}
\int_{\TT} p( \eta )_{yy}  \eta_{yy} \,\dx & = \int_{\TT} p'( \eta ) (\eta_{yy})^{2} \,\dx +  \int_{\TT} p''( \eta ) (\eta_{y})^{2} \eta_{yy} \,\dx\\
& \geq \int_{\TT} p'( \eta ) (\eta_{yy})^{2} \,\dx - C \de E_{10}^{(1)} (y) e^{-\a t} \int_{\TT} (\eta_{yy})^{2} \,\dx - C \de^{-1} E_{10}^{(1)} (y) e^{-\a t}.
\end{align*}
We then compute the first term on the right-hand side of \eqref{decay-L2-yy-1} as follows
\begin{align*}%\label{decay-L2-yy-3}
\int_{\TT} ( \eta  w )_{yyt} I( \eta_{yy})\,\dx  & =  \frac{\rm d}{\dt}\int_{\TT} ( \eta  w )_{yy} I( \eta_{yy})\,\dx - \int_{\TT} ( \eta  w )_{yy} I( \eta _{yyt})\,\dx\\
& =  \frac{\rm d}{\dt}\int_{\TT} ( \eta  w )_{yy} I( \eta_{yy})\,\dx + \int_{\TT} ( \eta  w )_{yy} I\left(( \eta  w )_{yyx}\right)\,\dx  \\
&  =  \frac{\rm d}{\dt}\int_{\TT} ( \eta  w )_{yy} I( \eta_{yy})\,\dx  + \int_{\TT} ( \eta  w )_{yy} (( \eta  w )_{yy} (t,x,y) - ( \eta  w )(t,0,y))\,\dx \\
&  =  \frac{\rm d}{\dt}\int_{\TT} ( \eta  w )_{yy} I( \eta_{yy})\,\dx  + \int_{\TT} ( ( \eta  w )_{yy} )^{2}\,\dx\\
& =  \frac{\rm d}{\dt}\int_{\TT} ( \eta  w )_{yy} I( \eta_{yy})\,\dx + \int_{\TT} (\eta_{yy} w + 2 \eta_{y} w_{y} + \eta w_{yy})^{2}\,\dx \\
&  \leq \frac{\rm d}{\dt}\int_{\TT} ( \eta  w )_{yy} I( \eta_{yy})\,\dx + C \left( E_{10}^{(1)} (y) + E_{20}(y)\right) e^{-\a t}+ C  \int_{\TT}  \eta w_{yy}^{2}\,\dx,
\end{align*}
where we used $\eqref{CNS-1d-dyy}_{1}$, \eqref{csv-m-m-1-dyy}, \eqref{energy-0-0-yy}, and the decay estimates we have obtained in the previous sections.

Similarly, we have
\begin{align*}%\label{decay-L2-yy-4}
- \int_{\TT}  ( \eta  w^2)_{yy}  \eta_{yy}\,\dx  & = - \int_{\TT}  ( w^{2} \eta_{yy} + 2 \eta_{y} w w_{y} + \eta (w w_{yy} + w_{y}^{2}))  \eta_{yy}\,\dx \\
& \leq C \left( E_{10}^{(1)} (y) + E_{20}(y)\right) e^{-\a t}+ C  \int_{\TT}  \eta w_{yy}^{2}\,\dx.
\end{align*}

For the last term in \eqref{decay-L2-yy-1}, we find
\ba\label{decay-L2-yy-40}
  \nu  \int_{\TT}w_{yyx}  \eta_{yy} \,\dx  \leq \de \nu  \int_{\TT}  \eta_{yy}^{2}\,\dx +  \de^{-1} \nu  \int_{\TT} w_{yyx}^{2}\,\dx.
\nn \ea

 Notice that $p'(\eta) \geq p'(\underline \eta) >0$, by substituting the above estimates into \eqref{decay-L2-yy-1}
  and taking $\de$ to be suitably small, we achieve
\ba\label{decay-L2-yy-5}
\frac{p'(\underline{\eta})}{2} \int_{\TT}  \eta_{yy}^2 \,\dx\leq \frac{\rm d}{\dt}\int_{\TT} ( \eta  w )_{yy} I( \eta_{yy})\,\dx + C \left( E_{10}^{(1)} (y) + E_{20}(y)\right) e^{-\a t}+ C  \int_{\TT}  \bigl(\eta w_{yy}^{2}+  w _{yyx}^2\bigr) \,\dx.
\ea
While it follows from a similar  proof of Lemma \ref{prop-kinetic-tx} that
\ba\label{decay-L2-yy-6}
 \int_{\TT} \bigl( \eta  w_{yy}^2 - \langle  \eta  w_{yy} \rangle^2\bigr) \,\dx  \leq \bbeta^2 \int_{\TT}  w _{yyx}^2 \,\dx.
\nn \ea
In view of \eqref{csv-m-m-1-dyy}, it holds that
$$
\langle  \eta  w_{yy} \rangle = - \langle  w \eta_{yy}  \rangle - \langle  2 \eta_{y} w_{y} \rangle.
$$
As a result, we infer
\ba\label{decay-L2-yy-7}
 \int_{\TT}  \eta  w_{yy}^2\,\dx
 & \leq \bbeta^2 \int_{\TT}  w_{yyx}^2 \,\dx +  \langle  \eta w_{yy}  \rangle^2 \\
 & \leq \bbeta^2 \int_{\TT}  w _{yyx}^2 \,\dx +  8 \int_{\TT} \bigl( \eta_{yy}^2  w^2 + \eta_{y}^{2} w_{y}^{2} \bigr) \,\dx\\
 & \leq C \left( E_{10}^{(1)} (y) + E_{20}(y)\right) e^{-\a t}  + C\int_{\TT}  w _{yyx}^2 \,\dx.
\ea
Thanks to \eqref{decay-L2-yy-5} and \eqref{decay-L2-yy-7}, we deduce  that
\ba\label{decay-L2-yy-8}
\frac{p'(\underline{\eta})}{2}\int_{\TT}  \eta_{yy}^2 \,\dx  - \frac{\rm d}{\dt}\int_{\TT} ( \eta  w )_{yy} I( \eta_{yy})\,\dx \leq  C \left( E_{10}^{(1)} (y) + E_{20}(y)\right) e^{-\a t}  + C\int_{\TT}  w _{yyx}^2 \,\dx.
\ea

Let $A$ be a sufficiently large positive constant, we denote
\be\label{F2-1-def}
F_1^{(2)}\eqdefa \int_{\TT} \bigl(A ( \eta  w_{yy}^2 +  \eta_{yy}^2) - ( \eta  w )_{yy} I( \eta_{yy}) \bigr)\,\dx \andf
F_2^{(2)}\eqdefa F_1^{(1)} + F_1^{(2)},
\ee
where $F_1^{(1)} $ is given in \eqref{F1-1-def}.

Then by virtue of \eqref{energy-0-3-dyy} and \eqref{decay-L2-yy-8},  we get
\ba\label{decay-L2-yy-9}
 \frac{\rm d}{\dt} F_1^{(2)}(t) + \int_{\TT}\bigl( w _{yyx}^2 + \frac{ p'(\underline{\eta})}{2}  \eta_{yy}^2 \bigr)\,\dx  \leq  C \left( E_{10}^{(1)} (y) + E_{20}(y)\right) e^{-\a t}  .
\ea
Notice that
\ba\label{decay-L2-yy-10}
\left| \int_{\TT} ( \eta  w )_{yy} I ( \eta_{yy})\,\dx \right|  &  \leq  \| ( \eta  w )_{yy} \|_{L^1_\h}   \| \eta_{yy} \|_{L^1_\h} \\
& \leq  \| \eta_{yy} w + 2 \eta_{y} w_{y} + \eta w_{yy} \|_{L^1_\h}   \| \eta_{yy} \|_{L^1_\h} \\
& \leq C \int_{\TT}\bigl( \eta  w_{yy}^2 +  \eta_{yy}^2 + \eta_{y}^{2} + w_{y}^{2}\bigr)\,\dx.
\ea
 Then by \eqref{decay-L2-y-1} and \eqref{decay-L2-yy-9}, we infer
\ba\label{decay-L2-yy-11}
\frac{\rm d}{\dt} F_2^{(2)}(t) + \int_{\TT}\bigl( w_{yx}^{2} + w _{yyx}^2 + \frac{ p'(\underline{\eta})}{2}  (\eta_{y}^{2} + \eta_{yy}^2 )\bigr)\,\dx  \leq  C \left( E_{10}^{(1)} (y) + E_{20}(y)\right) e^{-\a t}  .
\ea

On the other hand, in view of  \eqref{decay-L2-y-10}, \eqref{decay-L2-yy-7} and  \eqref{decay-L2-yy-10}, by choosing $A$ suitably large, we deduce that
\ba\label{decay-L2-yy-12}
\int_{\TT}  \bigl(\eta  w _y^2 +  \eta _y^2 + \eta  w_{yy}^2 +  \eta_{yy}^2 \bigr) \,\dx  \leq F_2^{(2)}(t)  & \leq C \int_{\TT}
\bigl(\eta  w _y^2 +  \eta _y^2 + \eta  w_{yy}^2 +  \eta_{yy}^2\bigr) \,\dx  \\
& \leq C  \int_{\TT} \bigl(w_{xy}^2 +  \eta_y^2 +  w_{yyx}^2 +  \eta_{yy}^2\bigr) \,\dx.
\ea
 Our desired estimate \eqref{decay-L2-yy} follows from \eqref{decay-L2-yy-11} and \eqref{decay-L2-yy-12}.
\end{proof}

\subsection{Proof of Proposition \ref{S2prop3}}

\begin{proof}[Proof of Proposition \ref{S2prop3}] We divide the proof into the following two steps:

\medskip

\noindent{\bf Step 1.} Decay estimate of $\eta_{yyx}.$

\medskip

The idea to derive the decay estimate of $\eta_{yyx}$ is similar as that  of $\eta_{yxx}$ in Section \ref{sec-decay-y-all}.
 In what follows, we just outline its derivation. In order to do so, wfirst e rewrite $\eqref{CNS-1d-dyy}_{2}$ as
\ba\label{decay-H1-yy-1}
 D_t w_{yy}  +  (2 w_{y} w_{yx} + w_{yy}w_{x}) +\eta^{-1}\bigl( 2 \eta_{y} (D_{t} w_{y} + w_{y} w_{x}) +
  \eta_{yy} D_{t} w - \nu  w _{yyxx} +  p( \eta )_{yyx}\bigr) = 0.
\ea
While it follows from  $\eqref{CNS-1d-dyy}_{1}$ that
\ba\label{decay-H1-yy-2}
\zeta w_{yyxx} = D_{t}\zeta_{yyx} - \zeta_{yy} w_{xx} - \zeta_{y} w_{yxx} -  \zeta_{y} w_{yxx}  + \zeta_{yxx} w_{y} + \zeta_{yxx} w_{y} + \zeta_{xx} w_{yy}.
\ea
Plugging \eqref{decay-H1-yy-2} into \eqref{decay-H1-yy-1} gives
\ba\label{decay-H1-yy-3}
 D_t (w_{yy} -\nu \zeta_{yyx}) +   \eta^{-1} p( \eta )_{yyx} + \frak{g}_1 = 0,
\ea
with
\begin{align*}%\label{decay-H1-yy-4}
\frak{g}_1 =& (2 w_{y} w_{yx} + w_{yy}w_{x}) + 2 \eta^{-1}\eta_{y} (D_{t} w_{y} + w_{y} w_{x}) + \eta^{-1} \eta_{yy} D_{t} w\\
 &- \nu (- \zeta_{yy} w_{xx} - \zeta_{y} w_{yxx} -  \zeta_{y} w_{yxx}  + \zeta_{yxx} w_{y} + \zeta_{yxx} w_{y} + \zeta_{xx} w_{yy}).
\end{align*}
Then we  deduce from Propositions \ref{S2prop1}, \ref{S2prop2} and \ref{prop-decay-L2-yy} that
\ba\label{decay-H1-yy-5}
\|\frak{g}_2\|_{L^2_{h}}^{2} \leq C E(y)  e^{-\a t}.
\ea

By taking $L^2(\TT)$ inner product of \eqref{decay-H1-yy-3} with $\eta (w_{yy} -\nu \zeta_{yyx})$ and using
\eqref{decay-H1-yy-5}, we find
\begin{align*}%\label{decay-H1-yy-6}
\frac 12 \frac{\rm d}{\dt}\int_{\TT}\eta(w_{yy} -\nu \zeta_{yyx})^{2}\,\dx &-\int_{\TT}p'(\eta) \zeta^{-2} \zeta_{yxx} (w_{yy} -\nu \zeta_{yyx})\,\dx\\
 &\leq C \de^{-1}E(y)  e^{-\a t} + \de \int_{\TT} \eta(w_{yy} -\nu \zeta_{yyx})^{2}\,\dx,
\end{align*}
from which, we infer
\ba\label{decay-H1-yy-7}
\frac 12 \frac{\rm d}{\dt}\int_{\TT} \eta(w_{yy} -\nu \zeta_{yyx})^{2}\,\dx
 &+  \nu^{-1}\int_{\TT}  p'(\eta) \eta^{2}(w_{yy} -\nu \zeta_{yyx})^{2}\,\dx \\
 &\leq C \de^{-1}E(y)  e^{-\a t} + \de \int_{\TT} \eta(w_{yy} -\nu \zeta_{yyx})^{2}\,\dx.
\ea
Taking $\de = (2\nu)^{-1}p'(\underline{\eta})\underline{\eta}$ in  \eqref{decay-H1-yy-7} gives
\ba\label{decay-H1-yy-8}
\frac{\rm d}{\dt} \int_{\TT} \eta(w_{yy} -\nu \zeta_{yyx})^{2}\,\dx + \nu^{-1}p'(\underline{\eta})\underline{\eta} \int_{\TT} \eta (w_{yy} -\nu \zeta_{yyx})^{2}\,\dx \leq C E(y)  e^{-\a t},
\nn \ea
from which, we infer
\ba\label{decay-H1-yy-9}
 \int_{\TT} \eta(w_{yy} -\nu \zeta_{yyx})^{2}\,\dx  \leq C E(y)  e^{-\a t},
\ea
and
\ba\label{decay-H1-yy-10}
 \int_{\TT} |\zeta_{yyx}|^{2}\,\dx \leq C E(y)  e^{-\a t} \andf  \int_{\TT} |\eta_{yyx}|^{2}\,\dx \leq C E(y)  e^{-\a t}.
\ea

\medskip

\noindent{\bf Step 2.} {Decay estimates of $D_{t} w_{yy}.$}

\medskip

The main idea to derive the decay estimates of $D_{t} w_{yy}$  is analogues to that of $D_{t} w_{yx}$
in Section {sec-decay-y-all}. We shall outline its proof below.

Observing that
\begin{align*}\label{Dt-wyy-2}
\int_{\TT}  D_{t} w_{yy}  w_{yyxx} & = \int_{\TT}  \d_{t} w_{yy} w_{yyxx}\,\dx + \int_{\TT}  w w_{yyx} w_{yyxx}\,\dx\\
 & = - \frac{1}{2}\frac{\rm d}{\dt} \int_{\TT}  |w_{yyx}|^{2} \,\dx- \frac{1}{2} \int_{\TT} w_{x} (w_{yyx})^{2}\,\dx.
\end{align*}
Then by taking $L^2(\TT)$ inner product of \eqref{CNS-1d-dyy-1} with $\eta^{-1} w_{yyxx},$ we deduce that
\ba\label{Dt-wyy-1}
\frac{\rm d}{\dt} \int_{\TT}  |w_{yyx}|^{2}\,\dx + \nu \bar\eta^{-1} \int_{\TT}   |w_{yyxx}|^{2}\,\dx \leq C  E(y) e^{-\a t}.
\ea

While due to
\begin{align*}%\label{Dt-wyy-7}
- \nu\int_{\TT}   w_{yyxx}  D_{t} w_{yy}\,\dx & = - \nu \int_{\TT}  w_{yyxx}  \d_{t} w_{yy}\,\dx
 - \nu \int_{\TT}   w_{yyxx}  w w_{yyx}\,\dx \\
& =\frac\nu2 \frac{\rm d}{\dt} \int_{\TT}   |w_{yyx}|^{2} + \frac{\nu}{2} \int_{\TT}     (w_{yyx})^{2}  w_{x}\\
& \geq \frac\nu2\frac{\rm d}{\dt} \int_{\TT} |w_{yyx}|^{2}\,\dx -  C  E(y) e^{-\a t}\int_{\TT}  |w_{yyx}|^{2}\,\dx,
\end{align*}
we get, by taking $L^2(\TT)$ inner product of \eqref{CNS-1d-dyy-1} with $ D_{t} w_{yy},$ that
\ba\label{Dt-wyy-5}
\nu\frac{\rm d}{\dt} \int_{\TT}  |w_{yyx}|^{2}\,\dx + \int_{\TT} \eta  |D_{t}w_{yy}|^{2}\,\dx \leq C  E(y) e^{-\a t} + C  E(y) e^{-\a t}\int_{\TT}  |w_{yyx}|^{2}\,\dx.
\ea

Notice that
\begin{align*}%\label{Dt-wyy-11}
 \int_{\TT}D_{t} (\eta D_{t} w_{yy} ) D_{t} w_{yy}\,\dx  & = \int_{\TT} (D_{t} \eta) D_{t} w_{yy} D_{t} w_{yy}\,\dx
  + \int_{\TT}\eta (D_{t}^2 w_{yy})D_{t} w_{yy} \,\dx\\
& =  \frac{1}{2}\frac{\rm d}{\dt} \int_{\TT}\eta |D_{t} w_{yy}|^{2}  +\frac12 \int_{\TT} (D_{t} \eta) |D_{t} w_{yy}|^{2} \\
& \geq  \frac{1}{2}\frac{\rm d}{\dt} \int_{\TT}\eta |D_{t} w_{yy}|^{2}  - C \int_{\TT} |D_{t} w_{yy}|^{2}\,\dx
\end{align*}
and
\begin{align*}%\label{Dt-wyy-12}\label{Dt-wyy-13}
- \nu \int_{\TT}  (D_{t} w_{yyxx}) D_{t} w_{yy}\,\dx  & = -\nu\int_{\TT}  (D_{t} w_{yy})_{xx} D_{t} w_{yy}\,\dx
 + \int_{\TT} \nu  (w_{xx} w_{yyx} + 2 w_{x} w_{yyxx})D_{t} w_{yy}\,\dx \\
& \geq \nu \int_{\TT} |(D_{t} w_{yy})_{x}|^{2}\,\dx  - C \int_{\TT} \bigl(|D_{t} w_{yy}|^{2} + |w_{yyxx}|^{2}\bigr)\,\dx,
\end{align*}
we get, by applying $D_{t}$ to \eqref{CNS-1d-dyy-1} and then taking $L^2(\TT)$ inner product of the resulting equation with  $D_{t} w_{yy},$
that
\ba\label{Dt-wyy-9}
\frac{\rm d}{\dt} \int_{\TT} \eta |D_{t} w_{yy}|^{2}\,\dx + \nu \int_{\TT} |(D_{t} w_{yy})_{x}|^{2}\,\dx \leq C  E(y) e^{-\a t} + C \int_{\TT}   \bigl(|w_{yyxx}|^{2} + |D_{t}w_{yy}|^{2} + |w_{yyx}|^{2}\bigr)\,\dx.
\ea

By virtue of \eqref{decay-L2-yy-9}--\eqref{decay-L2-yy-12}, we deduce from \eqref{Dt-wyy-1}, \eqref{Dt-wyy-5} and \eqref{Dt-wyy-9} that
\ba\label{Dt-wyy-20}
 \|D_{t} w_{yy}\|_{L^{2}_{h}}^{2} \leq C  E(y) e^{-\a t},
\ea
from which and \eqref{CNS-1d-dyy}, we infer
\ba\label{Dt-wyy-21}
 \|w_{yyxx}\|_{L^{2}_{h}}^{2} \leq C  E(y) e^{-\a t} .
\ea
By summarizing the estimates \eqref{decay-L2-yy}, \eqref{decay-H1-yy-10}, \eqref{Dt-wyy-20} and
 \eqref{Dt-wyy-21}, we conclude the proof of \eqref{decay-exp-all-yy-2}.
\end{proof}

\section{Decay estimates of $\frak{w}$}\label{Sect6}

With the decay estimates for $(\eta, w)$ obtained in the previous sections,
 we are going to derive the same exponential decay estimates for $\frak{w}$.
Notice that  the equation for $\frak{w},$ \eqref{tw}, is of standard parabolic type,  we shall only
 present the main decay in time estimates of $\frak{w}$ and skip the derivation of the related estimates
 concerning $(\d_y\frak{w},\d_y^2\frak{w}).$

 The main results state as follows

\begin{prop}\label{prop-tw-L2}
{\sl For all $t\in\R^{+}$, one has
\begin{subequations} \label{Streq19}
\begin{gather}
\label{tw-L2}
\int_{\TT} |\frak{w}|^{2}\,\dx  \leq  \underline \eta^{-1} e^{- \mu\bar \eta^{-2} t } \int_{\TT}\eta_{0}  |\frak{w}_{0}|^{2}\,\dx,\\
\label{tw-H1}
\int_{\TT}\bigl( |\frak{w}|^{2} + |\frak{w}_{x}|^2\bigr)\,\dx  \leq  CE(y) e^{- \a t },\\
\label{tw-H2}
\int_{\TT}\left( |\frak{w}|^{2} + |\frak{w}_{x} |  + | D_{t} \frak{w} |^{2} + |\d_{xx} \frak{w}|^{2}\right)\,\dx  \leq  C E(y)e^{- \a t }.
\end{gather}
\end{subequations}}
\end{prop}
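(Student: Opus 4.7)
The plan is to treat \eqref{tw} as a linear parabolic equation in $\frak{w}$, using the exponentially decaying coefficients $(\eta,w)$ provided by Proposition \ref{S2prop1} as fast-decaying source terms; structurally, the three estimates mirror Propositions \ref{prop-decay-exp-L2}, \ref{prop-decay-exp-H1} and \ref{prop-Dtv-L2} for $(\eta,w)$. First, combining \eqref{tw} with the continuity equation of \eqref{CNS-limit} gives the conservation form $(\eta\frak{w})_t + (\eta w\frak{w})_x = \mu\frak{w}_{xx}$; integrating over $\TT$ and using \eqref{ini-1} yields $\int_\TT (\eta\frak{w})(t,\cdot,y)\,\dx = 0$ for all $t\in\R^+$. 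Testing \eqref{tw} with $\frak{w}$ and using the mass equation produces the basic identity $\frac{d}{\dt}\int_\TT \tfrac12 \eta\frak{w}^2\,\dx + \mu\int_\TT \frak{w}_x^2\,\dx = 0$. Since $\langle\eta\rangle = 1$ and $\langle\eta\frak{w}\rangle = 0$, the argument of Lemma \ref{prop-kinetic-tx} applies verbatim (with $\frak{w}$ in place of $w$) to give $\int_\TT \eta\frak{w}^2\,\dx \leq \bar\eta^2\int_\TT\frak{w}_x^2\,\dx$. Gronwall together with the lower bound $\eta\geq\underline\eta$ then yields \eqref{tw-L2} with the explicit decay rate $\mu\bar\eta^{-2}$.

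For \eqref{tw-H1}, I test \eqref{tw} against $D_t\frak{w}$. The identity
\begin{equation*}
\int_\TT \frak{w}_{xx} D_t\frak{w}\,\dx = -\frac{d}{\dt}\int_\TT \tfrac12 \frak{w}_x^2\,\dx - \tfrac12\int_\TT w_x\frak{w}_x^2\,\dx
\end{equation*}
gives $\mu\frac{d}{\dt}\int_\TT \tfrac12 \frak{w}_x^2\,\dx + \int_\TT \eta(D_t\frak{w})^2\,\dx = -\tfrac{\mu}{2}\int_\TT w_x\frak{w}_x^2\,\dx$. By Proposition \ref{S2prop1}, $\|w_x\|_{L^\infty_\h}\leq CE^{\frac12}(y)e^{-\alpha t}$, so the right-hand side is bounded by $CE^{\frac12}(y)e^{-\alpha t}\|\frak{w}_x\|_{L^2_\h}^2$. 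Forming a linear combination with a sufficiently large multiple (depending on $E(y)$) of the basic energy equality absorbs this term into $\mu\int\frak{w}_x^2$, and the Poincar\'e-type coercivity $\int\eta\frak{w}^2 + \int\frak{w}_x^2\lesssim \int\frak{w}_x^2$ followed by Gronwall delivers exponential decay of $\|\frak{w}\|_{L^2_\h}^2 + \|\frak{w}_x\|_{L^2_\h}^2$, proving \eqref{tw-H1}.

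For \eqref{tw-H2}, I apply $D_t$ to \eqref{tw}. Using $D_t\eta = -\eta w_x$ and the commutator $D_t\frak{w}_{xx} = (D_t\frak{w})_{xx} - 2w_x\frak{w}_{xx} - w_{xx}\frak{w}_x$, one obtains
\begin{equation*}
\eta D_t^2\frak{w} - \mu(D_t\frak{w})_{xx} = \eta w_x D_t\frak{w} - 2\mu w_x\frak{w}_{xx} - \mu w_{xx}\frak{w}_x.
\end{equation*}
The clean cancellation $\int_\TT \eta(D_t^2\frak{w})(D_t\frak{w})\,\dx = \frac{d}{\dt}\int_\TT \tfrac12 \eta (D_t\frak{w})^2\,\dx$ follows from the mass equation. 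Testing with $D_t\frak{w}$ and bounding the right-hand side via H\"older's inequality, the embedding $H^1(\TT)\hookrightarrow L^\infty(\TT)$, the decay of $(w,\frak{w}_x)$ already established, and the relation $\frak{w}_{xx} = \mu^{-1}\eta D_t\frak{w}$ (from \eqref{tw}, which gives $\|\frak{w}_{xx}\|_{L^2_\h}\leq C\|D_t\frak{w}\|_{L^2_\h}$) yields
\begin{equation*}
\frac{d}{\dt}\int_\TT \tfrac12 \eta (D_t\frak{w})^2\,\dx + \mu\int_\TT ((D_t\frak{w})_x)^2\,\dx \leq CE(y)e^{-\alpha t}\Bigl(1 + \int_\TT \eta(D_t\frak{w})^2\,\dx + \int_\TT \frak{w}_x^2\,\dx\Bigr).
\end{equation*}
Summing with a large multiple of the $H^1$ estimate from \eqref{tw-H1} closes the loop and yields exponential decay of $\|D_t\frak{w}\|_{L^2_\h}^2$; the companion bound $\|\frak{w}_{xx}\|_{L^2_\h}^2\leq C\|D_t\frak{w}\|_{L^2_\h}^2$ then completes \eqref{tw-H2}. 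The main technical obstacle is the bookkeeping in this last step: the multipliers in the linear combination must be chosen with $E(y)$-dependence so that the coupling between $\frak{w}_x$ and $D_t\frak{w}$ (through the $w_x\frak{w}_{xx}$ term) does not produce destabilizing feedback, following the scheme already used in the proof of Proposition \ref{prop-Dtv-L2}.
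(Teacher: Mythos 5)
Your proposal is correct and follows essentially the same route as the paper: the same conservation identity $\int_\TT\eta\frak{w}\,\dx=0$ and Poincar\'e-type inequality from Lemma \ref{prop-kinetic-tx} for \eqref{tw-L2}, testing with $D_t\frak{w}$ plus a linear combination with the basic energy identity for \eqref{tw-H1}, and applying $D_t$ to \eqref{tw}, testing with $D_t\frak{w}$ and using $\mu\frak{w}_{xx}=\eta D_t\frak{w}$ for \eqref{tw-H2}. The only differences are cosmetic (your commutator for $D_t\frak{w}_{xx}$ is written in expanded form, and is in fact the correct one).
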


\begin{proof} 1) We first get, by
taking  $L^{2}(\TT)$ inner product of \eqref{tw} with $\frak{w}$ and using integrating by parts, that
\ba\label{tw-L2-0}
\frac{1}{2}\frac{\rm d}{\dt}\int_{\TT} \eta |\frak{w}|^{2} \,\dx +   \mu\int_{\TT} |\frak{w}_{x}|^{2}\,\dx = 0.
\ea

 While observing from the density equation of \eqref{CNS-limit} and \eqref{tw} that
\ba\label{tw-new-1}
(\eta  \frak{w})_{t} +  (\eta w \frak{w})_{x} - \mu \d_x^2 \frak{w}  = 0.
\nn \ea
Integrating the above equation over $\TT$ gives
\ba\label{tw-L2-1}
\frac{\rm d}{\dt} \int_{\TT}(\eta  \frak{w})\,\dx = 0,
\nn \ea
which together with \eqref{ini-1} ensures that
\be\label{tw-L2-2}
\int_{\TT}(\eta  \frak{w})\,\dx = \int_{\TT} \eta_{0} \frak{w}_{0} \,\dx= 0.
\ee
Then we get, by using a similar  proof of Lemma \ref{prop-kinetic-tx}, that
\be\label{tw-L2-3}
\int_{\TT}\eta  |\frak{w}|^{2}\,\dx \leq \bar \eta^{2} \int_{\TT} | \frak{w}_{x} |^{2}\,\dx.
\ee
By virtue of \eqref{tw-L2-3}, we deduce from \eqref{tw-L2-0} that
\ba
\int_{\TT}\eta  |\frak{w}|^{2}\,\dx \leq  e^{\mu\bar \eta^{-2} t } \int_{\TT}\eta_{0}  |\frak{w}_{0}|^{2}\,\dx,
\nn \ea which together with \eqref{lowerb-vtr} ensures \eqref{tw-L2}.

\medskip

\noindent 2) By taking $L^2$ inner product of  \eqref{tw} with $D_{t} \frak{w}$  and using  integrating by
parts, we obtain
\ba\label{tw-H1-0}
\frac{1}{2}\int_{\TT} \eta |D_{t}\frak{w}|^{2} \,\dx + \frac{\mu}{2}\frac{\rm d}{\dt}\int_{\TT}  |\frak{w}_{x}|^{2}  \,\dx  \leq  \frac{\mu}{2} \|w_{x}\|_{L^{\infty}(\OO)} \int_{\TT} |\frak{w}_{x}|^{2} \,\dx.
\ea

By multiplying \eqref{tw-L2-0} by $\frak{A}_{1}\eqdefa 1 + \|w_{x}\|_{L^{\infty}([0,\infty)\times\OO)}$ and summing up the
resulting inequality with \eqref{tw-H1-0}, we get
\ba
\int_{\TT}\eta |D_{t}\frak{w}|^{2} \,\dx + \frac{\rm d}{\dt}  \int_{\TT}\bigl({\frak{A}_{1}}\eta | \frak{w}|^{2} +  {\mu} |\frak{w}_{x}|^{2}\bigr)  \,\dx  +  \frak{A}_{1}\int_{\TT} \mu |\frak{w}_{x}|^{2}\,\dx \leq 0.
\nn \ea
Due to $D_t\frak{w}=\frac\mu\eta\d_x^2 \frak{w},$ \eqref{tw-H1} follows.

\medskip

\noindent 3)
Applying $D_{t}$ to \eqref{tw} gives
\ba\label{tw-Dtw}
D_{t} (\eta D_{t} \frak{w}) - \mu  D_{t} \d_x^2 \frak{w}  = 0.
\nn \ea
We observe that
\ba\label{tw-Dtw-1}
D_{t} (\eta D_{t} \frak{w})  = (-\eta \d_{x} w) D_{t}  \frak{w} + \eta D_{t}^{2} \frak{w},
\nn \ea
and
\ba
D_{t} \d_x^2 \frak{w}
 = \d_{x}^{2} (D_{t} \frak{w}) - \d_{x} ( \d_{x} w \d_{x} \frak{w}).
\nn \ea
As a result, it comes out
\ba\label{tw-Dtw-3}
 \eta D_{t}^{2} \frak{w}  - \mu \d_{x}^{2} (D_{t} \frak{w})  =  (\eta \d_{x} w) D_{t}  \frak{w}  - \mu \d_{x} ( \d_{x} w \d_{x} \frak{w}).
 \ea
By taking $L^2(\TT)$ inner product of  \eqref{tw-Dtw-3} with  $D_{t} \frak{w}$, we find
\ba\label{tw-Dtw-4}
\frac{1}{2} &\frac{\rm d}{\dt} \int_{\TT} \eta |D_{t} \frak{w}|^{2} \,\dx + \mu\int_{\TT}  |  \d_{x} (D_{t} \frak{w})|^{2}\,\dx \\
& = \int_{\TT}  (\eta \d_{x} w) |D_{t}  \frak{w}|^{2}\,\dx  + \int_{\TT}  \mu (\d_{x} w \d_{x} \frak{w}) \d_{x} D_{t} \frak{w}\,\dx\\
& \leq \| \d_{x} w \|_{L^{\infty}(\OO)} \int_{\TT} \eta |D_{t}  \frak{w}|^{2}\,\dx  +  \frac{\mu}{2} \|\d_{x} w \|_{L^{\infty}(\OO)}^{2} \int_{\TT}  |\d_{x} \frak{w}|^{2}\,\dx  + \frac{\mu}{2}\int_{\TT} |  \d_{x} (D_{t} \frak{w})|^{2}\,\dx.
\ea
Multiplying \eqref{tw-H1-0} by $\frak{A}_{2} \eqdefa 2 + 2 \|\d_{x} w \|_{L^{\infty}([0,\infty)\times\OO)}^{2}$   and summing up the resulting inequality with \eqref{tw-Dtw-4} yields
\ba\label{tw-Dtw-5}
 \frac{\rm d}{\dt}  \int_{\TT}\bigl( \frak{A}_{1}\frak{A}_{2} \eta | \frak{w}|^{2} +  \mu A_{2} |\frak{w}_{x}|^{2}   +  \eta |D_{t} \frak{w}|^{2}\bigr)  \,\dx  +  \int_{\TT}  \bigl(\mu |\frak{w}_{x}|^{2} + \eta |D_{t}\frak{w}|^{2} +  \mu |  \d_{x} (D_{t} \frak{w})|^{2}\bigr)\,\dx  \leq 0.
\ea
Then by using a similar proof of Lemma \ref{prop-kinetic-tx} and Gronwall's inequality, we find
\be\label{tw-Dtw-6}
\int_{\TT} \bigl(|\frak{w}|^{2} + |\frak{w}_{x} |  + | D_{t} \frak{w} |^{2}\bigr)\,\dx   \leq  C E(y) e^{- \a t }.
\ee
Observing that $\mu\d_{xx} \frak{w} =  \eta D_{t} \frak{w}, $ we conclude the proof of \eqref{tw-H2}.
This completes the proof of Proposition \ref{prop-tw-L2}.
\end{proof}

Let us now outline  the proof of Proposition \ref{S2prop4}.

\begin{proof}[Proof of Proposition \ref{S2prop4}]
 Along the same line to proof of Proposition \ref{prop-tw-L2} and through the induction method as what we used in
 the Appendix \ref{appa}, we deduce that
\ba\label{tw-Hk-0}
\| \frak{w}(t)\|_{H^5_\h} + \|\frak{w}_{t}(t)\|_{H^3_\h} + \| \frak{w}_{tt}(t) \|_{H^1_\h}  \leq CE(y) e^{-\a t}.
\ea
The decay estimates related to $y$-derivatives of $\frak{w}$ in \eqref{S2eq4} can be derived along the same line. We omit  the details here.
\end{proof}

\section{Energy estimates for the perturbed equations}\label{sec-error}

The purpose of this section is to present the proof of Propositions \ref{prop-energy-basic} and \ref{energy}.
For simplicity, we shall neglect the subscript $\e$ in the rest of this section.

\subsection{Basic energy estimate}\label{Sect7.1}

In this subsection, we shall derive a basic energy estimate for all $t < T^\star.$ We first deduce from Proposition \ref{prop-rela}
that

\begin{lem}\label{prop-rela-w}
{\sl Let $(\rho, u)$ and $(\rho^{\rm a}, u^{\rm a})$ be respectively given by \eqref{zeta-W-def} and \eqref{sl-1d-00}. Then one has
\ba\label{ineq-entropy-1}
 \calE_1\bigl((\rho,u) | (\rho^{\rm a}, u^{\rm a} )\bigr)(t)
 + \int_0^t  \int_\Omega \bigl(\mu \left| \nabla (u-u^{\rm a})  \right|^2 + \mu' |\dive (u-u^{\rm a}) |^2\bigr)\,\dx\,\dy \,\dt' =\int_0^t \calR (t') \,\dt',
\ea
where for $G$ given by  \eqref{def-G12},
\ba\label{R-def-1}
\calR (t) & \eqdefa \int_\Omega \Bigl((\rho^{\rm a})^{-1} (\rho -\rho^{\rm a}) ( \mu \Delta u^{\rm a} + \mu' \nabla \dive u^{\rm a}
)+ \rho  (u-u^{\rm a} )\cdot \nabla u^{\rm a}\\
&\qquad+\rho (\rho^{\rm a})^{-1} G\Bigr)\cdot (u^{\rm a} - u)\,\dx\,\dy + \e\int_{\OO}(\rho^{\rm a}-\rho)  (\rho^{\rm a})^{-1} p'(\rho^{\rm a})  [(\eta \frak{w})_{y}]_\e \,\dx \, \dy  \\
& \quad   - \int_\Omega \dive u^{\rm a}  \bigl(p(\rho ) - p(\rho^{\rm a}) - p'(\rho^{\rm a})(\rho - \rho^{\rm a})\bigr)\,\dx\,\dy.
\ea
}
\end{lem}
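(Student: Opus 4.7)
The plan is to apply Proposition \ref{prop-rela} with the test pair $(\tilde\rho,\tilde u)=(\rho^{\rm a},u^{\rm a})$ and then recast the remainder in \eqref{R-def} into the form \eqref{R-def-1} using the approximate equations \eqref{CNS-1d-W} satisfied by $(\rho^{\rm a},u^{\rm a})$. By \eqref{ini-data} and \eqref{zeta-W-def} the two initial states agree, so $\calE_1((\rho,u)|_{t=0}|(\rho^{\rm a},u^{\rm a})|_{t=0})=0$ and only the algebraic identification of $\calR_\e$ remains. The regularity required of $(\tilde\rho,\tilde u)$ in Proposition \ref{prop-rela} is inherited by $(\rho^{\rm a},u^{\rm a})$ from Theorem \ref{thm1} (the $y$-derivatives are paid for by powers of $\e$), so the application of Proposition \ref{prop-rela} is legitimate.

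For the momentum piece I would start from $\frak{D}_t u^{\rm a}=\d_t u^{\rm a}+u^{\rm a}\cdot\nabla u^{\rm a}+(u-u^{\rm a})\cdot\nabla u^{\rm a}$, substitute the momentum equation in \eqref{CNS-1d-W}, namely $\rho^{\rm a}(\d_t u^{\rm a}+u^{\rm a}\cdot\nabla u^{\rm a})=\mu\Delta u^{\rm a}+\mu'\nabla\dive u^{\rm a}-\nabla p(\rho^{\rm a})-G_\e$, into $\rho\frak{D}_t u^{\rm a}=(\rho/\rho^{\rm a})\rho^{\rm a}(\d_t u^{\rm a}+u^{\rm a}\cdot\nabla u^{\rm a})+\rho(u-u^{\rm a})\cdot\nabla u^{\rm a}$, and then split $\rho/\rho^{\rm a}=1+(\rho-\rho^{\rm a})/\rho^{\rm a}$. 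Testing against $(u^{\rm a}-u)$ and integrating by parts, the factor-one piece of the viscous contribution exactly cancels the viscosity term $\int_\Omega(\mu\nabla u^{\rm a}:\nabla(u^{\rm a}-u)+\mu'\dive u^{\rm a}\,\dive(u^{\rm a}-u))\,\dx\,\dy$ in \eqref{R-def}. What survives are precisely the three contributions inside the first bracket of \eqref{R-def-1} paired with $(u^{\rm a}-u)$, plus a residual pressure-gradient term $-\int_\Omega(\rho/\rho^{\rm a})\nabla p(\rho^{\rm a})\cdot(u^{\rm a}-u)\,\dx\,\dy$ that must be absorbed in the next step.

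For the pressure-potential block of \eqref{R-def} I will use $P''(r)=p'(r)/r$, so that $\nabla P'(\rho^{\rm a})=\nabla p(\rho^{\rm a})/\rho^{\rm a}$ and $\d_t P'(\rho^{\rm a})=p'(\rho^{\rm a})(\rho^{\rm a})^{-1}\d_t\rho^{\rm a}$. Inserting $\d_t\rho^{\rm a}=-\dive(\rho^{\rm a}u^{\rm a})+\e[(\eta\frak{w})_y]_\e$ from \eqref{CNS-1d-W} and expanding $(\rho^{\rm a}u^{\rm a}-\rho u)\cdot\nabla P'(\rho^{\rm a})$, the convective cross-terms reassemble into $(\rho/\rho^{\rm a})(u^{\rm a}-u)\cdot\nabla p(\rho^{\rm a})$, which cancels the residual left over from the momentum step. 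The pieces that do not cancel are $-(\rho^{\rm a}-\rho)p'(\rho^{\rm a})\dive u^{\rm a}$, which when combined with the last integral of \eqref{R-def} regroups into the Taylor-remainder $-\dive u^{\rm a}\bigl(p(\rho)-p(\rho^{\rm a})-p'(\rho^{\rm a})(\rho-\rho^{\rm a})\bigr)$ appearing on the last line of \eqref{R-def-1}, and the source correction $\e(\rho^{\rm a}-\rho)(\rho^{\rm a})^{-1}p'(\rho^{\rm a})[(\eta\frak{w})_y]_\e$, which matches the explicit $\e$-term in \eqref{R-def-1}. The delicate step is the bookkeeping of the two pressure-gradient contributions carrying the prefactor $\rho/\rho^{\rm a}$—one produced by the momentum equation and one by the renormalized transport of $P'(\rho^{\rm a})$—whose exact cancellation is what makes the right-hand side of \eqref{R-def-1} come out clean; this cancellation relies crucially on the choice of $P'$ as a primitive of $p'/r$ dictated by the renormalization identity $\rho P'(\rho)-P(\rho)=p(\rho)$.
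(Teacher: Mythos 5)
Your proposal is correct and follows essentially the same route as the paper: apply Proposition \ref{prop-rela} with $(\tilde\rho,\tilde u)=(\rho^{\rm a},u^{\rm a})$ (zero initial relative energy), substitute the approximate momentum equation of \eqref{CNS-1d-W} into $\rho\frak{D}_t u^{\rm a}$, cancel the order-one viscous piece against the viscosity term of \eqref{R-def} by integration by parts, and recombine the residual pressure-gradient term with the $P'(\rho^{\rm a})$-transport terms via the renormalized continuity equation (using $P''(s)=s^{-1}p'(s)$) to produce the Taylor remainder and the $\e$-source term. The only (immaterial) discrepancy is the sign in front of $G$ — the paper's intermediate computation writes $+G$ while \eqref{CNS-1d-W} gives $-G_\e$ — but since that contribution to $\calR$ is only ever estimated in absolute value in \eqref{def-G12-est}, this does not affect anything downstream.
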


\begin{proof} Since  $(\rho, u)$ and $(\rho^{\rm a}, u^{\rm a})$  have the same initial data, we get, by
applying  Proposition \ref{prop-rela-w}, that  \eqref{ineq-entropy-1} holds with
\ba\label{R-def2}
\calR(t) \eqdefa \int_\Omega \Bigl(&\rho \frak{D}_t u^{\rm a} \cdot (u^{\rm a} - u)+ \mu \nabla u^{\rm a}:\nabla (u^{\rm a} - u)
+ \mu' \dive u^{\rm a} \,\dive (u^{\rm a} - u)\\
&  +  (\rho^{\rm a} - \rho) \d_t P'(\eta) + (\rho^{\rm a} u^{\rm a}-\rho u) \cdot \nabla P'(\rho^{\rm a})- \dive u^{\rm a} (p(\rho ) - p(\rho^{\rm a}))\Bigr)\,\dx\,\dy.
\ea
It follows from the $u^{\rm a}$ equation of \eqref{CNS-1d-W} that
\begin{align*}
\calR_{1}(t)  & \eqdefa \int_\Omega\rho \bigl( \d_t u^{\rm a} + u^{\rm a}\cdot \nabla u^{\rm a} +   (u-u^{\rm a})\cdot \nabla u^{\rm a}\bigr)
\cdot ( u^{\rm a} - u)\,\dx\,\dy\\
& =  \int_\Omega \rho\Bigl((\rho^{\rm a})^{-1} \bigl( \mu \Delta u^{\rm a} + \mu' \nabla \dive u^{\rm a}
 - \nabla p( \rho^{\rm a} )  + G\bigr)+   (u-u^{\rm a})\cdot \nabla u^{\rm a}\Bigr) ( u^{\rm a} - u)\,\dx\,\dy.
\end{align*}
By using integration by parts and  the fact that $P''(s) = s^{-1} p'(s)$, we find
\ba
\calR_{1}(t)
& =  \int_\Omega \bigl((\rho^{\rm a})^{-1} (\rho -\rho^{\rm a}) \bigl( \mu \Delta u^{\rm a}  + \mu' \nabla \dive u^{\rm a} \bigr)
 -  \rho  \nabla P'(\rho^{\rm a})+\rho (\rho^{\rm a})^{-1} G\bigr)\cdot (  u^{\rm a}- u)\,\dx\,\dy \\
 &\quad - \int_\Omega\bigl( \mu \nabla  u^{\rm a} :\nabla ( u^{\rm a} - u)  + \mu' \dive  u^{\rm a}
\,\dive ( u^{\rm a} - u)\bigr)\,\dx\,\dy    \\
& \quad + \int_\Omega \rho  (u- u^{\rm a})\cdot \nabla  u^{\rm a} \cdot ( u^{\rm a} - u)\,\dx\,\dy.
\nn\ea
 Plugging the above equality into \eqref{R-def2} gives
\ba\label{R-def2-2}
\calR(t) & = \int_\Omega \Bigl((\rho^{\rm a})^{-1} (\rho -\rho^{\rm a}) \bigl( \mu \Delta u^{\rm a} + \mu' \nabla \dive u^{\rm a} \bigr)
+\rho (\rho^{\rm a})^{-1} G +  (u-u^{\rm a})\cdot \nabla u^{\rm a}\\
&\qquad- \rho  \nabla P'(\rho^{\rm a})\Bigr)
\cdot (u^{\rm a}- u)\,\dx\,\dy- \int_\Omega \dive u^{\rm a} (p(\rho ) - p(\rho^{\rm a}))\,\dx\,\dy\\
 &\quad+ \int_\Omega  \bigl((\rho^{\rm a} - \rho) \d_t P'(\eta) + (\rho^{\rm a} u^{\rm a} -\rho u) \cdot \nabla P'(\rho^{\rm a})\bigr) \,\dx\,\dy.
\ea
Notice that
$P''(s) = s^{-1} p'(s)$ and the renormalized equation
\be
\d_t P'(\rho^{\rm a}) + \dive(u^{\rm a} P'(\rho^{\rm a})) + (P''(\rho^{\rm a})\rho^{\rm a} - P'(\rho^{\rm a})) \dive u^{\rm a} = \e  P''(\rho^{\rm a}) [(\eta \frak{w})_{y}]_\e.
\ee
We get, by using the continuity equation $\eqref{CNS-1d-W}_{1}$, that
\ba\label{R-def2-3}
 &   - \rho  \nabla P'(\rho^{\rm a}) \cdot ( u^{\rm a} - u) + (\rho^{\rm a} - \rho) \d_t P'(\eta) + (\rho^{\rm a}  u^{\rm a} -\rho u) \cdot \nabla P'(\rho^{\rm a})\\
 &= (\rho^{\rm a} - \rho) \left( \d_t P'(\rho^{\rm a}) +  u^{\rm a} \cdot \nabla P'(\rho^{\rm a})\right) \\
 &= (\rho^{\rm a}-\rho) \left(\left( \d_t P'(\rho^{\rm a}) + \dive(u^{\rm a}P'(\rho^{\rm a})) + (P''(\rho^{\rm a})\rho^{\rm a} - P'(\rho^{\rm a})) \dive u^{\rm a} \right) -   P''(\rho^{\rm a})\rho^{\rm a} \dive u^{\rm a}\right) \\
 & =  \e (\rho^{\rm a}-\rho)  (\rho^{\rm a})^{-1} p'(\rho^{\rm a}) [(\eta \frak{w})_{y}]_\e
  - (\rho^{\rm a} - \rho) p'(\rho^{\rm a}) \dive  u^{\rm a}.
\ea

Then \eqref{R-def-1} follows by inserting \eqref{R-def2-3} into \eqref{R-def2-2}.
\end{proof}

We now present the proof of Proposition \ref{prop-energy-basic}.

\begin{proof}[Proof of Proposition \ref{prop-energy-basic}]
We first get, by applying \eqref{upper-lower-rho-0} and Taylor's expansion, that for $t < T^\star,$
\ba\label{rela-entr-lower}
\calE_1(t)\eqdefa
\calE_1\bigl((\rho,u) | (\rho^{\rm a}, u^{\rm a})\bigr)(t) & = \int_{\Omega} \Bigl(\frac{1}{2} \rho |u- u^{\rm a}|^{2} + P(\rho) - P(\rho^{\rm a}) - P'(\rho^{\rm a}) (\rho - \rho^{\rm a})\Bigr)\,\dx\,\dy\\
& \geq \int_{\OO}  \bigl({\underline \eta}/{4}  | R|^{2} + \g (2\bar \eta)^{\g-2} | \vr |^{2}\bigr)\,\dx\,\dy.
\ea
While it follows from  Lemma  \ref{prop-rela-w} that
\ba\label{energy-bas1}
 \calE_1(t)+ \int_0^t  \int_\Omega\bigl( \mu \left| \nabla R \right|^2 + \mu' |\dive R|^2\bigr)\,\dx\,\dy \,\dt' \leq\int_0^t \calR (t') \,\dt'.
\ea
According to \eqref{R-def-1}, we decompose $\calR$ as
$
\calR (t) \eqdefa  \sum_{j=1}^{5} \calR_{j}(t).
$
We first deduce from Theorem \ref{thm1} that
\begin{align*}
\calR_{1} (t) & \eqdefa \int_\Omega (\rho^{\rm a})^{-1} (\rho -\rho^{\rm a}) ( \mu \Delta u^{\rm a}
 + \mu' \nabla \dive u^{\rm a} )\cdot (u^{\rm a} - u)\,\dx\,\dy\\
& \leq  \|(\rho^{\rm a})^{-1}\|_{L^{\infty}(\OO)}  \| (\mu \Delta u^{\rm a} + \mu' \nabla \dive u^{\rm a})\|_{L^{\infty}(\OO)} \|\vr\|_{L^{2}(\OO)} \|R\|_{L^{2}(\OO)} \\
& \leq C e^{-\a t} \bigl( \|\vr\|_{L^{2}(\OO)}^{2} +  \|R\|_{L^{2}(\OO)}^{2} \bigr).
\end{align*}

For the second term in \eqref{R-def-1}, we have
 \ba
 \calR_{2} (t) \eqdefa \int_\Omega \rho (\rho^{\rm a})^{-1} G \cdot ( u^{\rm a}- u)\,\dx\,\dy
  \leq  \|\rho\|_{L^{\infty}(\OO)}   \|(\rho^{\rm a})^{-1}\|_{L^{\infty}(\OO)}  \|G\|_{L^{2}(\OO)}  \|R\|_{L^{2}(\OO)},
\nn \ea
which together with \eqref{def-G12-est}  and Theorem \ref{thm1} ensures  that
\ba
\calR_{2} (t) \leq C e^{-\a t} \e^{\frac 12} \|R\|_{L^{2}(\OO)}.
\nn \ea

For the third term in \eqref{R-def-1}, we get, by applying Theorem \ref{thm1}, that
\begin{align*}
\calR_{3} (t) & \eqdefa  \int_\Omega \rho  (u-u^{\rm a})\cdot \nabla u^{\rm a} \cdot (u^{\rm a} - u)\,\dx\,\dy\\
& \leq \|\nabla u^{\rm a}\|_{L^{\infty}(\OO)} \int_\Omega \rho  |R|^{2} \,\dx\,\dy  \leq C e^{-\a t} \calE_{1}(t).
\end{align*}
Along the same line and thanks to \eqref{pre-pot}, we have
\begin{align*}
\calR_{4} (t) & \eqdefa - \int_\Omega \dive u^{\rm a}  \bigl(p(\rho ) - p(\rho^{\rm a}) - p'(\rho^{\rm a})(\rho - \rho^{\rm a})\bigr)\,\dx\,\dy \\
&\leq (\g-1) \|  \dive u^{\rm a} \|_{L^{\infty}(\OO)}  \int_\Omega  \bigl(P(\rho ) - P(\rho^{\rm a}) - P'(\rho^{\rm a})(\rho - \rho^{\rm a})\bigr)\,\dx\,\dy  \leq C e^{-\a t} \calE_{1}(t),
\end{align*}
and
\begin{align*}
& \calR_{5}(t) \eqdefa \e \int_{\OO}(\rho^{\rm a}-\rho)  (\rho^{\rm a})^{-1} p'(\rho^{\rm a})  [(\eta \frak{w})_{y}]_\e \,\dx \, \dy \\
& \leq \e \|(\rho^{\rm a})^{-1} p'(\rho^{\rm a}) \|_{L^{\infty}(\OO)} \|[(\eta \frak{w})_{y}]_\e\|_{L^{2}(\OO)} \|\vr\|_{L^{2}(\OO)} \leq C \e^{\frac{1}{2}} e^{-\a t} \|\vr\|_{L^{2}(\OO)}.
\end{align*}

By inserting the above estimates into \eqref{energy-bas1} and using \eqref{rela-entr-lower},
 we  deduce that
\ba
\calE_{1}(t)  + \int_0^t  \int_\Omega \mu \left| \nabla R  \right|^2 \,\dx\,\dy \,\dt'\leq
 C \int_{0}^{t} e^{-\a t'}\bigl(  \e^{\frac 12}  \calE_{1}(t')^{1/2} +   \calE_{1}(t')\bigr)\,\dt'.
\nn \ea
Applying Gornwall's inequality   gives rise to
\ba
\calE_{1}(t)  + \int_0^t  \int_\Omega \mu \left| \nabla R  \right|^2 \,\dx\,\dy \,\dt'\leq  C \e,
\nn \ea
which together with \eqref{rela-entr-lower} ensures  \eqref{energy-bas}. This completes the proof of Proposition \ref{prop-energy-basic}.
\end{proof}

\subsection{Estimates of $\nabla R$}

\begin{lem}\label{prop-DtR}
{\sl For each $t < T^\star$, there holds
\ba\label{DtR-0}
\int_{\OO} |\nabla R|^{2}\,\dx\,\dy + \int_{0}^{t} \int_{\OO}  |\frak{D}_{t}R|^{2}\,\dx\,\dy
 \leq C \int_{0}^{t}\int_{\OO}  |\nabla R|^{3}\,\dx\,\dy + C \e.
\ea}
\end{lem}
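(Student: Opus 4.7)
The plan is to test the momentum equation of \eqref{CNS-error-new} against the material derivative $\fD_{t}R$ and to manage carefully the two structurally delicate pieces: the convective part of the viscous term (which is the source of the cubic term on the right of \eqref{DtR-0}) and the pressure term (which must be handled by integration by parts in both space and time so as to expose a total time derivative).

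First I would multiply the momentum equation in \eqref{CNS-error-new} by $\fD_{t}R$ and integrate over $\OO$. Using the lower bound $\rho\ge \underline{\eta}/2$ from \eqref{upper-lower-rho-0}, the principal term becomes $\tfrac{\underline\eta}{2}\int_{\OO}|\fD_{t}R|^{2}$, which is what we want to control. The viscous terms are rewritten via integration by parts as
\[
-\int_{\OO}\bigl(\mu\Delta R+\mu'\nabla\dive R\bigr)\cdot\fD_{t}R
= \frac{1}{2}\frac{\rm d}{\dt}\int_{\OO}\bigl(\mu|\nabla R|^{2}+\mu'|\dive R|^{2}\bigr)+\calI_{\rm conv},
\]
where $\calI_{\rm conv}$ collects the commutator terms arising from $\fD_{t}R=\d_{t}R+u\cdot\nabla R$. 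A direct computation (of the type $\int \nabla R\!:\!\nabla(u\cdot\nabla R)=\int \d_{i}u^{j}\d_{i}R^{k}\d_{j}R^{k}-\tfrac12\int(\dive u)|\nabla R|^{2}$) shows $|\calI_{\rm conv}|\le C\int|\nabla u||\nabla R|^{2}$. Splitting $u=u^{\rm a}+R$ and invoking $\|\nabla u^{\rm a}\|_{L^\infty}\lesssim e^{-\a t}$ from Theorem \ref{thm1}, one obtains
\[
|\calI_{\rm conv}|\le C\int_{\OO}|\nabla R|^{3}+C e^{-\a t}\int_{\OO}|\nabla R|^{2},
\]
which produces exactly the cubic term appearing on the right-hand side of \eqref{DtR-0}.

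Second, the pressure term is treated by writing $\pi\eqdefa p(\rho)-p(\rho^{\rm a})$ and integrating by parts twice,
\[
\int_{0}^{t}\!\!\int_{\OO}\nabla\pi\cdot\fD_{t}R
= -\Bigl[\int_{\OO}\pi\,\dive R\Bigr]_{0}^{t}
+\int_{0}^{t}\!\!\int_{\OO}\bigl(\fD_{t}\pi+\pi\,\dive u\bigr)\dive R
-\int_{0}^{t}\!\!\int_{\OO}\pi\,\d_{i}u^{j}\d_{j}R^{i}.
\]
Using the continuity equations of \eqref{CNS} and \eqref{CNS-1d-W} to eliminate $\fD_{t}\rho$ and $\fD_{t}\rho^{\rm a}$, the combination $\fD_{t}\pi+\pi\,\dive u$ reduces to a sum of bounded multiples of $\dive u$, $\dive u^{\rm a}$, $R\cdot\nabla\rho^{\rm a}$ and $\e[(\eta\frak w)_{y}]_{\e}$. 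All of these are then controlled in $L^{2}_{t}L^{2}_{x}$ by Cauchy--Schwarz, together with the basic energy estimate \eqref{energy-bas}, the decay \eqref{thm1-0}, and the bound \eqref{def-G12-est}; the boundary contribution at $t$ is absorbed using $\|\pi\|_{L^{2}}\le C\|\vr\|_{L^{2}}\le C\e^{1/2}$ and Young's inequality $\|\pi\,\dive R\|_{L^{1}}\le\de\|\nabla R\|_{L^{2}}^{2}+C_{\de}\e$.

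Third, the remaining lower-order terms $\int\rho R\cdot\nabla u^{\rm a}\cdot\fD_{t}R$, $\int\vr(\d_{t}u^{\rm a}+u^{\rm a}\cdot\nabla u^{\rm a})\cdot\fD_{t}R$ and $\int G\cdot\fD_{t}R$ are handled by Cauchy--Schwarz,
\[
\cdots\le \tfrac{\underline\eta}{8}\int_{\OO}|\fD_{t}R|^{2}+C e^{-\a t}\bigl(\|R\|_{L^{2}}^{2}+\|\vr\|_{L^{2}}^{2}\bigr)+Ce^{-\a t}\|G\|_{L^{2}}^{2},
\]
the first piece being absorbed into the left-hand side and the rest producing a $C\e$ contribution after integration in time via \eqref{energy-bas} and \eqref{def-G12-est}. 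Combining everything and integrating from $0$ to $t$ (with zero initial data for $R$ and $\vr$) yields \eqref{DtR-0}.

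The main obstacle will be the rigorous handling of the pressure term: one has to verify that, after the double integration by parts, every resulting quantity is either a total time derivative, a harmless lower-order term, or the cubic term $\int|\nabla R|^{3}$ already listed on the right-hand side of \eqref{DtR-0}. In particular, one must resist the temptation to estimate $\int\nabla\pi\cdot\fD_{t}R$ by Cauchy--Schwarz directly, which would require an $L^{2}_{t}L^{2}_{x}$ bound on $\nabla\vr$ that is not available at this stage; the renormalization identity for $p(\rho)-p(\rho^{\rm a})$ is what bypasses this difficulty.
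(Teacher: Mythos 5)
Your proposal is correct and follows essentially the same route as the paper's proof: testing against $\fD_{t}R$, extracting the cubic term from the commutator in the viscous part after splitting $u=u^{\rm a}+R$, handling the pressure via the identity $\int\nabla\pi\cdot\fD_tR=-\frac{\rm d}{\dt}\int\pi\,\dive R+\int(\fD_t\pi+\pi\dive u)\dive R-\int\pi\,\d_iu^j\d_jR^i$ combined with the continuity equations, and absorbing the lower-order terms by Young's inequality before invoking \eqref{energy-bas} and \eqref{def-G12-est}. Your closing remark about why a direct Cauchy--Schwarz on the pressure term fails is exactly the point the paper's argument is built around.
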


\begin{proof} We first get, by taking $L^2$ inner product of the $R$ equation of \eqref{CNS-error-new} with $\frak{D}_tR$, that
\ba\label{DtR-1}
\int_{\OO} \rho |\fD_{t} R|^{2}\,\dx\,\dy - \int_{\OO} \bigl(\mu \Delta R+ \mu' \nabla\dive R\bigr)\cdot \fD_tR \,\dx\,\dy
+\int_{\OO} \nabla \big(p(\rho)  - p(\rho^{\rm a})  \big)\cdot \fD_tR \,\dx\,\dy \\
 + \int_{\OO}\Bigl(\rho R\cdot \nabla u^{\rm a} +  \vr \bigl(\d_{t}u^{\rm a}  + u^{\rm a} \cdot \nabla u^{\rm a}\bigr)-   G\Bigr) \cdot \fD_tR \,\dx\,\dy = 0.
\ea

Let us now handle term by term above. By using integration by parts, we find
\ba
- \int_{\OO}  \Delta R \cdot \fD_{t} R\,\dx\,\dy
  = \frac{1}{2}\frac{\rm d}{\dt} \int_{\OO}  |\nabla R|^{2}\,\dx\,\dy
   +  \int_{\OO} \nabla R: \bigl( \nabla u \nabla R + (u \cdot \nabla)\nabla R\bigr)\,\dx\,\dy.
 \nn \ea
Due to $u=u^{\rm a}+R,$ one has
 \begin{align*}
  \int_{\OO}\nabla R: ( \nabla u \nabla R)\,\dx\,\dy & =  \int_{\OO}  \nabla R: \bigl( \nabla u^{\rm a} \nabla R\bigr)\,\dx\,\dy
  +  \int_{\OO} \nabla R: ( \nabla R \nabla R) \,\dx\,\dy \\
 & \leq \|\nabla u^{\rm a}\|_{L^{\infty}} \int_{\OO}  |\nabla R|^{2}\,\dx\,\dy +  \int_{\OO}|\nabla R|^{3}\,\dx\,\dy,
 \end{align*}
 and
  \begin{align*}
  \int_{\OO} \nabla R: ( (u \cdot \nabla)\nabla R) \,\dx\,\dy
  & =   -  \frac{1}{2}\int_{\OO}  (\dive u^{\rm a}) |\nabla R|^{2} \,\dx\,\dy
    - \,\frac{1}{2} \int_{\OO} (\dive R) |\nabla R|^{2}\,\dx\,\dy.
\end{align*}
This together with Theorem \ref{thm1} ensures that
\ba\label{DtR-3-1}
- \int_{\OO}  \Delta R \cdot \fD_{t} R\,\dx\,\dy
  \geq \frac{1}{2}\frac{\rm d}{\dt} \int_{\OO}  |\nabla R|^{2}\,\dx\,\dy -C e^{-\alpha t} \int_{\OO}  |\nabla R|^{2}\,\dx\,\dy
  -\int_{\OO}|\nabla R|^{3}\,\dx\,\dy.
  \ea

Exactly along the same line, one has
\ba\label{DtR-3-2}
 -\int_{\OO} \nabla \dive R \cdot \fD_{t} R\,\dx\,\dy\geq &\frac{1 }{2}\frac{\rm d}{\dt} \int_{\OO}  |\dive R|^{2}\,\dx\,\dy\\
& -C e^{-\a t} \int_{\OO}  |\nabla R|^{2}\,\dx\,\dy   -\int_{\OO} |\nabla R|^{3}\,\dx\,\dy .
 \ea

It is a little trickier to deal with the pressure term in \eqref{DtR-1}. Indeed we first observe that
\ba\label{DtR-4-1}
\int_{\OO}& \nabla (p(\rho) - p(\rho^{\rm a})) \cdot \fD_{t}R\,\dx\,\dy
 = - \frac{\rm d}{\dt} \int_{\OO}  (p(\rho) - p(\rho^{\rm a})) \dive R\,\dx\,\dy\\
    &+ \int_{\OO}  \d_{t}(p(\rho) - p(\rho^{\rm a})) \dive R \,\dx\,\dy
- \int_{\OO}  (p(\rho) - p(\rho^{\rm a})) \dive(u\cdot \nabla R)\,\dx\,\dy.
\ea
It follows from the continuity equations of \eqref{CNS} and \eqref{CNS-1d-W} that
\begin{align*}
\d_{t}(p(\rho) - p(\rho^{\rm a}))
& = - \dive\big(p(\rho) u - p(\rho^{\rm a}) u^{\rm a}\big) - a (\g-1)  \big( \rho^{\g} \dive u - (\rho^{\rm a})^{\g} \dive u^{\rm a}\big)
 -\e  p'(\rho^{\rm a})  [(\eta \frak{w})_{y}]_\e
\end{align*}
so that
\begin{align*}
&\int_{\OO}  \d_{t}(p(\rho) - p(\rho^{\rm a})) \dive R\,\dx\,\dy = \int_{\OO} \big(p(\rho) u - p(\rho^{\rm a}) u^{\rm a}\big) \cdot \nabla \dive R\,\dx\,\dy \\
&- a (\g-1)\int_{\OO}  \big( \rho^{\g} \dive u - (\rho^{\rm a})^{\g} \dive  u^{\rm a}\big) \dive R\,\dx\,\dy
 -  \e \int_{\OO} p'(\rho^{\rm a})  [(\eta \frak{w})_{y}] \dive R\,\dx\,\dy.
\end{align*}
While we observe that
\begin{align*}
& - \int_{\OO}  (p(\rho) - p(\rho^{\rm a})) \dive(u\cdot \nabla R)\,\dx\,\dy  = -  \int_{\OO}  \bigl(p(\rho) u- p(\rho^{\rm a}) u^{\rm a}\bigr)\cdot \nabla \dive R \,\dx\,\dy \\
& \qquad\qquad + \int_{\OO}  p(\rho^{\rm a}) R \cdot \nabla \dive R \,\dx\,\dy
 - \int_{\OO}  (p(\rho) - p(\rho^{\rm a})) (\nabla u^{\rm a}+\nabla R): \nabla R\,\dx\,\dy.
\end{align*}
As a result, it comes out
\ba\label{DtR-4-5}
& \int_{\OO} \nabla (p(\rho) - p(\rho^{\rm a})) \cdot \fD_{t} R \,\dx\,\dy
= - \frac{\rm d}{\dt} \int_{\OO}  (p(\rho) - p(\rho^{\rm a})) \dive R\,\dx\,\dy\\
&\quad-   a (\g-1)\int_{\OO}  \big( \rho^{\g} \dive u - (\rho^{\rm a})^{\g} \dive u^{\rm a}\big) \dive R\,\dx\,\dy
 -  \e \int_{\OO} p'(\rho^{\rm a})   [(\eta \frak{w})_{y}]_\e \dive R\,\dx\,\dy\\
&\quad + \int_{\OO}  p(\rho^{\rm a}) R \cdot \nabla \dive R\,\dx\,\dy  - \int_{\OO}  (p(\rho) - p(\rho^{\rm a})) (\nabla u^{\rm a}+\nabla R): \nabla R\,\dx\,\dy.
\ea
Observe that
\begin{align*}
&\int_{\OO} \big( \rho^{\g} \dive u - (\rho^{\rm a})^{\g} \dive u^{\rm a}\big) \dive R \,\dx\,\dy\\
 &  = \int_{\OO}  \big( \rho^{\g} \dive R + (\rho^{\g}- (\rho^{\rm a})^{\g})\dive u^{\rm a} \big) \dive R \,\dx\,\dy\\
& \leq \bar\rho^{\g}\int_{\OO} |\dive R |^{2}\,\dx\,\dy  + \|\dive u^{\rm a}\|_{L^{\infty}(\OO)}
\int_{\OO}\bigl(  (\rho^{\g}- (\rho^{\rm a})^{\g})^{2} +  |\dive R|^{2}\bigr)\,\dx\,\dy,
\end{align*}
and
\ba
 \e  \int_{\OO} p'(\rho^{\rm a}) [(\eta \frak{w})_{y}]_\e \dive R\,\dx\,\dy
   \leq  \e^{\frac 12} \|p'(\rho^{\rm a})\|_{L^{\infty}(\OO)} \| (\eta \frak{w}_{y} )\|_{L^{2}(\OO)} \|\dive R\|_{L^{2}(\OO)},
\nn \ea
and
\begin{align*}
& \int_{\OO}  p(\rho^{\rm a}) R \cdot \nabla \dive R\,\dx\,\dy  - \int_{\OO}  (p(\rho) - p(\rho^{\rm a})) (\nabla u^{\rm a}+\nabla R): \nabla R\,\dx\,\dy \\
%& = - \int_{\OO}  (\nabla p(\rho^{\rm a}) ) \cdot R \cdot \nabla R - \int_{\OO}   p(\rho^{\rm a}) \nabla R :\nabla R -  \int_{\OO}  (p(\rho) - %p(\rho^{\rm a})) (\nabla W+\nabla R): \nabla R \\
& =- \int_{\OO} \Bigl( (\nabla p(\rho^{\rm a}) ) \cdot R \cdot \nabla R  +  (p(\rho) - p(\rho^{\rm a})) \nabla u^{\rm a}: \nabla R
+  p(\rho) \nabla R: \nabla R\Bigr)\,\dx\,\dy   \\
& \leq  \|\nabla p(\rho^{\rm a})\|_{L^\infty} \| R\|_{L^2}^2  + \|\nabla u^{\rm a}\|_{L^\infty}
  \| (\rho-\rho^{\rm a})\|_{L^2}^{2} +  C \|\nabla R\|_{L^2}^{2}.
\end{align*}
By substituting the above estimates into \eqref{DtR-4-5} and using Theorem \ref{thm1}, we find
\ba\label{DtR-4-5a}
 \int_{\OO} \nabla (p(\rho) - p(\rho^{\rm a})) \cdot \fD_{t} R \,\dx\,\dy
\leq & - \frac{\rm d}{\dt} \int_{\OO}  (p(\rho) - p(\rho^{\rm a})) \dive R\,\dx\,\dy\\
&+Ce^{-\alpha t}\bigl(\|\vr\|_{L^2}^2+\| R\|_{L^2}^2 +\|\nabla R\|_{L^2}^2\bigr)+ C \|\nabla R\|_{L^2}^{2}.
\ea

We further observe that
  \begin{align*}
\int_{\OO} \rho R\cdot \nabla u^{\rm a}  \cdot \fD_{t} R\,\dx\,\dy &
\leq 2 \|\nabla u^{\rm a} (t)\|_{L^{\infty}}^{2}\int_{\OO}  \rho |R|^{2}\,\dx\,\dy + \frac{1}{12}\int_{\OO} \rho (\fD_{t} R)^{2}\,\dx\,\dy,\\
\int_{\OO} \vr \bigl(\d_{t} u^{\rm a} + u^{\rm a} \cdot \nabla u^{\rm a}\bigr)   \cdot \fD_{t} R\,\dx\,\dy
& \leq 2 \bigl\| \bigl(\d_{t} u^{\rm a} + u^{\rm a} \cdot \nabla u^{\rm a}\bigr)  \bigr\|_{L^{\infty}}^{2}
\int_{\OO}  \rho^{-1} \vr^{2}\,\dx\,\dy + \frac{1}{12}\int_{\OO} \rho (\fD_{t} R)^{2}\,\dx\,\dy,\\
\int_{\OO}  G \cdot \fD_{t}R\,\dx\,\dy  &  \leq  2\int_{\OO} \rho^{-1} |G|^{2}\,\dx\,\dy
+  \frac{1}{12}\int_{\OO} \rho (\fD_{t} R)^{2}\,\dx\,\dy.
 \end{align*}
This together with \eqref{def-G12-est} and
\eqref{energy-bas}  that
\ba\label{DtR-6-1}
\int_{\OO}\Bigl(\rho R\cdot \nabla u^{\rm a} +  \vr \bigl(\d_{t}u^{\rm a}  + u^{\rm a} \cdot \nabla u^{\rm a}\bigr)-   G\Bigr)
\cdot \fD_tR \,\dx\,\dy\leq C e^{-\a t} \e + \frac{1}{4}\int_{\OO} \rho (\fD_{t} R)^{2}\,\dx\,\dy.
 \ea

By inserting the estimates \eqref{DtR-3-1}, \eqref{DtR-3-2}, \eqref{DtR-4-5a} and \eqref{DtR-6-1} into \eqref{DtR-1}
and
  and integrating the resulting inequality over $[0,t]$, we achieve
 \ba\label{DtR-7-1}
&\int_{\OO} \bigl(\mu |\nabla R|^{2} + \mu'| \dive R|^{2}\bigr) \,\dx\,\dy + \int_{0}^{t} \int_{\OO} \rho |\fD_{t}R|^{2} \,\dx\,\dy\,\dt'\\
& \leq   C \e + \int_{\OO}  (p(\rho) - p(\rho^{\rm a})) \dive R \,\dx\,\dy+ 4C\int_{0}^{t}\int_{\OO} |\nabla R|^{3} \,\dx\,\dy\,\dt' + C \int_{0}^{t}\int_{\OO}  |\nabla R|^{2}\,\dx\,\dy\,\dt'.
\ea
Notice  that
 \begin{align*}
 \int_{\OO}  (p(\rho) - p(\rho^{\rm a})) \dive R\,\dx\,\dy & \leq  \mu^{-1}\int_{\OO}  |p(\rho) - p(\rho^{\rm a})|^{2}\,\dx\,\dy + \frac{\mu}{4} \int_{\OO} |\nabla R|^{2}\,\dx\,\dy \\
 & \leq C\mu^{-1}\|\vr\|_{L^2}^{2}  + \frac{\mu}{4} \|\nabla R\|_{L^2}^{2},
 \end{align*}
from which and \eqref{energy-bas}, we deduce \eqref{DtR-0} from \eqref{DtR-7-1}.
This completes the proof of the lemma.
\end{proof}

\subsection{Estimates of $\fD_{t} R$}

\begin{lem}\label{prop-DtR-new}
{\sl For each $t < T^\star$, one has
\ba\label{DtR-0-new}
 \int_{\OO}  |D_{t}R|^{2}\,\dx\,\dy + \int_{0}^{t} \int_{\OO} |\nabla \fD_{t} R|^{2}\,\dx\,\dy\,\dt' \leq C \e
   + C \int_{0}^{t} \int_{\OO}|\nabla R|^{4}\,\dx\,\dy\,\dt'.
  \ea}
\end{lem}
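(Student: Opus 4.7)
The strategy is to apply the material derivative $\fD_t = \d_t + u\cdot\nabla$ to the momentum equation in \eqref{CNS-error-new}, take $L^2$ inner product with $\fD_t R$, and integrate over $[0,t]\times\Omega$. Throughout, I will repeatedly use the continuity equation $\d_t\rho + \dive(\rho u) = -\e[(\eta\frak{w})_y]_\e$, which yields
$$
\frac{\rm d}{\dt}\int_\Omega \rho \,\phi \,\dx\,\dy = \int_\Omega \rho \,\fD_t\phi\,\dx\,\dy - \e\int_\Omega [(\eta\frak{w})_y]_\e\,\phi\,\dx\,\dy
$$
for any smooth $\phi$, so the leading inertial contribution produces $\tfrac12\frac{\rm d}{\dt}\int_\Omega \rho|\fD_t R|^2$ plus a correction of order $\e$. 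The initial value is under control because $R(0)=0$ and $\vr(0)=0$, so the momentum equation at $t=0$ reduces to $\rho(0)\fD_t R(0)=G(0)$, and \eqref{def-G12-est} together with \eqref{upper-lower-rho-0} gives $\int_\Omega |\fD_t R(0)|^2 \leq C\e$.

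The main work lies in handling the viscous commutators. For each $j=1,2$, one has the identity
$$
\fD_t \Delta R^j = \Delta \fD_t R^j - (\Delta u^i)\d_i R^j - 2(\d_k u^i)\d_i\d_k R^j,
$$
and similarly for $\fD_t\nabla\dive R$. After pairing with $\fD_t R$ and integrating by parts in the spatial variables, the leading contributions combine into $\mu\int|\nabla \fD_t R|^2 + \mu'\int|\dive\fD_t R|^2$, while the commutator pieces are schematically of the form $\int\nabla u\cdot \nabla R\cdot\nabla \fD_t R$ and $\int(\nabla^2 u^{\rm a})\cdot \nabla R\cdot\fD_t R$ (the genuinely second-order contribution in $R$ is removed by a second integration by parts, transferring a derivative onto $\fD_t R$). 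Splitting $u=u^{\rm a}+R$ at every occurrence of $\nabla u$, the $u^{\rm a}$-parts inherit the $L^\infty$-bound with exponential decay from Theorem \ref{thm1}, whereas the $R$-parts are controlled by Young's inequality
$$
\int_\Omega |\nabla R|^2 |\nabla\fD_t R|\,\dx\,\dy \leq \de \int_\Omega |\nabla \fD_t R|^2\,\dx\,\dy + C_\de \int_\Omega |\nabla R|^4\,\dx\,\dy,
$$
which is precisely where the quartic term in \eqref{DtR-0-new} originates.

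The pressure term is treated via $\fD_t\nabla f = \nabla \fD_t f - (\nabla u)^{\rm T}\cdot\nabla f$ combined with the renormalized continuity equations for $\rho$ and $\rho^{\rm a}$; writing $p(\rho)-p(\rho^{\rm a})$ as $p'(\tilde\rho)\vr$ and using $\fD_t\vr = -\rho\dive R - \dive(\vr u^{\rm a}) - \e[(\eta\frak{w})_y]_\e + u^{\rm a}\cdot\nabla\vr$ (obtained by subtracting the two continuity equations), one recasts the troublesome $\int\nabla\fD_t(p(\rho)-p(\rho^{\rm a}))\cdot \fD_t R$ after integration by parts as an integral of $\dive\fD_t R$ against a $\vr$-dependent expression, allowing bounds of the form $\de\int|\nabla\fD_t R|^2 + C e^{-\alpha t}(\e + \|\vr\|_{L^2}^2 + \|\nabla R\|_{L^2}^2)$. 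The remaining terms $\fD_t(\rho R\cdot\nabla u^{\rm a})$, $\fD_t[\vr(\d_t u^{\rm a}+u^{\rm a}\cdot\nabla u^{\rm a})]$ and $\fD_t G$ inherit decaying prefactors via Theorem \ref{thm1} and \eqref{def-G12-est}, and are absorbed using Proposition \ref{prop-energy-basic}.

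Assembling the above, choosing $\de$ small enough to absorb the $\int|\nabla\fD_t R|^2$ contributions into the left-hand side, and integrating in time yields
$$
\int_\Omega |\fD_t R|^2(t)\,\dx\,\dy + \int_0^t\int_\Omega |\nabla\fD_t R|^2\,\dx\,\dy\,\dt'
\leq C\e + C\int_0^t\int_\Omega |\nabla R|^4\,\dx\,\dy\,\dt' + C\int_0^t\int_\Omega |\nabla R|^3\,\dx\,\dy\,\dt'.
$$
The cubic remainder is then eliminated by the pointwise inequality $|\nabla R|^3\leq \de |\nabla R|^4 + C_\de|\nabla R|^2$, since Proposition \ref{prop-energy-basic} already provides $\int_0^\infty\!\int_\Omega|\nabla R|^2\leq C\e$. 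The principal obstacle is the bookkeeping of the commutators $[\fD_t,\Delta]$, $[\fD_t,\nabla\dive]$, and $[\fD_t,\nabla]$ acting on the pressure difference; the key is to consistently decompose $u=u^{\rm a}+R$ so that each factor $\nabla u^{\rm a}$ contributes an exponentially decaying $L^\infty$-prefactor while each factor $\nabla R$ is absorbed either into $\int|\nabla\fD_t R|^2$ or into the $|\nabla R|^4$ term that appears on the right of \eqref{DtR-0-new}.
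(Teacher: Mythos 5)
Your overall strategy is the paper's: apply $\fD_t$ to the momentum equation of \eqref{CNS-error-new}, test with $\fD_t R$, and extract the quartic term $\int|\nabla R|^4$ from the viscous commutators via Young's inequality after splitting $u=u^{\rm a}+R$. But there is a genuine gap at the single most delicate point, namely the second-order piece of the commutator $[\fD_t,\Delta]$. Your identity $\fD_t \Delta R^j=\Delta \fD_t R^j-(\Delta u^i)\d_i R^j-2(\d_k u^i)\d_i\d_k R^j$ is correct, but the claim that after integration by parts the leftover reduces to $\int\nabla u\cdot\nabla R\cdot\nabla\fD_t R$ plus $\int(\nabla^2 u^{\rm a})\cdot\nabla R\cdot\fD_t R$ is not. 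However you integrate by parts, one term of the form $\int(\dive u)\,\Delta R^j\,\fD_t R^j$ (equivalently, after a further integration by parts, $\int(\nabla\dive u)\cdot\nabla R^j\,\fD_t R^j$) survives, and its $R$-part carries $\nabla^2 R$, not merely $\nabla^2 u^{\rm a}$. Second derivatives of $R$ are not controlled anywhere in this scheme: the energy functional \eqref{thm2-2} only contains $\nabla R$, $\fD_t R$, $\nabla \o_\e$ and $\nabla\fD_t R$, and $\nabla\dive R$ cannot be recovered from these because $\nabla\bigl(p(\rho_\e)-p(\rho^{\rm a}_\e)\bigr)$ involves $\nabla\rho_\e$, which is never estimated. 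So this term cannot be closed by integration by parts and Young's inequality alone.

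The paper's proof removes this term by an exact cancellation that your argument forecloses. Writing $\fD_t(\rho\fD_t R)=\rho\fD_t^2R+(\fD_t\rho)\fD_t R$ and using $\fD_t\rho=-\rho\dive u$ (note the exact density satisfies the homogeneous continuity equation $\d_t\rho+\dive(\rho u)=0$; the source $\e[(\eta\frak{w})_y]_\e$ you insert belongs to the equations for $\rho^{\rm a}$ and $\vr$, not for $\rho$), one substitutes the momentum equation into $-\rho\dive u\,\fD_t R$ to produce, among other harmless terms, $-\mu\dive u\,\Delta R$; this cancels exactly the $+\mu\dive u\,\Delta R_i$ extracted from the commutator (see \eqref{DtR-2-3-new} and \eqref{DtR-3-6-new}). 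After the cancellation every remaining commutator term is in divergence form and lands on $\nabla\fD_t R$, which is precisely the structure you assumed. Your treatment of the inertial term --- reducing it to $\tfrac12\frac{\rm d}{\dt}\int_\OO\rho|\fD_t R|^2$ plus an $O(\e)$ correction --- throws away the very term needed for the cancellation, so as written the proof does not close. The rest of your sketch (initial value of $\fD_tR$ via $G(0)$, pressure via the renormalized continuity equations, lower-order terms via Theorem \ref{thm1} and \eqref{def-G12-est}, elimination of the cubic remainder by $|\nabla R|^3\le\de|\nabla R|^4+C_\de|\nabla R|^2$ together with Proposition \ref{prop-energy-basic}) is consistent with the paper.
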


\begin{proof}
By applying $\fD_{t}$ to the $R$ equation of  \eqref{CNS-error-new} and taking $L^2$ inner product of the resulting
equation with $\fD_tR,$ we find
\ba\label{DtR-1-new}
 \int_{\OO}  \Bigl(&\fD_{t} (\rho \fD_{t} R) - \mu \fD_{t} \Delta R - \mu' \fD_{t}  \nabla\dive R +  \fD_{t} \nabla \big(p(\rho)  - p(\rho^{\rm a})  \big) \\
&+  \fD_{t} \big(\rho R\cdot \nabla u^{\rm a} \big)  + \fD_{t} \big( \vr (\d_{t} u^{\rm a} + u^{\rm a} \cdot \nabla u^{\rm a})\big)\Bigr)
\cdot \fD_{t} R\,\dx\,\dy  = \int_{\OO}  \fD_{t} G\cdot \fD_{t} R\,\dx\,\dy.
\ea
Let us now handle term by term above. Notice that
\ba
\fD_{t} (\rho D_{t} R)  =\rho \fD_{t}^{2} R +  (\fD_{t} \rho) \fD_{t}R.
\nn \ea
Firstly,
\ba\label{DtR-2-2-new}
\int_{\OO} \rho \fD_{t}^{2} R  \cdot \fD_{t} R \,\dx\,\dy=  \frac{1}{2}\frac{\rm d}{\dt}\int_{\OO} \rho |\fD_{t}R|^{2}\,\dx\,\dy.
\ea
By virtue of \eqref{CNS}, we write
\ba\label{DtR-2-3-new}
\fD_{t} \rho \fD_{t}R    & = \dive u (-\rho \fD_{t}R) \\
& = \dive u \big( - \mu \Delta R- \mu' \nabla\dive R + \nabla \big(p(\rho)  - p(\rho^{\rm a})  \big) + \rho R \cdot \nabla u^{\rm a}  + \vr (\d_{t} u^{\rm a} + u^{\rm a} \cdot \nabla u^{\rm a}) -  G \big).
\ea
We shall postpone the estimate of the above terms below.

For $i=1,2,$ we write
\begin{align*}
 -\mu \fD_{t}\Delta R_{i}
 & = - \mu \Delta \d_{t}  R_{i} - \mu \dive\big( (u \cdot \nabla) \nabla R_{i}  \big) +  \mu \nabla u_{j} \cdot \d_{j}\nabla R_{i}  \\
 & =  - \mu \Delta \fD_{t}  R_{i} + \mu \dive\big(   \nabla u_{j} \cdot \d_{j} R_{i} \big) +  \mu \nabla u_{j} \cdot \d_{j}\nabla R_{i}.
  \end{align*}
 The last term above can be written as
\begin{align*}
 \mu \nabla u_{j} \cdot \d_{j}\nabla R_{i}  = \mu \d_{k} u_{j} \d_{j} \d_{k} R_{i}
 &= \mu \d_{j} \big( \d_{k} u_{j} \d_{k} R_{i}\big) - \mu \d_{k}\d_{j} u_{j}  \d_{k} R_{i}  \\
 & = \mu \d_{j} \big( \d_{k} u_{j} \d_{k} R_{i}\big) - \mu \d_{k}\big(\d_{j} u_{j}  \d_{k} R_{i}\big) + \mu   \dive u \Delta R_{i}.
 \end{align*}
 We observe that the last term above cancels with the first term on the right-side of  \eqref{DtR-2-3-new},
 so that there holds
 \begin{align*}
 & -\int_{\OO} \big( \dive u  \Delta R + \fD_{t}\Delta R\big) \cdot \fD_{t} R\,\dx\,\dy\\
  & =  - \int_{\OO}\Delta D_{t} R \cdot D_{t} R\,\dx\,\dy
   +\int_{\OO} \Big(  \d_k\big(   \d_{k} u_{j} \d_{j} R \big)  + \d_{j} \big( \d_{k} u_{j} \d_{k} R\big) -  \d_{k}\big(\d_{j} u_{j}  \d_{k} R\big) \Big)\cdot D_{t} R_i\,\dx\,\dy.
 \end{align*}
We get, by using integration by parts, that
 \ba
-\int_{\OO} \Delta \fD_{t} R \cdot \fD_{t} R \,\dx\,\dy = \int_{\OO}  |\nabla \fD_{t} R |^{2}\,\dx\,\dy,
 \nn \ea
and
  \begin{align*}
&  \mu\int_{\OO} \Big(  \d_k\big(   \d_{k} u_{j} \d_{j} R \big)  + \d_{j} \big( \d_{k} u_{j} \d_{k} R\big) -  \d_{k}\big(\d_{j} u_{j}  \d_{k} R\big) \Big)\cdot \fD_{t} R\,\dx\,\dy  \\
& \leq  12\mu \int_{\OO}  | \nabla u |^{2} |\nabla R|^{2}\,\dx\,\dy +  \frac{\mu}{8} \int_{\OO} |\nabla \fD_{t} R |^{2}\,\dx\,\dy \\
& \leq  12\mu \int_{\OO}  \bigl(\| \nabla u^{\rm a} \|_{L^\infty}^{2} + |\nabla R|^{2}\bigr) |\nabla R|^{2}\,\dx\,\dy
 +  \frac{\mu}{8} \|\nabla \fD_{t} R \|_{L^2}^{2}.
 \end{align*}
 Thus, we get, by applying Theorem \ref{thm1}, that
  \ba\label{DtR-3-6-new}
  -\mu\int_{\OO} \big( \dive u  \Delta R &+ \fD_{t}\Delta R\big) \cdot \fD_{t} R\,\dx\,\dy\\
&\geq \frac{7 }{8}\mu \|\nabla \fD_{t} R \|_{L^2}^{2}  - C e^{-\a t} \|\nabla R\|_{L^2}^{2} -  12\mu \int_{\OO} |\nabla R|^{4}\,\dx\,\dy.
 \ea
Similar argument leads to
\ba\label{DtR-4-1-new}
 - \mu'\int_{\OO} \big(\dive u (  \nabla \dive R) &+ \fD_{t}\nabla \dive R) \cdot \fD_{t} R\,\dx\,\dy\\
 & \geq \frac{7}{8} \mu' \|\dive \fD_{t} R \|_{L^2}^{2}  - C e^{-\a t} \|\nabla R\|_{L^2}^{2} -  12\mu' \int_{\OO} |\nabla R|^{4}\,\dx\,\dy.
\ea

To handle the pressure related terms, we write
\begin{align*}
& ( \dive u) \nabla(p(\rho) - p(\rho^{\rm a})) + (u \cdot \nabla )\nabla(p(\rho) - p(\rho^{\rm a})) \\
& = \d_{j} \big(u_{j} \nabla  (p(\rho) - p(\rho^{\rm a}))\big) = \nabla  \dive \big(u (p(\rho) - p(\rho^{\rm a}))\big) - \d_{j} \big( \nabla u_{j}  (p(\rho) - p(\rho^{\rm a}))\big).
 \end{align*}
In view of the continuity equations of \eqref{CNS} and \eqref{CNS-1d-W}, one has
 \begin{align*}
\d_{t}(p(\rho) - p(\rho^{\rm a}))
 =& - \dive\big( u (p(\rho)- p(\rho^{\rm a})) + p(\rho^{\rm a}) R \big)\\
  &- a (\g-1)  \big( \rho^{\g} \dive u - (\rho^{\rm a})^{\g} \dive u^{\rm a}\big) -\e p'(\rho^{\rm a})  [(\eta \frak{w})_{y}]_\e.
\end{align*}
As a consequence, we deduce that
\begin{align*}
( \dive u) \nabla(p(\rho) - p(\rho^{\rm a})) + \fD_{t} \nabla \big(p(\rho) & - p(\rho^{\rm a})  \big)
= - \d_{j} \big( \nabla u_{j}  (p(\rho) - p(\rho^{\rm a}))\big)   - \nabla \dive (p(\rho^{\rm a}) R)\\
 &- a  (\g-1)  \nabla\big( \rho^{\g} \dive u - (\rho^{\rm a})^{\g} \dive u^{\rm a}\big)  -  \e \nabla \big( p'(\rho^{\rm a})[(\eta \frak{w})_{y}]_\e \big),
 \end{align*}
 from which and Theorem \ref{thm1}, we deduce that
 \ba\label{DtR-5-5-new}
&\int_{\OO} \Big(( \dive u) \nabla(p(\rho) - p(\rho^{\rm a}))  + \fD_{t} \nabla \big(p(\rho)  - p(\rho^{\rm a})  \big)\Big) \cdot \fD_{t}R \,\dx\,\dy \\
& = \int_{\OO}  (p(\rho) - p(\rho^{\rm a}))(\nabla u^{\rm a} + \nabla R) : \nabla \fD_{t} R \,\dx\,\dy
+ \int_{\OO}  \Bigl((p(\rho^{\rm a}) \dive R + \nabla p(\rho^{\rm a}) \cdot R)  \\
& \qquad  +a (\g-1) \big( \rho^{\g} \dive R  + (\rho^{\g}- (\rho^{\rm a})^{\g} ) \dive u^{\rm a}\big)
+  \e p'(\rho^{\rm a}) [(\eta \frak{w})_{y}]\Bigr) \dive \fD_{t}R\,\dx\,\dy  \\
& \leq C e^{-\a t} \bigl(\e+\|\vr|_{L^2}^{2} + \|R\|_{L^2}^{2}\bigr)   + C \|\nabla R\|_{L^2}^{2}  + \frac{\mu}{2} \|\nabla \fD_{t} R \|_{L^2}^{2}.
 \ea

On the other hand, thanks to the continuity equation of \eqref{CNS-limit}, we infer
 \begin{align*}
&  \int_{\OO} \big( \fD_{t} (\rho R \cdot \nabla u^{\rm a})  + (\dive u) \rho R\cdot \nabla u^{\rm a}\big) \cdot \fD_{t} R\,\dx\,\dy  \\
& =  \int_{\OO} \big(  \rho (\fD_{t} R) \cdot \nabla u^{\rm a}+ \rho R \cdot D_{t} \nabla u^{\rm a}\big) \cdot \fD_{t} R\,\dx\,\dy  \\
& \leq \bigl(\| \nabla u^{\rm a}\|_{L^{\infty}}    +  \| \d_{t} \nabla u^{\rm a} \|_{L^{\infty}}
+  \|  u^{\rm a} \nabla^{2} u^{\rm a}\|_{L^{\infty}}+ \| \nabla^{2} u^{\rm a} \|_{L^{\infty}} \bigr) \\
  &\qquad\times\int_{\OO}   \rho \left(  |\fD_{t} R|^{2} +   |R|^{2}\right)\,\dx\,\dy + \| \nabla^{2} u^{\rm a} \|_{L^{\infty}}  \int_{\OO}   \rho |R|^{4}\,\dx\,\dy.
 \end{align*}
It follows from  the Gagliardo-Nirenberg interpolation inequality and \eqref{energy-bas} that
\ba
\| R \|_{L^{4}(\OO)}^{4} \leq C \| R \|_{L^{2}(\OO)}^{2} \| R\|_{H^{1}(\OO)}^{2} \leq C\e^{2} + C \e  \|\nabla R\|_{L^{2}(\OO)}^{2}.
\nn \ea
As a result, we get, by applying Theorem \ref{thm1}, that
 \ba\label{DtR-6-4-new}
  \int_{\OO} \big( \fD_{t} (\rho R \cdot \nabla u^{\rm a})  &+ (\dive u) \rho R\cdot \nabla u^{\rm a}\big) \cdot \fD_{t} R\,\dx\,\dy\\
  &\leq C  e^{-\a t}\Bigl(\e \bigl(1+ \|\nabla R\|_{L^2}^{2}\bigr) +  \int_{\OO}   \rho |D_{t} R|^{2}\,\dx\,\dy\Bigr).
 \ea

In view of the $\vr$ equation of \eqref{CNS-error-new}, we write
 \begin{align*}
 & \fD_{t} \big( \vr \bigl(\d_{t} u^{\rm a} + u^{\rm a} \cdot \nabla u^{\rm a})\bigr)
 + \dive u \big(\vr \bigl(\d_{t} u^{\rm a} + u^{\rm a} \cdot \nabla u^{\rm a}) \bigr) \\
&= \bigl(\fD_{t}\vr + \vr \dive u\bigr) \bigl(\d_{t} u^{\rm a} + u^{\rm a} \cdot \nabla u^{\rm a})\bigr)
 + \vr \fD_{t} \bigl(\d_{t} u^{\rm a} + u^{\rm a} \cdot \nabla u^{\rm a})\bigr)  \\
 & = - \big( \dive(\rho^{\rm a} R) + \e [(\eta \frak{w})_{y}]_\e\big) \bigl(\d_{t} u^{\rm a} + u^{\rm a} \cdot \nabla u^{\rm a})\bigr)
   + \vr \fD_{t} \bigl(\d_{t} u^{\rm a} + u^{\rm a} \cdot \nabla u^{\rm a})\bigr) .
  \end{align*}
 It is easy to observe that
     \begin{align*}
 &\int_{\OO} \big( \dive(\rho^{\rm a} R) + \e [(\eta \frak{w})_{y}]_\e\big)\bigl(\d_{t} u^{\rm a} + u^{\rm a} \cdot \nabla u^{\rm a})\bigr) \cdot \fD_{t} R \,\dx\,\dy\\
 &\leq \bigl\|\bigl(\d_{t} u^{\rm a} + u^{\rm a} \cdot \nabla u^{\rm a})\bigr)\bigr\|_{L^{\infty}}
 \bigl\| \rho^{\rm a} \dive R + R\cdot \nabla \rho^{\rm a} + \e [(\eta \frak{w})_{y}]_\e\bigr\|_{L^{2}}\| \fD_{t} R\|_{L^{2}}\\
 & \leq C e^{-\a t} \bigl(\|R\|_{L^{2}} + \|\nabla R\|_{L^{2}} + \e^{\frac 12}\bigr)\| \fD_{t} R\|_{L^{2}} ,
  \end{align*}
  and
 \begin{align*}
     & \int_{\OO} \vr \d_{t}\bigl(\d_{t} u^{\rm a} + u^{\rm a} \cdot \nabla u^{\rm a})\bigr)\cdot \nabla u^{\rm a})\bigr) \cdot \fD_{t} R \,\dx\,\dy\leq { \bigl\|\d_{t}\bigl(\d_{t} u^{\rm a} + u^{\rm a} \cdot \nabla u^{\rm a})\bigr)\bigr\|_{L^{\infty}}}  \| \vr\|_{L^{2}}\| \fD_{t} R\|_{L^{2}},
   \end{align*}
  and
    \begin{align*}
& \int_{\OO}    \vr u \cdot \nabla \bigl(\d_{t} u^{\rm a} + u^{\rm a} \cdot \nabla u^{\rm a})\bigr) \cdot \fD_{t} R\,\dx\,\dy \\
 &\leq \| \fD_{t} R\|_{L^{2}}\big( \bigl\|u^{\rm a}\cdot \nabla \bigl(\d_{t} u^{\rm a} + u^{\rm a} \cdot \nabla u^{\rm a})\bigr) \bigr\|_{L^{\infty}}  \| \vr\|_{L^{2}}+ \|\vr\|_{L^{\infty}} \bigl\|\bigl(\d_{t} u^{\rm a} + u^{\rm a} \cdot \nabla u^{\rm a})\bigr)\bigr\|_{L^{\infty}}
 \| R \|_{L^{2}}\big)\\
 &\leq C e^{-\a t} \e^{\frac 12} \| D_{t} R\|_{L^{2}(\OO)}.
 \end{align*}
Therefore, we obtain
  \ba\label{DtR-9-5-new}
 \int_{\OO}
\Bigl(\fD_{t} \big( \vr \bigl(\d_{t} u^{\rm a} + u^{\rm a} \cdot \nabla u^{\rm a})\bigr)
 + \dive u \big(\vr \bigl(\d_{t} u^{\rm a} &+ u^{\rm a} \cdot \nabla u^{\rm a}) \bigr)\Bigr)\cdot \fD_{t} R \,\dx\,\dy\\
 &\leq   C e^{-\a t} \e + C e^{-\a t}\| \fD_{t} R\|^{2}_{L^{2}(\OO)}.
\ea
We remark that it is exactly the term ${ \bigl\|\d_{t}\bigl(\d_{t} u^{\rm a} + u^{\rm a} \cdot \nabla u^{\rm a})\bigr)\bigr\|_{L^{\infty}}}$  that needs the highest regularity of the approximate solution. It follows from  \eqref{thm1-0} that ${\bigl\|\d_{t}\bigl(\d_{t} u^{\rm a} + u^{\rm a} \cdot \nabla u^{\rm a})\bigr)\bigr\|_{L^{\infty}}} \leq C. $

 Finally, thanks to \eqref{def-G12-est},   we compute
\ba\label{DtR-7-2-new}
  \int_{\OO} &\big( \d_{t} G + \dive (u G) \big) \cdot \fD_{t}R\,\dx\,\dy\\
   &\leq \|\d_{t} G\|_{L^{2}} \| \fD_{t} R\|_{L^{2}} + \| G\|_{L^{\infty}} (\|u^{\rm a}\|_{L^{2}} + \|R\|_{L^{2}})\| \nabla \fD_{t} R\|_{L^{2}} \\
  & \leq C \e^{\frac{1}{2}} e^{-\a t}\| \fD_{t} R\|_{L^{2}} + C \e^{\frac 12} e^{-\a t} \| \nabla \fD_{t} R\|_{L^{2}} \\
  & \leq C \e e^{-\a t} + C e^{-\a t}\| \fD_{t} R\|_{L^{2}}^{2} + \frac{1}{8}\| \nabla \fD_{t} R\|_{L^{2}}^2.
\ea

By inserting the estimates \eqref{DtR-2-2-new} and  (\ref{DtR-3-6-new}--\ref{DtR-7-2-new}) into \eqref{DtR-1-new} and then
integrating the resulting inequality over $[0,t],$ we arrive at
\ba\label{DtR-8-1-new}
 \|\fD_{t}R\|_{L^2}^{2} + \int_{0}^{t} \|\nabla \fD_{t} R(t')\|^{2}\,\dt' \leq C \e +
  C \int_{0}^{t} |\nabla R|^{4}\,\dx\,\dy
 + C \int_{0}^{t} e^{-\a t'} \|D_{t}R(t')\|^{2}\,\dt'.
\ea
Applying Gronwall's inequality leads to \eqref{DtR-0-new}.
This finishes the proof of Lemma \ref{prop-DtR-new}.
\end{proof}

\subsection{Estimates for the vorticity and proof of Proposition \ref{energy}} \label{Sect7.4}

For each vector valued function $v = (v_{1},v_{2})^{\rm T}:\R^{2} \to \R^{2}$, we define its vorticity $\curl v$ as follows:
$$
\curl v \eqdefa \nabla^{\perp} \cdot v  =  \d_{y} v_{1} - \d_{x} v_{2}, \quad \nabla^{\perp} \eqdefa \bp \d_{y} \\ - \d_{x}\ep.
$$
  In particular, we denote $\omega \eqdefa \curl R$.
  Applying $\curl$ to $\eqref{CNS-error-new}_{2}$ gives
\ba\label{vor-1}
\curl (\rho \fD_{t} R ) - \mu \Delta \o + \curl \big( \rho R\cdot  \nabla u^{\rm a} \big) + \curl \big(\vr (\d_{t} u^{\rm a}
 + u^{\rm a}  \cdot \nabla u^{\rm a} ) \big) = \curl G.
\ea
%\ba\label{vor-1}
%\rho D_{t} \curl R - \mu \Delta \curl R + \big( \d_{2} \rho D_{t} R^{1} - \d_{1} \rho D_{t} R^{2} \big) + \big( \d_{2} u\cdot \nabla R^{1} -\d_{1} u\cdot \nabla R^{2}  \big) + \d_{2} (\rho R \cdot \nabla W) = \curl G.
%\ea

\begin{lem}\label{prop-vorticity}
{\sl Let $\o\eqdefa \curl R$. Then there holds
\ba\label{vor-4}
\int_{\OO} |\nabla \o|^{2}\,\dx\,\dy    \leq C \e + C\int_{0}^{t}\int_{\OO} |\nabla R|^{4}\,\dx\,\dy,
\ea
and
\ba\label{vor-5}
\int_{0}^{t} \int_{\OO} |\nabla \o|^{2}\,\dx\,\dy\,\dt'    \leq C \e + C \int_{0}^{t} \int_{\OO} |\nabla R|^{3}\,\dx\,\dy\,\dt'.
 \ea}
\end{lem}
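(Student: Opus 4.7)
The idea is to view equation \eqref{vor-1} as an elliptic equation for $\omega$:
\[
-\mu \Delta \o \;=\; \curl G \;-\; \curl(\rho \fD_t R) \;-\; \curl\bigl(\rho R\cdot \nabla u^{\rm a}\bigr) \;-\; \curl\bigl(\vr(\d_t u^{\rm a}+u^{\rm a}\cdot\nabla u^{\rm a})\bigr).
\]
Testing against $\o$ and using the identity $\int_{\OO} \o\,\curl F = \int_\OO \nabla^\perp\o\cdot F$, followed by Cauchy--Schwarz, gives
\[
\mu \|\nabla \o\|_{L^2(\OO)}^2 \;\leq\; \|\nabla \o\|_{L^2(\OO)}\Bigl(\|G\|_{L^2} + \|\rho \fD_t R\|_{L^2} + \|\rho R\cdot \nabla u^{\rm a}\|_{L^2} + \|\vr(\d_t u^{\rm a}+u^{\rm a}\cdot\nabla u^{\rm a})\|_{L^2}\Bigr),
\]
so the whole problem reduces to controlling the $L^2$ norm of the right-hand side.

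Next I would bound each of these source terms using the tools already established. The bound $\|G\|_{L^2}\leq C\e^{1/2}e^{-\a t}$ is given in \eqref{def-G12-est}. For the convective and pressure-like source terms I would use Theorem \ref{thm1} to get $\|\nabla u^{\rm a}\|_{L^\infty}+\|\d_t u^{\rm a}+u^{\rm a}\cdot\nabla u^{\rm a}\|_{L^\infty}\leq Ce^{-\a t}$, together with the basic energy estimate \eqref{energy-bas} from Proposition \ref{prop-energy-basic}, which yields $\|R\|_{L^2}^2+\|\vr\|_{L^2}^2\leq C\e$. The pointwise density bound \eqref{upper-lower-rho-0} then gives $\|\rho\fD_t R\|_{L^2}\leq C\|\fD_t R\|_{L^2}$. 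Combining and squaring,
\[
\|\nabla \o\|_{L^2(\OO)}^2 \;\leq\; C\e + C\|\fD_t R\|_{L^2(\OO)}^2.
\]

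The first inequality \eqref{vor-4} follows immediately by inserting Lemma \ref{prop-DtR-new}, which bounds $\|\fD_t R(t)\|_{L^2}^2$ by $C\e+C\int_0^t\!\!\int_{\OO}|\nabla R|^4$. For the second inequality \eqref{vor-5}, I would integrate the preceding inequality in time on $[0,t]$ and apply Lemma \ref{prop-DtR} instead of Lemma \ref{prop-DtR-new}, obtaining
\[
\int_0^t \|\nabla\o\|_{L^2(\OO)}^2\,\dt' \;\leq\; C\e + C\int_0^t \|\fD_t R\|_{L^2(\OO)}^2\,\dt' \;\leq\; C\e + C\int_0^t\!\!\int_{\OO}|\nabla R|^3\,\dx\,\dy\,\dt'.
\]
There is no serious obstacle here: the proof is essentially elliptic regularity applied to the vorticity equation, combined with the two material-derivative estimates already proved. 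The only point requiring a bit of care is the integration-by-parts step for the curl terms, which is clean on $\OO=\TT\times\R$ since all quantities decay in $y$ and are periodic in $x$, so no boundary terms appear.
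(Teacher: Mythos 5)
Your proposal is correct and follows essentially the same route as the paper: test the vorticity equation \eqref{vor-1} against $\o$, integrate by parts to produce $\nabla^{\perp}\o$ against the source terms, absorb $\|\nabla\o\|_{L^2}^2$ by Young/Cauchy--Schwarz, and then invoke \eqref{def-G12-est}, Theorem \ref{thm1}, Proposition \ref{prop-energy-basic}, and Lemmas \ref{prop-DtR} and \ref{prop-DtR-new} exactly as you describe, with Lemma \ref{prop-DtR-new} giving the pointwise-in-time bound \eqref{vor-4} and Lemma \ref{prop-DtR} giving the time-integrated bound \eqref{vor-5}.
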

\begin{proof} We first get, by taking $L^2$ inner product of \eqref{vor-1} with $\o$ and using integration by parts, that
\begin{align*}
 \mu \int_{\OO} |\nabla \o|^{2} \,\dx\,\dy=\int_{\OO}\Bigl(&- G \cdot \nabla^{\perp} \o  + \rho \fD_{t} R \cdot \nabla^{\perp} \o
 \\
    & + \rho R\cdot  \nabla u^{\rm a} \cdot \nabla^{\perp} \o+ \vr \bigl(\d_{t} u^{\rm a} + u^{\rm a} \cdot \nabla u^{\rm a}\bigr)  \cdot \nabla^{\perp} \o\Bigr)\,\dx\,\dy.
  \end{align*}
Applying Young's inequality gives
\begin{align*}
\mu \int_{\OO} |\nabla \o|^{2} \,\dx\,\dy   \leq \frac{4}{\mu} \int_{\OO} \Big(& |G|^{2} +  \rho^{2} |\fD_{t} R|^{2} +  |\nabla u^{\rm a}| ^{2} \rho^{2} | R|^{2} \\
&+ \vr^{2} \bigl| \d_{t} u^{\rm a} + u^{\rm a} \cdot \nabla u^{\rm a}\bigr|^{2}  \Big)\,\dx\,\dy
 + \frac{\mu}{2} \int_{\OO} |\nabla \o|^{2}\,\dx\,\dy,
\end{align*}
 which together with Proposition  \ref{prop-energy-basic},  Lemmas \ref{prop-DtR} and  \ref{prop-DtR-new}, and Theorem
 \ref{thm1} ensures \eqref{vor-4} and \eqref{vor-5}.
\end{proof}

By summarizing Proposition \ref{prop-energy-basic}, Lemmas \ref{prop-DtR}, \ref{prop-DtR-new} and \ref{prop-vorticity}, we conclude the proof of
Proposition \ref{energy}.

 \appendix

 \section{Proof of Proposition \ref{S2prop1}}\label{appa}

 \begin{proof}[Proof of Proposition \ref{S2prop1}]
By summarizing Propositions \ref{prop-upperbd-density}, \ref{prop-decay-exp-L2}, \ref{prop-decay-exp-H1}, \ref{prop-decay-exp-rx}
and \ref{prop-Dtv-L2}, we conclude that \eqref{thm1-2} holds and  there exist two positive constants $C$ and $\a$ solely depending
 on $(a, \g, \nu, \bar \varsigma_0, \underline{\varsigma}_0, \bar E_{10})$ such that
\ba\label{decay-exp-all-1}
\|(\eta -1, w)(t)\|_{H^1_\h} + \| (\eta -1, w)(t)\|_{L^\infty_\h} + \|D_t  w(t)\|_{L^2_\h} \leq C E_{20}^{\frac{1}{2}}(y) e^{-\a t}, \quad \forall \, t\in \R^{+}, \ y\in \R,
\ea from which and \eqref{CNS-limit}, we infer
\ba\label{decay-exp-all-2}
 \| \eta _t(t)\|_{L^2_\h} +  \|(w _{xx}, w _t)(t)\|_{L^2_\h}  \leq C E_{20}^{\frac{1}{2}}(y) e^{-\a t}, \quad \forall \, t\in \R^{+}, \ y\in \R.
\ea
By virtue of Sobolev embedding, we obtain
\ba\label{decay-exp-all-3}
 \| \eta(t)\|_{C^{0,\frac 12}_\h} +  \|w(t)\|_{C^{1, \frac 12}_\h}  \leq C E_{20}^{\frac{1}{2}}(y) e^{-\a t}, \quad \forall \, t\in \R^{+}, \ y\in \R.
\ea

To derive decay estimates for higher order norms, we are going to use induction method. Let $n \in \Z_{+}, \ 1\leq n\leq 3,$  we denote
\ba\label{decay-exp-all-m00}
& E_{m+2}(t,y) \eqdefa \|(\eta -1)(t)\|_{H^{m+1}_\h}^{2} + \| w (t)\|_{H^{m+2}_\h}^{2} +  \| (\d_{t}^{m}\eta, \d_{t}^{m}w) (t)\|_{L^2_\h}^{2},\\
& D_{m+2} (t,y) \eqdefa \|(\eta -1)(t)\|_{H^{m+1}_\h}^{2} + \|w (t)\|_{H^{m+2}_\h}^{2}  + \|D_t  w(t)\|_{H^{m+1}_\h}^{2}.
\ea
and assume that for  $0 \leq m \leq n-1,$  there exists $\cF_{m}(t,y)$ so that
\begin{subequations} \label{Saeq11}
\begin{gather}
\label{cfm}
 \cF_{m}(t,y)\sim \bigl( \|(\eta -1, w)(t)\|_{H^{m+1}_\h}^{2} +\|D_t  w(t)\|_{H^m_\h}^{2} \bigr),\\
\label{decay-exp-all-m0}
\frac{\rm d}{\dt} \cF_m(t,y)+ \de  D_{m+2} (t,y) \leq C E_{m+2,0}(y) e^{-\a t},\\
\label{decay-exp-all-m}
E_{m+2}^{\frac12}(t,y) \leq C E_{m+2,0}^{\frac{1}{2}}(y) e^{-\a t},
\end{gather}
\end{subequations}
for some $\a>0, \ \de>0$ and all $t\in \R^{+}, \ y\in \R$, where $E_{m+2,0}(y) \eqdefa E_{m+2}(0,y)$.

\medskip

As shown in Section \ref{sec:1dNS}, the estimates in \eqref{decay-exp-all-m0} and \eqref{decay-exp-all-m} hold when $m=0$. We would like to prove that \eqref{decay-exp-all-m0} and \eqref{decay-exp-all-m} hold  for $m=n$. In order to it, we denote
\be\label{def-eta-w-n}
\eta^{(n)}\eqdefa \d_{x}^{n} \eta \andf  w^{(n)} \eqdefa \d_{x}^{n} w.
\ee
Then it follows from   \eqref{CNS-limit} that $(\eta^{(n)}, w^{(n)})$ verifies
\be\label{CNS-1d-n}
\left\{\begin{aligned}
& \eta^{(n)}_{t}  + w \eta^{(n)}_{x} + \eta w^{(n)}_{x}  = - f_{n},\\
& \eta(w^{(n)}_{t} + w w^{(n)}_{x}) - \nu w^{(n)}_{xx} + p'(\eta) \eta^{(n)}_{x} = - g_{n},
\end{aligned}\right.
\ee
where
\begin{align*}\label{CNS-1d-n-1}
& f_{n}\eqdefa \sum_{m=1}^{n} w^{(m)}\eta^{(n-m+1)}  + \sum_{m=1}^{n} \eta^{(m)} w^{(n-m+1)},\\
& g_{n} \eqdefa  \sum_{m=1}^{n} \eta^{(m)} w^{(n-m)}_{t}  + \sum_{m=1}^{n} \eta^{(m)} \d_{x}^{n-m} (w w_{x})  + \eta \sum_{m=1}^{n} w^{(m)} w^{(n-m+1)} + \sum_{m=1}^{n} \d_{x}^{m}(p'(\eta)) \eta^{(n-m+1)},
\end{align*}
from which, the inductive assumption \eqref{decay-exp-all-m} and Sobolev embedding theorem, we infer
\ba\label{CNS-1d-n-2}
\|(f_{n}, g_{n})(t)\|_{L^{2}_{h}} \leq C E_{n+2,0}^{\frac{1}{2}}(y) e^{-\a t}.
\ea

In what follows, we shall decompose the proof into the following steps:

\medskip

\noindent{\bf Step 1.} {Decay estimates for $\|\eta^{(n+1)}\|_{L^{2}_{h}}.$}

\medskip

Recalling $\zeta = \eta^{-1}$ and \eqref{decay-exp-rx-2}, we find
\be\label{decay-eta-n+1}
 \zeta w^{(n)}_{xx} = D_{t} \zeta^{(n+1)} +  \sum_{m=1}^{n} w^{(m)} \zeta^{(n-m+2)} - \sum_{m=1}^{n} \zeta^{(m)} w^{(n-m+2)}
  \with \zeta^{(n)} \eqdefa \d_{x}^{n}\zeta.
 \ee
While
we observe from $\eqref{CNS-1d-n}_{2}$ that
\ba\label{CNS-1d-n-3}
D_{t} w^{(n)} - \nu \eta^{-1} w^{(n)}_{xx} + \eta^{-1} p'(\eta) \eta^{(n)}_{x} = - \eta^{-1} g_{n}.
\nn \ea
By plugging \eqref{decay-eta-n+1} into the above equality  yields
\be\label{CNS-1d-n-4}
D_{t} (w^{(n)} -  \nu  \zeta^{(n+1)})  - \nu \Big(  \sum_{m=1}^{n} w^{(m)} \zeta^{(n-m+2)} - \sum_{m=1}^{n} \zeta^{(m)} w^{(n-m+2)}   \Big) + \eta^{-1} p'(\eta) \eta^{(n)}_{x} = - \eta^{-1} g_{n}.
\ee
By taking $L^2_\h$ inner product of  $\eta (w^{(n)} -  \nu  \zeta^{(n+1)}) $ with \eqref{CNS-1d-n-4}, we find
\begin{align*}
&\frac 12\frac{\rm d}{\dt} \int_{\TT}  \eta (w^{(n)} -  \nu  \zeta^{(n+1)})^{2}\,\dx   +  \int_{\TT} p'(\eta) \eta^{(n)}_{x} (w^{(n)} -  \nu  \zeta^{(n+1)})\,\dx\\
& = -\int_{\TT}    w_{x}   \eta \bigl(w^{(n)} -  \nu  \zeta^{(n+1)}\bigr)^2\,\dx  + \int_{\TT}    w_{x}  w^{(n)} \eta (w^{(n)} -  \nu  \zeta^{(n+1)})\,\dx \\
&\qquad  +\nu  \int_{\TT}      \Big(  \sum_{m=2}^{n} w^{(m)} \zeta^{(n-m+2)} - \sum_{m=1}^{n} \zeta^{(m)} w^{(n-m+2)}   \Big) \eta (w^{(n)} -  \nu  \zeta^{(n+1)})\,\dx \\
&\qquad- \int_{\TT}   g_{n} (w^{(n)} -  \nu  \zeta^{(n+1)})\,\dx,
\end{align*}
from which, and the induction assumption \eqref{decay-exp-all-m}, we infer
\ba\label{CNS-1d-n-5}
&\frac 12\frac{\rm d}{\dt} \int_{\TT}  \eta (w^{(n)} -  \nu  \zeta^{(n+1)})^{2}\,\dx    +  \int_{\TT} p'(\eta) \eta^{(n)}_{x} (w^{(n)} -  \nu  \zeta^{(n+1)})\,\dx \\
& \leq C \|w_{x}\|_{L^{\infty}_{h}}\int_{\TT} \eta (w^{(n)} -  \nu  \zeta^{(n+1)})^{2}\,\dx
  +  2\de^{-1}\int_{\TT} \eta |w_{x} w^{(n)}|^{2}\,\dx  \\
& \qquad+ 2\nu^{2}\de^{-1} \int_{\TT}    \eta   \Big|  \sum_{m=2}^{n} w^{(m)} \zeta^{(n-m+2)} - \sum_{m=1}^{n} \zeta^{(m)} w^{(n-m+2)}   \Big|^{2}\,\dx \\
& \qquad + 2\de^{-1}\int_{\TT} \eta^{-1}| g_{n} |^{2}\,\dx    + \de \int_{\TT} \eta (w^{(n)} -  \nu  \zeta^{(n+1)})^{2}\,\dx \\
&  \leq  C E_{m+2,0}^{\frac{1}{2}}(y) e^{-\a t} \int_{\TT} \eta (w^{(n)} -  \nu  \zeta^{(n+1)})^{2} \,\dx  + C \de^{-1} E_{n+2,0}(y) e^{-\a t}\\
&\qquad + \de \int_{\TT} \eta (w^{(n)} -  \nu  \zeta^{(n+1)})^{2}\,\dx,
\ea
where $\de$ is a small positive constant to be determined later on.

While we observe that
\ba%\label{CNS-1d-n-7}
\eta_{x}^{(n)} = \d_{x}^{n}(-\zeta^{-2} \zeta_{x}) = -\zeta^{-2} \zeta^{(n+1)}- \sum_{m=1}^{n} \d_{x}^{m}(\zeta^{-2}) \zeta^{(n-m+1)}.
\nn \ea
Then  we have
\ba
& \int_{\TT} p'(\eta) \eta^{(n)}_{x} (w^{(n)} -  \nu  \zeta^{(n+1)})\,\dx = \nu^{-1}
\int_{\TT} p'(\eta) \eta^{2} \bigl(w^{(n)} - \nu \zeta^{(n+1)}\bigr)^2  \,\dx\\
&\ -   \nu^{-1} \int_{\TT} p'(\eta) \eta^{2} w^{(n)}  (w^{(n)} -  \nu  \zeta^{(n+1)})\,\dx
 -  \int_{\TT} p'(\eta) \sum_{m=1}^{n} \d_{x}^{m}(\zeta^{-2}) \zeta^{(n-m+1)} (w^{(n)} -  \nu  \zeta^{(n+1)})\,\dx.
\nn\ea
By inserting the above equality into \eqref{CNS-1d-n-5} and using \eqref{thm1-2}, we deduce that
\ba\label{CNS-1d-n-9}
\frac 12 &\frac{\rm d}{\dt} \int_{\TT} \eta (w^{(n)} -  \nu  \zeta^{(n+1)})^{2}\,\dx
   +  \nu^{-1} p'(\underline \eta) \underline \eta \int_{\TT} \eta (w^{(n)} -  \nu  \zeta^{(n+1)})^{2}\,\dx\\
\leq  & C E_{n+2,0}^{\frac{1}{2}}(y) e^{-\a t} \int_{\TT} \eta (w^{(n)} -  \nu  \zeta^{(n+1)})^{2} \,\dx + C \de^{-1} E_{n+2,0}(y) e^{-\a t} \\
&+ \de \int_{\TT} \eta (w^{(n)} -  \nu  \zeta^{(n+1)})^{2}\,\dx.
\ea
Choosing $\de = \min\{1,\nu^{-1} p'(\underline \eta) \underline \eta /2\}$ in \eqref{CNS-1d-n-9}  and applying Gronwall's inequality gives
\ba\label{CNS-1d-n-10}
\int_{\TT}  \eta (w^{(n)} -  \nu  \zeta^{(n+1)})^{2}\,\dx  \leq  C   E_{n+2, 0}(y)   e^{-\a t},
 \nn \ea
which together with \eqref{decay-exp-all-m} ensures that
 \ba\label{CNS-1d-n-11}
\int_{\TT}  |\zeta^{(n+1)}|^{2} \,\dx \leq C   E_{n+2,0}(y)   e^{-\a t}.
 \nn \ea
This together together with the inductive assumption \eqref{decay-exp-all-m} leads to
  \ba\label{CNS-1d-dx-12}
\int_{\TT}  | \eta^{(n+1)}|^{2}   \leq C   E_{n+2, 0}(y)   e^{-\a t}.
 \ea
And then we deduce from \eqref{CNS-1d-n} that
 \ba\label{CNS-1d-dx-13}
\|\d_{t} \eta^{(n)}\|_{L^{2}_{h}} = \| \d_{x}^{n}(\eta w)\|_{L^{2}_{h}}  \leq C   E_{n+2, 0}(y)   e^{-\a t}.
 \ea

\medskip

 \noindent{\bf Step 2.} {Decay estimates of  $\|D_{t} w^{(n)}\|_{L^{2}_{h}}.$}

\medskip

We first get,
by  taking  $L^2_\h$ inner product of $\eta^{-1} w^{(n)}_{xx}$ with the $w^{(n)}$ equation of \eqref{CNS-1d-n}, that
\ba\label{CNS-1d-Dtwn-1}
  \frac 12\frac{\rm d}{\dt} \int_{\TT}   |w^{(n)}_{x}|^{2}\,\dx   + \nu \int_{\TT} \eta^{-1} |w^{(n)}_{xx}|^{2}\,\dx
  =   \int_{\TT}  \bigl(g_{n} - p'(\eta) \eta^{(n)}_{x} +\eta w w_{x}^{(n)}\bigr)\eta^{-1}  w^{(n)}_{xx}\,\dx.
\nn \ea
 Thanks to \eqref{thm1-2}, \eqref{decay-exp-all-m} and \eqref{CNS-1d-dx-13},  we infer
\ba\label{CNS-1d-Dtwn-2}
\frac 12 \frac{\rm d}{\dt} \int_{\TT}    |w^{(n)}_{x}|^{2}\,\dx   + \nu \bar \eta^{-1}  \int_{\TT}  |w^{(n)}_{xx}|^{2}\,\dx
 \leq  \frac{1}{2} \nu \bar \eta^{-1}  \int_{\TT}  |w^{(n)}_{xx}|^{2}\,\dx  + C   E_{n+2, 0}(y)   e^{-\a t}.
\nn \ea
This leads to
\be\label{CNS-1d-Dtwn}
\frac{\rm d}{\dt} \int_{\TT} |w^{(n)}_{x}|^{2} + \nu \bar \eta^{-1} \int_{\TT}  |w^{(n)}_{xx}|^{2} \leq C   E_{n+2, 0}(y)   e^{-\a t}.
\ee

 While we get,
by  taking  $L^2_\h$ inner product of $D_{t} w^{(n)} $  with the $w^{(n)}$ equation of \eqref{CNS-1d-n}, that
\ba\label{CNS-1d-Dtwn-4}
\int_{\TT} \eta  |D_t w^{(n)}|^{2}\,\dx  - \nu \int_{\TT}  w^{(n)}_{xx} D_t w^{(n)}\,\dx =  - \int_{\TT}  \eta^{-1} g_{n} D_{t}w^{(n)}\,\dx - \int_{\TT}   p'(\eta) \eta^{(n)}_{x} D_{t}w^{(n)}\,\dx.
\nn \ea
Observing that
\begin{align*}\label{CNS-1d-Dtwn-5}
- \nu \int_{\TT}  w^{(n)}_{xx} D_t w^{(n)}\,\dx & = - \nu \int_{\TT}  w^{(n)}_{xx} \d_t w^{(n)}\,\dx  - \nu \int_{\TT}  w^{(n)}_{xx} w w^{(n)}_{x}\,\dx \\
& = \frac{\nu}{2}\frac{\rm d}{\dt}  \int_{\TT} |w^{(n)}_{x}|^{2}\,\dx + \frac{\nu}{2}  \int_{\TT} (w^{(n)}_{x})^{2} w_{x}\,\dx,
\end{align*}
which together with \eqref{thm1-2}, \eqref{decay-exp-all-m} and \eqref{CNS-1d-dx-13} ensures that
\be\label{CNS-1d-Dtwn-3}
 \nu \frac{\rm d}{\dt} \int_{\TT} |w^{(n)}_{x}|^{2}\,\dx + \int_{\TT} \eta  |D_t w^{(n)}|^{2}\,\dx   \leq C   E_{n+2,0}(y)   e^{-\a t} .
\ee

Before proceeding,  let us admit the following lemma, the proof of which will be postponed till we finish the proof of
Proposition \ref{S2prop1}.

 \begin{lem}\label{lem-Dtwn-3} For all $t>0$, $y\in \R$, there holds
\be\label{CNS-1d-Dtwn-7}
\frac{\rm d}{\dt} \int_{\TT} \eta |D_{t} w^{(n)}_{x}|^{2}\,\dx + \nu \int_{\TT} |(D_t w^{(n)})_{x} |^{2}\,\dx
   \leq C   E_{n+2,0}(y)   e^{-\a t} + C \int_{\TT} \bigl(|D_{t} w^{(n)} |^{2} +  |w^{(n)}_{xx}|^{2}\bigr)\,\dx.
\ee
\end{lem}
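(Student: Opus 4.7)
The plan is to differentiate the momentum equation of \eqref{CNS-1d-n} along the material derivative $D_t$ and then test against $\eta D_t w^{(n)}$. Writing \eqref{CNS-1d-n}$_2$ as $\eta D_t w^{(n)} - \nu w^{(n)}_{xx} + p'(\eta)\eta^{(n)}_x = -g_n$ and applying $D_t,$ I would obtain
\[
D_t\bigl(\eta D_t w^{(n)}\bigr) \;-\; \nu\, D_t w^{(n)}_{xx} \;+\; D_t\bigl(p'(\eta)\eta^{(n)}_x\bigr) \;=\; -D_t g_n .
\]
Pairing with $D_t w^{(n)}$ and using that $\eta_t+(\eta w)_x=0$ makes the first term produce $\tfrac12\tfrac{d}{dt}\!\int \eta |D_t w^{(n)}|^2\,\dx$ exactly as in the derivation of \eqref{Dtv-L7}--\eqref{Dtv-L8}. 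This is the standard high-order analogue of the argument behind Proposition \ref{prop-Dtv-L2}.

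The diffusion term is the place where the commutator $[D_t,\partial_x^2] = -2w_x\partial_x^2 - w_{xx}\partial_x$ must be inserted: one writes $D_t w^{(n)}_{xx} = (D_t w^{(n)})_{xx} - 2w_x w^{(n)}_{xx} - w_{xx} w^{(n)}_x$, so that integration by parts extracts the desired $\nu\!\int |(D_t w^{(n)})_x|^2\,\dx$, modulo two commutator remainders. Those remainders are bounded via Young's inequality by $\frac{\nu}{4}\!\int |(D_t w^{(n)})_x|^2 + C\!\int w^{(n)}_{xx}{}^2 + Ce^{-\alpha t}\|\cdot\|_{L^2}^2$ using $\|(w_x,w_{xx})\|_{L^\infty_\h}\lesssim e^{-\alpha t}$ from Theorem \ref{thm1} and the induction hypothesis \eqref{decay-exp-all-m}. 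The resulting $\int w^{(n)}_{xx}{}^2\,\dx$ is precisely the term appearing on the right-hand side of \eqref{CNS-1d-Dtwn-7}.

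The pressure contribution is handled by expanding $D_t(p'(\eta)\eta^{(n)}_x) = -\eta p''(\eta) w_x\,\eta^{(n)}_x + p'(\eta) D_t \eta^{(n)}_x$ and substituting the continuity equation to rewrite $D_t \eta^{(n)}_x = \partial_x D_t\eta^{(n)} + [D_t,\partial_x]\eta^{(n)} = -\partial_x\bigl(\eta w^{(n+1)} + f_n\bigr) - w_x\,\eta^{(n)}_x$. The only truly top-order term is $-p'(\eta)\,\eta\, w^{(n+1)}_x\,D_t w^{(n)}$, which after integration by parts becomes $\partial_x(p'(\eta)\eta)\, w^{(n)}_x \,D_t w^{(n)} + p'(\eta)\eta\, w^{(n)}_x (D_t w^{(n)})_x$ and is absorbed by $\frac{\nu}{4}\!\int |(D_t w^{(n)})_x|^2$ plus $C\!\int |w^{(n)}_x|^2$; remaining terms are controlled by \eqref{CNS-1d-dx-12}--\eqref{CNS-1d-dx-13} and \eqref{decay-exp-all-m} and produce the $CE_{n+2,0}(y)e^{-\alpha t}$ factor. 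The source term contributes $-\!\int D_t g_n\, D_t w^{(n)}\,\dx$; writing $D_t g_n = \partial_t g_n + w\,\partial_x g_n$ and expressing all time derivatives of $\eta^{(m)}, w^{(m)}$ with $m\le n-1$ through the equations, one reduces to polynomial products of quantities already bounded by $C E_{n+2,0}^{1/2}(y)\,e^{-\alpha t}$ in $L^\infty_\h\cap L^2_\h$ by the induction assumption.

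The main obstacle is bookkeeping the top spatial regularity: the expansion of $D_t g_n$ and of $D_t(p'(\eta)\eta^{(n)}_x)$ formally produces $\partial_x^{n+1}\eta$ and $\partial_x^{n+2}w$, neither of which is controlled at the inductive stage. The point is that all such top-order occurrences can be combined into a single $w^{(n+1)}_x$ (modulo lower-order factors treated by \eqref{decay-exp-all-m}) and then moved onto $(D_t w^{(n)})_x$ via one integration by parts, where it is absorbed by the good dissipation $\nu\!\int |(D_t w^{(n)})_x|^2$; the remaining  $\partial_x^{n+1}\eta$-factor is precisely what \eqref{CNS-1d-dx-12} from Step 1 was designed to control. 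Once this bookkeeping is done, all the non-dissipative contributions are of the form $Ce^{-\alpha t}E_{n+2,0}(y) + C\!\int(|D_t w^{(n)}|^2 + |w^{(n)}_{xx}|^2)\,\dx$, which is exactly \eqref{CNS-1d-Dtwn-7}.
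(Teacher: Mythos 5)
Your proposal is correct and follows essentially the same route as the paper: apply $D_t$ to the $n$-th order momentum equation, test with $D_t w^{(n)}$, extract the dissipation via the commutator $D_t w^{(n)}_{xx}=(D_tw^{(n)})_{xx}-2w_xw^{(n)}_{xx}-w_{xx}w^{(n)}_x$, and absorb the top-order pressure and source contributions by one integration by parts onto $(D_tw^{(n)})_x$ together with the Step 1 bound on $\eta^{(n+1)}$ and the inductive hypothesis. The only (immaterial) difference is bookkeeping: the paper groups the pressure as $(p(\eta))^{(n+1)}$ and keeps the commutator terms out of the source $\tilde g_n$, while you keep $p'(\eta)\eta^{(n)}_x$ and substitute the continuity equation into $D_t\eta^{(n)}_x$ — both lead to the same estimates.
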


\medskip

\noindent{\bf Step 3.} {End of the induction and summary of decay estimates}

\medskip

By virtue of \eqref{CNS-1d-Dtwn}, \eqref{CNS-1d-Dtwn-3} and \eqref{CNS-1d-Dtwn-7},
 we can find a  large enough constant $A_{7}$ such that
\ba\label{CNS-1d-Dtwn-25}
\frac{\rm d}{\dt} \int_{\TT} \bigl(A_{7}| w^{(n)}_{x} |^{2} + \eta |D_{t} w^{(n)}|^{2}\bigr)\,\dx
 + \int_{\TT} \bigl(|D_{t} w^{(n)} |^{2} +  |w^{(n)}_{xx}|^{2}  +  \nu  | (D_{t} w^{(n)})_{x}|^{2}\bigr)\,\dx \leq  C   E_{n+2,0}(y)   e^{-\a t},
 \nn \ea
from which,  the induction assumption \eqref{decay-exp-all-m0} for all $0\leq m\leq n-1$ and \eqref{CNS-1d-n-9}, we deduce that
 \eqref{decay-exp-all-m0} holds  with $m=n$ for some small positive number $\de.$ This implies  \eqref{decay-exp-all-m} with $m=n$.  We thus complete the induction argument and show in aprticular that
\ba\label{CNS-1d-Dtwn-26}
 \|(\eta -1)(t)\|_{H^{4}_\h}^{2} + \| w (t)\|_{H^{5}_\h}^{2} +  \sum_{m=1}^{2}\| \d_{t}^{m}\eta (t)\|_{H^{4-m}_{\h}}^{2} + \| \d_{t}^{m} w (t)\|_{H^{5-2m}_\h}^{2} \leq C E_{5,0}(y) e^{-\a t}.
\ea
This finishes the proof of Proposition \ref{S2prop1}.
\end{proof}

Now let us present the proof of Lemma \ref{lem-Dtwn-3}.

\begin{proof}[Proof of Lemma \ref{lem-Dtwn-3}]
We first rewrite $\eqref{CNS-1d-n}_{2}$ as
\beq\label{CNS-1d-Dtwn-8}
\begin{split}
 &\eta D_{t}  w^{(n)} - \nu w^{(n)}_{xx} + (p(\eta))^{(n+1)} = - \tilde g_{n}, \with \\
& \tilde g_{n}\eqdefa \sum_{m=1}^{n} \eta^{(m)} w^{(n-m)}_{t}  + \sum_{m=1}^{n} \eta^{(m)} \d_{x}^{n-m} (w w_{x})  + \eta \sum_{m=1}^{n} w^{(m)} w^{(n-m+1)}.
\end{split}
\eeq
By applying $D_{t}$ to \eqref{CNS-1d-Dtwn-8} and then taking $L^2_\h$ inner product of the resulting equation with $D_{t} w^{(n)},$
we find
\ba\label{CNS-1d-Dtwn-10}
\int_{\TT}\bigl( \eta D^2_{t}  w^{(n)} + D_{t} \eta D_{t} w^{(n)} - \nu  D_{t} w^{(n)}_{xx} + D_{t}(p(\eta))_{n}\bigr)D_{t} w^{(n)}\,\dx
 = -\int_{\TT} D_{t} \tilde g_{n}D_{t} w^{(n)}\,\dx.
\ea
 In what follows, we shall frequently use the estimates \eqref{decay-exp-all-m}, \eqref{CNS-1d-dx-12} and \eqref{CNS-1d-dx-13} to handle
 the terms above.

We first observe that
\ba\label{CNS-1d-Dtwn-11}
 \int_{\TT}\eta D_{t}^2   w^{(n)} D_{t}  w^{(n)}\,\dx  =  \frac 12\frac{\rm d}{\dt} \int_{\TT}\eta |D_{t} w^{(n)}_{x}|^{2}\,\dx.
 \ea
For the second term in \eqref{CNS-1d-Dtwn-10}, direct calculation gives
\ba\label{CNS-1d-Dtwn-12}
 \int_{\TT} D_{t} \eta D_{t} w^{(n)} D_{t} w^{(n)}\,\dx  \leq \|w_{x}\|_{L^{\infty}_{h}}   \int_{\TT} \eta  | D_{t} w^{(n)}|^{2}\,\dx \leq C   \int_{\TT} \eta  | D_{t} w^{(n)}|^{2}\,\dx.
 \ea
Notice that
\be
D_{t} w^{(n)}_{xx} = (D_{t} w^{(n)})_{xx} - (2w_{x}w^{(n)}_{xx} + w_{xx} w^{(n)}_{x}).
 \nn\ee
Then, by using integration by parts, we get that
 \ba\label{CNS-1d-Dtwn-14}
-\nu  \int_{\TT}  D_{t} w^{(n)}_{xx} D_{t} w^{(n)}\,\dx & = \nu\int_{\TT}  | (D_{t} w^{(n)})_{x}|^{2}\,\dx + \int_{\TT} \nu  (2w_{x}w^{(n)}_{xx} + w_{xx} w^{(n)}_{x}) D_{t} w^{(n)}\,\dx\\
&\geq  \nu\int_{\TT} | (D_{t} w^{(n)})_{x}|^{2}  - C  \int_{\TT} \bigl(|w_{xx}|^{2} + |w^{(n)}_{xx}|^{2} + |D_{t} w^{(n)}|^{2}\bigr)\,\dx.
\ea
Similarly, we have
\begin{align*}
&\int_{\TT} D_{t}(p(\eta))^{(n+1)} D_{t} w^{(n)}\,\dx    =  \int_{\TT} \d_{t}(p(\eta))^{(n+1)} D_{t} w^{(n)}\,\dx
 +  \int_{\TT} w (p(\eta))^{(n+2)} D_{t} w^{(n)}\,\dx\\
 & = - \int_{\TT} \d_{t}(p(\eta))^{(n)} (D_{t} w^{(n)})_{x}\,\dx -  \int_{\TT} (p(\eta))^{(n+1)}\bigl( w_{x} D_{t} w^{(n)} +  w
(D_{t} w^{(n)} )_{x}\bigr)\,\dx,
\end{align*}
from which, we infer
 \ba\label{CNS-1d-Dtwn-15}
 \int_{\TT} D_{t}(p(\eta))^{(n+1)} D_{t} w^{(n)}\,\dx   \leq & C \int_{\TT} |\d_{t}(p(\eta))^{(n)}|^{2}\,\dx
 + \int_{\TT} |D_{t} w^{(n)}|^{2}\,\dx  + \frac{\nu}{4} \int_{\TT}  | (D_{t} w^{(n)})_{x}|^{2}\,\dx\\
& + C \bigl(\|w_{x}\|_{L^{\infty}_{h}}+\|w\|_{L^{\infty}_{h}}\bigr) \int_{\TT} | (p(\eta))^{(n+1)}|^{2}\,\dx \\
 \leq & C    E_{n+2,0}(y)   e^{-\a t} + \int_{\TT} |D_{t} w^{(n)}|^{2}\,\dx + \frac{\nu}{4} \int_{\TT}  | (D_{t} w^{(n)})_{x}|^{2}\,\dx.
\ea

It  remains to handle the source term in \eqref{CNS-1d-Dtwn-10}. Indeed in view of \eqref{CNS-1d-Dtwn-10},
 we find that the  most trouble terms are the following ones with highest derivatives on $\eta$ or $w$ in $\tilde{g}_n$:
\ba\label{CNS-1d-Dtwn-16}
&\tilde g_n^{(1)}\eqdefa \eta^{(1)} w^{(n-1)}_{t}  + \eta^{(1)}w w_{x}^{n-1}  + \eta w^{(1)} w^{(n)} = \eta^{(1)}D_{t}  w^{(n-1)} + \eta w^{(1)}  w^{(n)},\\
& \tilde g_n^{(2)}\eqdefa \eta^{(n)}   + \eta^{(n)}  + \eta w^{(n)}w^{(1)}.
\ea

Observe that
\begin{align*}\label{CNS-1d-Dtwn-17}
&D_{t} (\eta^{(1)}D_{t}  w^{(n-1)})   = (D_{t} \eta_{x} ) D_{t}  w^{(n-1)} + \eta_{x} D^2_{t}  w^{(n-1)} \\
& = D_{t} \eta_{x} \eta^{-1} \big(  \nu  w^{(n-1)}_{xx} -  (p(\eta))^{(n)} - \tilde g_{n-1}\big) + \eta_{x} ( D_{t} \eta^{-1})\big(  \nu  w^{(n-1)}_{xx} -  (p(\eta))^{(n)} - \tilde g_{n-1}\big) \\
& \qquad + \eta_{x} \eta^{-1}  D_{t}\big(  \nu  w^{(n-1)}_{xx} -  (p(\eta))^{(n)} - \tilde g_{n-1}\big) .
\end{align*}
It follows from \eqref{decay-exp-all-m}, \eqref{CNS-1d-dx-12} and \eqref{CNS-1d-dx-13} that
\begin{align*}%\label{CNS-1d-Dtwn-18}
&\int_{\TT} D_{t} \eta_{x} \eta^{-1} \big(  \nu  w^{(n-1)}_{xx} -  (p(\eta))^{(n)} - \tilde g_{n-1}\big)  D_{t} w^{(n)}\,\dx\\
& \leq \underline \eta^{-1}\|\big(  \nu  w^{(n-1)}_{xx} -  (p(\eta))^{(n)} - \tilde g_{n-1}\big)\|_{H^{1}_{h}}^{2}
 \int_{\TT} |D_{t} \eta_{x}|^{2}\,\dx + \int_{\TT} |D_{t} w^{(n)} |^{2} \,\dx\\
& \leq C   E_{n+2,0}(y)   e^{-\a t} + \int_{\TT} |D_{t} w^{(n)} |^{2}\,\dx,
\end{align*}
and
\begin{align*}
&\int_{\TT} \eta_{x} ( D_{t} \eta^{-1})\big(  \nu  w^{(n-1)}_{xx} -  (p(\eta))^{(n)} - \tilde g_{n-1}\big)  D_{t} w^{(n)}\,\dx\\
& \leq \|\eta_{x} ( D_{t} \eta^{-1}) \big(  \nu  w^{(n-1)}_{xx} -  (p(\eta))^{(n)} - \tilde g_{n-1}\big)\|_{H^{1}_{h}}^{2}    + \int_{\TT} |D_{t} w^{(n)} |^{2}\,\dx \\
& \leq C   E_{n+2,0}(y)   e^{-\a t} + \int_{\TT} |D_{t} w^{(n)} |^{2}\,\dx .
\end{align*}

Next we deal with the term $\int_{\TT}\eta_{x} \eta^{-1} D_{t}\big(   \nu  w^{(n-1)}_{xx} -  (p(\eta))^{(n)} - \tilde g_{n-1}\big)\,\dx $. We compute
\ba\label{CNS-1d-Dtwn-20}
\eta_{x}\eta^{-1}  D_{t}  w^{(n-1)}_{xx} & = \eta_{x} \eta^{-1} (D_{t} w^{(n)})_{x} - \eta_{x} \eta^{-1} w_{x} w^{(n)}_{x},
\nn \ea
so that
\begin{align*}%
\int_{\TT} \eta_{x}\eta^{-1}  D_{t}  w^{(n-1)}_{xx}  D_{t} w^{(n)}\,\dx & = \int_{\TT} \eta_{x} \eta^{-1} (D_{t} w^{(n)})_{x} D_{t} w^{(n)}\,\dx   - \int_{\TT}  \eta_{x} \eta^{-1} w_{x} w^{(n)}_{x} D_{t} w^{(n)} \,\dx \\
& \leq C   E_{n+2,0}(y)   e^{-\a t} + C \int_{\TT} |D_{t} w^{(n)} |^{2}\,\dx + \frac{\nu}{4} \int_{\TT}  | (D_{t} w^{(n)})_{x}|^{2}\,\dx.
\end{align*}
Similarly, we have
\begin{align*}
\int_{\TT} \eta_{x}\eta^{-1}  D_{t}  (p(\eta))^{(n)} D_{t} w^{(n)}\,\dx & \leq  \underline\eta^{-2} \|\eta_{x}\|_{L^{\infty}_{h}}^{2} \int_{\TT}  |D_{t} (p(\eta))_{n}|^{2}\,\dx + \int_{\TT} | D_{t} w^{(n)}|^{2}\,\dx \\
& \leq C   E_{n+2,0}(y)   e^{-\a t} + C \int_{\TT} |D_{t} w^{(n)} |^{2}\,\dx.
\end{align*}
Finally, by virtue of  the definition of $\tilde g_{n}$ given by \eqref{CNS-1d-Dtwn-8},
and the estimates \eqref{decay-exp-all-m}, \eqref{CNS-1d-dx-12} and \eqref{CNS-1d-dx-13}, we deduce  that
\begin{align*}
\int_{\TT} \eta_{x}\eta^{-1}  D_{t} \tilde g_{n-1}  D_{t} w^{(n)}\,\dx & \leq
 \underline\eta^{-2} \|\eta_{x}\|_{L^{\infty}_{h}}^{2} \int_{\TT}  |D_{t} \tilde g_{n-1}|^{2}\,\dx + \int_{\TT} | D_{t} w^{(n)}|^{2}\,\dx \\
& \leq C   E_{n+2,0}(y)   e^{-\a t} + C \int_{\TT} |D_{t} w^{(n)} |^{2}\,\dx.
\end{align*}
By summarizing the above estimates, we arrive at
\beq \label{CNS-1d-Dtwn-21}
\int_{\TT}D_{t} (\eta^{(1)}D_{t}  w^{(n-1)}) D_{t} w^{(n)}\,\dx \leq C   E_{n+2,0}(y)   e^{-\a t} +  \int_{\TT} \bigl(C|D_{t} w^{(n)} |^{2} + \frac{\nu}{4} | (D_{t} w^{(n)})_{x}|^{2}\bigr)\,\dx.
\eeq
The other terms in \eqref{CNS-1d-Dtwn-16} can be handled along the same line.

By substituting the estimates (\ref{CNS-1d-Dtwn-11}--\ref{CNS-1d-Dtwn-15}) and \eqref{CNS-1d-Dtwn-21} into \eqref{CNS-1d-Dtwn-10},
we conclude the proof of  \eqref{CNS-1d-Dtwn-7}. This finishes the proof of Lemma \ref{lem-Dtwn-3}.
\end{proof}

 \section{Proof of Lemma \ref{lem-GN}}\label{appb}

Let us recall  Lemma \ref{lem-GN}: let $2<p<\infty$ and  $\OO = \TT \times \R$, there exists a constant $C$ depending solely on $p$ such that for any $f\in H^{1}(\OO),$  there holds
\ba\label{GN-O-1}
\|f\|_{L^{p}(\OO)} \leq C \Bigl(  \| f \|_{L^{2}(\OO)}^{\frac 2p} \|\nabla f \|_{L^{2}(\OO)}^{1-\frac{2}{p}} +  \| f \|_{L^{2}(\OO)}^{\frac{1}{2} + \frac{1}{p}}  \|\nabla f \|_{L^{2}(\OO)}^{\frac{1}{2} - \frac{1}{p}} \Bigr).
\ea

\begin{proof}[Proof of Lemma \ref{lem-GN}]
 Let $f \in H^{1}(\OO),$ we split it as follows
$$
f(x,y) = \tilde f(x,y) + \bar f(y), \with \bar f(y)\eqdefa \int_{\TT} f(x,y)\,\dx, \andf \tilde f \eqdefa f - \bar f.
$$
We observe that $\bar f \in H^{1}(\R)$. Indeed, applying H\"older's inequality gives
\ba\label{GN-O-2}
 &\| \bar f \|_{L^{2}(\R)}^{2} = \int_{\R} \bigl| \int_{\TT} f(x,y)\,\dx\bigr|^{2}\,\dy  
 \leq \int_{\R}  \int_{\TT} |f(x,y)|^{2}\,\dx\,\dy   = \|f\|_{L^{2}(\OO)}^{2},  \\
&\| \d_{y}\bar f \|_{L^{2}(\R)}^{2}  = \| \overline {\d_{y} f} \|_{L^{2}(\R)}^{2} \leq \|\d_{y} f\|_{L^{2}(\OO)}^{2}.
\ea
Furthermore, $\bar f \in H^{1}(\OO)$ and $\tilde f \in H^{1}(\OO),$ and there hold
\ba\label{GN-O-3}
 \| \bar f \|_{L^{2}(\OO)} =  \|\bar f\|_{L^{2}(\R)},  \quad  \| \nabla_{x,y}\bar f \|_{L^{2}(\OO)}  = \| \d_{y}\bar f \|_{L^{2}(\R)}.
\ea

For $\tilde f$, we get, by applying the classical two dimensional Gagliardo-Nirenberg interpolation inequality, that
\ba\label{GN-O-4}
\|\tilde f\|_{L^{p}(\OO)} \leq C  \| \tilde f \|_{L^{2}(\OO)}^{\frac 2p} \| \tilde f \|_{H^{1}(\OO)}^{1-\frac{2}{p}} \leq C \| \tilde f \|_{L^{2}(\OO)} + C \| \tilde f \|_{L^{2}(\OO)}^{\frac 2p} \| \nabla \tilde f \|_{L^{2}(\OO)}^{1-\frac{2}{p}}.
\ea
Notice that
$$
\int_{\TT} \tilde f(x,y) \,\dx = 0, \quad \forall y \in \R,
$$
by applying Poincar\'e's inequality, one has
$$
\int_{\TT} |\tilde f(x,y)|^{2} \,\dx \leq   \int_{\TT} | \d_{x} \tilde f(x,y)|^{2} \,\dx, \quad \forall y \in \R.
$$
This leads to
\ba\label{GN-O-5}
\| \tilde f \|_{L^{2}(\OO)}^{2} = \int_{\R}\int_{\TT} |\tilde f(x,y)|^{2} \,\dx \,\dy \leq \int_{\R}\int_{\TT} | \d_{x} \tilde f(x,y)|^{2} \,\dx \,\dy \leq \| \nabla \tilde f \|_{L^{2}(\OO)}^2.
\ea
Plugging \eqref{GN-O-5}  into \eqref{GN-O-4} and using \eqref{GN-O-3},  we find
\ba\label{GN-O-6}
\|\tilde f\|_{L^{p}(\OO)} \leq C \| \tilde f \|_{L^{2}(\OO)}^{\frac 2p} \| \nabla \tilde f \|_{L^{2}(\OO)}^{1-\frac{2}{p}} \leq C \| f \|_{L^{2}(\OO)}^{\frac 2p} \| \nabla f \|_{L^{2}(\OO)}^{1-\frac{2}{p}}.
\ea
While for $\bar f(y),$ we get, by applying the one dimensional Gagliardo-Nirenberg interpolation inequality (see \cite{DELL14, Nir59}) and \eqref{GN-O-2}, that
\ba\label{GN-O-7}
\|\bar f\|_{L^{p}(\R)} \leq C  \| \bar f \|_{L^{2}(\R)}^{\frac{1}{2} + \frac{1}{p}}  \|\d_{y} \bar f \|_{L^{2}(\R)}^{\frac{1}{2} - \frac{1}{p}}  \leq C  \|  f \|_{L^{2}(\OO)}^{\frac{1}{2} + \frac{1}{p}} \|\d_{y}  f \|_{L^{2}(\OO)}^{\frac{1}{2} - \frac{1}{p}} .
\ea

Then \eqref{GN-O-1} follows immediately from \eqref{GN-O-6} and \eqref{GN-O-7}. This finishes proof of Lemma \ref{lem-GN}.
\end{proof}

\section*{Acknowledgments}

Yong Lu has been supported by the Recruitment Program of Global Experts of China. Ping Zhang is partially supported by K.C.Wong Education Foundation and NSF of China under Grants   11731007, 12031006 and 11688101.

%%%%%%%%%%%%%%%%%%%%%%%%%%%%%%%%%%%%%%%%%%%%%%%%%%%%%%%%%%%%%%%%%%%%%%%%%%%%%%%%%%%%%%%%%%%%%%%%%%%%%%%%%


\begin{thebibliography}{000}



\bibitem{CK03} J. H. Choe and H. Kim. Strong solutions of the Navier--Stokes equations for isentropic compressible fluids,
{\it J Differential Equations}, {\bf 190} (2003) 504--523.

\bibitem{CG10}  J.-Y. Chemin and I. Gallagher,  global solutions to the Navier-Stokes equations, slowly varying in one direction,
 {\it Trans. Amer. Math. Soc.}, {\bf 362} (2010),  2859--2873.

\bibitem{CZ6} J.-Y. Chemin and P. Zhang,
Inhomogeneous incompressible viscous flows with slowly varying initial data,
{\it J. Inst. Math. Jussieu}, {\bf 17} (2018), 1121--1172.

\bibitem{Dan00} R. Danchin, Global existence in critical spaces for compressible Navier-Stokes equations,
{\it  Invent. Math.}, {\bf 141} (2000),  579--614.


\bibitem{DELL14} J. Dolbeault, M. J. Esteban, A. Laptev and M. Loss, One-dimensional Gagliardo-Nirenberg-Sobolev inequalities: remarks on duality and flows, {\it J. London Math. Soc.}, {\bf (2) 90} (2014) 525--550.


%\bibitem{Feireisl-2001} E. Feireisl, On compactness of solutions to the compressible isentropic Navier--Stokes equations when the density is not square integrable, {\it Comment. Math. Univ. Carolinae}, {\bf 42} (2001) 83--98.


\bibitem{F-book} E. Feireisl, {\it Dynamics of viscous compressible fluids}, Oxford Lecture Series in Mathematics and its Applications, 26. Oxford University Press, Oxford,  2004.



\bibitem{FJN12} E. Feireisl, B. Jin and A. Novotn\'y, Relative entropies, suitable weak solutions, and weak-strong uniqueness
for the compressible Navier-Stokes system, {\it J. Math. Fluid Mech.}, {\bf 14} (2012), 717--730.



\bibitem{FNS14} E. Feireisl, A. Novotn\'y and Y. Sun, A regularity criterion for the weak Solutions
to the Navier--Stokes--Fourier System, {\it Arch. Rational Mech. Anal.}, {\bf 212} (2014), 219--239.


\bibitem{FNP} E.~Feireisl, A.~Novotn{\' y} and H.~Petzeltov{\' a},
  On the existence of globally defined weak solutions to the  Navier--Stokes equations of compressible isentropic fluids,
    {\it J. Math. Fluid Mech.}, {\bf 3} (2001) 358--392.


%\bibitem{F-N-book} E. Feireisl and A. Novotn\'y, {\it Singular limits in thermodynamics of viscous fluids.}
 %Advances in Mathematical Fluid Mechanics. Birkh\"auser Verlag, Basel,  2009.



%\bibitem{Galdi-book} G. P. Galdi, {\it An introduction to the mathematical theory of the Navier-Stokes  equations. Steady-state problems.} Second edition. Springer Monographs in Mathematics. Springer, New York,  2011.


\bibitem{Germain10} P. Germain, Weak-strong uniqueness for the isentropic compressible Navier--Stokes system,
{\it J. Math. Fluid Mech.}, {\bf 13} (2011),  137--146.



\bibitem{HLX11} X. Huang, J. Li and Z. Xin, Serrin-type criterion for the three-dimensional viscous compressible flows,
 {\it SIAM J. Math. Anal.}, {\bf 43} (2011),  1872--1886.

\bibitem{HLX12} X. Huang, J. Li and Z. Xin,  Global well-posedness of classical solutions with large oscillations and vacuum to the three-dimensional isentropic compressible Navier-Stokes equations, {\it Comm. Pure Appl. Math.}, {\bf 65} (2012),  549--585.

 \bibitem{HL16} X. Huang and J. Li,    Existence and blowup behavior of global strong solutions to the two-dimensional barotropic compressible Navier-Stokes system with vacuum and large initial data, {\it J. Math. Pures Appl. (9)}, {\bf 106} (2016),  123--154.

\bibitem{Hoff87} D. Hoff, Global existence for 1D, compressible, isentropic Navier-Stokes equations with large initial data,
 {\it Trans. Amer. Math. Soc.}, {\bf  303} (1987),  169--181.

\bibitem{Hoff95} D. Hoff, Global solutions of the Navier--Stokes equations for multidimensional compressible flow with discontinuous
initial data, {\it J Differential Equations}, {\bf 120} (1995), 215--254.

\bibitem{KS77} A.V. Kazhikhov and V. V. Shelukhin, Unique global solution with respect to time of initial-boundary value problems for one-dimensional equations of a viscous gas, {\it J. Appl. Math. Mech.}, {\bf 41} (1977),  273-282.; translated from {\it Prikl. Mat. Meh.}, {\bf 41} (1977), 282-291 (Russian).

\bibitem{LZ4} Y. Liu and P. Zhang, Global solutions of $3$-D Navier-Stokes system with small unidirectional
derivative,  {\it Arch. Ration. Mech. Anal.},  {\bf 235} (2020),    1405--1444.

\bibitem{Lu-Zhang18} Y. Lu and Z. Zhang,  Relative entropy, weak-strong uniqueness, and conditional regularity for a compressible Oldroyd-B model,
 {\it SIAM J. Math. Anal.}, {\bf 50} (2018),  557--590.



%\bibitem{Lions-InC} P.-L. Lions, \newblock { Mathematical Topics in Fluid Dynamics, Vol.1, Incompressible  Models}.
%\newblock Oxford University Press, Oxford, 1998.


\bibitem{Lions-C} P.-L. Lions, {\it Mathematical topics in fluid mechanics. Vol. 2. Compressible models.} Oxford Lecture Series in Mathematics and its Applications, 10. Oxford Science Publications. The Clarendon Press, Oxford University Press, New York,  1998.

\bibitem{MN83} A. Matsumura and T. Nishida,  Initial-boundary value problems for the equations of motion of compressible viscous and heat-conductive fluids, {\it Comm. Math. Phys.}, {\bf 89} (1983),  445--464.

\bibitem{Nash62} J.    Nash,  Le probl\'eme de Cauchy pour les \'equations diff\'erentielles d'un fluide g\'en\'eral,
 {\it Bull. Soc. Math. France}, {\bf 90} (1962), 487--497.

\bibitem{Nir59}  L. Nirenberg, On elliptic partial differential equations, {\it Ann. Sc. Norm. Super. Pisa}, {\bf (3) 13} (1959), 115--162.

%\bibitem{N-book} A. Novotn\'y and I. Stra\v{s}kraba,
%{Introduction to the mathematical theory of compressible flow,}
%Oxford Lecture Series in Mathematics and its Applications, 27. Oxford University Press, Oxford,  2004.

\bibitem{Serrin59} J. Serrin, On the uniqueness of compressible fluid motions, {\it  Arch. Rational Mech. Anal.}, {\bf 3} (1959), 271--288.

\bibitem{Solo76} V. A. Solonnikov, The solvability of the initial-boundary value problem for the equations of motion of a viscous compressible fluid, {Zap. Nau\v{c}n. Sem. Leningrad. Otdel. Mat. Inst. Steklov.} (LOMI), 56:128-142, 197, 1976. Investigations on linear operators and theory of functions, VI.

\bibitem{Solo80} V. A. Solonnikov, The solvability of the initial-boundary value problem for the equations of motion of a viscous compressible fluid, {\it  J. Sov. Math.}, {\bf 14} (1980), 1120--1133.

\bibitem{SZ02} I. Stra\v{s}kraba and A. Zlotnik, On a decay rate for 1D-viscous compressible barotropic fluid equations,
 {\it J. Evol. Equ.}, {\bf 2} (2002),  69--96.

 \bibitem{SZ03} I. Stra\v{s}kraba and A. Zlotnik,  Global properties of solutions to 1D viscous compressible barotropic fluid equations with density dependent viscosity,  {\it Z. angew. Math. Phys.}, {\bf 54} (2003),  593--607.



\bibitem{SWZ11} Y. Sun, C. Wang and Z.  Zhang,  A Beale-Kato-Majda blow-up criterion for the 3-D compressible Navier-Stokes equations,
{\it J. Math. Pures Appl. (9)}, {\bf 95} (2011), 36--47.


\bibitem{SZ11} Y. Sun and Z. Zhang, A blow-up criterion of strong solutions to the 2D compressible Navier-Stokes equations,
 {\it Sci. China Math.}, {\bf 54} (2011),  105--116.

 \bibitem{VK95} V. A. Va\u{\i}gant and A. V. Kazhikhov,  On the existence of global solutions of two-dimensional Navier-Stokes equations of a compressible viscous fluid, (Russian) {\it Sibirsk. Mat. Zh.}, {\bf 36} (1995),  1283-1316, ii; translation in {\it Siberian Math. J.},
     {\bf 36} (1995),  1108--1141.

\bibitem{Valli83} A. Valli, Periodic and stationary solutions for compressible Navier--Stokes equations via a stablity
method, {\it  Ann. Sc. Norm. Super. Pisa, Cl. Sci.}, {\bf 10}  (1983) 607--647.



\bibitem{ZZ20} Z. Zhang and R. Zi, Convergence to equilibrium for the solution of the full compressible Navier-Stokes equations,
 {\it Ann. Inst. H. Poincar\'e Anal. Non Lin\'eaire}, {\bf 37} (2020), 457--488.

 \end{thebibliography}
\end{document}